\newcommand{\mute}[2] {}
\newcommand{\labell}[1] {\label{#1}}
\renewcommand{\Tilde}{\widetilde}
\renewcommand{\Hat}{\widehat}
\newcommand{\QED}{\hfill$\Box$}
\newcommand{\less}{{\smallsetminus}}
\newcommand{\NI}{{\noindent}}
\newcommand{\SSS}{{\smallskip}}
\newcommand{\MS}{{\medskip}}
\newcommand{\la}{{\lambda}}
\newcommand{\De}{{\Delta}}
\newcommand{\Ga}{{\Gamma}}
\newcommand{\lat}{{\ft_\Z}}
\newcommand{\ga}{{\gamma}}
\newcommand{\ML}{\operatorname{ML}}
\newcommand{\iness}{\operatorname{iness}}
\newcommand\fk {{\mathfrak k}}
\newcommand\ft {{\mathfrak t}}
\newcommand\p {{\partial}}
\newcommand{\Oo}{{\mathcal O}}
\newcommand{\Ee}{{\mathcal E}}
\newcommand{\ov}{\overline}
\newcommand{\Uu}{{\mathcal U}}
\newcommand{\Aa}{{\mathcal A}}
\newcommand{\Ss}{{\mathcal S}}
\newcommand{\Cc}{{\mathcal C}}
\newcommand{\Ii}{{\mathcal I}}
\newcommand{\La}{{\Lambda}}
\newcommand{\CP}{{\mathbb CP}}
\newcommand{\C}{{\mathbb C}}
\newcommand{\R}{{\mathbb R}}
\newcommand{\Z}{{\mathbb Z}}
\newcommand{\om}{{\omega}}
\newcommand{\ka}{{\kappa}}
\newcommand{\al}{{\alpha}}
\newcommand{\si}{{\sigma}}
\newcommand{\io}{{\iota}}
\newcommand{\be}{{\beta}}
\newcommand{\Symp}{{\rm Symp}}
\newcommand{\Isom}{{\rm Isom}}
\newcommand{\eps}{{\epsilon}}
\newcommand{\ssminus}{{{\smallsetminus}}}
\newtheorem{theorem}{Theorem}[subsection]
\newtheorem{thm}[theorem]{Theorem}
\newtheorem{prop}[theorem]{Proposition}
\newtheorem{corollary}[theorem]{Corollary}
\newtheorem{cor}[theorem]{Corollary}
\newtheorem{lemma}[theorem]{Lemma}
\newtheorem{proposition}[theorem]{Proposition}
\newtheorem{definition}[theorem]{Definition}
\newtheorem{defn}[theorem]{Definition}
\newtheorem{rmk}[theorem]{Remark}
\newtheorem{example}[theorem]{Example}
\newtheorem{quest}[theorem]{Question}
\numberwithin{figure}{section}
\numberwithin{equation}{section}
\begin{document}

\title{Polytopes with mass linear functions II: the $4$-dimensional case}
\author{Dusa McDuff}\thanks{First author partially supported by NSF grant DMS 0905191, and second by NSF grant DMS 0707122.}
\address{Mathematics Department,  Barnard College, Columbia University
NY, USA}
\email{dmcduff@barnard.edu}
\author{Susan Tolman}
\address{Department of Mathematics,
 University of Illinois at Urbana--Champaign, 
IL, USA}
\email{tolman@math.uiuc.edu}
\keywords{simple polytope, Delzant polytope, center of gravity, toric symplectic manifold, mass linear function, Hamiltonian group, symplectomorphism group}
\subjclass[2000]{14M25,52B20,53D99,57S05}
\date{May 18, 2011}

\begin{abstract} 
This paper continues the analysis 
begun in {\it Polytopes with mass linear functions, Part I}
 of the structure of smooth moment polytopes 
$\De \subset \ft^*$
that support a mass linear function
$H \in \ft$.
As explained there, besides its purely  combinatorial interest, 
this question is relevant to the study of the homomorphism
$\pi_1(T^n)\to \pi_1\bigl(\Symp(M_\De, \om_\De)\bigr)$ from the 
fundamental group of the torus $T^n$ to that of the  group of  
symplectomorphisms
of the
$2n$-dimensional symplectic toric manifold $(M_\De, \om_\De)$
associated to $\De$.
 
 In Part I, we made a general investigation of this question and classified 
 all mass linear pairs $(\De, H)$ in dimensions up to three.  
The main result of the current paper
is a classification of all $4$-dimensional examples.
Along the way, we investigate the properties 
of general constructions such as fibrations, blowups and expansions
(or wedges), 
describing their
effect
both on moment polytopes and
on mass linear functions.

We end by discussing the relation of mass linearity to Shelukhin's notion of full mass linearity.  The two concepts agree in dimensions up to and including $4$.  However full mass linearity may 
be
the more natural concept when considering the question of which blow ups preserve mass linearity. 

\end{abstract}

\maketitle

\tableofcontents

\section{Introduction}

\subsection{Statement of main results}\labell{ss:intro}

This paper continues the analysis begun in 
\cite{MTI} of the structure of smooth 
polytopes $\De$ that support 
an essential 
mass linear function $H$.
As we show 
there, besides its purely  combinatorial interest, 
this question is relevant to the
 study 
 of the homomorphism
$\pi_1(T)\to  
\pi_1\bigl(\Symp(M_\De, \om_\De)\bigr)$
from the fundamental group of 
the torus 
$T$ to that of the  group of 
symplectomorphisms
of the 
symplectic toric manifold 
 $(M_\De,  \om_\De, T)$
associated to $\Delta$.
The paper \cite{Mct} describes other applications, 
such as
understanding when a product manifold of the form $(M\times S^2, \om +\si)$   has
more than one toric structure.

In \cite{MTI} (from now on called Part I), 
we made a 
general investigation of
the properties of mass linear functions
and classified 
all  
essential
mass linear pairs $(\De, H)$ in dimensions up to three.  
The main result of the current paper
is a classification of all $4$-dimensional examples.  We also develop new
 tools for understanding the topological properties 
of symplectic toric manifolds.

Before stating our results we shall remind the reader of some of the basic concepts 
introduced in Part I; more details can be found there.

Let $\ft$ be a real vector space
with integer lattice 
$\lat \subset \ft$.
Let $\ft^*$ denote the dual space and 
let
$\langle \, \cdot \, , \, \cdot \, \rangle \colon \ft \times \ft^* \to \R$ 
denote
the natural pairing.
A {\bf (convex) polytope} $\Delta \subset \ft^*$ is the bounded intersection of a finite
set of affine half-spaces.
In this paper, we shall always write $\De$ in the form
\begin{equation}\labell{eq:De}
\De =   
\bigcap_{i = 1}^N \bigl\{ x \in \ft^* \mid \langle \eta_i, x\rangle \leq \kappa_i  \bigr\},
\end{equation}
where the {\bf outward conormals} $\eta_i$ lie in $\ft$ and
the {\bf support numbers} $\kappa_i$ lie in $\R$ for all $1 \leq i \leq N$.   
We 
always assume that
$\Delta$ has  a nonempty interior,
and that  the affine
span of each {\bf facet}  $F_i := \Delta \cap 
 \{ x \in \ft^* \mid \langle \eta_i, x\rangle = \kappa_i  \}$ is a hyperplane.
Further, we assume that $\De$ is {\bf smooth}, 
that is,
for each vertex $v$ of $\De$
the primitive
outward conormals 
to the facets which meet at $v$ 
form a basis for 
the integral lattice
$\lat$ of $\ft$.
  In particular, a smooth polytope is 
{\bf simple},
that is, $\dim \ft$ 
facets meet at every vertex.

Given a 
polytope $\De \subset \ft^*$,  
let $c_\De$ denote its center of mass, 
considered as 
a
function of
the support constants $\ka$.
An element $H\in \ft$ is said to be {\bf mass linear} on $\De= \De(\ka)$
 if
the function 
$$
 \Hat H \colon  \ka'\mapsto \langle H,c_\De(\ka')\rangle
$$
is linear for all $\ka'$ near $\ka$;   
cf.\  [I, Definition 1.2 and Lemma 2.3].
In this case there are 
real
numbers $\be_i$, called the {\bf coefficients} of $H$ such that
$\langle H,c_\De(\ka')\rangle = \sum \be_i\ka_i'$ 
for all $\ka'$ near $\ka$.

 To explain 
the important distinction between 
{\em essential} and {\em inessential}
mass linear  functions,
we introduce an equivalence relation on the facets.  Following 
[I, Definition~1.12]
(and [I, Corollary~3.5 and Remark~1.6]),
we say that
two distinct facets $F_i$ and $F_j$ are {\bf equivalent},
denoted $F_i \sim F_j$,
exactly if there is an 
integral
affine transformation
of $\De(\ka)$ that 
takes $F_i$ to $F_j$ and is {\bf robust}, 
in the sense that it persists 
when $\ka$ is perturbed.
Let $\Ii$ denote
the set of equivalence
classes of facets.  
We say that $H \in \ft$ is {\bf inessential} iff
\begin{equation*}
H = \sum \be_i \eta_i, \quad \mbox{where} \quad
\be_i\in \R
\ \forall \, i \quad \mbox{and}
\quad \sum_{i \in I} \beta_i = 0
\quad \forall \, I \in \Ii. 
\end{equation*} 
Otherwise, we say that $H$ is {\bf essential}.
By  Proposition~\ref{prop:inessential}, every inessential function is mass linear. 

As an example  consider the standard $k$-simplex $\De_k$, 
that is,
$$
\De_k = \Bigl\{x\in \R^k\,\Big|\, 0\le x_i  \ \forall \, i \mbox{ and } \sum_{i+1}^k x_i\le 1\Bigr\}.
$$
Any pair of facets of 
$\De_k$ is
 equivalent, and so every
$H \in \ft$ is inessential.

To understand 
the implications of these definitions,
consider the symplectic toric manifold $(M_\Delta,\omega_\Delta,\Phi_\Delta)$ with
moment image $\Phi_\Delta(M_\Delta) = \Delta$.
Let 
$\Symp(M_\Delta,\omega_\Delta)$ denote 
the group of symplectomorphisms
of $(M_\Delta,\omega_\Delta)$, and let 
$\Isom(M_\Delta,\omega_\Delta)$ denote the group of
K\"ahler isometries, that is, the subgroup of symplectomorphisms that also preserve the canonical complex structure on $M$.
As we showed in [I, \S 1.2], 
if
the circle $\Lambda_H$ generated by $H \in \lat$ has  finite order in 
$\pi_1\bigl(\Symp(M_\Delta,\omega_\De)\bigr)$,
then $H$ is mass linear.\footnote
{
In fact, McDuff shows in \cite[\S4]{Mct} that $H$ is mass linear precisely if the rational cohomology ring of the toric bundle $M_\De\times_{\La_H} S^3\to S^2$ is isomorphic to the product
ring $H^*(M_\De)\otimes H^*(S^2)$.}
Moreover, $\Lambda_H$ has finite order in 
$\pi_1\bigl(\Isom(M_\De,\omega_\De)\bigr)$ exactly if
$H$ is inessential. Finally,  
if there are no essential mass linear functions on $\Delta$, then the natural map
$\pi_1\bigl(\Isom(M, \om)\bigr)\to\pi_1\bigl(\Symp(M, \om)\bigr)$ is an injection.
For more details, see [I, \S 1.2].

Most polytopes do not admit nonzero mass linear functions. We showed in Part I that 
in dimension two the only ones that do are 
the triangle, the parallelogram, and trapezoids,
corresponding respectively to the projective plane $\C P^2$, the product $S^2\times S^2$ and the different  Hirzebruch  surfaces ($S^2$-bundles over $S^2$).
Moreover, in each  
case all mass linear functions are inessential.

In dimension three, although there are more examples of polytopes with
mass linear functions 
(see Proposition \ref{prop:3d}), there are very few   
with essential mass linear functions.
To describe these,
we need the notion of ``bundle'', which is given in  
Definition \ref{def:bund} below.
One key fact
about bundles
 is  that if 
a smooth polytope $\Delta$
is a bundle over $\Hat \De$ with fiber $\Tilde\De$,
then the corresponding toric manifold 
$M_\De$ is a  bundle  over $M_{\Hat\De}$ with fiber $ M_{\Tilde\De}$;
see  [I, Remark 5.2].
In Part I, we showed that every smooth $3$-dimensional polytope which admits an essential mass linear function 
is a $\Delta_2$ bundle over $\Delta_1$. Since the moment image of $\CP^n$ is $\Delta_n$, this
implies that, $M_\Delta$ 
is
 a $\CP^2$ bundle over $\CP^1$.

The
analogous
 statement in dimension $4$ is more complicated
because there is a much greater variety of examples.
Correspondingly, we need to introduce new terminology.
Blowups are defined in Definition~\ref{def:blowup}.
As the name suggests, 
if $\Delta'$ is the blowup of 
a polytope
$\Delta$,
then 
the corresponding toric manifold $M_{\Delta'}$ is the blowup of 
$M_\Delta$; see 
Remark~\ref{rmk:geoblow} (i).
Double expansions are defined in 
Definition~\ref{def:dexpand}.
By [I, Remark 5.4],
a polytope $\De$ that is an expansion of $\Tilde\De$ corresponds to  a toric manifold that is a nonsingular pencil with fibers 
$M_{\Tilde\De}$.  
Thus if $\De$ is a double expansion,
the corresponding toric manifold
is a \lq\lq double pencil".

Additionally,
fix $H \in \ft$ and a 
polytope $\Delta \subset \ft^*$.
We say that a facet is {\bf symmetric} 
 (or  {\bf $\mathbf H$-symmetric}),
if $\langle H,c_\De(\ka)\rangle$ does not change when 
that facet is moved. Otherwise, we say that the facet is {\bf asymmetric} 
(or {\bf $\mathbf H$-asymmetric}). 
Further, we say that a facet is {\bf pervasive} if it meets all other facets.

Finally, a {\bf face} of $\Delta$ is a (nonempty) intersection of
some collection of facets of $\De$; 
a {\bf $\mathbf k$-face} 
is a face of dimension $k$.    
We
denote the faces of $\De$ by
$F_I: = \cap_{i\in I} F_i$, where $I\subset \{1,\dots,N\}$.
Then  we say that 
the face
$F_I$ is {\bf symmetric}
if the facet 
$F_i$ is symmetric for each $i\in I$.

\begin{thm}\labell{thm:4d}
Let $H \in \ft$ be an essential mass linear function
on a smooth $4$-dimensional polytope
$\Delta \subset \ft^*$. 
There exists a smooth 
$4$-dimensional polytope $\ov\Delta \subset \ft^*$ so that either:
\begin{itemize}
\item [(a)] $H$ is an essential mass linear function on $\ov\Delta$
and at least 
one\footnote{For most polytopes $\ov\De$, only one of these statements is 
true.  The one exception is $\Delta_2 \times \Delta_1$ bundles over $\Delta_1$,
which belong in case (a2) and case (a3).}
of the 
 following statements is true:
\begin{enumerate}
\item[(a1)] $\ov\Delta$ is a $\Delta_3$ bundle over $\Delta_1$,
\item[(a2)] 
$\ov\Delta$  is a 
$\Delta_1$ bundle over a
polytope  which is 
a $\Delta_2$ bundle over $\Delta_1$, or
\item[(a3)]  $\ov\Delta$ is a $\Delta_2$ bundle over a 
polygon $\Hat\De$; or
\end{enumerate}
\item [(b)] 
$H$ is inessential on $\ov\Delta$, 
the polytope
$\ov\Delta$ is the double expansion of a 
polygon 
$\Tilde{\De}$, 
and the asymmetric facets are the four base-type facets.
\end{itemize}
Moreover,
$\Delta: = \ov \De(m)$ can be obtained from $\ov\Delta: = \ov \De(0)$ by a series of blowups. For each $k = 1, \dots, m$, the polytope
$\ov \De(k)$ is obtained from $\ov \De(k-1)$ by blowing up either along a 
symmetric $2$-face  or along
an edge of the form
$\ov F_{ij}\cap \ov G: = \ov{F}_i \cap \ov{F}_j \cap \ov{G}$, where $\ov{G}$ is a symmetric facet
of $\ov\De(k-1)$,
$\ov F_{ij}\cap \ov G$ intersects every asymmetric facet,
and $\gamma_i + \gamma_j = 0$.
Here $\gamma_\ell$ is
the coefficient of the support number of the facet $\ov{F}_\ell$
in the linear function 
$\langle H, c_{\ov{\Delta}} \rangle.$
\end{thm}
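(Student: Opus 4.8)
The plan is to argue by descending induction on the number $N$ of facets of $\Delta$, using the coefficient relations from Part~I together with the blowup and expansion machinery developed earlier in this paper. One first records that mass linearity forces $\sum_i \beta_i \eta_i = H$ (a consequence of the translation-invariance of the center of mass), that a facet $F_i$ is symmetric exactly when $\beta_i = 0$, and that $H$ is essential precisely when the coefficients fail to sum to zero on some equivalence class $I \in \Ii$. The first substantive step is to pin down the local combinatorics of the asymmetric facets. Since the restriction of $H$ to each facet of $\Delta$ is again mass linear on a smooth $3$-polytope, the three-dimensional classification (Proposition~\ref{prop:3d}) constrains how the asymmetric facets can meet, and the smoothness condition at each vertex sharply limits the admissible conormal configurations.

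The heart of the argument is a blowdown step. Whenever $\Delta$ is not already one of the listed base forms, I would locate either a symmetric $2$-face, or an edge $F_{ij}\cap G = F_i \cap F_j \cap G$ with $G$ symmetric, $\beta_i + \beta_j = 0$, and $F_{ij}\cap G$ meeting every asymmetric facet, and show that it can be blown down. This has two ingredients: verifying that the chosen face is combinatorially of exceptional type, so that the blowdown produces a smooth polytope with one fewer facet; and checking, by inverting the blowup lemmas, that mass linearity is preserved. The conditions on the $\beta$'s are exactly what make the remaining coefficients continue to satisfy $\sum \beta_i \eta_i = H$ after the exceptional facet (with conormal $\eta_i + \eta_j + \eta_G$) is removed. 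Since each blowdown lowers $N$, the induction terminates at a polytope $\ov\Delta$ admitting no further admissible blowdown.

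It then remains to classify these minimal polytopes. Once all removable symmetric structure has been stripped away, the asymmetric facets must organize $\ov\Delta$ globally. Combining the constraints from the first step with the definitions of bundles (Definition~\ref{def:bund}) and double expansions (Definition~\ref{def:dexpand}), I would show that $\ov\Delta$ is forced to be either a simplex bundle over a lower-dimensional base (itself possibly a bundle) --- giving the three essential cases (a1)--(a3) according to the fiber dimension and the nesting of the fibrations --- or a double expansion of a polygon whose four base-type facets carry the asymmetry, which is case~(b). A point worth flagging is that blowing down can merge facet equivalence classes: a symmetry broken by a blowup is restored by the corresponding blowdown, so $H$ may become inessential on $\ov\Delta$ even though it was essential on $\Delta$. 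This is exactly what happens in case~(b), and is why that case is stated separately.

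I expect the main obstacle to be this final classification, rather than the blowdown step. The difficulty is that several distinct global structures can produce the same local asymmetric configurations, so separating (a1), (a2), (a3), and (b) demands a careful global analysis of which facets are pervasive and of how the symmetric facets fiber over the base, all while maintaining the Delzant condition at every vertex. The footnoted overlap between (a2) and (a3) for $\Delta_2 \times \Delta_1$ bundles over $\Delta_1$ is a direct symptom of this subtlety.
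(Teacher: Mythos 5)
Your outline captures the right general shape---reduce by blowdowns to a model polytope, then recognize the model as a bundle or double expansion---and this is indeed how the paper proceeds \emph{within} each of its cases. But as written the proposal has two genuine gaps. First, the assertion that ``whenever $\Delta$ is not already one of the listed base forms'' one can locate an admissible blowdown is precisely the hard technical content, and you give no mechanism for it. (Note also that one blows down a \emph{facet}, namely the exceptional divisor; the existence of a symmetric $2$-face or of an edge $F_{ij}\cap G$ with $\gamma_i+\gamma_j=0$ does not by itself produce anything blowdownable.) In the paper this existence statement is extracted case by case: from the polygon blowdown criterion (Lemma~\ref{le:2blowdown}) applied to the $2$-faces $F_{ij}$, from the structure of symmetric facets as $\Delta_1$ or $\Delta_2$ bundles (Lemma~\ref{le:blowblow}), from the graph argument of Lemma~\ref{etaindep}, and from the fan/cone argument in Proposition~\ref{prop:4indep}. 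Second, and relatedly, the classification of the terminal polytopes---which you correctly flag as the main obstacle---cannot even be set up without the organizing scaffolding that your proposal omits: the dichotomy that every asymmetric facet is pervasive or flat (Propositions~\ref{prop:asym} and \ref{prop:flat}, which immediately dispose of the nonpervasive case and produce the bundle structures of (a1)--(a3)); the case division by the \emph{number} of asymmetric facets ($2$ forces inessential by Proposition~\ref{prop:2asym}, $3$ and $4$ are the substantive cases, $\ge 5$ is handled by the combinatorial result that all-asymmetric polytopes are products of simplices); and, within the $3$- and $4$-facet cases, the further split according to whether the conormals of the asymmetric facets are linearly dependent or independent, which is exactly what separates (a1)/(a2) from (a3) and (b). Your plan of ``induct on $N$, then classify the minimal objects at the end'' inverts the paper's order: the global structure type is determined \emph{first} from the asymmetric-facet configuration, and the blowdowns are then found inside that structure.

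A smaller conceptual point: you suggest the conditions $\gamma_i+\gamma_j=0$ and ``$F_{ij}\cap G$ meets every asymmetric facet'' are needed to make the blowdown preserve mass linearity. In fact mass linearity always descends under blowdown with unchanged coefficients (Lemma~\ref{le:blowcent}); these conditions are \emph{derived} facts about the configuration (e.g.\ $\gamma_i+\gamma_j=0$ follows from applying the polygon classification, Proposition~\ref{prop:2dim}, to a symmetric $2$-face in which $F_i$ and $F_j$ are opposite in the rectangle order), and their role is to certify that the inverse blowup is of one of the two types allowed in the theorem statement. Finally, be careful with ``terminate at a minimal polytope'': the $\ov\Delta$ of the theorem need not be minimal, and (per Remark~\ref{rmk:intmin}) insisting on minimality forces one to admit blowups not listed in the theorem, so the stopping condition must be ``no admissible blowdown of the two stated types,'' not ``no blowdown at all.''
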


Combining this with the results of [I,\S1.2],  
we obtain the following corollary.

\begin{cor}  Let $(M,\om)$ be an  $8$-dimensional symplectic toric manifold.  
The natural map 
$$
\pi_1\bigl(\Isom(M,\om)\bigr)\to\pi_1\bigl(\Symp(M,\om)\bigr)
$$
is injective unless $M$ is a very special blowup of either  a 
double pencil or a bundle. Moreover the bundle either  has 
 $\C P^2$ or $\C P^3$ as its fiber or 
has a $\C P^2$ bundle over $\C P^1$ as its base. 
\end{cor}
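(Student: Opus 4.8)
The plan is to combine the topological criterion recalled in [I,\S1.2] with the combinatorial classification of Theorem~\ref{thm:4d}, reading off the geometric meaning of each polytope operation. The criterion says that if the $4$-dimensional moment polytope $\De$ of $M=M_\De$ supports no essential mass linear function, then $\pi_1\bigl(\Isom(M,\om)\bigr)\to\pi_1\bigl(\Symp(M,\om)\bigr)$ is injective. Hence it suffices to treat the case where $\De$ \emph{does} admit an essential mass linear function $H$ and to show that then $M$ has one of the stated forms; the corollary is simply the contrapositive packaging of this implication. In particular I never need a sufficient condition for non-injectivity, only the necessary one provided by Part~I.

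Given such an $H$, I would apply Theorem~\ref{thm:4d} to the pair $(\De,H)$. This yields a polytope $\ov\De$ that is either the double expansion of a polygon (case (b)) or one of the three bundles (a1)--(a3), together with a presentation of $\De$ as a sequence of blowups of $\ov\De$ along the restricted loci described there, namely symmetric $2$-faces or the distinguished edges $\ov F_{ij}\cap\ov G$ satisfying the stated conditions. The phrase ``very special blowup'' in the corollary is precisely shorthand for this admissible list of blowup centers, so at this stage the combinatorial content of the corollary is already in hand.

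Next I would translate each feature into its geometric counterpart using the dictionary already assembled in the paper. By Remark~\ref{rmk:geoblow}(i) each polytope blowup corresponds to a blowup of the associated toric manifold, so $M=M_\De$ is a series of blowups of $M_{\ov\De}$. By [I, Remark~5.4] a double expansion corresponds to a double pencil, which disposes of case (b). By [I, Remark~5.2] a bundle of polytopes gives a bundle of toric manifolds, and since $M_{\Delta_n}=\CP^n$ I can read off fibers and bases directly: case (a1) gives a $\CP^3$ bundle over $\CP^1$, so the fiber is $\CP^3$; case (a3) gives a $\CP^2$ bundle over the toric surface $M_{\Hat\De}$, so the fiber is $\CP^2$; and case (a2), being a $\Delta_1$ bundle over a $\Delta_2$ bundle over $\Delta_1$, gives a $\CP^1$ bundle whose base $M_{\Hat\De}$ is itself a $\CP^2$ bundle over $\CP^1$. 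These three alternatives are exactly the three possibilities listed for the bundle.

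I expect the only real subtlety to be bookkeeping rather than any new idea. One must identify the iterated bundle of case (a2) correctly, observing that it is the \emph{base} and not the fiber that carries the $\CP^2$-over-$\CP^1$ structure, which is why that case contributes the third clause of the corollary. One should also note, via the footnote to Theorem~\ref{thm:4d}, that the bundle presentation of $\ov\De$ need not be unique (the $\Delta_2\times\Delta_1$ bundles over $\Delta_1$ lie in both (a2) and (a3)), so the ``either \dots\ or'' in the corollary is genuinely inclusive rather than a partition. Beyond invoking the Part~I criterion and the established correspondences for blowups, bundles, and expansions, no estimate or construction is required: the entire content is in packaging Theorem~\ref{thm:4d} geometrically.
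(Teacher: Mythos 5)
Your proposal is correct and follows exactly the paper's own (one-line) argument: the corollary is obtained by combining Theorem~\ref{thm:4d} with the injectivity criterion from [I,\S1.2] and then translating each case of the polytope classification into its toric-manifold counterpart via the established dictionary for blowups, bundles, and expansions. Your case-by-case geometric reading (including the observation that in case (a2) it is the base, not the fiber, that is the $\C P^2$ bundle over $\C P^1$) matches what the paper intends.
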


The following results elaborate 
the statement of Theorem~\ref{thm:4d}.
\begin{itemize}
\item
We give a complete description of
all the essential mass linear functions $H$ on polytopes $\ov \Delta$
satisfying  
conditions (a1), (a2), and (a3)
in
Corollary~\ref{cor:Mabc}, Proposition~\ref{prop:doublebundle},  
and Proposition~\ref{prop:polybundle}, 
respectively. 
The inessential functions 
on a polytope
$\ov\De$ 
satisfying condition (b) 
are described in Lemma \ref{le:dexpan0}.

\item 
By Lemma~\ref{le:symblow} 
and Proposition~\ref{prop:blow4},
a  mass linear 
function on a polytope 
will still be mass linear
if the polytope is blown up by the types of blowups described above; 
moreover,  an essential function will still be essential on the blowup.  In fact, 
the linear  function
$\langle H, c_{\ov{\Delta}(k)} \rangle$ is unchanged under blowup:
the exceptional divisor of  each blowup is symmetric and so has zero coefficient,
while the coefficients of the other facets remain the same.

\item  
We explain in Proposition~\ref{prop:essblow2} exactly which blowups  on the polytope
$\ov \Delta$ described in (b) convert $H$ from 
an inessential to an essential mass linear function. 
\end{itemize}

Combining the above results,  we can draw a number of conclusions.

\begin{enumerate}
\item  In each case of 
part (a) of
Theorem \ref{thm:4d}
some, but not all, 
polytopes $\ov\De$ of the given form
support essential mass linear functions.
In case (a1)  and (a2) 
one can take 
the bundle
$\ov\De$ to be generic.
However, in case (a3),
while the base $\Hat\De$ 
can be 
any polygon
except a triangle,
the bundle itself must satisfy some very special conditions 
that have a topological interpretation; 
see Proposition~\ref{prop:geo3dep} 
Similarly, 
in case (b)  the polygon
$\Tilde \De$
can be anything except a triangle.

\item 
For any essential mass linear function 
of type (a),
the polytope $\ov \Delta$ has
between $3$ and $7$ asymmetric
facets; see Remark~\ref{rmk:asym}.  
In all cases $\ov\De$
can have $3$ asymmetric facets. 
However,
it is only possible to have 
$4$ or $6$
asymmetric facets  in the case 
(a1), and the  
only case with $7$ asymmetric facets is 
the product of $\Delta_1$ with a $\Delta_2$ bundle over $\Delta_1$.
Further in case (b) $\ov \De$ always has 
$4$ asymmetric facets.
Since,  
by Lemma~\ref{le:blowcent}, 
blowing down the facets of a polytope with
a mass linear function 
does not change the number of asymmetric facets, 
the original polytope $\De$ has
the same number of asymmetric facets.

\item  If the   polytope 
$\ov\De$ 
has more than four asymmetric facets 
then in all cases
$\ov \De =\De$,
that is, no blowups are 
possible.
Therefore,  every polytope that supports an essential mass linear
function with more than $6$ asymmetric facets is the product of $\Delta_1$
with a $\Delta_2$ bundle over $\Delta_1$;  the only examples with $6$ asymmetric
facets are $\Delta_3$ bundles over $\Delta_1$.

If it has three 
or four
asymmetric facets then
the situation is more complicated.
However, by Proposition \ref{prop:blow4},
 the edge blowups 
described in Theorem~\ref{thm:4d} 
are only possible when there are 
four asymmetric facets.
For more details see
 Remark~\ref{rmk:blowable}.

\item In all cases $\sum\ga_i = 0$; 
see Corollary \ref{cor:full0} and Remark~\ref{rmk:sum}.

\item
The mass linear functions on 
a given
polytope $\De$ form a vector space $V_{\ML}$, 
with a subspace $V_{\iness}$ consisting of inessential functions.  
If $\Delta$ is $4$-dimensional,
then
the quotient $V_{\ML}/V_{\iness}$ has dimension at most $1$ unless $\Delta$ is a
$\Delta_3$ bundle over $\Delta_1$, in which case it has dimension at most $2$.
\end{enumerate}

\begin{rmk}\labell{rmk:intmin} \rm
Suppose that $H$ is a mass linear function on 
$\De$ and that $\ov \De$ is obtained from $\De$ by any sequence of blowdowns.   Then Lemma \ref{le:blowcent}
shows that  $H$ is mass linear on $\ov \De$.  Therefore, when classifying mass 
linear functions one may 
assume that the underlying  polytope is {\bf minimal}, 
i.e., that none of its facets can be blown down.

The results in \S\ref{ss:min}  show 
that Theorem~\ref{thm:4d} still holds with the additional requirement that $\ov \Delta$ is minimal
as long as we omit the last two 
sentences, which restrict the kinds of blowups allowed.
For further details, see Remark~\ref{rmk:blowess} (i).

On the other hand, allowing
arbitrary blowups does not allow us to reduce the list of examples;
the results 
of \S\ref{ss:min} also 
show that 
there  exist minimal polytopes of each type.
In fact, for any  
sufficiently large $N$
there exists  a minimal polytope $\ov \De$ with $N$ facets
which satisfies the conditions of case (a3) or (b) of Theorem~\ref{thm:4d}; see 
Propositions \ref{prop:3depblowN} and \ref{prop:4indepblowN}.
In contrast, the polytopes $\ov\Delta$ described in 
(a1) and (a2)  have  $6$ and $7$ facets, respectively. 
\end{rmk}

\begin{rmk}\labell{rmk:smooth} \rm
(i)  
Many of our  constructions
and intermediate results are valid for   
all simple polytopes.
We wrote much of Part I in this generality, although 
our main classification theorem was only for smooth polytopes.  
In this paper  we assume  throughout  that $\De$ is smooth.\MS

\NI (ii) 
We have not
explicitly
written down a list of all the
$4$-dimensional smooth polytopes that admit nonzero mass linear functions
because the answer is too messy to be very enlightening. 
However, this information is easy to extract from our paper.
On the one hand, Propositions~\ref{prop:inessential} and \ref{prop:Iexpan},
allow us to classify  polytopes which admit inessential mass
linear functions with at most three asymmetric facets.
On the other hand, Propositions~\ref{prop:3dep} and \ref{prop:3indep}
classify all $4$-dimensional smooth polytopes which admit essential mass 
linear
functions with exactly three asymmetric facets, and 
Proposition~\ref{prop:4dep}, \ref{prop:4indep},
\ref{prop:5plus}, and \ref{prop:nonperv}  
classify all $4$-dimensional smooth polytopes which admit mass 
linear
functions with at least four asymmetric facets. 
\end{rmk}

We end this section with an 
example, which
demonstrates how blowups can transform an inessential function into an essential mass linear
function. Hence, it
is an example of case (b) of Theorem~\ref{thm:4d}, and not case (a1).

\begin{example}\labell{ex:7}\rm 
Let $\ov{\De} \subset (\R^4)^*$ 
be the $\De_3$ bundle over $\De_1$ with conormals
\begin{gather*}
\eta_1=(-1,0,0,0),\;\eta_2=(0,-1,0,0),\;\eta_3=(0,0,-1,0),\;
\eta_4=(1,1,1,0),\\
\al_1=(0,0,0,-1),\; \mbox{and} \;\al_2=(-1,-1,0,1).
\end{gather*}
The polytope $\ov\De$
is also 
the double expansion of the trapezoid
with conormals
$$
(-1,0), \; (0,-1), \; (1,1),\; \mbox{and}  \;(-1,-1)
$$
along its two parallel facets.

Denote by $\ov F_i$ and $\ov G_j$  the facets with conormals $\eta_i$ and $\al_j$, respectively.   
By Lemma~\ref{le:equiv}, two facets are equivalent exactly if the conormals of all the
other facets lie in a $3$-dimensional subspace.
Hence, $\ov F_1 \sim \ov F_2 \not\sim \ov F_3 \sim \ov F_4$, and so 
the function
$$
H: = \eta_1-\eta_2-\eta_3+\eta_4
$$
is inessential
on $\ov\De$.  
By Proposition~\ref{prop:inessential}, this implies that $H$ is mass linear on $\ov \De$; in fact,
\begin{equation}\labell{excoeff}
\langle H, c_{\De}(\ka) \rangle = \ka_1-\ka_2-\ka_3+\ka_4.
\end{equation}

Now consider the blowup $\De$ of $\ov{\De}$ along the edge 
$\ov F_{24}\cap \ov G_1$. 
This has a new 
facet $G_0$ (the exceptional divisor) with conormal 
$$
\al_0 = \eta_2+\eta_4+\al_1 = (1,0,1,-1).
$$
None of the  facets $\ov  F_i$ are equivalent in $\De$.  However,  
Proposition \ref{prop:blow4} and \eqref{excoeff} together imply
that $H$ is still mass linear on $\De$.
Therefore,
$H$ is essential 
on $\De$.

The corresponding toric manifold 
$M_{\ov \De}$ is a $\C P^3$
bundle over $\C P^1$;
in fact, it is
the projectivization of the vector bundle
$\Oo(-1)\oplus \Oo(-1)\oplus \Oo\oplus\Oo \to \C P^1$.
The toric manifold  $M_{\De}$ is the blowup of $M_{\ov \De}$ along a line in one of the fibers.
\end{example}

\subsection{Proof of Theorem~\ref{thm:4d}}\labell{ss:out}

This section explains the proof of Theorem \ref{thm:4d}.
Since we use results from part I without comment, readers might find it useful
look over  the beginning of \S\ref{ss:review}
where we summarize its main results.

We divide the proof of  Theorem \ref{thm:4d} into four steps. 
\MS

\NI {\bf Step 1:}  {\it Theorem \ref{thm:4d} holds 
if $\Delta$ has a nonpervasive asymmetric facet.}\MS

\NI {\it Proof.}
Let $H \in \ft$ be an essential mass linear function on a 
$4$-dimensional polytope $\De \subset \ft^*$.  
If one of the asymmetric facets $F$ is not pervasive, 
then  Proposition~\ref{prop:nonperv}
implies that
$\Delta$ is a bundle
of one of the three types mentioned in part (a).

The proof of Propositition~\ref{prop:nonperv} uses the fact
that, by Propositions~\ref{prop:flat} and \ref{prop:bund}, 
it is enough to prove that the claim holds for bundles 
over $\De_1$ with symmetric base facets.
We solve this case using the classification  of mass linear functions on 
$3$-dimensional polytopes given in Proposition 
\ref{prop:3d}.

\MS
\NI {\bf Step 2:} {\it Theorem \ref{thm:4d} holds if  
$\Delta$ has 
more than four asymmetric 
facets.} \MS

\NI {\it Proof.}
By Step 1, we may assume that all the asymmetric facets are pervasive.
Hence, Proposition~\ref{prop:5plus} implies that $\De$ is either
$\De_4$ or a $\De_2$ bundle over $\De_2$.
But $\Delta_4$ has no essential mass linear functions.
Therefore we are in case (a3).  
(In fact, this case 
does not occur for essential $H$; 
see Corollary~\ref{cor:22bundle}.)

The proof of Proposition~\ref{prop:5plus} 
relies on the fact, proved in [I, Corollary A.8],
that if all
the
facets of $\De$ are asymmetric then  
$\De$  is combinatorially equivalent to a product of simplices. 
Since all facets are pervasive, it must 
therefore
be combinatorially 
equivalent to $\Delta_4$ or $\Delta_2 \times \Delta_2$.
Moreover, because $\De$ is smooth, in the latter case
Lemma~\ref{prodsimp} implies that
it is a $\De_2$ bundle over $\De_2$.  
On the other hand, if
$\De$ has at least one symmetric facet $G$, then 
Proposition \ref{prop:symcent} implies that the 
restriction of $H$ to $G$ is mass linear. 
By the $3$-dimensional classification, this  implies that there are 
only a few possibilities for $G$.  
The proof is completed by analyzing these.

\MS

\NI {\bf Step 3:}  {\it Theorem \ref{thm:4d} holds if  
$\Delta$ has  four asymmetric 
facets.} \MS

\NI{\it Proof.}
By Step 1, we may assume that all the asymmetric facets are pervasive.
If their  conormals are linearly dependent, then by 
Proposition \ref{prop:4dep}, 
$\De$ is the blowup of a 
 $\Delta_3$ bundle over $\Delta_1$  by a series
of blowups of the type described in Theorem~\ref{thm:4d}.
If $H$ is essential on $\ov\De$ then we are in case (a1);
if $H$ is inessential on $\ov\De$ then by Proposition~\ref{prop:essblow} we are in the special case
of  (b) 
in which the double 
expansion is along two parallel edges of a quadrilateral $\Tilde\De$
(as in Example \ref{ex:7}). 
On the other hand, 
if their conormals are linearly independent then
we are in case (b) by
Proposition~\ref{prop:4indep}.

In each case, 
the classification is established by 
exploiting the 
 classification of polygons and $3$-dimensional
polytopes with four asymmetric facets to analyze the
set of symmetric facets of $\De$; see for example
Lemmas \ref{le:4comb},  
\ref{le:blowblow},
and
\ref{4comp}. 
\MS

\NI {\bf Step 4:} {\it Completion of the proof.} 
\MS

\NI {\it Proof.} 
It remains to consider the case when
$\Delta$ has fewer than four asymmetric facets.
By Proposition~\ref{prop:2asym}, $\Delta$ must 
have
exactly  three 
asymmetric  facets.  
If
their conormals are linearly dependent, then
Proposition~\ref{prop:3dep} implies that
$\De$ itself is a $\Delta_2$ bundle over a  
polygon and the asymmetric facets are the fiber facets;
hence, we are in case (a3).
If their conormals are linearly independent, then 
  Proposition~\ref{prop:3indep} implies  either that
the triple intersection  $F_{123} $ is empty, we are in case (a2), and the asymmetric facets
 correspond
to the facets of $\De_2$, or
 that
$F_{123}$ is nonempty, we are in case (a1), and three of the four fiber facets 
are asymmetric. 
In all these cases we analyze the structure of $\De$ by exploiting the fact that
the symmetric faces of smallest dimension are $2$-dimensional triangles
with edges $F_1\cap g$, $F_2  \cap g$, and $F_3 \cap g$.

\begin{rmk}\rm
As we explain above, the proof of Theorem~\ref{thm:4d} depends
on the number of asymmetric facets.
In particular, as is shown in Steps 3 and 4, the arguments needed if there are 
four pervasive asymmetric facets are very different from those needed if there 
are three.   
However, these cases are not so distinct as they might seem.
By Corollary~\ref{cor:Mabc}, a generic $\Delta_3$ bundle over $\Delta_1$ 
admits essential mass linear functions with either three or four 
pervasive asymmetric facets.
As the proof above shows,
in case  (a2) and (a3) the polytope necessarily has three  
pervasive asymmetric
facets, and in case (b) it has four. 
\end{rmk}

\subsection{Questions and comments}

 Many results and techniques used in this paper extend to higher dimensions. 
 However, the proof of Theorem~\ref{thm:4d} relies on a
very detailed result  about 
polygons (Lemma~\ref{le:2blowdown}) as well as
the description in Proposition \ref{prop:3d} below of all 
$3$-dimensional polytopes that have nonzero mass linear functions.
Since  higher dimensional 
 polytopes are not yet so well understood,
one cannot expect such a complete classification in higher dimensions. 
Additionally, 
to make further progress with our current methods
we would  first need to answer the following 
question 
since, as explained in  Step 2
of \S\ref{ss:out} above,
 this is the basis of our inductive argument: 
 once one has a symmetric facet  $G$ one can analyze the structure of 
 the mass linear pair $(\De,H)$
 by using information on the lower dimensional pair $(G,H|_G)$.

\begin{quest} \labell{q:1}  If 
$\De$  has a mass linear function such that all facets are asymmetric,
is the equivalence relation on the facets of $\De$ nontrivial?  
\end{quest}

If the answer were yes,  then 
by Lemma \ref{le:ea} 
there would be an inessential $H'$ 
such that $H-H'$  has a symmetric facet.  Further
Proposition \ref{prop:Iexpan}
would imply that $\De$ must be either
 a bundle over a simplex or an expansion.
 In fact, it seems quite likely that in this situation $\De$
 must be 
combinatorially equivalent to a product of simplices, and hence,
by an extension of  [I, Lemma 4.10], an iterated  simplex bundle.
See Corollary~\ref{cor:asym3} for the $4$-dimensional case.

Together with Timorin, we found a purely combinatorial argument 
that showed  Question \ref{q:1} has
a  positive answer in dimensions $\le 4$; see [I, Appendix]. 
As pointed out in \cite[Lemma~2.4]{Mct}, this
argument does not extend to higher dimensions.
However,  Chen  \cite{Chen}
showed that the answer is again positive for  
$5$-dimensional polytopes
with at most $9$ facets.

Question \ref{q:1} seems hard, though very interesting.  An easier task 
would be to analyze properties of particular kinds of polytopes. 
The obvious examples of polytopes  with a mass linear function  for 
which all facets are asymmetric are products of simplices.  As we showed in
  [I, Theorem~1.20] these polytopes have several other interesting
  characterizations: they are the only polytopes for which 
 every $H\in \ft$ is mass linear, and also the only polytopes such
that  for each equivalence class $I$ of facets
 the intersection $F_I: = \cap_{i\in I} F_i$ is empty.  The latter condition implies that $\De$ has no singleton facets, i.e., 
 that $|I|>1$ for all equivalence classes $I$.  
Polytopes with this property are analyzed, though not fully classified, in the proof of \cite[Lemma~3.7]{Mct};
they are a particular kind of expansion.  Here are some questions.

\begin{quest}\labell{q:d-1}   If $\De$ is an expansion 
(but not a bundle over a simplex), can it support a mass linear function 
for which all facets are asymmetric? 
Which polytopes 
 support
 an $(n-1)$-dimensional family of mass linear functions, where $n: =\dim \De$?
\end{quest}

Note that  by  
Propositions 
\ref{prop:lift} 
and 
\ref{DkoverD1} 
every $\De_{1}$-bundle over $\De_{n-1}$
and 
every $\De_{n-1}$ bundle over $\De_1$ 
has an  
$(n-1)$-dimensional family of mass linear functions, while, by 
Corollary~\ref{cor:22bundle},
generic
$\De_2$ bundles over $\De_2$ have only a $2$-dimensional family of  
mass linear functions,  and these
are all inessential.
Moreover,   
Proposition~\ref{DkoverD1}
shows that 
each generic
$\De_{n-1}$ 
bundle
over $\De_1$  
has 
mass linear functions for which every facet is asymmetric, 
while by Proposition 
\ref{prop:asym}
and Corollary~\ref{cor:22bundle}, generic bundles of the other two types must have symmetric fiber facets.
In this paper we do not study   mass linear functions on
$\De_s$ bundles over $\De_{n-s}$ for general $s$.

We show in Corollary \ref{cor:asym2} that 
the only $4$-dimensional polytope 
that has a mass linear function with $8$ asymmetric facets is the product $(\De_1)^4$.
In fact it is easy to see that {\it  a mass linear function on an $n$-dimensional polytope has at most $2n$ asymmetric facets.}   
This holds because by [I, Proposition~A.2] every asymmetric facet
$F$ is powerful, i.e. it is connected to every vertex of 
 $\De\less F$ by an edge.  

\begin{quest}\labell{q:3}  If $H$ is a mass linear function on a smooth $n$-dimensional polytope $\De$ with $2n$ asymmetric facets, must $\De$ be the product $(\De_1)^n$?
\end{quest}

Another interesting question concerns which blowups preserve mass linearity. 
It is  easy to see  that mass linearity is destroyed if one  blows up along a face
$f$ that does not {\em meet} 
all asymmetric facets; see Lemma \ref{le:blowcent}.  
 Lemma \ref{le:inessblow} is another straightforward result 
 showing that if $\De'$ is the blowup of the polytope $\De$ 
along a face $f$ that {\em lies in} 
all asymmetric facets then  every inessential function on $\De$ is inessential on $\De'$.
However, it is not clear whether this condition on $f$  is sufficient for  mass linearity
to be preserved.

\begin{quest}\labell{q:4}  Suppose that $H$ is a mass linear function
on $\De$ with asymmetric facets $F_j, j\in J$. Let $\De'$ be the blowup of $\De$ along the face $f= F_I$ where
$J\subseteq I$.  Is $H$ mass linear on $\De'$?
More generally, describe all blowups  that preserve mass linearity. 
\end{quest}

 In dimension $4$, besides the blowup operations   described in Theorem \ref{thm:4d} two others occur during the classification proof; 
namely, blowing 
up at a vertex or
  edge that meets all asymmetric facets.
Corollary \ref{cor:blowv} shows that the vertex blowup preserves mass linearity in any dimension. 
 However, if one blows up along an edge $e=F_I$, additional 
 conditions are needed.  
 One of these is very natural, namely that $\sum_{i\in I}\ga_i = 0$.
   (By Remark \ref{rmk:sum} this  holds if  $J\subseteq I$ as in Question \ref{q:4}.)   Corollary \ref{cor:blowedge}  shows that in dimension $4$ this extra condition 
 suffices.  Our proof also  suggests  that the natural framework in which to consider the effect of blowing up may not be the set of mass linear functions, but rather the set of fully mass linear functions that we now discuss.

 Our  analysis of the image of $\pi_1(T)$ in $\pi_1\bigr(\Symp(M,\om)\bigr)$ 
 is based on the properties of Weinstein's action homomorphism $\Aa_\om$; 
 see [I, \S5].  In  \cite{Shel}, Shelukhin  defined a series of related
 homomorphisms that allow one to formulate properties, 
in principle stronger than mass linearity, that must be satisfied whenever 
  the loop $\La_H$ generated by an integral $H\in \lat$ 
 contracts in $\Symp(M,\om)$.
These are discussed further in \cite[\S4]{Mct} where we called Shelukhin's conditions {\bf full mass linearity};
see also \S\ref{ss:fullML} below.
 
\begin{quest}\labell{q:ml}  Does every mass linear function 
satisfy Shelukhin's additional conditions?
\end{quest}

The results of this paper imply that the answer is
 yes in dimensions $\le 4$; 
see
Proposition \ref{prop:fullML}.
   However, although the mass linear condition seems to be very strong, it is not clear whether it is equivalent to full mass linearity
  in higher dimensions.
  If not, many of the above questions might be better investigated for
  fully mass linear functions.   
\MS

\NI {\bf Organization of the paper.}\,\,
Section 
\ref{sec:con} begins with a 
review of the results from Part I that we use most often.  It then
describes in detail some general ways to construct 
polytopes, namely bundles, 
expansions, and 
blowing up and down.
In each case, we describe the behavior of mass linear functions under these operations.  We also develop criteria
for recognizing when a 
facet can be blown down  
(Lemma~\ref{le:blowtech}) and for recognizing when a polytope is a double expansion (Lemma \ref{lemma:dexpan}).

In section \ref{sec:ex}, we construct all the (essential) mass
linear functions on the polytopes described in Theorem~\ref{thm:4d}.
\S\ref{ss:calcul}  gives detailed information on the three kinds of 
bundles 
in case (a) of Theorem \ref{thm:4d}, while \S\ref{ss:blowex} 
discusses double expansions, 
showing precisely how blowing up an inessential function on a 
 double expansion can convert it into an essential function.   

Section \ref{sec:4dim} 
finishes the proof of Theorem \ref{thm:4d},
thus showing that our list of examples is complete. 
 \S\ref{ss:3perv} deals with the case when there are three asymmetric facets, 
 and
  \S\ref{ss:4asym} with the case of four asymmetric
   and pervasive 
   facets.  These arguments are quite different, because by Proposition 
  \ref{prop:symcent} the symmetric $2$-faces are triangles in the first case and are rectangles in the second. 
 The final subsection \S\ref{s:more4}
 discusses the case when there are more than four asymmetric facets. 

   The last section contains a variety of further results.
  \S\ref{ss:min} explains  exactly when the polytopes $\ov\De$ in Theorem 
\ref{thm:4d} are minimal. In \S\ref{ss:fullML} we use Theorem
\ref{thm:4d} to show that in dimensions $\le 4$ every mass linear function is fully mass linear.  Finally, in 
\S\ref{ss:blml} we consider the question of which  blowup operations preserve mass linearity. 
\MS
 
\NI {\bf Acknowledgements.}  
Both authors are very grateful 
to MSRI for its hospitality in Spring  2010; the first author also
thanks the Simons Foundation for its support via an Eisenbud Professorship.

\section{Constructions}\labell{sec:con}

After a review of basic results, this section describes in
detail some general ways to construct 
polytopes: 
bundles, expansions, blowups, and blowdowns.  
We also
analyze certain natural mass linear functions
on each type of polytope.

\subsection{Review of basic results}\labell{ss:review}

For the convenience of the reader we 
begin by assembling
the  results from Part I that
will be used most often in this paper;
in particular, we describe all 
smooth 
polytopes of dimension at most three
that have mass linear functions.
In the process, we give the definition of a bundle.
Many of the results quoted below are valid for simple polytopes; 
however we restrict to the smooth case  
for simplicity.
Thus, even if it is not stated explicitly, we assume that 
every polytope
is smooth.

We begin by noting that the definition of mass linearity given in \S\ref{ss:intro} is slightly different 
from, but equivalent to, the definition used in Part I.
Given a 
smooth 
polytope $\Delta \subset \ft^*$,
the {\bf chamber} $\Cc_\De$ of $\Delta := \Delta(\kappa)$ is the connected component that contains $\kappa$ of
the set of all $\kappa' \in \R^N$ such that $\Delta(\ka')$ is 
smooth.
Note that, for every $\kappa' \in \Cc_\Delta$, the polytope $\Delta(\ka')$ is 
{\bf analogous}
 to $\Delta$,
that is, for all $I \subset \{1,\dots,N\}$  
the intersection 
$\bigcap_{i \in I} F_i'$ is empty exactly if the intersection $\bigcap_{i \in I} F_i$ is empty.
In \S\ref{ss:intro}, we gave a local definition of mass linearity, that is,
we only required the function $\Hat H$ which takes $\ka'$ to 
$\langle H,c_\De(\ka')\rangle$  to be linear
for $\ka'$ in some open neighborhood of $\ka$.  In contrast, in Part I we
required the function to be linear on all of $\Cc_\De$.
However, as we show in 
[I, Lemma 2.3], 
these two definitions are equivalent because $\Hat H$ is always a rational function.
A similar remark applies to the definition of equivalent facets; see [I, Corollary 3.5].

One
extremely useful fact -- 
which follows quite easily from the definitions --
is that inessential functions are mass linear.

\begin{proposition}[I, Proposition 1.18] 
\labell{prop:inessential}
Fix $H \in \ft$ and a
polytope 
$\Delta \subset \ft^*$.
Let $\Ii$ denote the set of equivalence
classes of facets 
of $\Delta$.
If  $H$ is inessential, 
write
$$
H =  \sum \be_i \eta_i, \quad \mbox{where} \quad
\be_i\in \R \  \forall \, i \quad \mbox{and}
\quad \sum_{i \in I}  \be_i =  0 \quad \forall \, I \in \Ii. 
$$
Then
$$
\langle H, c_{\De}(\ka) \rangle = \sum \be_i \kappa_i.
$$
\end{proposition}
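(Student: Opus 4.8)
The plan is to rewrite the desired identity in a form that isolates exactly what the equivalence relation controls. Setting $h_i := \langle \eta_i, c_\De(\ka)\rangle - \ka_i$ for each $i$, and using $H = \sum_i \be_i \eta_i$, one has $\langle H, c_\De(\ka)\rangle - \sum_i \be_i \ka_i = \sum_i \be_i\bigl(\langle \eta_i, c_\De(\ka)\rangle - \ka_i\bigr) = \sum_i \be_i h_i$. Thus the proposition is equivalent to the single assertion $\sum_i \be_i h_i = 0$, and the strategy is to prove that $h_i$ is constant on each equivalence class of facets, after which the defining condition $\sum_{i\in I}\be_i = 0$ finishes the argument.

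So the key step I would carry out is the claim that $h_i = h_j$ whenever $F_i \sim F_j$. First I would unwind the definition of equivalence: there is a (robust) integral affine automorphism $T(x) = Ax + b$ of $\De(\ka)$ carrying $F_i$ onto $F_j$, where $A \colon \ft^* \to \ft^*$ is lattice preserving. Since $T$ maps $\De$ bijectively onto itself and the center of mass is affine covariant, $T$ fixes $c_\De$, that is $A\,c_\De(\ka) + b = c_\De(\ka)$. Comparing the defining hyperplanes of $F_i$ and $F_j$ then shows that $A^{*}\eta_j = \eta_i$ (both conormals being primitive and outward forces the scalar to be $1$) and that $\langle \eta_j, b\rangle = \ka_j - \ka_i$. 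Substituting these into $\langle \eta_j, c_\De\rangle = \langle \eta_j,\, A\,c_\De + b\rangle = \langle A^{*}\eta_j,\, c_\De\rangle + \langle \eta_j, b\rangle$ gives $\langle \eta_j, c_\De\rangle = \langle \eta_i, c_\De\rangle + (\ka_j - \ka_i)$, and hence $h_j = \langle \eta_j, c_\De\rangle - \ka_j = \langle \eta_i, c_\De\rangle - \ka_i = h_i$, as claimed.

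With the claim established, $h_i$ takes a common value $h_I$ on each class $I \in \Ii$, so $\sum_i \be_i h_i = \sum_{I \in \Ii} h_I \sum_{i \in I} \be_i = 0$, which is the identity. This holds for each fixed $\ka$ in the chamber; since the equivalence classes and the automorphisms $T$ persist throughout $\Cc_\De$ by robustness (cf.\ [I, Corollary 3.5]), the same $\be_i$ and the same relation $h_i = h_j$ are valid for all $\ka'$ near $\ka$, so $\langle H, c_\De(\ka')\rangle = \sum_i \be_i \ka_i'$ is genuinely linear in $\ka'$, which also reconfirms that $H$ is mass linear.

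I expect the main obstacle to be the geometric claim $h_i = h_j$, and specifically the care needed in interpreting the equivalence-defining map: one must use that it is an honest affine \emph{automorphism} of $\De$ (so that it fixes $c_\De$) and that it is lattice preserving with primitive conormals (so that $A^{*}\eta_j = \eta_i$ with unit scalar, rather than up to a positive multiple). Once these two points are pinned down, the remainder is a short bookkeeping computation of how the support numbers and conormals transform under $T$, together with the purely formal reduction in the first paragraph.
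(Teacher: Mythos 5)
Your proof is correct. The reduction to showing that $h_i := \langle \eta_i, c_\De(\ka)\rangle - \ka_i$ is constant on equivalence classes is sound, and the verification that an integral affine automorphism $T = A(\cdot)+b$ of $\De$ carrying $F_i$ to $F_j$ forces $A^{*}\eta_j = \eta_i$ (primitivity of the conormals plus preservation of the outward side pinning the scalar to $1$) and $\langle\eta_j,b\rangle = \ka_j - \ka_i$, hence $h_i = h_j$ because $T$ fixes $c_\De$, is exactly the right computation. The route does differ in organization from the one in Part I (echoed in the proof of Lemma \ref{le:fullinessential} here): there one first reduces to elementary generators $H = \eta_i - \eta_j$ with $F_i\sim F_j$, which is possible precisely because $\sum_{i\in I}\be_i = 0$ on each class, and then invokes the concrete description of equivalence underlying Lemma \ref{le:equiv} --- a vector $\xi\in\ft^*$ parallel to every other facet with $\langle\eta_i,\xi\rangle = -\langle\eta_j,\xi\rangle = 1$ --- to produce an explicit affine reflection preserving $\De$ and conclude that $c_\De$ lies on the fixed hyperplane $\{\langle\eta_i-\eta_j,\cdot\rangle = \mathrm{const}\}$. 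Your version works with an arbitrary automorphism taken straight from the definition of $\sim$ and never needs the decomposition into generators or the existence of $\xi$; what you lose is the explicit formula for the symmetry (which the paper reuses elsewhere, e.g.\ to control all the barycenters $B_k(\De)$ for full mass linearity), and what you gain is a shorter, more self-contained argument whose only input beyond the definitions is the affine covariance of the centroid. Your closing remarks correctly identify the two points that need care (automorphism of $\De$, not just of the pair of facets; unit scalar in $A^{*}\eta_j = \eta_i$), and the appeal to robustness to pass from a single $\ka$ to a neighborhood in $\Cc_\De$ is exactly what is needed.
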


It is 
straightforward to use the formula 
above
to show that  we can reduce the number of asymmetric facets if some of them are equivalent.

\begin{lemma} [I, Lemma 3.19] 
\labell{le:ea}
Let $H \in \ft$ be a mass linear function on a
polytope
$\Delta \subset \ft^*$. 
If $F_1,\ldots,F_m$ are equivalent facets,
there exists an inessential function $H' \in \ft$ 
so that the mass linear function
$\Tilde{H} = H - H'$  
has the following properties:
\begin{itemize}
\item 
For all $i < m$,
the facet $F_i$ is  $\Tilde{H}$-symmetric. 
\item   For all $i > m$,  
the facet $F_i$ is $\Tilde{H}$-symmetric iff it is $H$-symmetric. 
\end{itemize}
\end{lemma}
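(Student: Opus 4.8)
The plan is to reduce everything to linear bookkeeping of coefficients, using only that inessential functions are mass linear. I would first record two elementary facts. Since $\Hat H$ is linear on a neighborhood of $\ka$ (an open subset of $\R^N$), the coefficients $\be_i$ in $\langle H, c_\De(\ka')\rangle = \sum_i \be_i\ka_i'$ are uniquely determined, and by the definition of symmetry a facet $F_i$ is $H$-symmetric precisely when $\be_i = 0$. Second, for fixed $\ka'$ the pairing $\langle\,\cdot\,, c_\De(\ka')\rangle$ is linear in its first argument, so if $H'$ is mass linear with coefficients $\be_i'$ then $\Tilde H := H - H'$ is again mass linear with coefficients $\be_i - \be_i'$. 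Thus the lemma will follow once I produce an inessential $H'$ whose coefficients agree with those of $H$ for $i < m$ and vanish for $i > m$.

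To this end I would let $\be_1,\dots,\be_N$ denote the coefficients of $H$ and set
$$
H' := \sum_{i=1}^{m-1}\be_i\,(\eta_i - \eta_m) \in \ft.
$$
Writing $H' = \sum_i \be_i'\eta_i$, its conormal coefficients are $\be_i' = \be_i$ for $i < m$, $\be_m' = -\sum_{i<m}\be_i$, and $\be_i' = 0$ for $i > m$. The crucial step is to verify that $H'$ is inessential, i.e.\ that $\sum_{i \in I}\be_i' = 0$ for every equivalence class $I \in \Ii$. Here I would use the hypothesis that $F_1,\dots,F_m$ lie in a single class $I_0$: for the class $I_0$ the coefficients sum to $\sum_{i<m}\be_i + \be_m' = 0$ by construction (any members of $I_0$ of index $> m$ contribute $0$), while every other class consists only of facets of index $> m$, on which $\be_i'$ vanishes, so its relation holds trivially.

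Finally I would invoke Proposition~\ref{prop:inessential}: since $H'$ is inessential it is mass linear, and its coefficients as a mass linear function coincide with the conormal coefficients $\be_i'$ just computed. Therefore $\Tilde H = H - H'$ has coefficient $\be_i - \be_i = 0$ for each $i < m$, so those facets are $\Tilde H$-symmetric; and coefficient $\be_i - 0 = \be_i$ for each $i > m$, so such a facet is $\Tilde H$-symmetric exactly when $\be_i = 0$, that is, exactly when it is $H$-symmetric. I do not expect a genuine obstacle; the only delicate point, and the sole place the hypothesis is used, is the inessentiality check, where it matters both that $F_1,\dots,F_m$ share a single equivalence class (so the one relation on $I_0$ can be absorbed into the free coefficient of $\eta_m$) and that setting $\be_i' = 0$ for $i > m$ cannot spoil the relation on any other class.
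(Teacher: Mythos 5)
Your proof is correct and is essentially the argument behind [I, Lemma 3.19], which this paper only cites: one subtracts the inessential function $H'=\sum_{i<m}\be_i(\eta_i-\eta_m)$ built from the (unique) mass-linear coefficients of $H$ and applies Proposition~\ref{prop:inessential} to compute the coefficients of $\Tilde H=H-H'$. The one delicate point, verifying inessentiality class by class using that $F_1,\dots,F_m$ all lie in a single equivalence class while every other class carries only zero coefficients of $H'$, is handled correctly.
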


Note that, in general,  even
if $H=\sum_i\be_i\eta_i$  is mass linear,
$\langle H, c_\Delta(\kappa) \rangle $ need not equal $ \sum \beta_i \kappa_i$
since the 
 $\be_i$ are not uniquely determined by $H$.
In contrast, the next lemma shows that the coefficients of a mass linear function $H \in \ft$ always determine
the function $H$ itself.

\begin{lemma}[I, Lemma 2.6] 
\labell{le:Hsum}
Fix $H \in \ft$ and 
a polytope
$\Delta \subset \ft^*$. 
If
$\langle H, c_\Delta(\kappa) \rangle = \sum \beta_i \kappa_i,$
then $$H = \sum \beta_i \eta_i.$$
\end{lemma}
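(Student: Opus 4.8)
The plan is to exploit the single most transparent family of perturbations of the support numbers: rigid translations of the polytope. The key observation is that translating $\De(\ka)$ by a vector $y \in \ft^*$ both translates the center of mass by $y$ and alters the support numbers in a completely explicit way. Indeed, the inequality $\langle \eta_i, x\rangle \le \ka_i$ defining $\De$ becomes, after replacing $x$ by $x-y$, the inequality $\langle \eta_i, x\rangle \le \ka_i + \langle \eta_i, y\rangle$. Hence the translate $\De(\ka) + y$ equals $\De(\ka')$, where
$$
\ka_i' = \ka_i + \langle \eta_i, y\rangle \qquad (1 \le i \le N),
$$
and its center of mass is $c_\De(\ka') = c_\De(\ka) + y$.

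With this in hand the proof is essentially a one-line computation. First I would note that for $y$ small the perturbed support numbers $\ka'$ lie in a neighborhood of $\ka$, so the hypothesis applies and gives $\langle H, c_\De(\ka')\rangle = \sum \be_i \ka_i'$. Substituting the two displayed identities,
$$
\langle H, c_\De(\ka)\rangle + \langle H, y\rangle
= \sum \be_i \ka_i + \sum \be_i \langle \eta_i, y\rangle.
$$
Since $\langle H, c_\De(\ka)\rangle = \sum \be_i \ka_i$ by hypothesis, the constant terms cancel and I am left with
$$
\langle H, y\rangle = \sum \be_i \langle \eta_i, y\rangle = \Bigl\langle \textstyle\sum \be_i \eta_i,\, y\Bigr\rangle.
$$

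This identity holds for all sufficiently small $y \in \ft^*$, and as both sides are linear in $y$ it therefore holds for every $y \in \ft^*$. Because the pairing $\langle\,\cdot\,,\,\cdot\,\rangle \colon \ft \times \ft^* \to \R$ is nondegenerate, I conclude that $H = \sum \be_i \eta_i$, as claimed. I do not anticipate a genuine obstacle here: the entire content is the recognition that translations form an explicit one-parameter (indeed $\dim\ft^*$-parameter) subfamily of support-number perturbations on which both the center of mass and the $\ka_i$ move in a controlled, affine fashion. The only point requiring a word of care is the passage from ``small $y$'' to ``all $y$'', which is immediate from linearity of both sides, and the implicit use of the fact (from the equivalence of the local and global definitions of mass linearity recalled in \S\ref{ss:review}) that $\Hat H$ really is linear on a neighborhood of $\ka$ so that the formula $\langle H, c_\De(\ka')\rangle = \sum\be_i\ka_i'$ may be applied at the translated parameter.
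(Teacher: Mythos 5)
Your proof is correct, and it is essentially the same argument as the paper's: the paper states explicitly (Remark~\ref{translate}) that the proof of this lemma in Part I rests on the observation that translating $\De$ by $\xi$ shifts each support number by $\langle \eta_i,\xi\rangle$, which is precisely the perturbation you exploit. The passage from small $y$ to all $y$ and the appeal to nondegeneracy of the pairing are both handled correctly.
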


In part I, the proof of the lemma above relies on the following 
fact.

\begin{rmk}\labell{translate}\rm
Given 
a polytope 
$\Delta = \bigcap_{i=1}^N \{ x \in \ft^* \mid \langle \eta_i,x \rangle \leq \kappa_i \}  \subset \ft^*$
and  $\xi \in \ft^*$, consider $\Delta' = \Delta + \xi$, the
translate of $\Delta$ by $\xi$.  
Then
\begin{equation*}
\Delta' = \bigcap_{i=1}^N \{ x \in \ft^* \mid \langle \eta_i,x \rangle \leq \kappa_i' \}, 
\quad \mbox{where} \quad \kappa_i' = \kappa_i +\langle \eta_i,\xi \rangle \quad \forall \ i.
\end{equation*}
\end{rmk}

We continue 
with  some very useful 
results about symmetric faces, taken from 
[I, \S 2.3].
They imply that if 
$H$ is a mass linear function on $\De$ and
$(\De,H)$ has a symmetric face $f$ then the pair
$(f,H|_f)$ is also  mass linear.  
(Here, we consider the face $f$ as a polytope in $P(f) \subset \ft^*$, the smallest
affine plane containing  $f$.)
Hence one can use knowledge of the structure of the lower dimensional $(f,H|_f)$ to analyse $(\De,H)$.

\begin{prop}
[I, Lemma 2.7  and Proposition 2.9]
\labell{prop:symcent} 
Let $H \in \ft$ be a mass linear function on 
a  polytope 
$\Delta \subset \ft^*$.
Let $f$ be a symmetric face of $\Delta$. 
Then the following hold:
\begin{itemize}
\item 
$
\langle H,c_f(\kappa) \rangle  = \langle H, c_{\Delta}(\kappa) \rangle $ for all $ \kappa \in \Cc_\Delta,
$
where $c_f$ denotes the center of mass of $f$ in $P(f)$.
\item  
The restriction of $H$ to $f$ is  mass linear.
\item  Intersection induces a one-to-one
correspondence between the asymmetric facets of $\Delta$
and the asymmetric facets of $f$.
\item The coefficient of the support number of a facet $F$ in $\langle H,
c_\Delta \rangle$ is the coefficient of the support number of $f \cap F$
in $\langle H,c_f \rangle$.
\end{itemize}
\end{prop}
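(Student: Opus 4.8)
The plan is to reduce everything to the first–variation formulas for the volume and barycentre of a polytope and then to bootstrap from facets to arbitrary symmetric faces by induction on codimension. First I would record the master formula. Writing $V(\ka):=\Vol(\De(\ka))$ and applying the standard first variation of the volume and of the vector-valued moment $\int_\De x\,dx$ under a parallel displacement of the $i$-th facet, one obtains $\partial_{\ka_i}V=a_i$ and $\partial_{\ka_i}\!\int_\De x\,dx=a_i\,c_{F_i}$, where $a_i=a_i(\ka)>0$ is the lattice $(n-1)$-volume of $F_i$ (with $n=\dim\De$) and $c_{F_i}\in P(F_i)$ is its barycentre. Dividing gives
\[
\partial_{\ka_i}c_\De=\frac{a_i}{V}\bigl(c_{F_i}-c_\De\bigr),\qquad\text{hence}\qquad
\ga_i:=\partial_{\ka_i}\langle H,c_\De\rangle=\frac{a_i}{V}\,\langle H,\,c_{F_i}-c_\De\rangle.
\]
Since $a_i,V>0$ and, by mass linearity, each $\ga_i$ is a constant on $\Cc_\De$, the facet $F_i$ is symmetric (i.e. $\ga_i\equiv 0$) exactly when $\langle H,c_{F_i}(\ka)\rangle=\langle H,c_\De(\ka)\rangle$ for all $\ka\in\Cc_\De$. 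This is the first bullet in the case $f=F_i$.

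Next I would settle the remaining bullets for a symmetric facet $F:=F_i$ by a coordinate computation. Choosing an integral basis with $\eta_i=e_n$, so that $P(F)=\{x_n=\ka_i\}$, and writing $H=(H_1,\dots,H_n)$, one has $\langle H,c_F\rangle=\langle H|_F,c_F\rangle+H_n\ka_i$, where $H|_F$ denotes the image of $H$ in $\ft/\R\eta_i$, while the support number of the facet $F\cap F_j$ of $F$ is $\Tilde\ka_j=\ka_j-(\eta_j)_n\ka_i$. Feeding the identity $\langle H,c_F\rangle=\langle H,c_\De\rangle=\sum_j\ga_j\ka_j$ into this, and using $H=\sum_j\ga_j\eta_j$ (Lemma~\ref{le:Hsum}) together with $\ga_i=0$, the coefficient of $\ka_i$ cancels and one is left with $\langle H|_F,c_F\rangle=\sum_{j\neq i}\ga_j\Tilde\ka_j$. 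This is linear in the support numbers of $F$, which proves that $H|_F$ is mass linear (second bullet) and identifies the coefficient of $\Tilde\ka_j$ with $\ga_j$ (fourth bullet). For the correspondence I would note that $c_F$ depends on $\ka_j$ only when $F\cap F_j$ is genuinely a facet of $F$; if $F\cap F_j$ is empty or of higher codimension then $\partial_{\ka_j}\langle H,c_F\rangle=0$, whereas the first bullet gives $\partial_{\ka_j}\langle H,c_F\rangle=\partial_{\ka_j}\langle H,c_\De\rangle=\ga_j$, forcing $\ga_j=0$. Hence every asymmetric facet of $\De$ meets $F$ in a facet, and by the coefficient identity $F\cap F_j$ is asymmetric in $F$ iff $F_j$ is asymmetric in $\De$; simplicity of $\De$ makes $F_j\mapsto F\cap F_j$ injective, giving the one-to-one correspondence (third bullet).

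Finally I would pass to a general symmetric face $f=F_I$ by induction on $|I|$, the base case $|I|=1$ being the above. Picking $i_0\in I$, the facet $F_{i_0}$ is symmetric, and the third bullet shows that each $F_{i_0}\cap F_i$ with $i\in I\ssminus\{i_0\}$ is a symmetric facet of $F_{i_0}$; thus $f$ is a symmetric face of $F_{i_0}$ of one lower codimension, to which the inductive hypothesis applies. The same splitting $\langle H,\,\cdot\,\rangle=\langle H|_{F_{i_0}},\,\cdot\,\rangle+H_n\ka_{i_0}$ on $P(F_{i_0})$ shows that the full and restricted pairings of $c_f$ and of $c_{F_{i_0}}$ differ by one common constant, so the inductive identity $\langle H|_{F_{i_0}},c_f\rangle=\langle H|_{F_{i_0}},c_{F_{i_0}}\rangle$ upgrades to $\langle H,c_f\rangle=\langle H,c_{F_{i_0}}\rangle=\langle H,c_\De\rangle$; the mass linearity of $H|_f$ together with the correspondence and coefficient statements are then inherited directly. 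I expect the main obstacle to be the first step: making the first-variation formulas precise with the correct lattice normalization (so that the $a_i$ are genuinely positive and the boundary term is exactly $a_i c_{F_i}$), and verifying that perturbing the $\ka_j$ within the open chamber $\Cc_\De$ realizes all nearby support numbers of each face, so that local linearity transfers faithfully. By comparison the combinatorial facts used above---that two facets of a simple polytope meet in codimension two or not at all, and that $F_j\mapsto F\cap F_j$ is injective---are routine.
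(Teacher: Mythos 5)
This proposition is only quoted in the present paper from [I, Lemma 2.7 and Proposition 2.9], and your argument — the first-variation identity $\partial_{\kappa_i}c_\Delta=\tfrac{a_i}{V}\,(c_{F_i}-c_\Delta)$ to characterize symmetric facets, the coordinate computation transferring the coefficients $\gamma_j$ to the support numbers of a symmetric facet, and induction over the codimension of the face — is correct and is essentially the route taken in Part I. The one presentational point is that you should record the vanishing $\gamma_j=0$ for facets $F_j$ with $F\cap F_j=\emptyset$ \emph{before} asserting that $\sum_{j\neq i}\gamma_j\tilde\kappa_j$ is a linear function of the support numbers of $F$, but your third-bullet argument already supplies exactly that fact, so the logic closes.
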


\begin{rmk}
\labell{rmk:symcent} \rm  
In fact, it is not hard to prove  the following slightly stronger claims:
\begin{itemize}
\item  If $H$ is inessential on $\De$ then it is inessential on $f$ (but not conversely).
\item  $H$ is mass linear on $\Delta$ exactly if
the restriction of $H$ to $f$ is  mass linear.
\end{itemize}
\end{rmk}

On the other hand, asymmetric facets 
have special properties.
Recall that a 
facet $F$ of 
a  polytope
 $\Delta \subset \ft^*$ is called  {\bf pervasive}
if it has nonempty intersection with every other facet  of $\Delta$. 
Further, we say that $F$ is {\bf flat} if there is a hyperplane in $\ft$ that contains
the conormal of every other facet (other than $F$ itself) that meets $F$.

\begin{prop} [I, Proposition 2.11]  
\labell{prop:asym}
Let $H \in \ft$ be a mass linear function
on  
a  polytope
 $\De \subset \ft^*$.
Then every asymmetric facet is 
pervasive or  flat (or both).
\end{prop}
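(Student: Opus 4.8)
The plan is to prove the contrapositive: if a facet $F=F_i$ is asymmetric but \emph{not} pervasive, then it is flat. The starting point is the standard first-variation calculus for moving a single facet (developed in Part~I). Writing $V:=\Vol(\De)$ and $A_i:=\p V/\p\ka_i$ for the normalized volume of $F_i$, and $c_{F_i}$ for the center of mass of $F_i$, one has $\p(Vc_\De)/\p\ka_i = A_i\,c_{F_i}$, which combined with $\p V/\p\ka_i=A_i$ gives
\[
\be_i \;=\; \frac{\p}{\p\ka_i}\langle H,c_\De\rangle \;=\; \frac{A_i}{V}\bigl(\langle H,c_{F_i}\rangle-\langle H,c_\De\rangle\bigr).
\]
Thus $F_i$ is symmetric exactly when $\langle H,c_{F_i}\rangle\equiv\langle H,c_\De\rangle$ on the chamber, and — crucially — since $H$ is mass linear each coefficient $\be_i$ is a \emph{constant} function of $\ka$.

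Next I would exploit the hypothesis that $F_i$ is not pervasive by choosing a facet $F_k$ with $F_i\cap F_k=\emptyset$. Since $F_k$ is not one of the facets cutting out $F_i$, the set $F_i$ — and hence both $A_i$ and $c_{F_i}$ — is locally independent of $\ka_k$; symmetrically $A_k$ is independent of $\ka_i$. Differentiating the identity $\be_i V = A_i(\langle H,c_{F_i}\rangle-\langle H,c_\De\rangle)$ with respect to $\ka_k$, and using $\p_{\ka_k}A_i=\p_{\ka_k}\langle H,c_{F_i}\rangle=0$ together with $\p_{\ka_k}V=A_k$ and $\p_{\ka_k}\langle H,c_\De\rangle=\be_k$, yields the clean relation
\[
\be_i\,A_k + \be_k\,A_i \;\equiv\; 0 .
\]
If $F_i$ is asymmetric then $\be_i\neq0$, so $A_k=-(\be_k/\be_i)A_i$ is a nonzero constant multiple of $A_i$. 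Differentiating this once more in $\ka_i$ and using $\p_{\ka_i}A_k=0$ forces $\p_{\ka_i}A_i=0$, i.e.\ $\p^2V/\p\ka_i^2\equiv0$: the cross-sectional volume of $\De$ is constant as the hyperplane carrying $F_i$ is translated parallel to itself.

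The final step is to convert this cylindrical behaviour into flatness. Near $F_i$ the slices $S(t)=\De\cap\{\langle\eta_i,\cdot\rangle=t\}$ all share the same normal fan, generated by the conormals of the facets meeting $F_i$, their support numbers depend affinely on $t$, and the computation above says they all have equal $(n-1)$-volume. Here I would invoke the equality case of the Brunn--Minkowski inequality: two convex bodies of equal volume whose Minkowski interpolates all have that same volume must be translates. Applied to two such slices this shows that varying $t$ moves $S(t)$ by an honest translation $w$, so that the affine change of support numbers $\de\ka_j=-\langle\eta_j,u_i\rangle$ (where $u_i$ satisfies $\langle\eta_i,u_i\rangle=1$) agrees with $\langle\eta_j,w\rangle$ for every facet $F_j$ meeting $F_i$; this is exactly the translation correspondence of Remark~\ref{translate}. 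Hence $\langle\eta_j,\,u_i+w\rangle=0$ for all such $j$, and since $\langle\eta_i,u_i+w\rangle=1\neq0$ the functional $\xi:=u_i+w$ is nonzero. Its kernel is a hyperplane of $\ft$ containing the conormal of every facet meeting $F_i$, which is precisely the assertion that $F_i$ is flat.

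The main obstacle is this last step — passing from constant cross-sectional volume to the honest-translation, hence flatness, conclusion. The Brunn--Minkowski equality case handles it cleanly, but one must verify that the interpolating slices remain in the chamber (so the normal fan is genuinely preserved) and that the equal-volume normalization collapses homothety to translation; a more self-contained, combinatorial alternative would argue directly that a frustum with fixed lateral normal fan and equal-volume parallel caps must be a prism. By contrast, the earlier steps are routine first-variation manipulations, and the only real idea there is to use the disjointness of $F_k$ from $F_i$ to annihilate the relevant $\ka_k$- and $\ka_i$-derivatives.
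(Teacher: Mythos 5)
Your argument is correct, and it is essentially the proof of this statement given in Part I (the present paper only cites [I, Proposition 2.11] rather than reproving it): the first-variation identity $\be_i A_k+\be_k A_i=0$ for a pair of disjoint facets, the resulting vanishing of $\p^2 V/\p\ka_i^2$, and the Brunn--Minkowski equality case converting constant cross-sectional volume into a translation (hence flatness) are exactly the ingredients used there. The one small elision is the justification that $\be_k\neq 0$ before you divide by it, but this is immediate from your own identity: if $\be_k=0$ then $\be_i A_k=0$ would force $A_k=0$, which is impossible for a genuine facet.
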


The following statement combines [I, Lemma 2.13] 
with results from [I, \S4.2].

\begin{prop}\labell{prop:2asym}
Let $H \in \ft$ be a 
nonzero
mass linear function on 
a  polytope 
$\Delta \subset \ft^*$. 
Then
 $\Delta$ has  at least two asymmetric facets.    
Moreover, if it has exactly two,
then they are equivalent and $H$ is inessential. 
\end{prop}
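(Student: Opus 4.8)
The plan is to phase everything through the coefficients $\be_i$ of $H$, defined by $\langle H, c_\De(\ka)\rangle = \sum_i \be_i\ka_i$, recalling that a facet $F_i$ is asymmetric precisely when $\be_i \neq 0$. First I would record the identity that drives both assertions. Since $H$ is mass linear we have $\langle H, c_\De(\ka)\rangle = \sum_i \be_i\ka_i$, while Lemma~\ref{le:Hsum} gives $H = \sum_i \be_i\eta_i$. Subtracting the two resulting expressions for $\langle H, c_\De\rangle$ yields
\[
0 \;=\; \sum_i \be_i\ka_i - \sum_i \be_i\langle \eta_i, c_\De\rangle \;=\; \sum_i \be_i\, d_i, \qquad d_i := \ka_i - \langle \eta_i, c_\De(\ka)\rangle .
\]
Because the center of mass lies in the interior of $\De$, every $d_i$ is strictly positive. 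Hence if $H \neq 0$, so that some $\be_i \neq 0$, the relation $\sum_i \be_i d_i = 0$ forces the nonzero coefficients to include both a positive and a negative value. Thus there are at least two asymmetric facets, which settles the first assertion.

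Now suppose there are exactly two asymmetric facets, $F_1$ and $F_2$, so that $\be_1\be_2 \neq 0$, $\be_i = 0$ for $i > 2$, and $\be_1 d_1 + \be_2 d_2 \equiv 0$ on the chamber. The decisive point is that \emph{once $F_1 \sim F_2$ is known, inessentiality follows almost for free.} Indeed, applying Lemma~\ref{le:ea} to the equivalent pair $F_1, F_2$ (the case $m=2$) produces an inessential $H'$ for which $\Tilde H := H - H'$ is mass linear, has $F_1$ symmetric, and has every $F_i$ with $i > 2$ symmetric (those are already $H$-symmetric). So the only facet of $\Tilde H$ that can be asymmetric is $F_2$; that is, $\Tilde H$ has at most one asymmetric facet. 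By the first part of the proposition this forces $\Tilde H = 0$, whence $H = H'$ is inessential (and then $\be_1 + \be_2 = 0$ as the sum of $H$ over the equivalence class of $F_1$). Consequently the entire statement reduces to the single claim that $F_1$ and $F_2$ are equivalent.

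To prove $F_1 \sim F_2$ I would induct on $\dim\De$, using symmetric faces as the inductive handle. The relation $\be_1 d_1 + \be_2 d_2 = 0$ shows that the ratio $d_1/d_2 = -\be_2/\be_1$ is a positive constant throughout the chamber, a strong rigidity on the pair $(F_1,F_2)$. In dimension two the claim can be checked directly against the classification of mass linear polygons (triangle, parallelogram, trapezoid), where in each case the two asymmetric edges are visibly equivalent. For the inductive step I would pick a symmetric facet $G$, which exists because $\De$ has at least $\dim\De + 1 \ge 3$ facets while only two are asymmetric. By Proposition~\ref{prop:symcent}, $G$ meets both $F_1$ and $F_2$, the restriction $H|_G$ is mass linear with exactly the two asymmetric facets $F_1\cap G$ and $F_2\cap G$, and the coefficients are unchanged; induction then gives $F_1\cap G \sim F_2\cap G$ in $G$. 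It remains to lift this equivalence from $G$ to all of $\De$, using Proposition~\ref{prop:asym} and the pervasive/flat dichotomy to force the conormals of the symmetric facets into a sufficiently restricted position relative to $\eta_1$ and $\eta_2$.

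I expect this lifting — converting the constant-ratio rigidity together with the equivalence on $G$ into a genuine robust integral-affine equivalence $F_1 \sim F_2$ on $\De$ — to be the main obstacle. It is exactly the combinatorial analysis of [I, \S4.2], while the ``at least two asymmetric facets'' half is [I, Lemma~2.13]; everything else is the short bookkeeping above.
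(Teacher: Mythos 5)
Your argument for the first assertion is correct and self-contained: from $\langle H,c_\De(\ka)\rangle=\sum_i\be_i\ka_i$ and Lemma~\ref{le:Hsum} you get $\sum_i\be_i\bigl(\ka_i-\langle\eta_i,c_\De\rangle\bigr)=0$ with each $\ka_i-\langle\eta_i,c_\De\rangle>0$ (the center of mass is interior), so the nonzero $\be_i$ cannot all have one sign and there are at least two of them. Your reduction of the second assertion is also sound: once $F_1\sim F_2$ is known, Lemma~\ref{le:ea} with $m=2$ yields $\Tilde H=H-H'$ with at most one asymmetric facet, the first assertion forces $\Tilde H=0$, and $H=H'$ is inessential. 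For the record, this paper gives no proof of the proposition at all; it is imported from Part I, combining [I, Lemma 2.13] (your first paragraph) with the analysis of [I, \S4.2] (the part you leave open).

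The genuine gap is exactly where you flag it: the equivalence $F_1\sim F_2$ is never established, and it is the entire content of the second assertion. Your induction gives, via Proposition~\ref{prop:symcent} and the inductive hypothesis, that $F_1\cap G\sim F_2\cap G$ inside each symmetric facet $G$; by Lemma~\ref{le:equiv} this says only that the conormals of the facets of $\De$ that \emph{meet} $G$ (other than $F_1,F_2$), together with the conormal of $G$, span a hyperplane of $\ft$. To conclude $F_1\sim F_2$ in $\De$ you need the conormals of \emph{all} facets other than $F_1$ and $F_2$ to lie in a single hyperplane: you must account for facets that miss $G$, and you must show that the hyperplanes produced by different symmetric facets coincide. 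Neither step is routine --- it is precisely the combinatorial work of [I, \S4.2] that you defer to --- so the proposal does not constitute a proof. Two smaller points: invoking the classification of mass linear polygons (Proposition~\ref{prop:2dim}) for the two-dimensional base case risks circularity, since that classification is not obviously independent of the present proposition, and a direct argument for polygons would be safer; and the ``constant ratio'' $d_1/d_2=-\be_2/\be_1$ you highlight is never actually used, so it does not substitute for the missing lifting argument.
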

Next, we give another characterization of 
the equivalence relation on the facets.

\begin{lemma}[I, Lemma 3.7]\labell{le:equiv}
Let $\Delta \subset \ft^*$ 
be a 
smooth  polytope.
Given a subset $I \subset \{1,\ldots,N\}$,
we have
$F_i \sim F_j$ for all $i$ and $j$ in $I$
exactly if the plane 
$V\subset \ft$ spanned
by the outward conormals $\eta_k$ for $k \not\in I$
has codimension $|I| - 1$. 
Moreover, in this case 
the linear combination
$\sum_{i \in I} c_i \eta_i$ lies in $V$ 
if and only if $c_i = c_j$ for all $i$ and $j$.
\end{lemma}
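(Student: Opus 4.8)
\emph{Plan.} The plan is to translate the equivalence relation into linear algebra about the conormals, isolate a single ``core'' fact about pairs of facets, and then bootstrap to general $I$. First I would spell out what a robust symmetry is: a robust integral affine self-map of $\De(\ka)$ has a fixed linear part $B \in GL(\lat)$ acting on $\ft$ that permutes the primitive conormals, say $B\eta_k = \eta_{\si(k)}$ for a permutation $\si$, and $F_i \sim F_j$ means we may arrange $\si(i)=j$. Writing out the requirement that $\De(\ka')$ be preserved for all $\ka'$ near $\ka$ and solving for the translation, one sees that robustness amounts to the condition that every linear relation $\sum_l a_l \eta_l = 0$ among the conormals satisfy $a_{\si(l)} = a_l$; let $R \subset \R^N$ denote the space of such relations. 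I would also record the two consequences of $\De$ being a bounded full-dimensional polytope: the $\eta_l$ positively span $\ft$, so no $\eta_l$ lies outside the span of the others, and the $\eta_l$ cannot all lie in one closed half-space of $\ft$.

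The engine is the \emph{core pair lemma} (the case $|I|=2$). Suppose $H_{ij} := \mathrm{span}\{\eta_l : l \neq i,j\}$ is a hyperplane with $\eta_i,\eta_j \notin H_{ij}$, and let $\phi \in \lat^*$ be the primitive covector with $H_{ij} = \ker \phi$. I would prove $\phi(\eta_i) = -\phi(\eta_j) = \pm 1$. The sign statement uses boundedness: since $\phi$ vanishes on every conormal except $\eta_i,\eta_j$, if $\phi(\eta_i)$ and $\phi(\eta_j)$ had the same sign then all conormals would lie in a half-space. The magnitude statement is the crucial use of smoothness: pick a vertex $v$ of $F_i$ lying off $F_j$ (such a vertex exists since $F_i \cap F_j$ is a proper face of $F_i$); then the $n-1$ other conormals at $v$ lie in $H_{ij} = \ker\phi$, so in the lattice basis at $v$ one gets $\phi = \phi(\eta_i)\,\eta_i^*$, and primitivity of $\phi$ forces $|\phi(\eta_i)| = 1$, with $|\phi(\eta_j)|=1$ symmetrically. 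This single fact delivers both the integral involution $B_{ij} = \mathrm{id} + (\eta_j - \eta_i)\otimes \psi$, where $\psi := \phi(\eta_i)\,\phi$ (it swaps $\eta_i,\eta_j$, fixes every other conormal, and lies in $GL(\lat)$ because $\psi \in \lat^*$ with $|\psi(\eta_i)|=1$), and its robustness, since applying $\phi$ to any relation gives $a_i = a_j$.

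For the general statement I would argue both directions through $Q := \ft/V$, where $V = \mathrm{span}\{\eta_k : k \notin I\}$ and the images $\bar\eta_i$ ($i \in I$) span $Q$. For the forward direction, assume $\mathrm{codim}\,V = |I|-1$, so $\dim Q = |I|-1$ and the $|I|$ vectors $\bar\eta_i$ carry a one-dimensional relation space. Boundedness rules out any vanishing coefficient in this relation (a zero coefficient would exhibit some $\eta_{i_0}$ outside the span of the others), so every $|I|-1$ of the $\bar\eta_i$ form a basis; this makes each $H_{ij}$ a hyperplane, the core lemma applies to every pair, and the resulting $B_{ij}$ give $F_i \sim F_j$ for all $i,j \in I$. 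The ``moreover'' falls out simultaneously: applying each $\phi_{ij}$ to a relation $\sum_{i\in I} c_i \eta_i \in V$ yields $c_i = c_j$, so the unique relation is $\sum_{i\in I}\eta_i \in V$. For the converse, assume $F_i \sim F_j$ for all pairs; the $|I|=2$ analysis of robust symmetries shows each $H_{ij}$ is a hyperplane, which forces $\dim Q \le |I|-1$, while the descended covectors $\bar\phi_{i_0 j}$ (vanishing on $\bar\eta_{i'}$ for $i' \neq i_0,j$ but not on $\bar\eta_j$) show that $\{\bar\eta_i : i \neq i_0\}$ is linearly independent, giving $\dim Q \ge |I|-1$. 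Hence $\mathrm{codim}\,V = |I|-1$.

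The main obstacle is the magnitude claim $|\phi(\eta_i)| = 1$ in the core pair lemma: this is precisely where smoothness (the Delzant basis condition at a vertex) is indispensable, and it must be combined with global boundedness to fix the sign. Everything else is linear algebra over $\R$ together with careful bookkeeping of the relation space $R$; the one delicate organizational point is to route both the forward construction of the integral symmetries and the ``moreover'' through this single pairwise fact, and to verify robustness cleanly via the identity $a_i = a_j$ that $\phi_{ij}$ forces on every relation.
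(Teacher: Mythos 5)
Your proof is correct, and it runs on the same engine that the paper itself relies on when it invokes this lemma (the paper only cites [I, Lemma~3.7]; compare Remark~\ref{rmk:equiv} and the proof of Lemma~\ref{le:fullinessential}): namely the primitive covector $\phi$ annihilating every conormal except $\eta_i,\eta_j$, whose values on $\eta_i$ and $\eta_j$ are forced to be of opposite sign by boundedness and of magnitude $1$ by smoothness at a vertex of $F_i$ off $F_j$, together with the identification of robustness with the condition $a_{\si(l)}=a_l$ on the relation space. The one spot worth tightening is the converse: to conclude $\bar\phi_{i_0j}(\bar\eta_j)\neq 0$ you need $\eta_j\notin H_{i_0j}$, which is not literally part of ``$H_{i_0j}$ is a hyperplane'' but does follow from your robustness identity $a_{i_0}=a_j$ applied to the relation that would otherwise express $\eta_j$ in terms of $\{\eta_l\}_{l\neq i_0,j}$.
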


\begin{rmk}\labell{rmk:equiv}\rm   
In particular,  $F_i\sim F_j$ exactly if there is a vector $\xi\in \ft^*$ that is parallel to all the facets except $F_i$ and $F_j$.
This condition is very easily tested. The vector $\xi$ is preserved by the corresponding  reflection symmetry; 
in Masuda  \cite{Mas} it is called a root. 
Since $\Delta$ is smooth, there
is also a homological interpretation of this equivalence relation:  by [I, Remark~5.8], $F_i\sim F_j$ exactly if
the corresponding submanifolds $\Phi^{-1}(F_i)$ and $\Phi^{-1}(F_j)$ are homologous
in $H_{2n-2}(M_\De)$.
\end{rmk}

Our next aim is to describe all the
$2$- and $3$-dimensional polytopes that have mass linear functions.  
 Since many of these polytopes are bundles, we start with the formal definition.   
Two polytopes $\Delta$ and $\Delta'$ are said to be {\bf combinatorially
equivalent} if there exists a bijection of facets $F_i \leftrightarrow F_i'$
so that $\cap_{i \in  I} F_i \neq \emptyset$ exactly if $\cap_{i \in I} 
F_i' \neq \emptyset.$

\begin{defn}\labell{def:bund} 
Let $\Tilde{\Delta} =
\bigcap_{j = 1}^{\Tilde{N}} \{ x \in \Tilde{\ft}^* \mid \langle \Tilde{\eta}_j,x
\rangle \leq \Tilde{\kappa}_j \}$ and
 $\Hat{\Delta} =
\bigcap_{i = 1}^{\Hat{N}} \{ y \in \Hat{\ft}^* \mid \langle \Hat{\eta}_i,y
\rangle \leq \Hat{\kappa}_i \}$ be 
smooth
 polytopes.
We say that a 
smooth
polytope 
$\Delta \subset \ft^*$
is a {\bf bundle} with {\bf fiber} $\Tilde{\Delta}$ over the {\bf
base} $\Hat{\Delta}$ if 
there exists  a short exact sequence
$$ 
0 \to \Tilde{\ft} \stackrel{\iota}{\to} \ft \stackrel{\pi}{\to} \Hat{\ft} \to 0
$$
so that
the following hold:
\begin{itemize}
\item 
$\Delta$ is combinatorially equivalent to the product $\Tilde{\Delta}
\times \Hat{\Delta}$.
\item 
If $\Tilde{\eta}_j\,\!'$   
 denotes the outward conormal to
the  facet 
$\Tilde{F}_j\,\!'$
of $\Delta$ which corresponds to  $\Tilde{F}_j \times
\Hat{\Delta} \subset \Tilde{\Delta} \times \Hat{\Delta}$,
then $\Tilde{\eta}_j\,\!' = \iota(\Tilde{\eta}_j)$ 
for all $1 \leq j \leq \Tilde{N}.$
\item 
If $\Hat{\eta}_i\,\!'$  
denotes the outward conormal to
the  facet 
$\Hat F_i\,\!'$
of $\Delta$ which corresponds to  $\Tilde{\Delta} \times
\Hat{F_i} \subset \Tilde{\Delta} \times \Hat{\Delta}$,
then $\pi(\Hat{\eta}_i\,\!') = \Hat{\eta}_i$
for all $1 \leq i \leq \Hat{N}.$
\end{itemize}
 The facets 
$\Tilde{F}_1\,\!', \ldots, \Tilde{F}_{\Tilde{N}}\,\!'$ 
will be called {\bf fiber facets},  
and the facets 
$\Hat F_1\,\!' \ldots, \Hat F_{\Hat N} \,\!'$
will be called {\bf base facets}.
\end{defn}

Observe  that if $\De$ is such a bundle
then 
the faces $\Hat F_{I}': = \cap_{i\in I} \Hat F_i'$ of $\De$ corresponding
to 
the vertices $\Hat{F}_{I}$  of the base $\Hat\Delta$  are all
affine equivalent.
In contrast,
the faces $\Tilde F_J\,\!'$ of $\De$
corresponding to the vertices  $\Tilde F_J$ of the fiber $\Tilde\De$ 
may not be affine equivalent, but they are {\bf analogous}, that is,
we may identify 
the affine plane
$P(\Tilde F_J\,\!')$
 with $\Hat\ft^*$ so that there is  combinatorial equivalence 
between  $\Tilde F_J\,\!'$
and $\Hat\De$ in which corresponding facets are parallel; see [I, \S1.1].

To help the reader understand this rather complicated definition, here is
a recognition lemma,
which explains how to identify a given polytope $\De$ as a bundle with fiber $\Tilde\De$ and base $\Hat \De$.  
The proof is elementary and is left to the reader.

\begin{lemma}  \labell{le:recog}
A 
smooth polytope  $\Delta$ is a
bundle over $\Hat{\Delta}$ with fiber $\Tilde{\Delta}$ exactly if all 
the following conditions hold:
\begin{itemize}
\item $\De$ is combinatorially equivalent to the product
$\Tilde{\Delta}\times \Hat{\Delta}$.
\item The conormals $\Tilde{\eta}_j\,\!'$ to the fiber facets $\Tilde F_j\,\!'$ lie in a dim $\Tilde\Delta$ subspace.
\item There is a vertex $\Hat{F}_{I}$  of $\Hat\Delta$
so that  the face $\Hat F_{I}'$ of $\De$ is analogous to $\Tilde\Delta$.
\item There is a vertex  $\Tilde{F}_{J}$  of $\Tilde\Delta$
so that the face $\Tilde F_{J}'$ of $\De$ is analogous to $\Hat\Delta$. 
\end{itemize}
\end{lemma}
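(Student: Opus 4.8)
The plan is to dispatch the forward implication directly from the definitions and to reserve the real effort for the converse. If $\De$ is a bundle in the sense of Definition~\ref{def:bund}, the four conditions are immediate: the first is part of the definition, the fiber conormals $\Tilde\eta_j' = \iota(\Tilde\eta_j)$ all lie in the $\dim\Tilde\De$-dimensional subspace $\iota(\Tilde\ft)$, and the last two are precisely the observation recorded just before the lemma (faces over a vertex of the base are affine equivalent to the fiber, faces over a vertex of the fiber are analogous to the base). So the substance is the reverse implication, where from the four hypotheses I must build the short exact sequence of Definition~\ref{def:bund} and verify the two conormal conditions.

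First I would set $\tilde n := \dim\Tilde\De$ and $W := \mathrm{span}_\R\{\Tilde\eta_j' : 1\le j\le\Tilde N\}\subset\ft$; by the second hypothesis $\dim W = \tilde n$. The first step is to show that $W_\Z := W\cap\lat$ is saturated, so that $\Hat\ft := \ft/W$ carries the torsion-free lattice $\Hat\ft_\Z := \lat/W_\Z$ and we obtain a short exact sequence of lattices $0\to W_\Z\to\lat\to\Hat\ft_\Z\to 0$. For this I take a vertex $v$ of the face $\Hat F_I'$ of the third hypothesis. Under the combinatorial equivalence $v$ corresponds to a vertex $\Tilde F_J\times\Hat F_I$ with $|J|=\tilde n$, so the facets of $\De$ through $v$ are the base facets $\Hat F_i'$, $i\in I$, together with the fiber facets $\Tilde F_j'$, $j\in J$. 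Smoothness of $\De$ makes $\{\Hat\eta_i' : i\in I\}\cup\{\Tilde\eta_j' : j\in J\}$ a lattice basis of $\lat$; in particular $\{\Tilde\eta_j' : j\in J\}$ is a lattice basis of $W_\Z$, which proves saturation and supplies the splitting $\ft = W\oplus\mathrm{span}\{\Hat\eta_i' : i\in I\}$.

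The heart of the argument is to produce the injection $\iota\colon\Tilde\ft\to\ft$ with image $W$ and $\iota(\Tilde\eta_j)=\Tilde\eta_j'$ for \emph{all} $j$, not merely for $j\in J$. Let $q\colon\ft\to\ft/\mathrm{span}\{\Hat\eta_i':i\in I\}$ be the quotient; by the splitting above $q$ restricts to a lattice isomorphism $W_\Z\cong(\ft/\mathrm{span}\{\Hat\eta_i'\})_\Z$, and $q(\Tilde\eta_j')$ is exactly the primitive outward conormal to the facet $\Hat F_I'\cap\Tilde F_j'$ of the smooth face $\Hat F_I'$ — here smoothness is what guarantees that the image of the primitive vector $\Tilde\eta_j'$ remains primitive in the quotient lattice. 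The third hypothesis provides an integral-affine identification of $P(\Hat F_I')$ with $\Tilde\ft^*$ under which $\Hat F_I'$ and $\Tilde\De$ are combinatorially equivalent with parallel facets; since parallel primitive conormals of smooth polytopes agree, this dualizes to a lattice isomorphism $\psi$ with $\psi(q(\Tilde\eta_j'))=\Tilde\eta_j$ for all $j$. Then $\iota := (q|_W)^{-1}\circ\psi^{-1}$ is a lattice isomorphism $\Tilde\ft\to W$ satisfying $\iota(\Tilde\eta_j)=\Tilde\eta_j'$ for every $j$. Running the same argument with the roles of fiber and base exchanged — now using the fourth hypothesis and the face $\Tilde F_J'$, whose facet conormals are the images of the $\Hat\eta_i'$ in $\ft/W=\Hat\ft$ — identifies $\Hat\ft$ with the base lattice and yields $\pi(\Hat\eta_i')=\Hat\eta_i$, completing the verification of Definition~\ref{def:bund}.

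I expect the main obstacle to be precisely the consistency for all $j$ in the construction of $\iota$: the hypotheses only directly control the $\tilde n$ conormals through a single vertex, and one must convert the purely geometric ``analogous'' data of the third hypothesis into the algebraic identity $\iota(\Tilde\eta_j)=\Tilde\eta_j'$ for the remaining fiber conormals. The technical crux underlying this is the repeated use of smoothness to ensure that passing to the face $\Hat F_I'$ (i.e.\ applying $q$) carries primitive conormals to primitive conormals, so that the parallelism supplied by ``analogous'' upgrades to genuine equality of integral vectors.
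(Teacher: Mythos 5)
Your argument is correct. Note, however, that the paper offers no proof to compare against: it states only that ``the proof is elementary and is left to the reader,'' so your write-up is supplying details the authors chose to omit rather than paralleling or diverging from an existing argument. What you have done is the natural verification: the forward direction is immediate from Definition~\ref{def:bund} and the remarks preceding the lemma, and for the converse you correctly identify the two real points --- that smoothness at a vertex of $\Hat F_{I}'$ makes $\{\Tilde{\eta}_j'\}_{j\in J}$ a basis of the saturated sublattice $W\cap\lat$ (giving the splitting and the exact sequence), and that the ``analogous'' hypotheses, read as lattice-preserving identifications under which primitive parallel outward conormals coincide, upgrade the data at a single vertex to the identities $\iota(\Tilde\eta_j)=\Tilde\eta_j'$ and $\pi(\Hat\eta_i')=\Hat\eta_i$ for \emph{all} $j$ and $i$. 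The one point worth flagging explicitly in your write-up is that this reading of ``analogous'' (integral identification, matching primitive conormals rather than merely parallel affine hulls) is essential; with a weaker, non-integral reading the converse would fail, so you are right to lean on it.
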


Later, we will also need the following result from Part I; it  explains why the smooth case is easier than the general one.

\begin{lemma}[I, Lemma 4.10] \labell{prodsimp}
Let $\Delta$ be a smooth polytope which is combinatorially
equivalent to $\Delta_k \times \Delta_n$.
Then $\Delta$ is either a $\Delta_k$ bundle over $\Delta_n$,
or  a $\Delta_n$ bundle over $\Delta_k$.
\end{lemma}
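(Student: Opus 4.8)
The plan is to verify the criterion of Lemma~\ref{le:recog}. Label the facets of $\De$ so that $A_0,\dots,A_k$ are those corresponding to the factors $F\times\De_n$ and $B_0,\dots,B_n$ are those corresponding to $\De_k\times G$; write $\al_0,\dots,\al_k$ and $\be_0,\dots,\be_n$ for their primitive outward conormals, and set $W_\al=\mathrm{span}(\al_0,\dots,\al_k)$ and $W_\be=\mathrm{span}(\be_0,\dots,\be_n)$. The cases $k=0$ or $n=0$ are trivial (then $\De$ is itself a simplex), so assume $k,n\ge 1$. Every vertex of $\De$ has the form $v=\bigcap_{i\ne i_0}A_i\cap\bigcap_{j\ne j_0}B_j$, and smoothness says that $\{\al_i:i\ne i_0\}\cup\{\be_j:j\ne j_0\}$ is then a lattice basis of $\ft$; in particular $\dim W_\al\ge k$ and $\dim W_\be\ge n$, and since these conormals span $\ft$ we have $W_\al+W_\be=\ft$. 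By Lemma~\ref{le:recog}, once I know $\dim W_\al=k$ (resp.\ $\dim W_\be=n$) the remaining hypotheses are routine: the "analogous face" conditions hold automatically because the faces $\bigcap_{j\ne j_0}B_j\cong\De_k$ and $\bigcap_{i\ne i_0}A_i\cong\De_n$ are smooth simplices, hence analogous to $\De_k$ and $\De_n$. So everything reduces to showing that $\dim W_\al=k$ or $\dim W_\be=n$.

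The key step is to extract a linear relation from the simplex faces. Fix $j_0$ and consider the face $f=\bigcap_{j\ne j_0}B_j$, which is combinatorially $\De_k$ and, being a face of the smooth polytope $\De$, is a smooth $k$-simplex in its affine span. Its facets are the $f\cap A_i$, and smoothness of $\De$ at the vertices of $f$ shows that the images $q(\al_i)$ under the quotient $q\colon\ft\to\ft/\mathrm{span}(\be_j:j\ne j_0)$ are the primitive conormals of $f$. Since the primitive conormals of a smooth $k$-simplex sum to zero, I obtain $\sum_{i=0}^k q(\al_i)=0$, that is,
\[
s_\al:=\sum_{i=0}^k\al_i\ \in\ \mathrm{span}(\be_j:j\ne j_0)\qquad\text{for every }j_0.
\]

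Now I finish by a dimension dichotomy on $W_\be$. If $\dim W_\be=n$, then the $\be_j$ are linearly dependent, so by Lemma~\ref{le:recog} the polytope $\De$ is a $\De_n$ bundle over $\De_k$. Otherwise $\dim W_\be=n+1$, so $\be_0,\dots,\be_n$ are independent and $\bigcap_{j_0}\mathrm{span}(\be_j:j\ne j_0)=\{0\}$; combined with the displayed relation this forces $s_\al=0$. Hence $\al_0,\dots,\al_k$ are dependent, so $\dim W_\al=k$, and $\De$ is a $\De_k$ bundle over $\De_n$. This proves the dichotomy and hence the lemma.

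The part I expect to need the most care is the extraction of the relation $\sum_i q(\al_i)=0$: one must check that smoothness of $\De$ really descends to the face $f$, so that the $q(\al_i)$ are the primitive conormals of a smooth simplex and genuinely sum to zero (rather than to some other integral combination). The concluding dimension count is then short, the only subtlety being the observation that $\bigcap_{j_0}\mathrm{span}(\be_j:j\ne j_0)$ vanishes precisely when the $\be_j$ are linearly independent.
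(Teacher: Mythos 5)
Your proof is correct: the reduction to Lemma~\ref{le:recog}, the key observation that each face $\bigcap_{j\ne j_0}B_j$ is a smooth $k$-simplex whose primitive outward conormals (the images $q(\al_i)$, primitive by smoothness at a vertex of that face) sum to zero, hence $\sum_i\al_i\in\mathrm{span}(\be_j:j\ne j_0)$ for every $j_0$, and the concluding dichotomy on $\dim W_\be$ are all sound. This is essentially the argument of [I, Lemma~4.10], which the present paper only cites rather than reproves, so there is nothing further to compare.
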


Finally here are some detailed results about 
mass linear pairs in dimensions $2$ and $3$.

\begin{prop}[I, Proposition 4.2 and Corollary 4.3]\labell{prop:2dim}  
Let $H \in \ft$ be a nonzero mass linear function on a 
smooth
polygon $\De \subset \ft^*$.
Then one of the following statements holds:
\begin{itemize}
\item $\Delta$ is the simplex $\Delta_2$; at most one edge is symmetric.
\item $\Delta$ is a $\Delta_1$ bundle over $\Delta_1$; the base facets are the asymmetric edges.
\item $\Delta$ is the product $\Delta_1 \times \Delta_1$; each edge is asymmetric.
\end{itemize}
In any case, $H$ is inessential.
Moreover, if two edges $F_i$ and $F_j$ do not intersect
then $\gamma_i + \gamma_j = 0$, where $\gamma_k$ is the coefficient
of the support number of $F_k$ in the linear function $\langle H, c_\Delta \rangle$.
\end{prop}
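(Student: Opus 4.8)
The plan is to first bound the number $N$ of edges, then classify the resulting polygons and read off the symmetric structure. Write $\De = \bigcap_{i=1}^N\{\langle\eta_i,x\rangle\le\ka_i\}$ with the edges $F_1,\dots,F_N$ labelled cyclically, so that $F_i$ meets only its neighbours $F_{i-1}$ and $F_{i+1}$. Since $H$ is nonzero, Proposition~\ref{prop:2asym} gives at least two asymmetric edges, and by [I, Proposition~A.2] every asymmetric facet is \emph{powerful}, i.e.\ joined by an edge of $\De$ to every vertex not lying on it. In a polygon the only vertices joined to a vertex of $F_i$ by an edge are the two endpoints of $F_{i-1}$ and $F_{i+1}$ that do not lie on $F_i$; hence a powerful edge can be joined to at most two vertices off itself, forcing $N-2\le 2$. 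Thus $N\le 4$, and since a polygon has $N\ge 3$ we are reduced to triangles and quadrilaterals.

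Next I would pin down the geometry using Proposition~\ref{prop:asym}: each asymmetric edge is pervasive or flat. For $N\ge 4$ no edge meets all the others, so no edge is pervasive; hence every asymmetric edge $F_i$ is flat, which for a polygon means that the conormals $\eta_{i-1},\eta_{i+1}$ of its two neighbours are parallel. In the quadrilateral case the neighbours of $F_1$ are $F_2$ and $F_4$, so an asymmetric edge immediately produces a parallel pair of opposite edges; two asymmetric edges either produce one such pair (a trapezoid) or both pairs (a parallelogram). A smooth triangle is $\Delta_2$, a smooth parallelogram is $\Delta_1\times\Delta_1$, and a trapezoid is recognised as a $\Delta_1$ bundle over $\Delta_1$ by Lemma~\ref{le:recog}, the two parallel edges being the fiber facets. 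This produces exactly the three listed polytopes.

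It remains to determine the symmetric edges and prove inessentiality. In the triangle case at most one edge is symmetric because there are at least two asymmetric edges; moreover the primitive conormals of $\Delta_2$ satisfy $\eta_1+\eta_2+\eta_3=0$, so any $H=\sum\be_i\eta_i$ may be normalised to satisfy $\sum\be_i=0$, and since the single equivalence class is $\{1,2,3\}$ this shows $H$ is inessential. For the trapezoid, the analysis above shows that the two fiber facets are symmetric while the base facets are the asymmetric ones; there are thus exactly two asymmetric facets, which are equivalent by Lemma~\ref{le:equiv} (the conormals of the fiber facets are parallel), so Proposition~\ref{prop:2asym} gives inessentiality directly. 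For the parallelogram the two equivalence classes are the two pairs of opposite edges, and $H$ is inessential because $H=\sum\be_i\eta_i$ with $\be_1+\be_3=\be_2+\be_4=0$ can represent any vector. Comparing coefficients shows that the two opposite edges of a pair are simultaneously symmetric or asymmetric, so the parallelogram either has all four edges asymmetric (the product case) or reduces to a bundle with the asymmetric pair as base facets, matching the statement.

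Finally, for non-intersecting edges $F_i,F_j$ I would read $\ga_i$ off the inessential representation: by Proposition~\ref{prop:inessential}, $\langle H,c_\De\rangle=\sum\be_i\ka_i$ for the inessential representation $H=\sum\be_i\eta_i$, so that $\ga_i=\be_i$ by uniqueness of the coefficients of a linear function. In a polygon a non-intersecting pair is a pair of opposite edges of a quadrilateral, and in each case such a pair is either symmetric (so $\ga_i=\ga_j=0$) or forms a single equivalence class, whence $\ga_i+\ga_j=0$ by the defining condition of inessentiality. I expect the main obstacle to be the bound $N\le 4$: the clean route uses the powerful-facet property, and if one prefers to avoid it one must rule out pentagons and larger polygons using only Proposition~\ref{prop:asym}, where the flat condition alone is necessary but not sufficient and an additional convexity argument on the turning of the conormals is required.
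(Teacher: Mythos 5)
This proposition is imported from Part~I (Proposition~4.2 and Corollary~4.3); the present paper gives no proof of it, so there is nothing in-text to compare against line by line. Your argument is correct and self-contained given the toolkit this paper does make available, and it is worth recording how it differs from the route one would extract from Part~I. Your bound $N\le 4$ comes from the ``powerful facet'' property of [I, Proposition~A.2] (every asymmetric facet is joined by an edge to every vertex off it), which in a polygon immediately caps the number of vertices off an asymmetric edge at two; Part~I instead reaches the same classification by combining the at-least-two-asymmetric-facets statement (Proposition~\ref{prop:2asym}) with the flat-or-pervasive dichotomy (Proposition~\ref{prop:asym}) and a convexity/turning argument on the conormals, which is the messier alternative you correctly anticipate at the end. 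The rest of your argument matches what the available results force: flatness of an asymmetric edge in a quadrilateral makes its two neighbours antiparallel, which yields exactly the trapezoid and parallelogram cases; smoothness pins down $\Delta_2$ and $\Delta_1\times\Delta_1$; Lemma~\ref{le:recog} identifies the trapezoid as a $\Delta_1$ bundle over $\Delta_1$ with the parallel (symmetric) pair as fiber facets; and inessentiality plus the identity $\gamma_i=\beta_i$ from Proposition~\ref{prop:inessential} and uniqueness of linear coefficients gives $\gamma_i+\gamma_j=0$ for the opposite pairs. Two small remarks: invoking Lemma~\ref{le:equiv} for the equivalence of the two asymmetric edges of the trapezoid is redundant, since Proposition~\ref{prop:2asym} already asserts that exactly two asymmetric facets are equivalent and $H$ is inessential; and in the final step one should note explicitly (as your coefficient comparison does show) that a non-intersecting pair is never ``mixed'' --- in the trapezoid the parallel pair is forced symmetric because neither of its members is flat or pervasive, and in the parallelogram $\gamma_i=-\gamma_j$ for opposite edges makes each pair symmetric or asymmetric together.
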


\begin{prop}[I, Theorem 1.4, 
Proposition 4.14, and Lemma 4.15]\labell{prop:3d}
Let $H \in \ft$ be a mass linear function on a smooth
$3$-dimensional polytope $\Delta \subset \ft^*$.
If $\Delta$ has more than two asymmetric facets,
then one of the following statements holds:
\begin{itemize}
\item $\Delta$ is the simplex $\Delta_3$.
\item $\Delta$ is a $\Delta_1$ bundle over $\Delta_2$; the base facets
are the asymmetric facets.
\item $\Delta$ is a $\Delta_2$ bundle over $\Delta_1$; if either base facet
is asymmetric then both are.
\item $\Delta$ is a $\Delta_1$ bundle over $\Delta_1 \times \Delta_1$;
the base facets are the asymmetric facets.
\item $\Delta$ is the product $\Delta_1 \times \Delta_1 \times \Delta_1$;
every facet is asymmetric.
\end{itemize}
Moreover, $H$ is inessential unless
$\Delta$ is a $\Delta_2$ bundle over $\Delta_1$.
Finally,  $$\sum_{i=1}^N \gamma_i = 0,$$
where $\gamma_i$ is the coefficient of the support number of the
facet $F_i$ in the linear function $\langle H, c_\Delta \rangle.$
\end{prop}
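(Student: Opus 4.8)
The plan is to prove the classification by descending from the already-established two-dimensional case (Proposition~\ref{prop:2dim}), using the behavior of mass linearity on symmetric faces (Proposition~\ref{prop:symcent}) as the inductive engine. The first dichotomy is whether $\Delta$ has a symmetric facet. Suppose it does, say $G$. Then $H|_G$ is a mass linear function on the polygon $G$, and intersection with $G$ gives a bijection between the asymmetric facets of $\Delta$ and the asymmetric edges of $G$. Since $\Delta$ has more than two asymmetric facets, $G$ carries a mass linear function with more than two asymmetric edges; by Proposition~\ref{prop:2dim} this forces $G$ to be either a triangle $\Delta_2$ (three asymmetric edges) or a parallelogram $\Delta_1\times\Delta_1$ (four asymmetric edges) — the trapezoid is excluded because it has only two — and in particular $\Delta$ has exactly three or four asymmetric facets.

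Next I would reconstruct $\Delta$ from the shape of $G$ together with the asymmetric facets lying over its edges. When $G=\Delta_2$, the three asymmetric facets meet $G$ along its three edges; applying the same argument to any other (necessarily symmetric) facet shows that it too must be a triangle, so $\Delta$ is combinatorially $\Delta_3$ (one symmetric facet, four in total) or the triangular prism $\Delta_2\times\Delta_1$. When $G=\Delta_1\times\Delta_1$, the four asymmetric facets sit over the four edges of the parallelogram, and the analogous reasoning identifies $\Delta$ as combinatorially $\Delta_1^3$ or again $\Delta_2\times\Delta_1$. In each case I would invoke Lemma~\ref{prodsimp} to promote the combinatorial product of simplices to an honest bundle and Lemma~\ref{le:recog} to pin down fiber and base, then read off the bundle type and the asymmetric facets. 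A recurring trick rules out spurious mixed configurations: by Proposition~\ref{prop:2dim} a quadrilateral can support a mass linear function with exactly two or exactly four asymmetric edges, but never three, which immediately eliminates candidate symmetric facets meeting the wrong number of asymmetric facets.

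The remaining case, where \emph{every} facet of $\Delta$ is asymmetric, is where I expect the real difficulty to lie, since there is no symmetric face on which to induct. Here I would appeal to the combinatorial fact — the positive answer to Question~\ref{q:1} in low dimensions, established in the appendix of Part~I — that a polytope all of whose facets are asymmetric is combinatorially equivalent to a product of simplices; in dimension three this leaves only $\Delta_3$, $\Delta_2\times\Delta_1$, and $\Delta_1^3$, which are then handled via Lemma~\ref{prodsimp} as above. I expect this to be the main obstacle: the symmetric-facet induction is clean and essentially mechanical, whereas ruling out exotic all-asymmetric polytopes requires the genuinely combinatorial argument, precisely the step whose higher-dimensional analogue is not available.

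Finally I would extract the two auxiliary assertions. For essentiality I would use Lemma~\ref{le:equiv}: in every case except the $\Delta_2$ bundle over $\Delta_1$ the facets fall into equivalence classes forcing $H$ to be inessential — all facets of $\Delta_3$ are equivalent, the two fiber facets of any $\Delta_1$-bundle are equivalent, and opposite facets of $\Delta_1^3$ pair up — whereas the three fiber conormals of a $\Delta_2$ bundle over $\Delta_1$ lie in a two-dimensional subspace yet are generically pairwise inequivalent, so an essential combination can survive. For the identity $\sum_i\gamma_i=0$, I would note that symmetric facets have $\gamma_i=0$ and argue case by case: in the inessential cases $\gamma_i=\beta_i$ by Proposition~\ref{prop:inessential} and Lemma~\ref{le:Hsum}, whence $\sum_i\gamma_i=\sum_{I}\sum_{i\in I}\beta_i=0$; in the essential $\Delta_2$-bundle case one restricts $H$ to a symmetric base triangle and uses that the coefficients on a simplex sum to zero (Proposition~\ref{prop:symcent} identifies these with the fiber coefficients of $\Delta$), the base facets contributing nothing.
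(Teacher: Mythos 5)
This proposition is not proved in the paper at all: it is imported verbatim from Part I (the bracketed citation [I, Theorem 1.4, Proposition 4.14, and Lemma 4.15] is the entire "proof"), so there is no in-paper argument to compare against. Your skeleton -- restrict to a symmetric facet via Proposition~\ref{prop:symcent}, feed the result into the two-dimensional classification of Proposition~\ref{prop:2dim}, and fall back on the combinatorial product-of-simplices result [I, Corollary A.8] when every facet is asymmetric -- is exactly the methodology this paper uses in its own four-dimensional arguments, and the overall plan is sound. However, there are concrete gaps.

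The sharpest error is in your essentiality argument: the two fiber facets of a $\Delta_1$-bundle are equivalent only when the bundle is trivial (by Lemma~\ref{le:equiv} they are equivalent exactly if the base conormals span a hyperplane, which fails for a nontrivial bundle; cf.\ Lemma~\ref{le:bund}(ii)), and in any case the asymmetric facets in the second and fourth bullets are the \emph{base} facets, so the relevant fact is that those fall into equivalence classes (all three base facets over $\Delta_2$, opposite pairs over $\Delta_1\times\Delta_1$, by Lemma~\ref{le:bund}(i)), after which Lemma~\ref{le:ea} and Proposition~\ref{prop:2asym} finish the job. Second, passing from ``combinatorially a product of simplices'' to the precise bundle descriptions in the bullets needs more than Lemmas~\ref{prodsimp} and \ref{le:recog}: you must explain why an all-asymmetric combinatorial cube is the honest product $(\Delta_1)^3$, why a prism whose two triangular facets are asymmetric cannot be merely a nontrivial $\Delta_1$ bundle over $\Delta_2$, and you must bound the number of symmetric facets by two (a vertex count, or the fact that distinct symmetric facets meet a fixed edge $F_{ij}$ in distinct vertices). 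The first two of these follow from the pervasive-or-flat dichotomy of Proposition~\ref{prop:asym} and Proposition~\ref{prop:flat}(i), which your outline never invokes but which is indispensable here. Finally, your proof of $\sum\gamma_i=0$ breaks down in precisely the one essential case: a $\Delta_2$ bundle over $\Delta_1$ may have \emph{both} base facets asymmetric (third bullet), leaving no symmetric triangle to restrict to; you need the decomposition $H=H'+\Tilde H$ of Proposition~\ref{prop:bund} to reduce to a function with symmetric base facets before restricting.
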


\subsection{Bundles}\labell{ss:bund}

In this subsection, we give a new way to construct 
mass linear functions on  bundles.
More precisely, we show that there is a one-to-one
correspondence between (essential) mass linear functions on the
base and  (essential) mass linear functions 
with symmetric fiber facets on the bundle.
Let $\Delta \subset \ft^*$ be a $\Tilde\De \subset \Tilde{\ft}^*$
bundle over 
$\Hat\De \subset \Hat{\ft}^*$
and  consider  the associated short exact sequences
\begin{equation}\labell{short}
0 \to \Tilde\ft \stackrel{\iota}{\to}  \ft \stackrel{\pi}{\to} \Hat\ft \to 0
 \quad \mbox{and} \quad 
0 \to \Hat\ft^* \stackrel{\pi^*}{\to} 
 \ft^* \stackrel{\iota^*}{\to} \Tilde\ft^* \to 0.
\end{equation}
We begin our discussion with the following elementary but useful lemma.

\begin{lemma}\labell{le:bund}
Let $\Delta \subset \ft^*$ be a $\Tilde \Delta \subset \Tilde\ft^*$
bundle over $\Hat\De \subset \Hat\ft^*$. 
\begin{itemize}\item[(i)]
Two base facets $\Hat F_i\,\!'$ and $\Hat F_j\,\!'$ of $\Delta$ are equivalent
exactly if the corresponding facets $\Hat F_i$ and $\Hat F_k$ of $\Hat \Delta$ 
are equivalent. 
\item[(ii)]  If two fiber facets of $\Delta$ are equivalent then the 
corresponding facets of the fiber are 
equivalent;  the converse need not hold.
\item[(iii)] A base facet 
$\Hat F_i\,\!'$ of $\Delta$ is never equivalent to
a fiber facet  $\Tilde F_j\,\!'$.
\end{itemize}
\end{lemma}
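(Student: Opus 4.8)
The plan is to deduce all three parts from the root characterization of equivalence in Remark~\ref{rmk:equiv}: two facets $F_i,F_j$ of a bounded smooth polytope are equivalent exactly when there is a \emph{nonzero} $\xi$ parallel to the conormals of all the other facets. I would phrase each statement in terms of such roots and transport them between $\Delta$, $\Hat\De$ and $\Tilde\De$ via the dual sequence \eqref{short}, namely $0\to\Hat\ft^*\xrightarrow{\pi^*}\ft^*\xrightarrow{\iota^*}\Tilde\ft^*\to0$. Two elementary facts are used repeatedly: (1) the fiber conormals $\Tilde\eta_j'=\iota(\Tilde\eta_j)$ lie in and span $\iota(\Tilde\ft)=\ker\pi$, while the base conormals satisfy $\pi(\Hat\eta_i')=\Hat\eta_i$; and (2) the conormals of a bounded polytope span the ambient space, and remain spanning after deleting any single one — at a vertex lying off the deleted facet the remaining conormals already contain a basis. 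I would also note the evident typo in (i): ``$\Hat F_k$'' should read ``$\Hat F_j$''.

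For the rigid cases (iii) and (i), suppose $\xi\neq0$ is a root. In (iii) the root is parallel to every fiber conormal except $\Tilde\eta_j'$, so $\langle\Tilde\eta_\ell,\iota^*\xi\rangle=\langle\Tilde\eta_\ell',\xi\rangle=0$ for all $\ell\neq j$; since those $\Tilde\eta_\ell$ span $\Tilde\ft$ by fact (2), we get $\iota^*\xi=0$, hence $\xi=\pi^*\hat\xi$ for a unique $\hat\xi\in\Hat\ft^*$. But then $\langle\Hat\eta_k,\hat\xi\rangle=\langle\Hat\eta_k',\xi\rangle=0$ for all $k\neq i$, and fact (2) for the base forces $\hat\xi=0$, so $\xi=0$, a contradiction; thus a base facet is never equivalent to a fiber facet. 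Part (i) is the same computation with \emph{all} fiber conormals excluded: the fiber conditions again give $\iota^*\xi=0$, so $\xi=\pi^*\hat\xi$ with $\hat\xi\neq0$, and $\langle\Hat\eta_k',\xi\rangle=\langle\Hat\eta_k,\hat\xi\rangle$ shows $\xi$ is a root for $\Hat F_i',\Hat F_j'$ precisely when $\hat\xi$ is a root for $\Hat F_i,\Hat F_j$. Conversely, given such a $\hat\xi$, the vector $\xi:=\pi^*\hat\xi\neq0$ is parallel to every fiber conormal because $\iota^*\pi^*=0$ by exactness, so the correspondence runs both ways and $\Hat F_i'\sim\Hat F_j'\iff\Hat F_i\sim\Hat F_j$.

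For the forward direction of (ii), let $\xi\neq0$ be a root for fiber facets $\Tilde F_a',\Tilde F_b'$, so $\xi$ is parallel to every base conormal and to every fiber conormal except $\Tilde\eta_a',\Tilde\eta_b'$. Set $\Tilde\xi:=\iota^*\xi$; then $\langle\Tilde\eta_\ell,\Tilde\xi\rangle=\langle\Tilde\eta_\ell',\xi\rangle=0$ for all $\ell\neq a,b$, so $\Tilde\xi$ is the candidate root for $\Tilde F_a,\Tilde F_b$. The only point to check is $\Tilde\xi\neq0$: if $\iota^*\xi=0$ then $\xi=\pi^*\hat\xi$ with $\hat\xi\neq0$, and the base conditions would make $\hat\xi$ parallel to \emph{all} base conormals, contradicting that they span $\Hat\ft$. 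Hence $\Tilde\xi\neq0$ and $\Tilde F_a\sim\Tilde F_b$. To show the converse fails I would exhibit a nontrivial $\De_1$ bundle over $\De_1$, i.e.\ a Hirzebruch trapezoid $\mathbb{F}_n$ with $n\neq0$: the two facets of the fiber $\De_1$ are equivalent (any two facets of a simplex are), yet the corresponding fiber facets of the trapezoid are its top and bottom edges, which are parallel but \emph{not} equivalent — homologically they are the zero and infinity sections, with self-intersections $\pm n$. The obstruction is exactly that the fiber root $\Tilde\xi$ does not lift to a root of $\Delta$ parallel to the twisted base conormals.

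The only genuinely delicate step is the direction-changing argument in (ii), and it reduces to the nonvanishing of $\iota^*\xi$; everything else is bookkeeping with \eqref{short} and fact (2). I expect the main obstacle to be pinning down fact (2) cleanly and confirming that Remark~\ref{rmk:equiv} really certifies equivalence from a \emph{nonzero} root — which holds because deleting two conormals of a bounded polytope lowers their span by at most one dimension (otherwise a nonzero vector in the two-dimensional annihilator would give a nontrivial recession direction), so a nonzero root forces the codimension-one condition of Lemma~\ref{le:equiv}.
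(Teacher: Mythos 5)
Your proof is correct and takes essentially the same route as the paper's: the paper likewise deduces (i) and (iii) from Lemma~\ref{le:equiv}, the exact sequences \eqref{short}, and the fact that the conormals of a compact polytope minus any single one still span, only phrasing the argument via the codimension of the span of the remaining conormals rather than dually via roots as in Remark~\ref{rmk:equiv}. The one cosmetic difference is the counterexample in (ii), where the paper cites a $\De_2$ bundle over $\De_1$ with $(a_1,a_2)\neq(0,0)$ while your nontrivial Hirzebruch trapezoid serves equally well.
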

\begin{proof}
To prove (i),
note that \eqref{short} implies that 
the image of $\pi^* \colon \Hat{\ft}^* \to \ft^*$ 
is the annihilator  of $\iota(\Tilde \ft) \subset \ft$; 
moreover,
$$
\langle \Hat\eta_i\,\!', \pi^*(\Hat\xi) \rangle 
 = \langle \pi(\Hat\eta_i\,\!'), \Hat\xi \rangle 
 = \langle \Hat\eta_i,\Hat\xi \rangle 
\qquad \forall \ \Hat\xi \in \Hat\ft \ \mbox{and} \ \forall  \ i.
 $$ 
Hence, the claim follows from Lemma~\ref{le:equiv}.

The first part of (ii) is easy.  To illustrate  the second, 
consider the  $\De_2$ bundle over $\De_1$ with polytope
$Y$ as in Equation~\eqref{eq:Yb}. 
The fiber facets are not all equivalent unless $a_1 = a_2 = 0$.

To prove (iii),
note that since $\Hat \Delta$ is compact,
the outward conormals to all but one facet of $\Hat \Delta$
still span $\Hat \ft$.  Since the same
holds for $\Tilde \Delta$, the claim follows from
\eqref{short} and Lemma~\ref{le:equiv}.
\end{proof}

We are now ready -- after one last definition -- to state 
the main result of this subsection.

\begin{definition}\labell{def:lift}
Let $\Hat H \in \Hat \ft$ be a  mass linear function on $\Hat \Delta$;
write $\langle \Hat H, c_{\Hat \Delta} \rangle = \sum \beta_i \Hat{\kappa}_i$.
The 
{\bf lift} of $\Hat H$ to $\Delta$ is
$$H =  \sum \beta_i 
\Hat\eta_i\,\!' \in \ft.$$ 
\end{definition}

Note that, by
Lemma~\ref{le:Hsum}, $\Hat H = \sum \beta_i \Hat\eta_i$;
hence, $\pi(H) = \Hat H$.

\begin{prop}\labell{prop:lift}
Let $\Delta \subset \ft^*$ be a $\Tilde\De \subset \Tilde{\ft}^*$ bundle over 
$\Hat\De \subset \Hat{\ft}^*$.
Then the following  hold: 
\begin{itemize}
\item[(i)]
If $H \in \ft$ is mass linear on $\De$ and the fiber facets
are symmetric, then $\Hat H = \pi(H)$ is mass linear on $\Hat\De$.
More specifically, 
if $\langle H, c_\Delta \rangle = \sum \beta_i \Hat\kappa_i\,\!'$,
then 
$\langle \Hat H, c_{\Hat\Delta} \rangle = \sum \beta_i \Hat\kappa_i.$
\item[(ii)] Conversely, 
if $\Hat H \in \Hat\ft$ is  mass linear on $\Hat \De$,
then the 
lift of $\Hat H$ to $\Delta$ is  
mass linear on $\Delta$ and the fiber facets are symmetric.
\item[(iii)]  
In the cases described above, $H$ is inessential
on $\Delta$ exactly if $\Hat H$ is inessential on $\Hat \Delta$.
\end{itemize}
In particular, $\pi$ induces a one-to-one correspondence between
essential mass linear functions on $\Delta$ with symmetric
fiber facets and essential mass linear functions on 
$\Hat \Delta$.
\end{prop}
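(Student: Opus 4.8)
The plan is to use the projection dual to the fiber inclusion, namely $\iota^*\colon\ft^*\to\Tilde\ft^*$, as the main organizing tool. Since the fiber facets have conormals $\iota(\Tilde\eta_j)$, the map $\iota^*$ carries $\De$ onto $\Tilde\De$, and over each interior point $\Tilde x\in\Tilde\De$ the fiber $\De\cap(\iota^*)^{-1}(\Tilde x)$ is cut out only by the base facets. Choosing any linear section $s\colon\Tilde\ft^*\to\ft^*$ of $\iota^*$ and writing points of this fiber as $x=\pi^*(\hat y)+s(\Tilde x)$ with $\hat y\in\Hat\ft^*$, the base inequality $\langle\Hat\eta_i',x\rangle\le\Hat\ka_i'$ becomes $\langle\Hat\eta_i,\hat y\rangle\le\Hat\ka_i'-\langle\Hat\eta_i',s(\Tilde x)\rangle$ (using $\pi(\Hat\eta_i')=\Hat\eta_i$ and exactness of \eqref{short}); hence each fiber is the analogous polytope $\Hat\De(\ka'(\Tilde x))$ whose support numbers $\Hat\ka_i'(\Tilde x):=\Hat\ka_i'-\langle\Hat\eta_i',s(\Tilde x)\rangle$ depend affinely on $\Tilde x$ and stay in $\Cc_{\Hat\De}$.

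For (i) I would avoid this computation and argue via symmetric faces. Pick a vertex $\Tilde F_J$ of the fiber; then $\Tilde F_J'=\cap_{j\in J}\Tilde F_j'$ is a symmetric face of $\De$, being an intersection of symmetric fiber facets. By Proposition~\ref{prop:symcent}, $H|_{\Tilde F_J'}$ is mass linear and $\langle H,c_{\Tilde F_J'}\rangle=\langle H,c_\De\rangle$. Now $\Tilde F_J'$ is analogous to $\Hat\De$ (see the discussion after Definition~\ref{def:bund}); under the identification $P(\Tilde F_J')\cong\Hat\ft^*$, which presents $P(\Tilde F_J')$ as a coset of $\pi^*(\Hat\ft^*)$, the restriction of $\langle H,\cdot\rangle$ becomes $\langle\pi(H),\cdot\rangle$ plus a constant, because $\langle H,\pi^*(\hat y)\rangle=\langle\pi(H),\hat y\rangle$. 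Thus $\Hat H=\pi(H)$ is mass linear on $\Hat\De$, and the coefficient identity $\langle\Hat H,c_{\Hat\De}\rangle=\sum\be_i\Hat\ka_i$ follows from the last clause of Proposition~\ref{prop:symcent}, after accounting for the translation of Remark~\ref{translate}.

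The substantive step, and the one I expect to be the main obstacle, is (ii), where I must compute $\langle H,c_\De\rangle$ directly for the lift $H=\sum\be_i\Hat\eta_i'$. Integrating over the fibration $\iota^*$ by Fubini, on the fiber over $\Tilde x$ one has $\langle H,x\rangle=\langle\Hat H,\hat y\rangle+\langle H,s(\Tilde x)\rangle$, so the inner integral equals $\bigl(\langle\Hat H,c_{\Hat\De(\ka'(\Tilde x))}\rangle+\langle H,s(\Tilde x)\rangle\bigr)\Vol\bigl(\Hat\De(\ka'(\Tilde x))\bigr)$. Since $\Hat H$ is mass linear, $\langle\Hat H,c_{\Hat\De(\ka'(\Tilde x))}\rangle=\sum\be_i\Hat\ka_i'(\Tilde x)=\sum\be_i\Hat\ka_i'-\langle H,s(\Tilde x)\rangle$, so the two section-dependent terms cancel and the inner integral reduces to $\bigl(\sum\be_i\Hat\ka_i'\bigr)\Vol\bigl(\Hat\De(\ka'(\Tilde x))\bigr)$. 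Integrating out $\Tilde x$ then gives $\langle H,c_\De\rangle=\sum\be_i\Hat\ka_i'$. This single identity proves mass linearity of the lift, shows the fiber support numbers $\Tilde\ka_j'$ do not appear (so the fiber facets are symmetric), and records the coefficients. The care required is precisely in setting up the affine parametrization with its constant Jacobian and in checking that this cancellation, which uses mass linearity of $\Hat H$, makes the answer independent of the arbitrary section $s$.

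For (iii) I would invoke Lemma~\ref{le:bund}: parts (i) and (iii) there show that the equivalence relation on the base facets of $\De$ is the pullback of that on $\Hat\De$ and that no class mixes base and fiber facets. Applying $\pi$ to any class-sum-zero expression kills every fiber conormal (exactness of \eqref{short}) and sends $\Hat\eta_i'$ to $\Hat\eta_i$, so a class-by-class comparison shows $H$ is inessential on $\De$ iff $\Hat H$ is inessential on $\Hat\De$; inessential functions being mass linear (Proposition~\ref{prop:inessential}) keeps this within the stated correspondence. Finally, the ``in particular'' bijection follows by combining the three parts: (ii) sends mass linear functions on $\Hat\De$ to mass linear functions on $\De$ with symmetric fiber facets, (i) provides the inverse $\pi$ (with $\pi\circ\mathrm{lift}=\id$ and, using Lemma~\ref{le:Hsum}, $\mathrm{lift}\circ\pi=\id$ on functions with symmetric fiber facets), and (iii) shows both maps preserve essentiality.
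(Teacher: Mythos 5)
Your proposal is correct and follows essentially the same route as the paper: slicing $\De$ by the fibers of $\iota^*$ into parallel copies of $\Hat\De$ whose support numbers vary affinely, using a symmetric face over a vertex of the fiber together with Proposition~\ref{prop:symcent} for (i), the slice-by-slice moment computation with the cancellation coming from mass linearity of $\Hat H$ for (ii), and Lemma~\ref{le:bund} with Proposition~\ref{prop:inessential} for (iii). The only differences are notational (a linear section $s$ in place of a basepoint in each fiber, and an explicit Fubini integral in place of the paper's equation~\eqref{relate} plus the remark that $\De$ is the union of the slices).
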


\begin{figure}[htbp] 
   \centering
\includegraphics[width=3in]{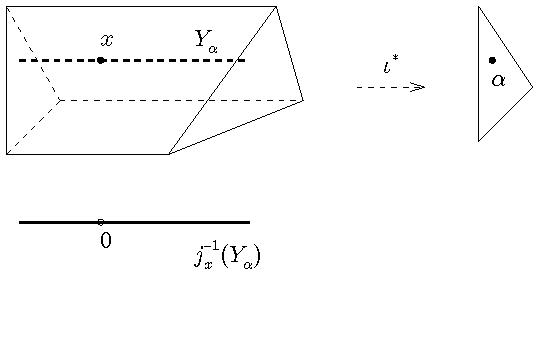} 
   \caption{Slicing  $\De$ by the \lq\lq sections" $Y_{\alpha}$}
   \label{fig:5}
\end{figure}
\begin{proof}
Fix any $H \in \ft$ and let $\Hat H = \pi(H)$.
Given $\alpha \in \iota^*(\Delta) \subset \Tilde\ft^*$,  
consider the slice $Y_\alpha$ of $\Delta$ defined by
$$Y_\alpha := \{ y \in \Delta \mid \iota^*(y) = \alpha\}.
$$
The $Y_\alpha$'s form a  family of parallel polytopes that are analogous
to the base polytope;
see Figure~\ref{fig:5}.
More precisely,  
fix $x \in (\iota^*)^{-1}(\alpha)  \subset \ft^*$ and define an isomorphism
$j_x \colon  \Hat{\ft}^* \to (\iota^*)^{-1}(\alpha)$ by $j_x(y) = \pi^*(y) + x$.
Since $Y_\alpha = 
\bigcap_{i = 1}^{\Hat{N}} \left\{ z \in \ft^* 
\left|\  \langle  \Hat{\eta}_i\,\!', z
\rangle \leq \Hat{\kappa}_i\,\!' \right. \right\} 
\cap (\iota^*)^{-1}(\alpha),$
$$
\Hat\De_x: = j_x^{-1}( Y_\alpha) = 
\bigcap_{i = 1}^{\Hat{N}} 
\left\{ y \in \Hat{\ft}^* \left| \ \langle \Hat{\eta}_i, y
\rangle \leq \Hat{\kappa}_i^x \right. \right\}, \quad \mbox{where} \quad 
 \Hat{\kappa}_i^x = \Hat{\kappa}_i\,\!' - \langle \Hat{\eta}_i\,\!', x \rangle.
$$
By definition, this polytope $\Hat\De_x$ is analogous to $\Hat\Delta$,
that is, its support numbers 
$\Hat\ka^x: =(\Hat\ka_i^x)$ lie
in the chamber $\Cc_{\Hat\De}$. 
Hence, $\langle \Hat H, c_{\Hat\Delta_x} \rangle = 
\langle \Hat H, c_{\Hat\Delta}(\Hat\ka^x) \rangle$.
Therefore,
\begin{equation}\label{relate}
\langle H, c_{Y_\alpha}(\Hat\kappa\,\!') \rangle =
\langle \Hat{H}, c_{\Hat\Delta}(\Hat\kappa^x) \rangle + \langle H,x \rangle,
 \quad \mbox{where} \quad 
\Hat{\kappa}_i^x = \Hat{\kappa}_i\,\!' - \langle \Hat{\eta}_i\,\!', x \rangle.
\end{equation}

First, 
assume that $H \in \ft$ is mass linear on $\Delta$
and that the fiber facets are symmetric; 
write $\langle  H, c_{\Delta} \rangle = \sum \beta_i 
\Hat{\kappa}_i\,\!'$.
Then  $H = \sum \beta_i \Hat{\eta}_i\,\!'$ by Lemma~\ref{le:Hsum}, and so 
$ \langle H,x \rangle = \sum\be_i \langle 
\Hat{\eta}_i\,\!',x \rangle$.
Choose
 $\alpha$ to 
 be a vertex of the polytope $\iota^*(\Delta)$, which is analogous
to  
the fiber
$\Tilde\Delta$.  Then $Y_\alpha$ is the intersection of
the corresponding fiber facets, and hence is a symmetric face of $\Delta$.
Thus, by Proposition~\ref{prop:symcent},  
$\langle H, c_{Y_\alpha}(\Hat\kappa\,\!') \rangle = 
\sum \beta_i \Hat\kappa_i\,\!'$.
Hence, substituting in Equation~(\ref{relate}) we find that 
$$
\langle \Hat H, c_{\Hat \Delta}(\Hat \kappa^x) \rangle = 
\sum \beta_i \Hat{\kappa}^x_i.
$$
This proves (i).

Conversely, assume that $\Hat H \in \Hat \ft$ is mass linear on  
$\Hat \Delta$;  write 
$\langle \Hat H, c_{\Hat \Delta} \rangle = \sum \beta_i \Hat{\kappa}_i$.  
Let $H =  \sum \beta_i \Hat\eta_i\,\!'$ be the lift of $\Hat H$.
By Lemma~\ref{le:Hsum}, $\Hat H = \sum \beta_i \Hat\eta_i$, and so $\pi(H) = \Hat H$.
Therefore, Equation~(\ref{relate}) implies that 
for all $\alpha \in \iota^*(\Delta)$,
$\langle  H, c_{Y_\alpha}(\Hat\kappa\,\!') \rangle = \sum \beta_i \Hat{\kappa}_i\,\!'$.
Since $\Delta$ is the union of such $Y_\alpha$, this immediately implies
that 
$\langle  H, c_{\Delta} \rangle = 
\sum \beta_i \Hat{\kappa}_i\,\!'$.
This proves (ii). .

Finally, (iii) 
follows immediately from 
Proposition~\ref{prop:inessential} and
Lemma \ref{le:bund}.
\end{proof}

In particular, every bundle over a simplex has many inessential functions
with symmetric fiber facets.
In [I, \S3.3]
we used this fact to
prove the following result\footnote{Proposition 3.22 in [I] has a
slightly different statement, but
its proof clearly establishes this stronger claim.}.

\begin{prop}[I, Proposition 3.22] 
\labell{prop:bund}  
Let $H \in \ft$ be a mass linear function on a 
polytope
$\Delta \subset \ft^*$ which is
a bundle over the 
simplex $\De_k$. 
Then we can write $H = H' + \Tilde H$, where
\begin{itemize}
\item $H'$ is inessential and the fiber facets are $H'$-symmetric, and
\item $\Tilde H$ is mass linear and the base facets are $\Tilde{H}$-symmetric.
\end{itemize}
\end{prop}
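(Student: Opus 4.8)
The plan is to exploit the correspondence established in Proposition~\ref{prop:lift} together with the abundance of inessential functions on a bundle over a simplex. First I would recall that, since $\Delta$ is a bundle with fiber $\Tilde\Delta$ over the simplex $\De_k$, the base facets $\Hat F_0\,\!', \dots, \Hat F_k\,\!'$ are precisely the facets corresponding to the $k+1$ facets of $\De_k$, and these are all mutually equivalent by Lemma~\ref{le:bund}(i) (because any two facets of a simplex are equivalent). This is the crucial structural input: the $k+1$ base facets form a single equivalence class (or union of equivalence classes that are all linked through the simplex base), so I can apply Lemma~\ref{le:ea} to them.

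Concretely, given the mass linear function $H$, I would apply Lemma~\ref{le:ea} to the family of equivalent base facets $\Hat F_0\,\!', \dots, \Hat F_k\,\!'$. This produces an inessential function $H'$ such that $\Tilde H := H - H'$ is mass linear and has all but possibly one of these base facets symmetric; moreover the symmetry type of every other facet (in particular every fiber facet) is unchanged by subtracting $H'$. To promote ``all but one base facet symmetric'' to ``all base facets symmetric,'' I would use Proposition~\ref{prop:2asym} applied to the base: if $\Tilde H$ had exactly one asymmetric base facet, that would contradict the fact that a nonzero mass linear function has at least two asymmetric facets within the relevant equivalence structure — alternatively, and more cleanly, I can invoke the last clause of Proposition~\ref{prop:2dim}/\ref{prop:3d}-style reasoning or simply Lemma~\ref{le:Hsum}: writing $\langle \Tilde H, c_\Delta\rangle = \sum \beta_i \Hat\ka_i\,\!'$ and using that the base facets of a simplex bundle satisfy a single linear relation among their conormals forces the leftover coefficient to vanish. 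So all base facets of $\Tilde H$ are symmetric.

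It then remains to check that the required decomposition holds: $H'$ is inessential with the fiber facets $H'$-symmetric, and $\Tilde H$ is mass linear with the base facets $\Tilde H$-symmetric. The second bullet is exactly what the previous step delivered, together with the fact that $\Tilde H$ is mass linear (Lemma~\ref{le:ea} guarantees this). For the first bullet, I would arrange the construction so that the inessential correction $H'$ is built entirely from the conormals of the base facets — that is, $H'$ is a combination $\sum \be_i \Hat\eta_i\,\!'$ of base conormals whose coefficients sum to zero over the single equivalence class. Such an $H'$ is inessential by definition, and since it involves no fiber conormals, the fiber facets are automatically $H'$-symmetric (their support numbers do not appear in $\langle H', c_\Delta\rangle$, which by Proposition~\ref{prop:inessential} equals $\sum \be_i \Hat\ka_i\,\!'$).

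The main obstacle, and the point requiring the most care, is the passage from ``at most one asymmetric base facet'' to ``no asymmetric base facet'' for $\Tilde H$. The naive count from Lemma~\ref{le:ea} leaves one facet potentially asymmetric, and one must rule this out using the simplex structure. The clean way is to observe that the $k+1$ base conormals of a $\De_k$-bundle satisfy, modulo $\iota(\Tilde\ft)$, the single defining relation of the simplex (their projections under $\pi$ are the conormals of $\De_k$, which sum — with appropriate positive weights — to zero); combined with Lemma~\ref{le:Hsum} and the fact that the coefficients of a mass linear function are essentially determined, this forces the one remaining base coefficient to be expressible via the inessential relation and hence absorbable into $H'$. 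I would verify this relation explicitly for the simplex base and then conclude that $\Tilde H$ has all base facets symmetric, completing the proof.
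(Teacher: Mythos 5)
Your overall strategy is reasonable and close in spirit to the argument the paper leans on (the paper does not reprove this statement; it cites [I, Proposition 3.22] and notes that the key input is the supply of inessential functions on a bundle over a simplex, i.e.\ lifts from the base). Your structural inputs are correct: the $k+1$ base facets are mutually equivalent by Lemma~\ref{le:bund}(i), none is equivalent to a fiber facet by Lemma~\ref{le:bund}(iii), and an inessential correction $H'$ built purely from base conormals with coefficients summing to zero has, by Proposition~\ref{prop:inessential}, all fiber facets $H'$-symmetric. Applying Lemma~\ref{le:ea} then makes $k$ of the $k+1$ base facets $\Tilde H$-symmetric without disturbing the fiber facets.

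The gap is exactly where you flag it, and neither of your proposed fixes closes it. Proposition~\ref{prop:2asym} gives no contradiction, because $\Tilde H$ may well have several asymmetric fiber facets in addition to the one remaining base facet, so there is no shortage of asymmetric facets. The linear-algebra route also fails: writing $\Tilde H = \gamma_{k+1}\Hat\eta_{k+1}' + \sum_j \gamma_j \Tilde\eta_j'$ via Lemma~\ref{le:Hsum} is perfectly consistent with $\gamma_{k+1}\neq 0$, since $\Hat\eta_{k+1}'$ does not lie in the span of the fiber conormals; and a coefficient vector that is nonzero on exactly one base facet and zero on the others can never be ``absorbed'' into an inessential function, precisely because inessentiality forces the coefficients to sum to zero over the equivalence class of base facets. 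So the claim you need --- that the leftover base coefficient vanishes --- is not a formal consequence of the relation among the conormals; it needs a genuine geometric input. One correct way to finish: once $\Hat F_1',\dots,\Hat F_k'$ are $\Tilde H$-symmetric, their intersection $f=\bigcap_{i=1}^{k}\Hat F_i'$ is a nonempty symmetric face of $\Delta$ (it corresponds to a vertex of $\Delta_k$). By Proposition~\ref{prop:symcent}, intersection with $f$ puts the asymmetric facets of $\Delta$ in bijection with those of $f$, so every $\Tilde H$-asymmetric facet must meet $f$; since $\Hat F_{k+1}'\cap f=\emptyset$ (the $k+1$ base facets have empty total intersection, just as in $\Delta_k$), the remaining base facet is forced to be $\Tilde H$-symmetric.
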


Part (i) of the next proposition is Corollary 3.24 from Part I.
The second part then follows easily from the proposition above,
just as in the proof of Proposition 3.25 in Part I.

\begin{prop}\labell{prop:flat}
Let $H \in \ft$ be a mass linear function
on 
a  polytope 
$\Delta \subset \ft^*$.
\begin{itemize}\item[(i)] 
If $F$ is an asymmetric facet that is not pervasive, then $\De$ is an $F$ bundle over $\De_1$.

\item[(ii)] 
We can write $H = H' + \Tilde H$, where
\begin{itemize}
\item[$\bullet$] $H'$ is inessential and the pervasive facets are $H'$-symmetric, and
\item[$\bullet$] $\Tilde H$ is mass linear and the nonpervasive facets are $\Tilde H$-symmetric.
\end{itemize}
\end{itemize}
\end{prop}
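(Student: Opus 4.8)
The plan is to cite part (i) directly and to prove part (ii) by a descent argument built on it together with Proposition~\ref{prop:bund}. For (i), this is exactly Corollary 3.24 of Part~I: since $F$ is asymmetric but not pervasive, Proposition~\ref{prop:asym} forces $F$ to be flat, and a flat facet that fails to meet some other facet exhibits $\De$ as an $F$ bundle over $\De_1$. So I would simply invoke that result without further argument.

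For (ii), I would induct on the number $\nu(H)$ of nonpervasive facets that are $H$-asymmetric. Since the set of nonpervasive facets depends only on $\De$, this $\nu$ is a well-defined nonnegative integer attached to each mass linear function, and the induction is on its value (the polytope $\De$ is fixed throughout). If $\nu(H) = 0$, then every nonpervasive facet is already $H$-symmetric, so I take $H' = 0$ and $\Tilde H = H$; then $H'$ is trivially inessential with every facet $H'$-symmetric, while $\Tilde H$ is mass linear with all nonpervasive facets symmetric, as required.

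For the inductive step, suppose $\nu(H) > 0$ and pick a nonpervasive $H$-asymmetric facet $F$. By part (i), $\De$ is an $F$ bundle over $\De_1$, and $F$ is one of its two base facets; since the two base facets are disjoint (as are the two vertices of $\De_1$), neither is pervasive, and hence every pervasive facet of $\De$ is a fiber facet. Applying Proposition~\ref{prop:bund} to this bundle over the simplex $\De_1$, I write $H = H_1' + \Tilde H_1$, where $H_1'$ is inessential with all fiber facets $H_1'$-symmetric and $\Tilde H_1$ is mass linear with both base facets $\Tilde H_1$-symmetric. Because $H_1'$ has vanishing coefficient on every fiber facet, the coefficient of each fiber facet in $\Tilde H_1 = H - H_1'$ agrees with its coefficient in $H$; thus a fiber facet is $\Tilde H_1$-symmetric exactly when it is $H$-symmetric. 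As the nonpervasive facets of $\De$ are the two base facets together with the nonpervasive fiber facets, this shows $\nu(\Tilde H_1) < \nu(H)$ (the asymmetric base facet $F$ has been removed, and no fiber facet changes symmetry type). By induction, $\Tilde H_1 = H_2' + \Tilde H$ with $H_2'$ inessential and pervasive facets $H_2'$-symmetric, and $\Tilde H$ mass linear with nonpervasive facets $\Tilde H$-symmetric. Setting $H' = H_1' + H_2'$ then finishes the argument, since $H'$ is inessential and the pervasive facets, all of which are fiber facets, are both $H_1'$-symmetric and (by the inductive conclusion) $H_2'$-symmetric.

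The main obstacle is precisely the bookkeeping in the inductive step, namely verifying that $\nu$ strictly decreases. This rests on two structural facts about the bundle furnished by part (i) that must be checked with care: that every pervasive facet is a fiber facet, so that symmetrizing all fiber facets via $H_1'$ automatically symmetrizes all pervasive facets; and that subtracting $H_1'$ leaves the symmetry type of every fiber facet unchanged, so that no new nonpervasive asymmetry is introduced. Once these are in place, the descent is immediate.
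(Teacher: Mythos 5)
Your proposal is correct and follows the same route the paper takes: part (i) is quoted as Corollary 3.24 of Part I, and part (ii) is deduced from Proposition~\ref{prop:bund} applied to the $F$ bundle over $\De_1$ furnished by (i), exactly as the paper indicates by its reference to the proof of Proposition 3.25 in Part I. You simply make explicit the descent on the number of nonpervasive asymmetric facets that the paper leaves to the reader, and the bookkeeping (pervasive facets are fiber facets; subtracting $H_1'$ does not change the symmetry type of any fiber facet) is carried out correctly.
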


The next lemma explores what happens when we assume that the base facets are symmetric; 
cf.\ Proposition~\ref{prop:lift}.
We will not need it in this paper.

\begin{lemma}\labell{le:fibbun} 
Let $\De \subset \ft^*$ be a  $\Tilde \De \subset \Tilde \ft^*$ 
bundle
over $\Hat \Delta \subset \Hat \ft^*$.
Then the following hold:
\begin{itemize}
\item[(i)]
If  $H \in \ft$ is a  mass linear function on
 $\Delta$ and
the base facets are symmetric, then $H \in \iota(\Tilde\ft)$
and $\Tilde{H} = \iota^{-1}(H)$ is mass linear on $\Tilde{\Delta}$.
More specifically, 
if $\langle H, c_\Delta \rangle = \sum \beta_i \Tilde\kappa_i\,\!'$,
then 
$\langle \Tilde H, c_{\Tilde\Delta} \rangle = 
\sum \beta_i \Tilde\kappa_i.$

\item[(ii)]  In contrast, even if $\Tilde H \in \Tilde \ft$ is  mass linear (and inessential) on $\Tilde \De$, $H = \io(\Tilde H)$ may
not be mass linear on $\De$.

\item [(iii)]
In case (i) above, if $H$ is inessential on $\De$ then  $\Tilde H$
is inessential on $\Tilde \De$.
\end{itemize}
\end{lemma}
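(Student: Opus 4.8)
The plan is to prove the three parts in turn, with part~(i) carrying essentially all the content. Throughout I work with the short exact sequences \eqref{short} and freely use that $\iota$ is injective. For the first assertion of part~(i), suppose $H$ is mass linear on $\De$ with the base facets symmetric. Since a symmetric facet contributes coefficient zero to $\langle H,c_\De\rangle$, we may write $\langle H,c_\De\rangle=\sum_j\beta_j\Tilde\kappa_j\,\!'$ with $j$ ranging over the fiber facets only; this is exactly the normalization in the statement. By Lemma~\ref{le:Hsum} this forces $H=\sum_j\beta_j\Tilde\eta_j\,\!'$, and since $\Tilde\eta_j\,\!'=\iota(\Tilde\eta_j)$ by Definition~\ref{def:bund}, we get $H=\iota\bigl(\sum_j\beta_j\Tilde\eta_j\bigr)\in\iota(\Tilde\ft)$. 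Hence $\Tilde H:=\iota^{-1}(H)=\sum_j\beta_j\Tilde\eta_j$ is well defined.

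To prove $\Tilde H$ is mass linear on $\Tilde\De$, I would realize the fiber as a symmetric face. Choose a vertex $\Hat F_I$ of the base $\Hat\De$; the corresponding face $f:=\Hat F_I\,\!'=\cap_{i\in I}\Hat F_i\,\!'$ of $\De$ is an intersection of symmetric base facets, hence symmetric, and it is analogous to $\Tilde\De$ (Lemma~\ref{le:recog}). One checks that $\iota^*$ restricts to an affine isomorphism $P(f)\to\Tilde\ft^*$: its kernel $\pi^*(\Hat\ft^*)$ meets the direction space of $f$ only in $0$, because $\iota(\Tilde\ft)$ and $\operatorname{span}\{\Hat\eta_i\,\!':i\in I\}$ together span $\ft$. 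Under this isomorphism $H=\iota(\Tilde H)$ pulls back to $\Tilde H$, since $\langle H,y\rangle=\langle\Tilde H,\iota^*(y)\rangle$ for $y\in P(f)$. Now Proposition~\ref{prop:symcent} shows $H|_f$ is mass linear and that the coefficient of the support number of $\Tilde F_j\,\!'$ in $\langle H,c_\De\rangle$ equals that of $f\cap\Tilde F_j\,\!'$ in $\langle H,c_f\rangle$; transporting this back along $\iota^*$ yields $\langle\Tilde H,c_{\Tilde\De}\rangle=\sum_j\beta_j\Tilde\kappa_j$. The matching of support numbers is routine bookkeeping I would not spell out.

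For part~(iii), in the situation of~(i) I would invoke Lemma~\ref{le:bund}(ii): equivalence of two fiber facets in $\De$ implies equivalence of the corresponding facets of $\Tilde\De$, so the partition of the fiber facets into $\De$-equivalence classes refines the one into $\Tilde\De$-equivalence classes, and each $\Tilde\De$-class $\Tilde I$ is a disjoint union of $\De$-classes $I_1,\dots,I_r$. If $H$ is inessential on $\De$, then by Proposition~\ref{prop:inessential} (and the fact that the support numbers are independent coordinates on $\Cc_\De$, so the coefficients are determined) the center-of-mass coefficients satisfy $\sum_{i\in I}\beta_i=0$ for every $\De$-class $I$; summing over $I_1,\dots,I_r$ gives $\sum_{j\in\Tilde I}\beta_j=0$, so $\Tilde H$ is inessential on $\Tilde\De$. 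This runs in only one direction, which is exactly why—unlike Proposition~\ref{prop:lift}(iii)—no converse is claimed: vanishing of the coarser $\Tilde\De$-sums need not force vanishing of the finer $\De$-sums.

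For part~(ii) I would simply exhibit a counterexample, taking a nontrivial bundle (for instance the $\De_2$ bundle $Y$ over $\De_1$ of~\eqref{eq:Yb}, or a suitable $\De_1\times\De_1$ bundle over $\De_1$) with $\Tilde H$ inessential mass linear on the fiber, and computing $\langle\iota(\Tilde H),c_\De\rangle$ directly to see it is a genuinely non-linear rational function of the support numbers. Conceptually this reflects the asymmetry with Proposition~\ref{prop:lift}: a base function is constant along the fiber slices $Y_\alpha$ and so lifts, whereas pushing a fiber function forward by $\iota$ makes it detect the variation of the fiber positions over the base, which is nonzero precisely when the bundle is twisted. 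The main obstacle is part~(i): the conceptual step is short, but one must correctly identify the symmetric face $f$ over a base vertex with $\Tilde\De$ and track support numbers and their coefficients through $\iota^*|_{P(f)}$. Mimicking the slicing proof of Proposition~\ref{prop:lift} is not convenient here, since \eqref{short} provides no canonical projection $\ft^*\to\Hat\ft^*$ and hence no natural family of fiber slices, so the symmetric-face route is the more economical one.
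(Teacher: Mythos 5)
Your proposal is correct, and for parts (i) and (iii) it is essentially the paper's argument: the paper also deduces $H\in\iota(\Tilde\ft)$ from Lemma~\ref{le:Hsum} (the symmetric base facets contribute zero coefficients), realizes $\Tilde\De$ as the symmetric face of $\De$ lying over a vertex of $\Hat\De$, and applies Proposition~\ref{prop:symcent}; for (iii) it likewise combines Proposition~\ref{prop:inessential} with Lemma~\ref{le:bund}, and your refinement-of-equivalence-classes bookkeeping is just the unpacking of that citation. The only genuine divergence is in part (ii). You propose a computational counterexample — take a twisted $\De_2$ bundle over $\De_1$ and verify that $\langle\iota(\Tilde H),c_\De\rangle$ is a nonlinear rational function of $\ka$ — which does work (the needed computation is exactly Equation~\eqref{Yform} in Lemma~\ref{le:calcul}, where the nonlinearity is visible whenever $\sum\ga_ia_i\neq0$), but it requires actually carrying out an integral. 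The paper instead gives a structural argument valid for any nontrivial $\De_1$-bundle: if $\iota(\Tilde H)$ were mass linear it would have at least two asymmetric facets by Proposition~\ref{prop:2asym}, yet the fiber facets are neither pervasive nor flat and hence symmetric by Proposition~\ref{prop:asym}, so by Proposition~\ref{prop:symcent} the restriction of $H$ to a fiber facet would be a nonconstant mass linear function — contradicting that $\iota(\Tilde H)$ is constant on fiber facets. The paper's route buys a computation-free contradiction and exhibits the obstruction (twistedness forces fiber facets to be symmetric) conceptually; yours buys an explicit formula showing exactly how linearity fails. Both are valid.
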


\begin{proof}
To prove (i), first note  that
by Lemma~\ref{le:Hsum},  $H$ lies in the span of the fiber facets,
that is, 
$H = \iota(\Tilde H)$ for some $\Tilde H \in 
\Tilde \ft$.
Moreover, let $f$ be  the face
formed by intersecting any $k = \dim \Hat\Delta$ base facets.
Then, under the natural identification (as affine spaces) of $P(f)$ with $\Tilde \ft^*$,
$f$ is analogous to $\Tilde \De$ and $H$ restricts to $\Tilde H$.
Since $f$ is symmetric, the first claim 
 now follows
from   Proposition~\ref{prop:symcent}.

To prove (ii)
let $\De$ be a nontrivial $\De_1$-bundle over 
some base polytope $\Hat\De$.
Every nonzero element $\Tilde H \in \Tilde \ft$ is mass linear (and inessential) on $\Delta_1$.
So assume that $H = \iota(\Tilde H) \in \ft$ is mass linear on $\De$.
By Proposition~\ref{prop:2asym}, $\De$ has at least two $H$-asymmetric facets.
On the other hand, let $F$ be a fiber 
facet.
Since the bundle is not trivial, $F$ is not flat,
and so Proposition~\ref{prop:asym} implies that $F$ is symmetric.
Therefore, by Proposition~\ref{prop:symcent}, the restriction of $H$ to $F$
is a mass linear function with 
at least two 
asymmetric facets; in particular, 
the restriction of $H$ to $F$ is not constant.
But this is impossible because $H = \iota(\Tilde H)$ is constant on $F$ by construction.

Note finally that if 
$H = \iota(\Tilde H)$ is inessential and the base facets are symmetric, then
Proposition~\ref{prop:inessential} and 
Lemma \ref{le:bund} 
imply that $\Tilde H$ is inessential on $\Tilde \Delta$; this proves (iii).
\end{proof}

\subsection{Expansions}\labell{ss:expan}

We now describe  
a class of polytopes -- $k$-fold expansions\footnote
{
In the combinatorial literature  this construction is known as a wedge; 
cf.\ Haase and Melnikov \cite{HM}. }
 -- which have 
inessential mass linear functions.   
Overall, these polytopes are
very similar to bundles over the simplex $\De_k$, 
except that in this case the ``base" facets
all intersect.   
As we proved in Part I,
these two classes of polytopes are
the only ones which admit nonzero inessential functions.

\begin{defn}\labell{def:expand}
Let $\Tilde{\Delta} =
\bigcap_{j = 1}^{\Tilde{N}} \{ x \in \Tilde{\ft}^* \mid \langle  
\Tilde{\eta}_j, x
\rangle \leq \Tilde{\kappa}_j \}$ be a 
smooth
polytope. 
Given a natural number $k$,
a polytope $\Delta \subset \ft^*$ is the {\bf $\mathbf k$-fold expansion}
of $\Tilde{\Delta}$ along the facet $\Tilde{F}_1$ if there
is an identification
$\ft = \Tilde{\ft} \oplus \R^k$ so that
\begin{gather*}
\Delta =
\bigcap_{j = 2}^{\Tilde{N}} \{ x \in {\ft}^* \mid 
\langle (\Tilde{\eta}_j,0),x \rangle \leq \Tilde{\kappa}_j\} 
\cap\bigcap_{i = 1}^{k+1} \{ x \in {\ft}^* \mid \langle  \Hat\eta_i,x
\rangle \leq \Hat\kappa_i \},
\quad \mbox{where} \\
\Hat\eta_i = (0, - e_i)  \mbox{  and  }\Hat\kappa_i = 0 \mbox{  for all  }   1 \leq i \leq k,
\quad \Hat\eta_{k+1} = (\Tilde{\eta}_1, \sum e_i) \quad \mbox{and} \quad
\Hat\kappa_{k+1} = \Tilde{\kappa}_1.
\end{gather*}
We shall call the facet $\Tilde{F}_j\,\!'$ of $\Delta$ with  outward conormal 
$(\Tilde{\eta}_j,0)$ the  {\bf fiber-type facet (associated to 
$\mathbf{ \Tilde{F}_j} $)  }
for all $j > 1$
and the facets 
$\Hat{F}_i$ with outward conormals 
$\Hat\eta_i$ 
the  {\bf base-type facets}.
\end{defn}

\begin{figure}[htbp] 
   \centering
 \includegraphics[width=4in]{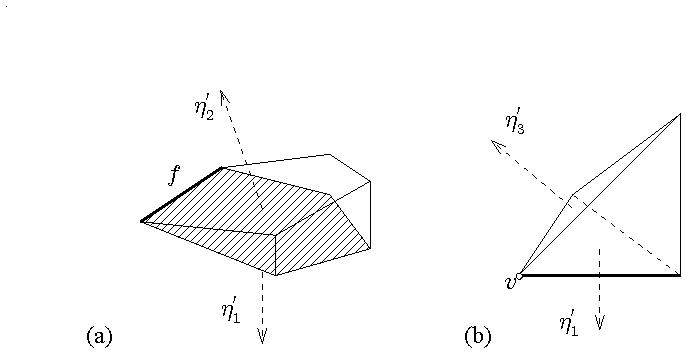} 
   \caption{(a) is the $1$-fold expansion of the shaded polygon along $f$; (b) is the $2$-fold expansion of the heavy line at the vertex $v$}
   \labell{fig:example}
\end{figure}

It is easy to check that $\De$ is 
smooth.

\begin{rmk}\labell{rmk:expint}\rm
(i)
The base-type facets are pervasive; in fact,
the face $\bigcap_{i \neq n} \Hat{F}_i$
can be identified with $\Tilde\Delta$
for all $n \in \{1,\dots,k+1\}$.  
Similarly,  
the face
$ \bigcap_{i=1}^{k+1} \Hat{F_i}$ 
can be identified with $\Tilde F_1$. 
In particular, for any $J \subset \{2,\ldots,\Tilde N\}$ and 
$n \in \{1,\ldots,k+1\}$,
the face
$ \Tilde{F}_J\,\!'  \cap \big( \bigcap_{i \neq n} \Hat{F}_i \big)  \subset \Delta$
is empty exactly if 
$\Tilde{F}_J \subset \Tilde{\Delta}$ is empty,
and
the face $\Tilde{F}_J\,\!'  \cap
\big(\bigcap_{i=1}^{k+1} \Hat{F}_i \big)  \subset \Delta$
is empty exactly if   
$ \Tilde{F}_J  \cap \Tilde{F}_1 \subset \Tilde{\Delta} $ is empty.
\MS

\NI (ii)
The base-type facets $\Hat{F}_1,\ldots,\Hat{F}_{k+1}$ 
are clearly equivalent.  
By Lemma~\ref{le:equiv}, two fiber-type facets $\Tilde{F}_i\,\!'$ and $\Tilde{F}_j\,\!'$ 
of $\Delta$ are equivalent
exactly if  the corresponding facets $\Tilde{F}_i$ and $\Tilde{F}_j$ of $\Tilde \Delta$ are
equivalent.  
Similarly, a fiber-type facet $\Tilde{F}_i\,\!'$ is equivalent to  the base-type
facets exactly if $\Tilde{F}_i \sim \Tilde{F}_1$.
\end{rmk}

Conversely, if a polytope has equivalent facets, it  is either
an expansion or a bundle over a simplex.

\begin{prop}[I, Proposition 3.17]\labell{prop:Iexpan} 
Let $\Delta \subset \ft^*$ be a smooth
 polytope.
Let $I \in \Ii$ be an equivalence class of facets and 
define $I': = I \ssminus \{n\}$ for some $n \in I$.

\begin{itemize}\item[(i)] If $F_I  = \emptyset$, then $\Delta$ is a $F_{I'}$ bundle
over $\Delta_{|I|-1}$ 
with base facets  $\{F_i\}_{i \in I}$.

\item[(ii)] If $F_I \neq \emptyset$, then $\Delta$ is the
$(|I|-1)$-fold expansion of $F_{I'}$ along $F_I = F_n \cap F_{I'}$   
with base-type facets $\{F_i\}_{i \in I}$.
\end{itemize}
\end{prop}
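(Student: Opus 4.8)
The plan is to treat both cases through a common linear-algebra core and only afterwards to split on the combinatorial condition $F_I = \emptyset$ versus $F_I \neq \emptyset$. Write $m = |I|$ and keep $n \in I$ fixed, so $I' = I \ssminus \{n\}$. First I would apply Lemma~\ref{le:equiv}: since all the $F_i$ with $i \in I$ are equivalent, the subspace $V := \mathrm{span}\{\eta_k \mid k \notin I\} \subset \ft$ has codimension $m-1$, and a combination $\sum_{i \in I} c_i \eta_i$ lies in $V$ if and only if the $c_i$ are all equal. From this I would extract three facts: the conormals $\{\eta_i \mid i \in I\}$ are linearly independent; the intersection $V \cap \mathrm{span}\{\eta_i \mid i \in I\}$ is the line spanned by $\si := \sum_{i \in I}\eta_i$; and $V + \mathrm{span}\{\eta_i \mid i \in I\} = \ft$, using that all $\eta_i$ together span $\ft$ because $\De$ is bounded. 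I would then form the projection $\pi \colon \ft \to \Hat\ft := \ft/V$, an $(m-1)$-dimensional quotient fitting into $0 \to V \stackrel{\iota}{\to} \ft \stackrel{\pi}{\to} \Hat\ft \to 0$. Because $\si \in V$, the images $\Hat\eta_i := \pi(\eta_i)$, $i \in I$, span $\Hat\ft$ and satisfy the single relation $\sum_{i\in I}\Hat\eta_i = 0$; smoothness of $\De$ makes them primitive, so they are exactly the outward conormals of a standard simplex $\De_{m-1} \subset \Hat\ft^*$. This identifies the base facets as $\{F_i \mid i \in I\}$, the fiber (or fiber-type) facets as the $F_k$ with $k \notin I$ (whose conormals lie in $V$), and the fiber as the face $F_{I'}$, of dimension $\dim\De - (m-1)$. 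All the data demanded by Definition~\ref{def:bund} and Definition~\ref{def:expand} are now present, and what remains is purely combinatorial.

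For case (i), where $F_I = \emptyset$, I would verify the hypotheses of the recognition Lemma~\ref{le:recog}. That the fiber conormals lie in a subspace of dimension $\dim F_{I'}$ is immediate from the first paragraph, since they lie in $V$. A vertex of the base $\De_{m-1}$ is obtained by deleting one base facet, and the corresponding face of $\De$ is $\bigcap_{i \in I,\, i \neq n}F_i = F_{I'}$, which is the fiber; dually, intersecting $F_{I'}$ with an appropriate vertex's worth of fiber facets produces a face analogous to $\De_{m-1}$. The substantive step is the combinatorial equivalence $\De \cong F_{I'} \times \De_{m-1}$: one must show that, for any mixed collection of base and fiber facets, the intersection is nonempty exactly when the corresponding product face is, i.e.\ that non-emptiness factors as a simplex condition on the $I$-indices times a fiber condition on the rest. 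The hypothesis $F_I = \emptyset$ supplies the "all $m$ base facets miss" relation characteristic of the simplex.

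For case (ii), where $F_I \neq \emptyset$, I would instead match Definition~\ref{def:expand} directly, with $k = m-1$, fiber $\Tilde\De = F_{I'}$, and expansion facet $\Tilde F_1 = F_I = F_n \cap F_{I'}$. The key consistency check is that $\si = \sum_{i\in I}\eta_i$ equals $\iota(\Tilde\eta_1)$, where $\Tilde\eta_1$ is the conormal of $\Tilde F_1$ inside the fiber $F_{I'}$; this is precisely the relation built into the definition, whose base-type conormals sum to $(\Tilde\eta_1,0) = \iota(\Tilde\eta_1)$. Choosing a splitting $\ft = \Tilde\ft \oplus \R^{m-1}$ adapted to $V$ and the independent $\eta_i$, and then normalizing by a translation (Remark~\ref{translate}) so that the base-type support numbers vanish — which is exactly what $F_I \neq \emptyset$ permits, since it locates a common point of the base facets — puts the conormals $\eta_i$, $i \in I$, into the required normal forms $(0,-e_j)$ and $(\Tilde\eta_1,\sum e_j)$.

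I expect the main obstacle to be the combinatorial bookkeeping in case (i), and its analogue in case (ii): proving that the full face lattice of $\De$ splits as the product $F_{I'} \times \De_{m-1}$ (respectively, takes the expansion shape), rather than merely matching on facets and their pairwise intersections. The clean way to control this is to exploit the robustness of the equivalences among the $F_i$ — the affine transformations permuting them persist under perturbation of $\ka$ — so that the incidence of each base facet with the fiber facets is forced to be the same for all $i \in I$. Together with the span relation $\si \in V$ and the emptiness (resp.\ non-emptiness) of $F_I$, this rigidity is what pins down the entire incidence pattern as the product (resp.\ expansion) pattern.
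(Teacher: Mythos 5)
A preliminary remark: this paper does not actually prove Proposition~\ref{prop:Iexpan} --- it is imported from Part~I as [I, Proposition~3.17] --- so there is no proof here to compare against, and I can only assess your argument on its own terms. Your linear-algebraic skeleton is essentially correct: Lemma~\ref{le:equiv} gives that $V$ has codimension $|I|-1$ and that the kernel of $c\mapsto \sum_{i\in I}c_i\eta_i \bmod V$ is the diagonal, so the projections $\Hat\eta_i$ span $\ft/V$ subject to the single relation $\sum_{i\in I}\Hat\eta_i=0$, and smoothness (checked at a vertex, each of which is forced to lie on exactly $|I|-1$ of the facets $F_i$, $i\in I$, when $F_I=\emptyset$) identifies them with the conormals of $\De_{|I|-1}$. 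However, one of your three ``extracted facts'' is false: the conormals $\{\eta_i\}_{i\in I}$ need not be linearly independent. Take $\De=\De_1\times\De_1$ with $I$ the two parallel horizontal edges: then $\eta_1=-\eta_2$ and $\si=0$, yet $V$ has codimension $1=|I|-1$ as the lemma requires. Independence does hold in case (ii), but there it follows from $F_I\neq\emptyset$ together with simplicity (the $\eta_i$ sit inside a lattice basis at any vertex of $F_I$), not from the equivalence relation; you should not use it in case (i), where you invoke $F_{I'}$ having dimension $\dim\De-(|I|-1)$ and this too needs $F_{I'}\neq\emptyset$, which is not yet established.

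The genuine gap is the one you flag and then leave open: the combinatorial equivalence of $\De$ with $F_{I'}\times\De_{|I|-1}$ in case (i), and the analogous incidence pattern in case (ii). Nothing in the write-up shows that $F_J\neq\emptyset$ for every proper $J\subsetneq I$, nor that for $K$ disjoint from $I$ one has $F_K\cap F_J\neq\emptyset$ exactly when $F_K\neq\emptyset$ and $J\subsetneq I$; this is the entire content of the proposition beyond the conormal bookkeeping. Your proposed lever --- robustness of the equivalences --- is the right one: by Remark~\ref{rmk:equiv} each equivalence $F_i\sim F_j$ is implemented by a reflection fixing every other facet, which carries $F_i\cap F_K$ to $F_j\cap F_K$ and hence makes the incidence of each $F_i$, $i\in I$, with the facets outside $I$ identical. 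But symmetry of the incidence data alone does not yield the product (or expansion) shape; one still needs, for example, the vertex-counting argument above to show every vertex lies on at least $|I|-1$ of the $F_i$, and then an argument propagating this from vertices to arbitrary faces. As it stands, the proposal is a correct reduction together with an honest to-do list for the part that actually constitutes the proof.
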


\begin{rmk}\rm \labell{rmk:fibexp}
In most ways, mass linear functions on $k$-fold expansions behave like
mass linear functions on bundles over  the simplex $\Delta_k$.
For example, since the base-type facets are equivalent there is a one-to-one correspondence between  mass linear functions
on $\De$ with symmetric fiber facets and mass linear 
functions
on $\Delta_k$,
and all such functions are inessential; cf.\ Proposition~\ref{prop:lift}. 
Similarly,  
as in Lemma~\ref{le:fibbun},
if $H \in \ft$ is a  mass linear function on $\Delta$ with symmetric base-type facets,
then there exists a mass linear $\Tilde H \in \Tilde \ft$ such
that $\iota(\Tilde H) = H$, where $\iota \colon \Tilde \ft \to \ft$ is the natural inclusion.
In contrast, just as for bundles, even 
if $\Tilde H$ is mass linear
on $\Tilde \De$,  $\iota( \Tilde H)$ may not be
mass linear on $\De$;
see Example~\ref{ex:expanii}.

However, there are some significant differences between these two cases.
Most notably,
 Remark~\ref{rmk:expint} (ii) implies that
  $\Tilde{H} \in \Tilde{\ft}$ is inessential
on $\Tilde{\Delta}$ exactly if $H = \iota(\Tilde{H})$ is
inessential on $\Delta$ and the base-type facets  are symmetric. 
By Lemma \ref{le:fibbun} (ii) the corresponding statement is not true for bundles.
(Contrast Remark~\ref{rmk:expint} (ii) with  Lemma~\ref{le:bund} (ii).)
These differences arise because expansions correspond to very special bundles.  
In fact, Example \ref{ex:expanbun} shows how to convert a $k$-fold expansion into  a bundle over $\De_k$ by 
blowing 
up; but the converse operation is not usually possible.
\end{rmk}

Let $\Tilde{\Delta} \subset \Tilde\ft$ be a
smooth  polytope.
If we first take the $1$-fold expansion of $\Tilde\Delta$
along a facet $\Tilde{F}_1$, and then take the $1$-fold
expansion of the resulting polytope along one of the 
base-type facets, we simply obtain the $2$-fold
expansion of $\Tilde{\Delta}$ along $\Tilde{F}_1$.
(By repeating this process, we can obtain the $k$-fold expansion.)
However, if instead we 
expand  the $1$-fold
expansion of $\Tilde\Delta$ along the fiber-type facet
associated to $\Tilde{F}_2$,  we get something new: a double expansion.

\begin{defn}\labell{def:dexpand}
Let $\Tilde{\Delta} =
\bigcap_{j = 1}^{\Tilde{N}} \{ x \in \Tilde{\ft}^* \mid \langle  
\Tilde{\eta}_j, x
\rangle \leq \Tilde{\kappa}_j \}$ be a 
smooth polytope. 
A polytope $\Delta \subset \ft^*$ is the {\bf double expansion}
of $\Tilde{\Delta}$ along the facets $\Tilde{F}_1$ and $\Tilde{F}_2$ if there
is an identification
$\ft = \Tilde{\ft} \oplus \R^2$ so that
\begin{gather*}
\Delta =
\bigcap_{j = 3}^{\Tilde{N}} \{ x \in {\ft}^* \mid 
\langle (\Tilde{\eta}_j,0),x \rangle \leq \Tilde{\kappa}_j\} 
\cap\bigcap_{i = 1}^{4} \{ x \in {\ft}^* \mid \langle  \Hat\eta_i,x
\rangle \leq \Hat\kappa_i \},
\quad \mbox{where} \\
\Hat\eta_1 = (0,-e_1), \ \ 
\Hat\eta_2 = (\Tilde{\eta}_1,  e_1),  \ \ 
\Hat\eta_3 = (0,-e_2), \ \ 
\Hat\eta_4 = (\Tilde{\eta}_2,  e_2),  \\
\Hat\kappa_{1} = \Hat\kappa_3 = 0, \ \ 
\Hat\kappa_2 = \Tilde{\kappa}_1, \quad \mbox{and} \quad
\Hat\kappa_4 = \Tilde{\kappa}_2.
\end{gather*}
We shall call the facet $\Tilde{F}_j'$ of $\Delta$ with  outward conormal 
$(\Tilde{\eta}_j,0)$ the {\bf fiber-type facet (associated  to 
$\mathbf{ \Tilde{F}_j} $) }
for all $j > 2$,
the facets $\Hat{F}_1$ and $\Hat{F}_2$ the {\bf base-type facets
(associated to $\mathbf {\Tilde{F}_1}$)}, and
the facets $\Hat{F}_3$ and $\Hat{F}_4$ the {\bf base-type facets
(associated to $\mathbf {\Tilde{F}_2} $)}.
\end{defn}

Note  that the order of the expansions does not matter; if we expand
first along $\Tilde{F}_2$ and then along the fiber-type facet  
associated to $\Tilde{F}_1$, the
resulting polytope is isomorphic to $\De$ under 
the transformation that 
interchanges the last two coordinates of $\Tilde\ft \oplus \R^2$. 
Here are a few properties which will be useful later.

\begin{rmk}  \labell{rmk:dint} \rm (i)
Fix $k \subset \{3,\dots,\Tilde{N}\}$.
If the 
facet
 $\Tilde F_k$ of $\Tilde \De$ 
intersects both $\Tilde F_1$ and $\Tilde F_2$,
then -- applying  Remark~\ref{rmk:expint} (i) twice  --
the face 
$\Hat{F}_{ij} \cap \Tilde F_k\,\!': = \Hat{F}_i \cap \Hat{F}_j \cap \Tilde F_k\,\!'$ 
 of $\Delta$
intersects all the  base-type facets
for  any $i \in \{1,2\}$ and $j \in \{3,4\}$.
Conversely, if 
$\Tilde F_k$
does not intersect  both $\Tilde F_1$ and $\Tilde F_2$,
then $\Tilde F_k\,\!'$ will not intersect both $\Hat{F}_{12}$ and $\Hat{F}_{34}$.
A fortiori, 
the face 
$\Hat{F}_{ij} \cap \Tilde F_k\,\!'$ 
will not intersect all the base-type facets for any 
$i \in \{1,2\}$ and $j \in \{3,4\}$.
\MS

\NI (ii)   
Similarly, applying Remark~\ref{rmk:expint} (ii) twice,  the
base-type facets $\Hat{F}_1$ and $\Hat{F}_2$ are equivalent,  
as are  the base-type facets $\Hat{F}_3$ and $\Hat{F}_4$.
Moreover, these facets are all equivalent to each other
exactly if the facets $\Tilde{F}_1$ and $\Tilde{F}_2$ are equivalent. 
\end{rmk}

Finally we show how to recognize double expansions.

\begin{lemma}\labell{lemma:dexpan}
Let $F_1,\dots,F_4$ be facets of a smooth polytope $\Delta \subset \ft^*$.
If $F_1 \sim F_2$, $F_3 \sim F_4$, 
$F_{12} \neq \emptyset$, and  $F_{34} \neq \emptyset$, then 
$\Delta$ is the double expansion of $F_{13}$ along $F_2 \cap F_{13}$ 
and $F_4 \cap F_{13}$ with base-type facets 
$F_1,\dots,F_4$.
\end{lemma}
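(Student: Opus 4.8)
The goal is to recognize $\De$ as a double expansion, so the natural strategy is to invoke the recognition machinery already available for single expansions, namely Proposition~\ref{prop:Iexpan}, and apply it twice, exploiting the fact that a double expansion is (by the discussion preceding Definition~\ref{def:dexpand}) nothing but a $1$-fold expansion of a $1$-fold expansion along a fiber-type facet. First I would use the hypotheses $F_1 \sim F_2$ together with $F_{12} \neq \emptyset$: by Proposition~\ref{prop:Iexpan}(ii), since $\{F_1, F_2\}$ is contained in an equivalence class with nonempty intersection, $\De$ is the $1$-fold expansion of $F_1$ (equivalently $F_2$) along $F_{12}$, with base-type facets $F_1$ and $F_2$. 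This gives an identification $\ft = \ft_1 \oplus \R$, where $\ft_1$ is the tangent space of the facet $F_1$, and realizes $\De$ in the standard expansion form of Definition~\ref{def:expand} with $k=1$.

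The key second step is to track where the facets $F_3$ and $F_4$ go under this identification. Since $F_3$ and $F_4$ are distinct from the base-type facets $F_1, F_2$, they must be fiber-type facets of the expansion; that is, each $F_\ell$ ($\ell = 3,4$) corresponds to a facet of $F_1$, namely $F_\ell \cap F_1$, with conormal of the form $(\Tilde\eta_\ell, 0)$ under $\ft = \ft_1 \oplus \R$. Here I would lean on Remark~\ref{rmk:expint}(ii): two fiber-type facets are equivalent in $\De$ exactly when the corresponding facets of the fiber $F_1$ are equivalent. Thus the hypothesis $F_3 \sim F_4$ in $\De$ forces $F_3 \cap F_1 \sim F_4 \cap F_1$ inside $F_1$. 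Combined with $F_{34} \neq \emptyset$ (which gives $(F_3 \cap F_1) \cap (F_4 \cap F_1) \neq \emptyset$ after checking the triple intersection $F_1 \cap F_3 \cap F_4$ is nonempty — this needs a small argument, see below), I can apply Proposition~\ref{prop:Iexpan}(ii) a second time, now to the polytope $F_1$, to realize $F_1$ itself as the $1$-fold expansion of $F_{13} = F_1 \cap F_3$ along $F_1 \cap F_3 \cap F_4$, with base-type facets $F_3 \cap F_1$ and $F_4 \cap F_1$. Unwinding the two expansions and matching conormals against Definition~\ref{def:dexpand} then identifies $\De$ as the double expansion of $F_{13}$ along $F_2 \cap F_{13}$ and $F_4 \cap F_{13}$, with base-type facets $F_1, \dots, F_4$, as claimed.

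The main obstacle, I expect, is the bookkeeping at the interface between the two applications of Proposition~\ref{prop:Iexpan}: one must verify that $F_3$ and $F_4$ really do restrict to genuine distinct facets of the fiber $F_1$ (rather than, say, failing to meet $F_1$ or coinciding on it), that their restricted equivalence persists, and that the triple intersection $F_1 \cap F_3 \cap F_4$ is nonempty so that case (ii) rather than case (i) of Proposition~\ref{prop:Iexpan} applies at the second stage. The nonemptiness of $F_1 \cap F_3 \cap F_4$ is where I would be most careful: it does not follow formally from $F_{12} \neq \emptyset$ and $F_{34} \neq \emptyset$ alone, but should be forced by smoothness together with the structure of the first expansion — in the expansion of $F_1$ along $F_{12}$, every fiber-type facet meets the base-type facet $F_1$ in a facet, and the equivalence $F_3 \sim F_4$ with $F_{34}\neq\emptyset$ then propagates the intersection down to $F_1$. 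Once these incidence facts are pinned down, the remaining work is the routine (if slightly tedious) verification that the conormals $\Hat\eta_1,\dots,\Hat\eta_4$ and support numbers produced by composing the two expansions match those prescribed in Definition~\ref{def:dexpand} under the identification $\ft = \Tilde\ft \oplus \R^2$; this is essentially forced and I would present it compactly rather than grinding through every coordinate.
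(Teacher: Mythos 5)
Your proposal follows the paper's own proof essentially verbatim: apply Proposition~\ref{prop:Iexpan}(ii) to $F_1\sim F_2$, $F_{12}\neq\emptyset$ to exhibit $\De$ as a $1$-fold expansion of $F_1$, use Remark~\ref{rmk:expint} to transfer the equivalence $F_3\sim F_4$ and the nonemptiness of $F_{34}$ down to $F_1$, and apply Proposition~\ref{prop:Iexpan}(ii) a second time inside $F_1$. The point you flag as delicate --- that $F_1\cap F_3\cap F_4\neq\emptyset$ follows from $F_{34}\neq\emptyset$ via the expansion structure rather than formally --- is exactly how the paper handles it, citing Remark~\ref{rmk:expint}(i).
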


\begin{proof}
By Proposition~\ref{prop:Iexpan}, $\Delta$ is the $1$-fold expansion
of $F_1$ along 
$F_2 \cap F_1$.
Clearly, the fact that $F_3$ is equivalent to $F_4$ implies
that $F_{13}$ is equivalent to $F_{14}$.
By Remark~\ref{rmk:expint} (i),
the fact that  $F_{34} \neq \emptyset$ implies
that 
$F_{13}\cap F_{14} = F_{34} \cap F_1 \neq \emptyset$.
Therefore, Proposition~\ref{prop:Iexpan} also implies
that $F_1$ is the $1$-fold expansion of $F_{13}$ along $F_4 \cap F_{13}$.
The claim follows immediately.
\end{proof}

\subsection{Blowing up}\labell{ss:blow}

In this section, we show how to construct new polytopes
by blowing up faces 
of  polytopes.
We also consider how this operation affects  
mass linear functions.
We begin with the definition of  
blowup.

\begin{defn}\labell{def:blowup}
Let $\Delta =
\bigcap_{i = 1}^N \{ x \in \ft^* \mid \langle \eta_i, x\rangle \leq \kappa_i  \}$
be a 
smooth polytope.
Given a face $f  = F_I$
of codimension at least $2$ and $\eps > 0$,
let $\eta_0\,\!': = \sum_{i \in I} \eta_i$ and 
$\kappa_0\,\!': = \sum_{i \in I} \kappa_i - \epsilon$.
The polytope
$$
\Delta' =  \Delta \cap  
\{  x \in \ft^* \mid
\langle  \eta_0\,\!', x\rangle \leq  \kappa_0\,\!'  \}
$$
is the {\bf blowup of $\mathbf \Delta$ along $\mathbf f$} 
provided that
$\eps$ is sufficiently small that
$ \langle  \eta_0\,\!', v \rangle < \kappa_0\,\!'$ for all vertices $v \in \Delta$ which do not lie on $f$.
\end{defn}

It is easy to check that $\De'$ is  smooth.
The facet $F_0\,\!'$ corresponding to $\eta_0\,\!'$ is called
the {\bf exceptional divisor};
there is a natural one-to-one correspondence
between the facets $F_j$ of $\Delta$
and the remaining facets $F_j\,\!'= F_j \cap \Delta'$ of $\Delta'$.

\begin{rmk}\labell{rmk:blowint} \rm
The exceptional divisor is a 
$\Delta_{|I|-1}$ bundle over 
$f = F_I$ 
with fiber facets $F_i\,\!' \cap F_0\,\!'$
for $i \in I$;
the base facets are the facets of $f$.
Moreover,  
$f$
is the only face of $\Delta$ 
which is ``lost''. Hence 
if $K \subset \{1,\dots,N\}$ 
does not contain $I$,  
then  
$\bigcap_{k \in K \cup \{0\} } F_k\,\!'  \neq \emptyset$
exactly if 
$\bigcap_{k \in K \cup I} F_k \neq \emptyset$; similarly,
$\bigcap_{k \in K} F_k\,\!' 
\neq \emptyset$ 
exactly if 
$\bigcap_{k \in K} F_k 
\neq 
\emptyset.$  
\end{rmk}

The following example demonstrates the very close connection 
between bundles and expansions.

\begin{example}\labell{ex:expanbun}\rm
Suppose that $\De$ is the $k$-fold expansion 
of $\Tilde \De$.
Let $\Delta'$ be the blowup of $\Delta$ along the face
$f = \bigcap_{i=1}^{k+1} \Hat{F}_i$. 
It is straightforward to check directly that $\Delta'$ is a
$\Tilde \Delta$ bundle over $\Delta_k$ and that the base facets
are $\Hat{F}_1 \cap \Delta',\ldots,\Hat{F}_{k+1} \cap \Delta'$; this justifies
our terminology.  
\end{example}

In the next remark we show that 
the blowup of a 
polytope $\Delta$ along a face $f$
corresponds to the usual geometric blowup of
the toric manifold $M_\De$ along 
a submanifold $M_f$, 
and give a geometric interpretation of the preceding example.

\begin{rmk}\labell{rmk:geoblow}\rm (i)
Let  $\Delta \subset \ft^*$ be a  smooth polytope,
and  let $\Delta'$ be the blowup of $\Delta$ along
a face $f = \bigcap_{i \in I} F_i$; assume that
$I = \{1,\ldots,k\}$;
we will use the notation of the definition above.
Construct  the associated toric manifolds
$M_\Delta = \Uu/K_\C$  and 
$M_{\Delta'} = \Uu'/K_\C\,\!'$  
as in 
[I, Remark 5.1], 
where $\Uu \subset \C^N$ and   $\Uu' \subset \C^{N+1}$;
identify $\C^{N+1}$ with $\C \times \C^N$.
Since $\eta_0\,\!' = \sum_{i \in I} \eta_i$,
$\lambda = e_0 - \sum_{i \in I} e_i$ lies in $\fk$;
define $\Lambda \colon S^1 \to K$ by $\Lambda(\exp(t)) = \exp(t \lambda)$.
Moreover, the intersection of $K'$ with the inclusion
$(S^1)^N \to (S^1)^{N+1}$ is $K$; 
hence we can write $K' = K \times \Lambda$.
It is easy to check that the map
$f \colon \Uu' \to  \Uu$ defined by  $f(z_0,\ldots,z_N) = 
(z_0^{-1} z_1,\ldots,z_0^{-1} z_k, z_{k+1}, \ldots,z_N)$
is surjective and 
induces a well defined map of toric manifolds.
If $z_i = 0$ for all $i \in I$,
the preimage $f^{-1}(z)$ is isomorphic to $\C^k \ssminus \{0\}$;
otherwise, the preimage is a single $\Lambda$ orbit.
Therefore, $f$ induces a surjective holomorphic map
$\overline{f} \colon M_{\Delta'} \to M_\Delta$ which collapses
$\Phi_{\Delta'}^{-1}(F_0\,\!')$ to $\Phi_\Delta^{-1}(f)$, but
is otherwise a homeomorphism. 
\MS

\NI (ii)  By [I, Remark 5.4] the toric manifold $M_{\De}$ corresponding to the $1$-fold expansion  $\De$ of $\Tilde\De$ along $\Tilde F_1$ can be thought of as a  nonsingular symplectic pencil with fibers 
$M_{\Tilde\De}$ and axis $M_{\Tilde F_1}$.  Thus Example \ref{ex:expanbun} shows
that when we blow up this axis we get a toric bundle.
\end{rmk}

The next lemma explains how blowing up affects
the
facet equivalence relation.

\begin{lemma} \labell{le:blowequiv}
Let $\Delta'$ be the blowup of  
a  polytope
 $\Delta$
along a face $F_I$.
\begin{itemize}\item[(i)]
Given facets $F_i$ and $F_j$ of $\Delta$, 
the corresponding facets 
$F_i\,\!'$ and $F_j\,\!'$  
of $\Delta'$ are equivalent exactly if
$F_i$ is equivalent to $F_j$ and the pair $\{i,j\}$ is
either contained in  $I$ or disjoint from $I$.
\item[(ii)]
The exceptional divisor  $F_0\,\!'$ is not equivalent to any other facet.
\end{itemize}
\end{lemma}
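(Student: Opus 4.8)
The plan is to work throughout with the root-vector characterization of equivalence from Remark~\ref{rmk:equiv}: for a smooth polytope, two distinct facets $F_i, F_j$ satisfy $F_i \sim F_j$ exactly when there is a $\xi \in \ft^*$ with $\langle \eta_k, \xi\rangle = 0$ for every $k \neq i, j$ and $\langle \eta_i, \xi\rangle \neq 0 \neq \langle\eta_j,\xi\rangle$. Since $F_i \sim F_j$ forces the span of the remaining conormals to have codimension exactly one (Lemma~\ref{le:equiv}), such a $\xi$ is unique up to scale, and the ``moreover'' clause of Lemma~\ref{le:equiv} (applied with $c_i = c_j = 1$) supplies the key relation $\langle \eta_i, \xi\rangle = -\langle \eta_j, \xi\rangle$. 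The advantage of this formulation is that the only new datum introduced by the blowup is the exceptional conormal $\eta_0\,\!' = \sum_{\ell\in I}\eta_\ell$, so testing equivalence in $\Delta'$ amounts to imposing the single extra condition $\langle \eta_0\,\!', \xi\rangle = 0$ on a candidate root, and this condition becomes transparent once one records how $i$ and $j$ meet $I$.

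For part (i), I fix $i, j \in \{1,\dots,N\}$ and recall that the conormals of $\Delta'$ are the unchanged $\eta_k$ together with $\eta_0\,\!'$. A vector $\xi$ is a root for the pair $(F_i\,\!', F_j\,\!')$ in $\Delta'$ precisely if it is a root for $(F_i, F_j)$ in $\Delta$ and in addition satisfies $\langle \eta_0\,\!', \xi\rangle = 0$. Since any such $\xi$ annihilates every $\eta_\ell$ with $\ell \neq i,j$, the sum defining $\eta_0\,\!'$ collapses to $\langle \eta_0\,\!',\xi\rangle = \sum_{\ell \in I \cap\{i,j\}}\langle\eta_\ell,\xi\rangle$. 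I then split into cases. If $\{i,j\}\subseteq I$, this equals $\langle\eta_i,\xi\rangle + \langle\eta_j,\xi\rangle = 0$ automatically by the key relation; if $\{i,j\}\cap I = \emptyset$ it is an empty sum, again zero; in both cases the root for $\Delta$ survives, so $F_i\,\!'\sim F_j\,\!'$ exactly when $F_i \sim F_j$. If exactly one index, say $i$, lies in $I$, then $\langle\eta_0\,\!',\xi\rangle = \langle\eta_i,\xi\rangle$, which is nonzero for any root, so the extra condition can never hold and $F_i\,\!'\not\sim F_j\,\!'$. This yields exactly the stated criterion.

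For part (ii), I suppose $F_0\,\!'\sim F_j\,\!'$ for some $j\in\{1,\dots,N\}$ and derive a contradiction. A root $\xi$ for this pair annihilates $\eta_k$ for all $k\in\{1,\dots,N\}\setminus\{j\}$ and has $\langle\eta_0\,\!',\xi\rangle\neq 0\neq\langle\eta_j,\xi\rangle$; once more the sum for $\eta_0\,\!'$ collapses. If $j\notin I$, every $\ell\in I$ is annihilated, so $\langle\eta_0\,\!',\xi\rangle = 0$, contradicting $\langle\eta_0\,\!',\xi\rangle\neq 0$. If $j\in I$, only the $\ell = j$ term survives, giving $\langle\eta_0\,\!',\xi\rangle = \langle\eta_j,\xi\rangle$; but the key relation for the equivalent pair $(F_0\,\!',F_j\,\!')$ forces $\langle\eta_0\,\!',\xi\rangle = -\langle\eta_j,\xi\rangle$, whence $\langle\eta_j,\xi\rangle = 0$, again a contradiction. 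Thus $F_0\,\!'$ is equivalent to no other facet.

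The routine parts are the bookkeeping of which conormals a root must annihilate; the one genuinely delicate point, and the place where a naive argument can go astray, is the ``exactly one index in $I$'' case of part (i). A pure codimension count would suggest that adjoining $\eta_0\,\!'$ always drops the codimension of the relevant span and could accidentally land it at codimension one, producing a spurious equivalence; it is precisely the nonvanishing requirement $\langle\eta_i,\xi\rangle\neq 0$ built into the root characterization (equivalently, the sign relation $\langle\eta_i,\xi\rangle = -\langle\eta_j,\xi\rangle$) that rules this out. I therefore expect the main work to be stating the root characterization carefully enough that this nonvanishing is available, after which all cases close by the one-line collapse of the sum defining $\eta_0\,\!'$.
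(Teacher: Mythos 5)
Your proof is correct and is essentially the paper's argument in dual form: the paper tests whether $\eta_0\,\!'=\sum_{\ell\in I}\eta_\ell$ lies in the codimension-one subspace $V$ spanned by the remaining conormals, while you test whether the annihilator $\xi$ of $V$ kills $\eta_0\,\!'$ — the same case analysis and the same use of Lemma~\ref{le:equiv} (including the relation $\eta_i+\eta_j\in V$). The only small divergence is in part (ii), where the paper argues directly from compactness that the conormals of all facets but one span $\ft$, whereas you reuse the sign relation; both close the case equally well.
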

\begin{proof}
If $F_i$ and $F_j$ are not equivalent, claim (i)  
is clear.
So assume that $F_i$ and $F_j$ are equivalent.
By Lemma~\ref{le:equiv},
the subspace $V \subset \ft$ spanned 
by the conormals $\eta_k$ for $k \not\in \{i,j\}$
has codimension $1$; moreover, the sum $\eta_i + \eta_j$ lies in $V$.
Hence, if both $i$ and $j$ are in $I$,
then $\eta_0 = \eta_i + \eta_j \in V$,
and if neither are in $I$ then again $\eta_0\in V$.
Hence in these cases
 $F_i\,\!'$ and $F_j\,\!'$ are equivalent. 
In contrast, if  only one is in $I$ then
$\eta_0 \not\in V$,
and so  
$F_i\,\!'$ and $F_j\,\!'$ are not equivalent.

Now consider (ii). If $F_0\,\!'$  were equivalent to $F_k\,\!'$ then by Lemma~\ref{le:equiv} the subspace of 
$\ft$ spanned
by the outward conormals to 
all
the facets of $\Delta$ 
 except $F_k$ would have codimension
$1$.  But  this is impossible when 
$\De$ is compact.
\end{proof}

We are now ready to analyze the behavior of mass linear functions
under blowups. 
Our arguments 
use the elementary fact that the
volume and
$H$-moment  
\begin{equation}\labell{eq:Hmom}
V(\De):= \int_{\De} 1 \quad \mbox{and} \quad
\mu_H(\De): = \int_{\De} H(x) 
\end{equation}
of the polytope $\De$ with respect to the affine volume form are    
additive when $\De$ is decomposed as a sum $\De'\cup W$.  In other words
$V(\De) = V(\De') + V(W)$ and
$\mu_H(\De) = \mu_H(\De') + \mu_H(W)$.
Note also that 
$\mu_H(\De) =
\langle H, c_\Delta \rangle \,\,V(\De)$.

Since the facets of $\Delta$ are a subset of the facets of $\Delta'$,
we may think of $\langle H, c_\Delta \rangle$ as a function on 
an open subset $U$ of $\R^{N+1}$ ---
a function which does
not depend on the support number $\kappa_0'$ of the exceptional divisor.
We will say that $\langle H, c_\Delta \rangle$ and
$\langle H, c_{\Delta'} \rangle$ are {\bf equal} if they agree
on 
a nonempty open subset of the form
$U \cap \Cc_{\Delta'}$. In this case, the exceptional divisor is
symmetric and $H$ is mass linear on $\Delta$ exactly if it is mass linear
on $\Delta'$; moreover, if they are mass linear   
the coefficient of the support number of a facet $F_i$ in $\langle H,
c_\Delta \rangle$ is the coefficient of the support number of 
$F_i \cap \Delta'$ in $\langle H,c_{\Delta'} \rangle$.
Similarly, we may think of $\langle H, c_W \rangle$ as a function on 
an open subset of $\R^{N+1}$
which does not
depend on the support numbers of any of the facets
of $\De$
which 
do not intersect $f$.

\begin{lemma}\labell{le:newblow} Fix $H \in \ft$.  
Let $\Delta'$ be the blowup of 
a  polytope 
$\Delta \subset \ft^*$
along a face $f$ and write $\De= \De'\cup W$.   
Assume that
two of the three functions
$\langle H, c_{\Delta}\rangle, \langle H, c_{\Delta'}\rangle$, and 
$ \langle H, c_{W} \rangle$   
are equal.
Then
all three functions 
are equal;
in 
particular,
$H$ is mass linear on $\De$ exactly if 
$H$ is mass linear on $\De'$.
\end{lemma}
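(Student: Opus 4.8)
The plan is to reduce the whole statement to the additivity of volume and $H$-moment recalled just before the lemma, combined with the strict positivity of the three volumes, after which a short case analysis finishes the argument. First I would fix notation: abbreviate $a = \langle H, c_\Delta \rangle$, $b = \langle H, c_{\Delta'} \rangle$, $c = \langle H, c_W \rangle$, regarded as functions on the common open set $U \cap \Cc_{\De'}$, and write $V = V(\De)$, $V' = V(\De')$, $V_W = V(W)$ for the corresponding volume functions on the same set. Since the decomposition $\De = \De' \cup W$ is valid throughout $\Cc_{\De'}$ (the two pieces meeting only along the exceptional hyperplane, a set of measure zero), the additivity of $V$ and $\mu_H$ gives $V = V' + V_W$ and, using $\mu_H = \langle H, c \rangle\, V$ for each of the three polytopes, the identity $aV = bV' + cV_W$, both holding as identities of functions on $U \cap \Cc_{\De'}$.

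Next I would combine these two identities. Substituting $V = V' + V_W$ into $aV = bV' + cV_W$ and rearranging produces the single master relation
\[
(a - b)\,V' = (c - a)\,V_W ,
\]
again an identity of functions on $U \cap \Cc_{\De'}$. The structural point that makes everything work is that $V'$, $V_W$, and $V = V' + V_W$ are all strictly positive on this open set: $V' > 0$ because $\De'$ is smooth (hence full-dimensional) on $\Cc_{\De'}$, and $V_W > 0$ because $W$ is the genuinely full-dimensional corner of $\De$ cut off by the exceptional hyperplane. Thus none of the three volume functions vanishes anywhere on the domain, and they may be cancelled freely.

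The conclusion then follows from a three-way case analysis according to which pair of functions is assumed equal. If $a = b$, the master relation forces $(c-a)V_W = 0$, so $c = a$ since $V_W > 0$; symmetrically, if $a = c$, then $(a-b)V' = 0$ forces $a = b$. If instead $b = c$, substituting $c = b$ gives $(a-b)(V' + V_W) = (a-b)V = 0$, whence $a = b$ since $V > 0$. In every case all three functions coincide on $U \cap \Cc_{\De'}$, which is the main assertion. The final \emph{in particular} clause is then immediate: once $a = b$, the functions $\langle H, c_\Delta \rangle$ and $\langle H, c_{\Delta'} \rangle$ agree on the open set, so one is linear exactly when the other is, as already recorded in the discussion preceding the lemma. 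I do not expect any serious obstacle; the only point requiring care is the bookkeeping of domains, namely checking that all three moment and volume identities are read on the same open set $U \cap \Cc_{\De'}$ and that the positivity of the volumes is genuinely available there.
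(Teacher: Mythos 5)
Your proof is correct and follows essentially the same route as the paper: additivity of the volume and the $H$-moment yields $\langle H,c_\De\rangle V(\De)=\langle H,c_{\De'}\rangle V(\De')+\langle H,c_W\rangle V(W)$ together with $V(\De)=V(\De')+V(W)$, from which agreement of any two of the three functions forces agreement of all three on the relevant open set. Your explicit three-way case analysis and the remark on strict positivity of the volumes merely spell out what the paper leaves implicit.
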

\begin{proof} 
Since the $H$-moment is additive,
$$
\langle H, c_\Delta \rangle \,\,V(\De) = 
\langle H, c_{\Delta'} \rangle\,\, V(\De') +
\langle H, c_W \rangle \,\,V(W).
$$
Since $V(\De) = V(\De') + V(W)$ the three 
functions 
$\langle H, c_{\Delta}\rangle, \langle H, c_{\Delta'}\rangle$, and 
$ \langle H, c_{W} \rangle$ 
must agree on some 
nonempty open
set, and hence, as explained at the beginning of \S\ref{ss:review}, on $\Cc_{\De'}$.
\end{proof}

We first 
describe 
what happens when $H$ is  mass linear on a polytope $\De'$ that is a blowup.

\begin{lemma}\labell{le:blowcent}
Let $H \in \ft$ be a mass linear function on a polytope
$\Delta'$ that is  the blowup of 
a polytope
 $\Delta$
along a face $f$.
The 
following hold.

\begin{itemize} 
\item[(i)]   The exceptional divisor $F_0\,\!'$ is symmetric.

\item[(ii)]  $H$ is mass linear on $\Delta$ and 
$\langle H, c_{\De} \rangle=\langle H, c_{\Delta'} \rangle$.

\item [(iii)]  The face $f$ meets every asymmetric facet.

\item[(iv)]
If $H$ is inessential on $\Delta'$, then it is inessential on $\Delta$.
\end{itemize}
\end{lemma}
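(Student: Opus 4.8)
The plan is to decompose $\Delta = \Delta' \cup W$, where $W := \{x \in \Delta \mid \langle \eta_0', x\rangle \ge \kappa_0'\}$ is the corner cut off near $f = F_I$, and to combine the additivity of $V$ and $\mu_H$ (as in Lemma~\ref{le:newblow}) with the fact that $\Delta$, and hence $\langle H, c_\Delta\rangle$, is completely independent of the support number $\kappa_0'$ of the exceptional divisor. The crux is part (i): once the coefficient $\be_0$ of $\kappa_0'$ in $\langle H, c_{\Delta'}\rangle$ is shown to vanish, the other three parts follow quickly. Since $H$ is mass linear on $\Delta'$, I would write $\langle H, c_{\Delta'}\rangle = \be_0\kappa_0' + \sum_{i\ge1} \be_i \kappa_i$ with the $\be_i$ constant on the chamber $\Cc_{\Delta'}$. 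Fixing $\kappa_1,\dots,\kappa_N$ and letting $\eps \to 0$ (so that $\kappa_0' = \sum_{i\in I}\kappa_i - \eps \to \sum_{i\in I}\kappa_i$ and $\Delta' \to \Delta$), continuity of the center of mass gives
$$
\langle H, c_\Delta\rangle = \sum_{i \ge 1}\be_i \kappa_i + \be_0 \sum_{i\in I}\kappa_i .
$$
This is linear in $\kappa$, so $H$ is mass linear on $\Delta$, and comparing the two formulas yields $\langle H, c_\Delta\rangle - \langle H, c_{\Delta'}\rangle = \be_0\,\eps$ for every admissible $\eps$.

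The main obstacle is to deduce $\be_0 = 0$. Additivity of the $H$-moment rewrites the previous identity as
$$
\langle H, c_W\rangle = \langle H, c_\Delta\rangle + \be_0\,\eps\,\frac{V(\Delta')}{V(W)} .
$$
Here the key geometric input is that the corner $W$ collapses onto the codimension-$|I|$ face $f$: in the local coordinates $t_i = \kappa_i - \langle \eta_i, x\rangle \ge 0$ ($i \in I$), the transverse slice of $W$ is the simplex $\{\, t_i \ge 0,\ \sum_{i\in I} t_i \le \eps \,\}$, so $V(W)$ scales like $\eps^{|I|}$ while $V(\Delta')$ stays bounded away from $0$. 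Since a blown-up face has codimension $|I| \ge 2$, the factor $\eps\,V(\Delta')/V(W)$ grows like $\eps^{1-|I|} \to \infty$ as $\eps \to 0$. As $c_W \in \Delta$ is bounded, the left-hand side stays bounded, forcing $\be_0 = 0$. This proves (i); the displayed formula for $\langle H, c_\Delta\rangle$ then reduces to $\sum_{i\ge1}\be_i\kappa_i = \langle H, c_{\Delta'}\rangle$, giving (ii).

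Parts (iii) and (iv) are then formal. For (iii), since $F_0'$ is now symmetric, Proposition~\ref{prop:symcent} shows that intersection with $F_0'$ identifies the asymmetric facets of $\Delta'$ with those of the face $F_0'$; in particular every asymmetric facet $F_j'$ must meet $F_0'$. By Remark~\ref{rmk:blowint}, $F_j' \cap F_0' \ne \emptyset$ is equivalent to $F_j \cap f \ne \emptyset$, and facets $F_i$ with $i \in I$ contain $f$; as the coefficients of $\langle H, c_\Delta\rangle$ and $\langle H, c_{\Delta'}\rangle$ coincide, the asymmetric facets of $\Delta$ and $\Delta'$ are the same, giving (iii). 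For (iv), suppose $H$ is inessential on $\Delta'$, so that $\sum_{i\in K}\be_i = 0$ for each equivalence class $K$ of facets of $\Delta'$. By Lemma~\ref{le:blowequiv}, the equivalence relation on the facets of $\Delta'$ refines the one on the facets of $\Delta$, and the exceptional divisor is alone in its class; hence each equivalence class of $\Delta$ is a disjoint union of classes of $\Delta'$ not involving $F_0'$. Summing $\be_i$ over such a union gives $0$, so $H = \sum_{i\ge1}\be_i\eta_i$ is inessential on $\Delta$.
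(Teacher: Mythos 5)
Your argument is correct and follows essentially the same route as the paper: the same decomposition $\Delta = \Delta'\cup W$, the same use of additivity of volume and $H$-moment, the same key fact that $V(W)$ vanishes to order $|I|\ge 2$ in $\eps$, and parts (iii) and (iv) handled exactly as in the text via Proposition~\ref{prop:symcent}, Remark~\ref{rmk:blowint}, and Lemma~\ref{le:blowequiv}. The only cosmetic difference is that you extract $\be_0=0$ from the boundedness of $\langle H, c_W\rangle$ against the divergence of $\eps\, V(\Delta')/V(W)$, whereas the paper differentiates the volume and moment polynomials with respect to $\ka_0'$ at $\eps=0$; these are two phrasings of the same estimate.
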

 \begin{proof}
Decompose $\De$ as $\De'\cup W$ where $W$ is a $\De_{|I|}$-bundle over $f = F_I$ with a fiber of size 
$\eps=\sum_{i\in I}\ka_{i} -\ka_0'>0$.
Let $V_W$ and $V'$ denote the volume of $W$ and $\Delta'$,  respectively;
similarly, let 
$\mu_W $ and $\mu'$ 
denote the $H$-moment 
of $W$ and $\Delta'$ as in  Equation \eqref{eq:Hmom}.
At $\epsilon = 0$, the partial derivatives $\frac{\p V_W}{\p \ka_0'}$ and $\frac{ \p \mu_W}{\p \ka_0'}$
both vanish since $V_W$ and $\mu_W$ are polynomial functions with
a factor 
$\eps^k$, where $k > 1$.
By the additivity of the volume and moment,
 this implies that 
$\frac{\p V'}{\p \ka_0'}$ and 
$\frac{ \p \mu'}{\p \ka_0'}$ also both vanish at $\epsilon = 0$.
Finally, since $\mu' = \langle H, c_{\De'} \rangle\,\, V'$,
 this implies that 
$\frac{\p \langle H, c_{\De'} \rangle}{\p \ka_0'}$
vanishes
at $\epsilon = 0$.
Therefore, since  $H$ is mass linear on $\Delta'$, 
$F_0'$ is symmetric\footnote{
Here we use the fact that locally mass linear 
functions are globally mass linear: cf.\ [I, Lemma 2.3].
Thus 
$\langle H, c_{\De'} \rangle$ is a linear function of $(\ka_0',\ka)$ throughout the chamber $\Cc_{\De'}$.
}.
This proves (i).

Now fix 
$\kappa \in \Cc_\Delta$. 
Since $F_0'$ is symmetric,
$\langle H, c_{\Delta'}(\kappa'_0 , \kappa) \rangle$
does not depend on the
support number $\kappa'_0$  
as long as $(\kappa'_0,\kappa)$  lies  in 
$\Cc_{\Delta'}$.
In fact, since  the center of mass is a continuous function
of the support numbers, the same 
claim 
holds as long as $(\kappa'_0,\kappa)$ lies 
in the closure of $\Cc_{\Delta'}$.
Moreover,  if $\kappa'_0 = \sum_{i \in I} \kappa_i$
then $P(F_0')$ intersects $\Delta$ at exactly $f$,
and the polytopes $\Delta'(\kappa_0',\kappa) $ and $\Delta(\kappa)$ coincide. Therefore
$\langle H, c_{\Delta}(\kappa) \rangle =
 \langle H, c_{\Delta'}(\kappa'_0 , \kappa) \rangle$ is a linear function of $\ka$.
 The claims in (ii) follow immediately.

Since
the symmetric facet $F_0'$ meets all asymmetric facets of $\De'$
by Proposition \ref{prop:symcent}, the face $f$ does as well by 
Remark \ref{rmk:blowint}. This proves 
(iii).

Claim (iv)
follows immediately from 
Lemma \ref{le:blowequiv}. 
\end{proof}

We are now ready to consider the question of which 
blowups preserve 
mass linearity.
The simplest case is {\bf symmetric blowup}, that is,
blowing up along a symmetric face.

\begin{lemma}\labell{le:symblow}   
Let $H \in \ft$ be a mass linear function on a polytope $\Delta \subset \ft^*$.
Let $\Delta'$ be the blowup of 
$\Delta$
along a symmetric face $f$.
Then the following hold.

\begin{itemize}
\item[(i)]   $H$ is mass linear on $\De'$ 
and 
$\langle H, c_\De \rangle = \langle H, c_{\De'} \rangle$.
\item[(ii)] 
$H$ is essential on $\De$ exactly if it is essential on $\De'$.
\end{itemize}
\end{lemma}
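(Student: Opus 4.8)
The plan is to handle (i) through the additivity packaged in Lemma~\ref{le:newblow}, and then to deduce (ii) from the way the equivalence relation behaves under blowup. For (i), decompose $\De = \De' \cup W$, where, as in the proof of Lemma~\ref{le:blowcent}, $W$ is a $\De_{|I|}$ bundle over $f = F_I$. By Lemma~\ref{le:newblow} it suffices to check that two of the three functions $\langle H, c_\De\rangle$, $\langle H, c_{\De'}\rangle$, $\langle H, c_W\rangle$ coincide; since $H$ is mass linear on $\De$ by hypothesis, I would show $\langle H, c_W\rangle = \langle H, c_\De\rangle$, which then forces $\langle H, c_{\De'}\rangle = \langle H, c_\De\rangle$ and the mass linearity of $H$ on $\De'$. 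Writing $\langle H, c_\De\rangle = \sum \ga_i \ka_i$, Lemma~\ref{le:Hsum} gives $H = \sum \ga_i \eta_i$, and because $f$ is symmetric we have $\ga_i = 0$ for every $i \in I$, so in fact $H = \sum_{i \notin I}\ga_i\eta_i$.

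The key observation is that every asymmetric facet of $\De$ meets $f$: by Proposition~\ref{prop:symcent}, intersection with $f$ matches the asymmetric facets of $\De$ bijectively with those of $f$. Hence each $F_i$ with $\ga_i \neq 0$ meets $f$ and lies outside $I$, i.e.\ it is a base facet of the bundle $W \to f$. Therefore $H = \sum_i \ga_i\eta_i$ is precisely the lift (Definition~\ref{def:lift}) of $\pi(H)$, and $\pi(H)$ is mass linear on $f$ since the restriction $H|_f$ is mass linear (Proposition~\ref{prop:symcent}) and agrees with $\pi(H)$ under the identification $P(f) \cong \Hat\ft^*$. Applying Proposition~\ref{prop:lift}(ii) to $W \to f$ then shows $H$ is mass linear on $W$ with $\langle H, c_W\rangle = \sum_i \ga_i\ka_i = \langle H, c_\De\rangle$, the symmetric facets contributing nothing. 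This gives (i).

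For (ii), one direction is immediate: by (i) the function $H$ is mass linear on $\De'$, so Lemma~\ref{le:blowcent}(iv) shows that $H$ inessential on $\De'$ implies $H$ inessential on $\De$; contrapositively, $H$ essential on $\De$ implies $H$ essential on $\De'$. For the reverse, suppose $H$ is inessential on $\De$ and write $H = \sum\be_i\eta_i$ with $\sum_{i\in J}\be_i = 0$ for every equivalence class $J$ of $\De$. By Proposition~\ref{prop:inessential} these $\be_i$ are the coefficients $\ga_i$, so $\be_i = 0$ for $i \in I$. Setting $\be_0' = 0$ for the exceptional divisor and $\be_i' = \be_i$ otherwise gives $H = \sum\be_i'\eta_i'$ (recall $\eta_0' = \sum_{i\in I}\eta_i$), and it remains to verify the class condition on $\De'$. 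By Lemma~\ref{le:blowequiv} the classes of $\De'$ are the singleton $\{F_0'\}$ together with the classes of $\De$, except that a class $J$ meeting both $I$ and its complement splits into $J\cap I$ and $J\ssminus I$. The singleton sum is $\be_0' = 0$; an unsplit class inherits $\sum_{i\in J}\be_i = 0$; and for a split class the piece $J\cap I$ sums to zero because $\be_i = 0$ there, whence the complementary piece sums to zero as well. Thus $H$ is inessential on $\De'$.

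The one step demanding real care is the identification $\langle H, c_W\rangle = \langle H, c_\De\rangle$: everything turns on recognizing $H$ as a lift for the bundle $W\to f$, which in turn relies on the fact that every asymmetric facet meets the symmetric face $f$. Once that is in place Proposition~\ref{prop:lift} does the substantive work, and the remainder of both parts is bookkeeping with the equivalence relation via Lemma~\ref{le:blowequiv}.
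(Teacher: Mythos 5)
Your proposal is correct and follows essentially the same route as the paper's proof: decompose $\De = \De'\cup W$, recognize $H$ (restricted to $W$) as the lift of the mass linear function $H|_f$ via Lemma~\ref{le:Hsum} and Proposition~\ref{prop:symcent}, apply Proposition~\ref{prop:lift} to get $\langle H, c_W\rangle = \langle H, c_\De\rangle$ and conclude (i) from Lemma~\ref{le:newblow}, then settle (ii) with Proposition~\ref{prop:inessential} and Lemma~\ref{le:blowequiv}. Your version only spells out in more detail the splitting of equivalence classes under blowup and notes that one direction of (ii) also follows from Lemma~\ref{le:blowcent}(iv); both are consistent with the paper's terser bookkeeping.
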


\begin{proof}{}
Let
$f=F_I$,
and let $\eta_i$ be the outward conormal to $F_i$ for all $i$.
Then $\Delta = \Delta' \cup W$, where $W$ is a $\Delta_{|I|}$ bundle
over $f$. The outward conormals to the fiber facets of $W$ are $\{\eta_i\}_{i \in I}$ and $- \sum_{i \in I} \eta_i$.
The outward conormals to 
its  base facets are the outward conormals to the
facets 
of $\De$
that 
restrict to facets of $f$.
Since $f$ is symmetric, the restriction of $H$ to $f$ is mass linear with the
same coefficients.  
Hence, by Lemma~\ref{le:Hsum}, the restriction of $H$ to $W$ is the lift  
of the restriction of $H$ to $f$.
Therefore, 
Proposition~\ref{prop:lift}  
implies that 
$H|_W$ 
is mass linear with the same coefficients
on $W$,
that is,
$\langle H, c_{\Delta}\rangle = \langle H, c_{W} \rangle$.  
Thus (i) follows from Lemma \ref{le:newblow}.

Since $F_i$ is symmetric for all $i \in I$, 
Proposition~\ref{prop:inessential}
implies that 
every inessential $H$  has the form
$H = \sum_{j \not\in I} \be_j \eta_j$.
Therefore to prove (ii)  
it suffices 
to recall that, by Lemma \ref{le:blowequiv},   
$F_k$ and $F_\ell$ 
are equivalent facets of $\De$ for 
some $k$ and $\ell$ 
not in $I$ exactly if
$F_k\,\!'$ and $F_\ell\,\!'$
are equivalent facets of  $\De'$; moreover, the exceptional divisor $F_0'$ is not equivalent to any other facet.
\end{proof}

We can also blow up faces which are  not symmetric,
but then the situation is more complicated.
We first 
describe the scenario 
that is most relevant to 
the $4$-dimensional classification.  It
 turns out 
to be an important mechanism for creating 
new essential
mass linear functions, since 
$H$  may 
be essential
on $\Delta'$
even if 
it
is inessential on $\Delta$.  See Example \ref{ex:7} for an easy instance of this
process,
 and Propositions 
\ref{prop:essblow2} and \ref{prop:essblow} 
for a more extended discussion.

\begin{defn}\labell{def:edge}  Let $H$ be a mass linear function on a polytope $ \De$ with asymmetric facets 
$F_1,\dots,F_k$.
We say that a blowup of $\De$ is  
 {\bf of type} 
 $\mathbf{ ({F}_{ij},  g)}$ 
if it is the blowup of $ \De$ along the edge $ 
 {F}_{ij}\cap  g$, where
 $g$ is a  symmetric $3$-face, 
${F}_{ij} \cap {g}$ intersects every asymmetric facet,
and
$\gamma_i + \gamma_j = 0$.
Here 
$\gamma_k$ is the coefficient of the support number of ${F}_k$
in the linear function $\langle H, c_{\Delta} \rangle.$
\end{defn}

\begin{prop}\labell{prop:blow4}
Let $H \in \ft$ be a mass linear function on a smooth polytope
$\Delta \subset \ft^*$.
Let $\Delta'$ be 
 a blowup of $\De$ of type $(F_{ij},g)$. The following hold.
  \begin{itemize}\item [(i)]
 $\De$ has 
zero, two, 
or four asymmetric facets.
 \item [(ii)]
 $H$ is mass linear on $\Delta'$
and $\langle H, c_\Delta \rangle = \langle H, c_{\Delta'} \rangle$.
\item [(iii)] 
If $H$ is essential on $\De$, then it is essential on $\Delta'$;
otherwise, $H$ is essential on $\Delta'$ exactly if $F_i \not\sim F_j$
and 
there are four asymmetric facets.
\end{itemize}
\end{prop}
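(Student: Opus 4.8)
The plan is to reduce all three statements to the $3$-dimensional symmetric face $g$. Write $f = F_i \cap F_j \cap g$ for the blown-up edge, so $F_0'$ has conormal $\eta_0' = \eta_i + \eta_j + \eta_g$, and write $\langle H, c_\De \rangle = \sum_k \gamma_k \ka_k$, so $H = \sum_k \gamma_k \eta_k$ by Lemma~\ref{le:Hsum} and $F_k$ is asymmetric exactly when $\gamma_k \ne 0$. Since $g$ is a symmetric $3$-face, Proposition~\ref{prop:symcent} makes $H|_g$ mass linear on the $3$-dimensional polytope $g$ with $\langle H, c_g \rangle = \langle H, c_\De \rangle$, gives a coefficient-preserving bijection between the asymmetric facets of $\De$ and those of $g$, and forces $\gamma_g = 0$; moreover $f$ is an edge of $g$ meeting every asymmetric facet, and $\gamma_i + \gamma_j = 0$. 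The facets of $g$ meeting $f$ are just the two containing it, $F_i \cap g$ and $F_j \cap g$, together with the two distinct facets $F_a \cap g, F_b \cap g$ meeting it at its endpoints, so every asymmetric facet of $\De$ is one of $F_i, F_j, F_a, F_b$. For (i), Proposition~\ref{prop:3d} gives $\sum_k \gamma_k = 0$ for $(g, H|_g)$, i.e.\ $\gamma_i + \gamma_j + \gamma_a + \gamma_b = 0$; with $\gamma_i + \gamma_j = 0$ this yields $\gamma_a + \gamma_b = 0$. Hence each of the pairs $\{\gamma_i, \gamma_j\}$ and $\{\gamma_a, \gamma_b\}$ is either wholly zero or wholly nonzero, and the number of asymmetric facets — the number of nonzero $\gamma$'s among these four — is $0$, $2$, or $4$.

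For (ii), decompose $\De = \De' \cup W$ as in Lemma~\ref{le:blowcent}, where $W$ is the $\De_3$ bundle over $f \cong \De_1$ cut off by $F_0'$, with facets $F_i, F_j, g, F_0', F_a, F_b$ and with fiber facets $F_i, F_j, g, F_0'$. By Lemma~\ref{le:newblow} it is enough to prove $\langle H, c_W \rangle = \langle H, c_\De \rangle$. Since $\langle H, c_W \rangle$ depends only on the support numbers of facets meeting $f$, while $\langle H, c_\De \rangle = \sum_k \gamma_k \ka_k$ has vanishing coefficients on the facets not meeting $f$ (these being symmetric), this reduces to showing that $H$ is mass linear on $W$ with $\langle H, c_W \rangle = \gamma_i \ka_i + \gamma_j \ka_j + \gamma_a \ka_a + \gamma_b \ka_b$; equivalently, that on $W$ the facets $g$ and $F_0'$ are symmetric while $F_i, F_j, F_a, F_b$ retain the coefficients $\gamma_i, \gamma_j, \gamma_a, \gamma_b$. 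I would prove this by a direct center-of-mass computation on the simplex bundle $W$, slicing it into its fibers over $f$: the fiber is a simplex whose four facets carry the conormals $\eta_i, \eta_j, \eta_g, -\eta_0'$, and in the resulting expression for the centroid the relations $\gamma_g = 0$ and $\gamma_i + \gamma_j = 0$ are exactly what force the coefficient of $\ka_0'$ to vanish and the remaining coefficients to match. This computation is the main obstacle, being the one place where the numerical hypothesis $\gamma_i + \gamma_j = 0$ is used quantitatively rather than formally. Granting it, Lemma~\ref{le:newblow} gives that $H$ is mass linear on $\De'$ with $\langle H, c_{\De'} \rangle = \langle H, c_\De \rangle$; in particular the coefficients on $\De'$ are the $\gamma_k$ together with $\gamma_0' = 0$.

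For (iii), if $H$ is essential on $\De$ then it is essential on $\De'$ by the contrapositive of Lemma~\ref{le:blowcent}(iv), which applies because $H$ is mass linear on $\De'$ by (ii). So suppose $H$ is inessential on $\De$. Here I would use the characterization that, for a mass linear function, inessentiality is equivalent to $\sum_{k \in C} \gamma_k = 0$ for every equivalence class $C$ of facets: one implication is immediate from the definition together with Lemma~\ref{le:Hsum}, and for the other Proposition~\ref{prop:inessential} forces any balanced representation of $H$ to have coefficients equal to the unique coefficients $\gamma_k$ of the linear function $\langle H, c_\De \rangle$. By Lemma~\ref{le:blowequiv} the classes of $\De'$ are obtained from those of $\De$ by separating, within each class $C$, the indices lying in $\{i, j, g\}$ from the rest, with $F_0'$ forming a class of its own of coefficient $0$; since $\gamma_g = 0$, the only new balancing conditions are that the $\gamma$-sum over whichever of $i, j$ lie in $C$ must vanish. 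If $F_i \sim F_j$ these indices lie in one class and contribute $\gamma_i + \gamma_j = 0$, so $H$ stays inessential; the same holds if $\gamma_i = \gamma_j = 0$. Hence $H$ can become essential only when $F_i \not\sim F_j$ and $\gamma_i \ne 0$, whence $\gamma_j \ne 0$ as well and both $F_i, F_j$ are asymmetric; since two asymmetric facets are equivalent by Proposition~\ref{prop:2asym}, the condition $F_i \not\sim F_j$ then rules out exactly two asymmetric facets and forces four. Conversely, if $F_i \not\sim F_j$ and there are four asymmetric facets, then $F_i$ is asymmetric and its class, summing to zero on $\De$, must contain an asymmetric facet outside $\{i, j, g\}$; that class genuinely splits with nonzero partial sum, so $H$ is essential on $\De'$. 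This is precisely the stated criterion.
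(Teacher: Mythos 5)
Parts (i) and (iii) of your argument are essentially the paper's: (i) is the same application of Proposition~\ref{prop:symcent} and Proposition~\ref{prop:3d}, and (iii) is a valid (slightly reorganized) version of the case analysis via Lemma~\ref{le:blowequiv} and Proposition~\ref{prop:2asym}. The genuine gap is in part (ii), which is also the part that (iii) depends on. You reduce correctly to showing $\langle H, c_W\rangle = \langle H, c_\De\rangle$ via Lemma~\ref{le:newblow}, but then defer the crux to an unexecuted ``direct center-of-mass computation'' on the simplex bundle $W$, asserting that the relations $\gamma_g=0$ and $\gamma_i+\gamma_j=0$ are ``exactly what'' make it close. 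That assertion is false. $W$ is a $\De_{n-1}$ bundle over $\De_1$, and by Proposition~\ref{DkoverD1} (equivalently, by carrying out the computation of Lemma~\ref{le:calcul}) the function $H=\gamma_i\eta_i+\gamma_j\eta_j+\gamma_a\eta_a+\gamma_b\eta_b$ is mass linear on $W$ with the desired coefficients only if, in addition to $\gamma_i+\gamma_j=\gamma_a+\gamma_b=0$, a twisting condition of the form $\sum_k a_k\gamma_k=0$ holds, where the $a_k$ record how the base conormals $\eta_a+\eta_b$ sit relative to the fiber conormals $\eta_i$, $\eta_j$ and those of $g$. With $\gamma_j=-\gamma_i$ this reduces to $\gamma_i(a_i-a_j)=0$, so in the only interesting case $\gamma_i\neq 0$ you must prove $a_i=a_j$, i.e.\ that the two fiber facets of $W$ lying in $F_i$ and $F_j$ are equivalent. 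This is a geometric statement that does not follow from the numerical hypotheses; your computation would terminate in an uncancelled term proportional to $\gamma_i(a_i-a_j)$.

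The paper supplies exactly this missing input: from the hypothesis that $F_{ij}\cap g$ meets every asymmetric facet, Lemma~\ref{le:nonsb} (which invokes the $3$-dimensional classification, Proposition~\ref{prop:3d}, together with Propositions~\ref{prop:bund} and \ref{prop:2asym}) shows that $F_i\cap g$ and $F_j\cap g$ are equivalent facets of $g$, hence that $\eta_a$, $\eta_b$, the conormals of $g$, and $\eta_i+\eta_j$ lie in a common hyperplane. Once that is known, no integration is needed: the facets of $W$ coming from $F_i,F_j$ are equivalent, as are those from $F_a,F_b$, so $H$ is inessential on $W$ and Proposition~\ref{prop:inessential} gives $\langle H,c_W\rangle=\sum_k\gamma_k\kappa_k=\langle H,c_\De\rangle$ immediately. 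You should also note that your proposal silently assumes $\dim\De=4$ by treating $g$ as a single facet with conormal $\eta_g$ (so that $\eta_0'=\eta_i+\eta_j+\eta_g$ and the fiber of $W$ is $\De_3$); the proposition is stated, and used (e.g.\ in Proposition~\ref{prop:essblow}), in arbitrary dimension, where $g=\bigcap_j G_j$ and $\eta_0'=\eta_i+\eta_j+\sum_j\alpha_j$ — a cosmetic but necessary adjustment.
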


\begin{proof}  Label the facets of $\De$ so that 
the two facets that intersect the edge $F_{12}\cap g$ are $F_3$ and $F_4$,
and 
so that
$g=\cap_{j=1}^{n-3} G_j$.
Let $\eta_i$ denote the outward conormal to $F_i$ 
and let $\alpha_j$
denote the outward conormal to $G_j$. 
Since the edge $F_{12}\cap g$ intersects every asymmetric facet, 
each 
facet except possibly $F_1,\dots,F_4$ is symmetric.

By Proposition~\ref{prop:symcent}, $\langle H, c_g(\kappa) \rangle
= \langle H, c_\Delta(\kappa) \rangle$ for all $\kappa \in \Cc_\Delta$.
In particular, the restriction of $H$ to $g$ is mass linear.
Thus,  Proposition~\ref{prop:3d}  
implies that 
$\sum_{i=1}^4  \gamma_i = 0$.
Since $\gamma_1 + \gamma_2 = 0$,
this implies that $\gamma_3 + \gamma_4 = 0$.  
Therefore, $\ga_1$ and $\ga_2$ 
(respectively $\ga_3$ and $\ga_4$)
are either both zero or both nonzero.  
This proves (i).

If $F_1$ and $F_2$ are symmetric,  
claim (ii) follows
from Lemma~\ref{le:symblow}.
Hence, we may assume that $F_1$ and $F_2$ are asymmetric facets.
By Proposition~\ref{prop:symcent},
intersection induces a one-to-one correspondence
between the asymmetric facets of $\Delta$ 
and the asymmetric facets of $g$.
Therefore, $F_1 \cap g$ and $F_2 \cap g$ are asymmetric facets of $g$
and   
$F_{12}\cap g$ intersects every asymmetric facet of $g$. 
Hence, Lemma~\ref{le:nonsb} below implies that
$F_1\cap g$ and $F_2\cap g$ 
are equivalent facets of $g$.  

We claim that 
 $\eta_3$, $\eta_4$, $\alpha_1,\ldots,\alpha_{n-3}$, and
$\eta_1 + \eta_2$ all lie in a hyperplane of $\ft$.
 To see this, observe that
 the smallest affine plane $P(g)\subset \ft^*$ containing the face $g$ is 
  $$
 P(g) = 
\bigcap_{j = 1}^{n-3}
\{x\in \ft^*\,|\, \langle\al_j,x\rangle = \ka_j\},
 $$
 and hence may be identified with 
the dual to the quotient 
of $\ft$ by the span $V_\al$ of the $\al_j$.  
(This  is explained in more detail at the beginning of [I, \S 2].)   
 Let $\pi: \ft\to \ft/V_\al$ denote the projection. 
  Then the claim will follow if we can show that the vectors $\pi(\eta_1) + \pi(\eta_2), 
 \pi(\eta_3),$ 
and $ \pi(\eta_4)$ span a hyperplane in 
 $\ft/V_\al$. But by Lemma~\ref{le:equiv}, this 
follows from the fact that
 $F_1\cap g$ and $F_2\cap g$ are equivalent facets of $g$.

Now note that $\Delta = \Delta' \cup W$, where $W$ is a $\Delta_{n-1}$ bundle over
$\Delta_1$. 
The outward conormals to the fiber facets of $W$
are  $\eta_1, \eta_2, \alpha_1,\ldots,\alpha_{n-3},$ and 
$-\eta_1 - \eta_2 -\sum_i \alpha_i$;
the outward conormals to the base facets are $\eta_3$ and $\eta_4$.
Therefore, the facets of $W$ with conormals $\eta_3$ and $\eta_4$ are equivalent.
Moreover, since $\eta_3$, $\eta_4$, $\alpha_1,\ldots,\alpha_{n-3}$, and
$\eta_1 + \eta_2$ lie in a hyperplane,
the facets of $W$ with conormals $\eta_1$ and $\eta_2$ are equivalent.
By Lemma~\ref{le:Hsum}, 
$H = \sum_{i=1}^4 \gamma_i \eta_i$.
Since $\gamma_1 + \gamma_2 = \gamma_3 + \gamma_4 = 0$,
$H$ is inessential on $W$; hence by Proposition~\ref{prop:inessential}
$$\langle H, c_W \rangle = \sum \gamma_i \kappa_i = \langle H, c_\De \rangle.
$$
Claim (ii) now follows from Lemma~\ref{le:newblow}.

Since the first part of claim (iii) is a special case of
Lemma~\ref{le:blowcent} (iv),  we may assume
that $H$ is inessential on $\De$. 
By Proposition~\ref{prop:inessential}, this implies that every asymmetric
facet must be equivalent to at least one other asymmetric facet.
Moreover, recall that $\gamma_1 + \gamma_2 = \gamma_3 + \gamma_4 = 0$.
Hence, if $F_1 \sim F_2$ or if any of the facets $F_1,\dots,F_4$
are symmetric, then the following statements are both true.
\begin{itemize}
\item  $F_1 \sim F_2$ or $\gamma_1 = \gamma_2 = 0$, and
\item $F_3 \sim F_4$ or $\gamma_3 = \gamma_4 = 0$.
\end{itemize}
Hence,
$H$ is inessential on  
$\De'$ 
by Lemma~\ref{le:blowequiv} (i).
In contrast, if 
$\gamma_1 \neq 0$ and
$F_1 \not\sim F_2$ the same lemma implies that $F_1$ is
not equivalent to any other asymmetric facet. Claim (iii) follows immediately.
\end{proof}

Here is the auxiliary lemma used above.

\begin{lemma}\labell{le:nonsb}
Let $H \in \ft$ be a mass linear function on a smooth $3$-dimensional
polytope $\Delta \subset \ft^*$.
If $F_1$ and $F_2$ are asymmetric facets and
the edge  $F_{12}$  
meets every asymmetric facet,
then $F_1$ and $F_2$ are equivalent facets of  $\Delta$.
\end{lemma}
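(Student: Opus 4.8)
The plan is to bound the number of asymmetric facets from the local geometry of the edge $F_{12}$, and then run through the classification of Proposition~\ref{prop:3d}; the only genuinely substantial case turns out to be the $\De_2$ bundle over $\De_1$.

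\emph{Step 1 (counting).} First I would observe that, since $\De$ is a simple $3$-polytope, the edge $F_{12}$ has exactly two vertices, and at each of them exactly one facet other than $F_1,F_2$ occurs; call these $F_3$ and $F_4$. A facet $F\ne F_1,F_2$ meets $F_{12}$ only if it contains a vertex of $F_{12}$, so the only facets meeting $F_{12}$ are $F_1,F_2,F_3,F_4$. Moreover $F_3\ne F_4$, since a facet containing both vertices of $F_{12}$ would contain the whole edge $F_1\cap F_2$, impossible for a third facet because an edge of a simple polytope lies in exactly two facets. As $F_{12}$ meets every asymmetric facet by hypothesis, all asymmetric facets lie in $\{F_1,F_2,F_3,F_4\}$; in particular $\De$ has at most four asymmetric facets.

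\emph{Step 2 (dispatching the easy cases).} If $\De$ has exactly two asymmetric facets they are $F_1,F_2$, and Proposition~\ref{prop:2asym} shows they are equivalent. So I would then assume there are three or four asymmetric facets and apply Proposition~\ref{prop:3d}. The cube $\De_1\times\De_1\times\De_1$ is ruled out, having six asymmetric facets. If $\De=\De_3$ then all facets are pairwise equivalent, and if $\De$ is a $\De_1$ bundle over $\De_2$ then $F_1,F_2$ are base facets corresponding to equivalent edges of $\De_2$, so $F_1\sim F_2$ by Lemma~\ref{le:bund}(i); both conclusions use that all facets of a simplex are equivalent. Finally, if $\De$ is a $\De_1$ bundle over $\De_1\times\De_1$, the four asymmetric facets are the base facets; two of these intersect only when the corresponding edges of the square are adjacent, in which case $F_{12}$ lies over a vertex of the square and hence misses the two remaining asymmetric base facets, contradicting the hypothesis. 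So this case does not occur.

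\emph{Step 3 (the bundle $\De_2\to\De_1$).} This is the main case. Write the fiber facets as $\Tilde F_1,\Tilde F_2,\Tilde F_3$ and the base facets as $\Hat F_1,\Hat F_2$. Two base facets never meet, and if $F_1,F_2$ were one base and one fiber facet then, since a base facet is asymmetric iff both are, the other base facet would be an asymmetric facet disjoint from $F_{12}$, which is impossible. Hence $F_1,F_2$ are two fiber facets, say $\Tilde F_1,\Tilde F_2$; then $F_{12}$ is disjoint from $\Tilde F_3$, so $\Tilde F_3$ is symmetric. Since there are at least three asymmetric facets, both base facets are asymmetric, so the asymmetric facets are exactly $\Tilde F_1,\Tilde F_2,\Hat F_1,\Hat F_2$. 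Restricting to the symmetric facet $\Tilde F_3$ and applying Proposition~\ref{prop:symcent}, the quadrilateral $\Tilde F_3$ has all four edges asymmetric, so by Proposition~\ref{prop:2dim} it is a parallelogram $\De_1\times\De_1$. In particular its opposite edges $\Hat F_1\cap\Tilde F_3$ and $\Hat F_2\cap\Tilde F_3$ are parallel; let $\xi\in\ft^*$ be their common direction. Then $\xi$ is parallel to $\Tilde F_3,\Hat F_1,\Hat F_2$, while it is not parallel to $\Tilde F_1$ or $\Tilde F_2$: otherwise $\xi$ would be orthogonal to the conormals of three facets meeting at a vertex of $\De$ (for instance $\Tilde F_1,\Tilde F_3,\Hat F_1$), forcing $\xi=0$ by smoothness. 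Thus $\xi$ is parallel to every facet except $\Tilde F_1,\Tilde F_2$, and Remark~\ref{rmk:equiv} yields $F_1=\Tilde F_1\sim\Tilde F_2=F_2$.

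I expect Step 3 to be the main obstacle. Every other case collapses either to the bound of Step 1 or to the triviality that facets of a simplex are equivalent, but in the $\De_2$-bundle case the equivalence $F_1\sim F_2$ is a real constraint on the bundle (it fails for a generic twisting), so it must be extracted from mass linearity rather than from pure combinatorics. The key device is to pass to the symmetric fiber facet $\Tilde F_3$: there mass linearity forces a parallelogram, and the resulting parallel pair of edges produces exactly the root $\xi$ that witnesses $\Tilde F_1\sim\Tilde F_2$ via Remark~\ref{rmk:equiv}.
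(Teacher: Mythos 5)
Your proof is correct. The skeleton is the same as the paper's: both arguments run through the classification of Proposition~\ref{prop:3d}, dispose of the two-asymmetric-facet case via Proposition~\ref{prop:2asym} and the simplex case via the equivalence of all facets of $\De_3$, and isolate the $\De_2$ bundle over $\De_1$ as the only substantive case. (Your Step 1 counting bound and your treatment of the $\De_1$ bundle over $\De_2$ are harmless extras; the paper instead rules that case out because the other asymmetric base facet misses $F_{12}$, but your observation that the conclusion holds there anyway is equally valid.) Where you genuinely diverge is in the main case. The paper invokes Proposition~\ref{prop:bund} to split off an inessential function $H'$ supported on the base facets, so that $\Tilde H=H-H'$ has exactly the two asymmetric facets $F_1,F_2$, and then quotes Proposition~\ref{prop:2asym} again. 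You instead restrict $H$ to the symmetric fiber facet $\Tilde F_3$, use Propositions~\ref{prop:symcent} and \ref{prop:2dim} to force that quadrilateral to be a parallelogram, and read off an explicit root $\xi$ (the common direction of the two base edges) witnessing $F_1\sim F_2$ via Lemma~\ref{le:equiv}. The paper's route is shorter and stays at the level of the general structure theory of bundles; yours is more concrete, produces the equivalence directly from a geometric constraint ($a_1=a_2$ in the coordinates of \eqref{eq:Yb}) rather than through the "exactly two asymmetric facets implies equivalent" principle, and makes visible exactly which twistings of the bundle are excluded by mass linearity. Both are complete proofs.
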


\begin{proof}
By Proposition~\ref{prop:3d}
we see that there 
are only three possibilities:
\begin{itemize}\item
 $\Delta$ has exactly two asymmetric facets;
 \item
$\Delta$ is the simplex $\Delta_3$; or
\item
 $\Delta$ is a $\Delta_2$ bundle over $\Delta_1$, 
$F_1$ and $F_2$ are fiber facets, and the third fiber facet is symmetric. 
\end{itemize}

If $\Delta$ has exactly two asymmetric facets, $F_1$ and $F_2$, then  
$F_1 \sim F_2$ by Proposition~\ref{prop:2asym}.
If $\Delta$ is the simplex $\Delta_3$, 
then all the facets are equivalent. Therefore 
it remains to consider the 
third case.
By  Proposition~\ref{prop:bund} there is
an inessential function $H'$ so that 
the  $\Tilde H$-asymmetric facets are exactly $F_1$ and $F_2$, where $\Tilde H := H-H'$.
By Proposition~\ref{prop:2asym}, this implies that $F_1$
and $F_2$ are equivalent.
\end{proof}

As we mentioned 
above, blowups of the form considered in  Proposition \ref{prop:blow4} may convert an inessential function on $\De$ to an essential function on the blowup $\De'$.
The next result shows that this is not possible if we blow up along a 
face that is contained in every asymmetric facet.

\begin{lemma}\labell{le:inessblow}
Fix $H\in \ft^*$ and let
$\Delta'$ be the blowup of 
a polytope
$\Delta$ along a face $F_I$ that is contained in every $H$-asymmetric facet. Then $H$ is inessential on $\De$ exactly if it is inessential on $\De'$.
\end{lemma}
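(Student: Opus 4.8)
The plan is to dispatch one implication immediately and reduce the other to bookkeeping with equivalence classes. For the direction ``inessential on $\De'$ implies inessential on $\De$'' no hypothesis on $F_I$ is needed: if $H$ is inessential on $\De'$ then it is mass linear there by Proposition~\ref{prop:inessential}, so Lemma~\ref{le:blowcent} applies and part~(iv) gives that $H$ is inessential on $\De$. Thus all the work lies in the converse.

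So suppose $H$ is inessential on $\De$ and write $H=\sum_i\be_i\eta_i$ with $\sum_{i\in J}\be_i=0$ for every $J\in\Ii$. By Proposition~\ref{prop:inessential} we have $\langle H,c_\De(\ka)\rangle=\sum_i\be_i\ka_i$, so $\be_i$ equals the coefficient $\ga_i$ of $\ka_i$; in particular $F_i$ is asymmetric exactly if $\be_i\neq 0$. The key translation of the hypothesis is combinatorial: since $\De$ is smooth, hence simple, the face $f=F_I$ of codimension $|I|$ is contained in precisely the facets $F_i$ with $i\in I$. Therefore the assumption that $F_I$ lies in every asymmetric facet says exactly that every asymmetric facet is indexed by $I$, i.e. $\be_j=0$ whenever $j\notin I$.

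I would then produce an explicit inessential representation of $H$ on $\De'$. Recall that for $j\neq 0$ the facet $F_j'$ has conormal $\eta_j$, while the exceptional divisor $F_0'$ has conormal $\eta_0'=\sum_{i\in I}\eta_i$. Assign the coefficient $\be_j$ to $F_j'$ for $j\neq0$ and $0$ to $F_0'$; then $\sum_{j\neq0}\be_j\eta_j=H$. To check the inessential condition on $\De'$ I use Lemma~\ref{le:blowequiv}: the exceptional divisor forms a singleton equivalence class, whose assigned coefficient is $0$, and each class $J\in\Ii$ of $\De$ refines in $\De'$ into the two (possibly empty) classes $J\cap I$ and $J\ssminus I$, with no new cross-class equivalences created. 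For each such class, the sum over $J\ssminus I$ vanishes because those facets satisfy $j\notin I$, hence $\be_j=0$; and then the sum over $J\cap I$ equals $\sum_{j\in J}\be_j$ minus the sum over $J\ssminus I$, which is $0-0=0$. Hence every equivalence-class sum vanishes and $H$ is inessential on $\De'$.

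The main obstacle is not analytic but lies in the two combinatorial translations. First, one must justify that ``$F_I$ contained in every asymmetric facet'' is equivalent to ``$\be_j=0$ for $j\notin I$''; this rests on the simple-polytope fact that $f=F_I$ is contained in exactly the $|I|$ facets $F_i$, $i\in I$, so that the representing index set $I$ really is the set of facets through $f$ (and on the observation from Proposition~\ref{prop:inessential} and Lemma~\ref{le:Hsum} that the coefficients $\be_i$ are pinned down as the $\ga_i$). Second, one must use the precise way Lemma~\ref{le:blowequiv} splits each class into its parts inside and outside $I$ while never merging distinct classes, so that verifying the two refined sums is enough.
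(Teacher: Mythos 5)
Your proof is correct and follows the same route the paper takes: the paper's own (two-sentence) proof cites exactly Proposition~\ref{prop:inessential} together with Lemma~\ref{le:blowequiv}~(i) for the forward direction and Lemma~\ref{le:blowcent}~(iv) for the converse. You have simply written out the class-refinement bookkeeping that the paper leaves implicit, and your details (uniqueness of the coefficients $\be_i=\ga_i$, the simple-polytope fact that $F_I$ lies in exactly the facets indexed by $I$, and the splitting of each equivalence class into $J\cap I$ and $J\ssminus I$) are all sound.
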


\begin{proof} 
If $H$ is inessential on $\De$, this follows easily from
Proposition~\ref{prop:inessential} and
Lemma \ref{le:blowequiv} (i).
The converse is a special case of 
Lemma \ref{le:blowcent} (iv).  
\end{proof}

Further examples of blowups that preserve mass linearity are given
in \S\ref{ss:blml}.
For example, we show that blowing up a polytope 
at
a vertex that meets every asymmetric facet preserves mass linearity;
see Corollary~\ref{cor:blowv}.

\subsection{Blowing down}\labell{ss:blowd}

Although one can always blow up a smooth polytope along a facet of codimension at least $2$ to obtain a new smooth polytope,
it is not so easy to decide when this process can be reversed.
This subsection explores 
general
conditions under which this is possible.
Explicit
$4$-dimensional examples may be found in \S\ref{ss:min}.

We say that a smooth polytope $\De$ can be {\bf blown down} along
a facet $F_0$
if $\Delta$ is the blowup of a smooth polytope $\ov\Delta$ along some face 
$\ov f$, and $F_0$ is the exceptional divisor.
In this case,
the polytope $\ov \De$ is obtained 
from $\De$ by moving the hyperplane $P(F_0)$ outwards 
(i.e. increasing its support number $\ka_0$) until it 
no longer intersects the intersection of the remaining half spaces. 
The facet $F_0$ must be a bundle whose fiber is a simplex $\De_k$. 
As  $\ka_0$ increases, 
the sizes and relative positions of the fiber and the base facets of $F_0$  changes.
If the outward conormal to $F_0$  is a positive multiple of
the sum of the fiber facets, the size of the fiber facet will
decrease as we move $P(F_0)$ outwards.
The transition from $\De$ to $\ov \De$ 
is  a blowdown if $\ov \De$ is smooth and if during this movement of 
$P(F_0)$ there is precisely one value of $\ka_0$ for 
which $P(F_0)$ intersects 
a vertex of $\De$. 
What is crucial is that the size of the fiber
shrinks to zero before any new intersections of the base facets
of $F_0$
are created.

It is easy to check that an edge of a smooth $2$-dimensional polygon can
be blown down exactly if the outward conormal to that edge
is the sum of the outward conormals to the 
two
adjacent edges.
In higher dimensions, the situation is somewhat  more complicated.

\begin{prop}\labell{prop:blowdown} 
Let $\Delta =
\bigcap_{i = 0}^N \{ x \in \ft^* \mid \langle \eta_i, x\rangle \leq \kappa_i  \}$
be a smooth polytope; denote the facets by $F_0,\ldots,F_N$.
Fix $I \subset \{1,\ldots,N\}$.
Then the polytope $\ov\Delta = 
\bigcap_{i = 1}^N \{x \in \ft^* \mid \langle \eta_i, x\rangle \leq \kappa_i \}$
is 
smooth and $\Delta$ is the blowup of
$\ov \Delta$ along $\ov{F}_I := \bigcap_{i \in I} P(F_i) \cap \ov\Delta$ with 
exceptional divisor $F_0$ exactly if
\begin{enumerate}
\item [(i)] The facet $F_0$ is a $\Delta_{|I|-1}$ bundle  
with fiber facets $\{F_i \cap F_0\}_{i \in I}$
and base facets $\{F_j \cap F_0\}_{j \in J}$  for
some $J  \subset \{1,\ldots,N\}.$
\item [(ii)] $ \eta_0 = \sum_{i \in I} \eta_i .$
\item [(iii)]
Given $K \subset J$, if  $F_K = \emptyset$
then $\ov{F}_K  := \bigcap_{k \in K} P(F_k) \cap \ov\Delta =   \emptyset$. 
\end{enumerate}
\end{prop}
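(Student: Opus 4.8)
The plan is to prove both implications, the forward one being a fairly direct consequence of the earlier blowup results and the backward one requiring a hands-on analysis of the polytope $\ov\De$ obtained by relaxing the constraint that defines $F_0$. For the forward implication, suppose $\De$ is the blowup of the smooth polytope $\ov\De$ along $\ov F_I$ with exceptional divisor $F_0$. Then (ii) is immediate from Definition~\ref{def:blowup}, which forces the conormal of the exceptional divisor to be $\sum_{i\in I}\eta_i$. Condition (i) is exactly the content of Remark~\ref{rmk:blowint}: the exceptional divisor is a $\Delta_{|I|-1}$ bundle over $\ov F_I$ whose fiber facets are the $F_i\cap F_0$, $i\in I$, and whose base facets are the facets of $\ov F_I$; let $J$ index the latter. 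Since base and fiber facets of a bundle are distinct, $J\cap I=\emptyset$, so every $K\subset J$ fails to contain $I$; the combinatorial correspondence in Remark~\ref{rmk:blowint} then gives $F_K\neq\emptyset \Leftrightarrow \ov F_K\neq\emptyset$, which yields (iii).

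For the backward implication, assume (i)--(iii) and consider pushing the hyperplane $P(F_0)$ outward, i.e.\ the family $\De_t$ obtained by replacing $\ka_0$ with $t\in[\ka_0,\sum_{i\in I}\ka_i]$. Since $\eta_0=\sum_{i\in I}\eta_i$ by (ii), the constraint $\langle\eta_0,x\rangle\le t$ is implied by the constraints for $i\in I$ once $t=\sum_{i\in I}\ka_i$; hence $\ov\De$ is bounded and at that value $F_0$ has collapsed precisely onto $\ov F_I=\bigcap_{i\in I}P(F_i)\cap\ov\De$, a face of codimension $|I|$. I would next settle smoothness at the vertices of $\ov F_I$. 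The fiber of the bundle $F_0$ over a vertex $\ov v$ of $\ov F_I$ is a simplex whose vertices $w$ are vertices of $\De$; at such a $w$ the conormals are $\eta_0$, the $m=|I|-1$ fiber conormals $\{\eta_i\}_{i\in I\ssminus\{i_0\}}$, and the $n-m-1$ base conormals $\{\eta_j\}_{j\in J_w}$. As $\De$ is smooth these form a lattice basis, and substituting $\eta_{i_0}=\eta_0-\sum_{i\in I\ssminus\{i_0\}}\eta_i$ (a unimodular change of basis, using (ii)) shows that $\{\eta_i\}_{i\in I}\cup\{\eta_j\}_{j\in J_w}$ is again a lattice basis. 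These are exactly the $n$ conormals meeting $\ov\De$ at $\ov v$, so $\ov\De$ is smooth there.

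The crux, which I expect to be the main obstacle, is the combinatorial bookkeeping showing that the collapse is clean, and this is precisely where (iii) is used. As $P(F_0)$ is pushed outward the only faces that could be newly created are intersections of the base facets $\{F_j\cap F_0\}_{j\in J}$ of the collapsing divisor; condition (iii) guarantees that any subcollection $K\subset J$ separated in $\De$ (i.e.\ $F_K=\emptyset$) remains separated in $\ov\De$ (i.e.\ $\ov F_K=\emptyset$), so the fiber simplices shrink onto the vertices of $\ov F_I$ before any base facets are forced together and no face other than those through $\ov F_I$ appears. The delicate point is to make this ``nothing happens until $t=\sum_{i\in I}\ka_i$'' statement precise, by arguing that the combinatorial type of $\De_t$ is constant on $[\ka_0,\sum_{i\in I}\ka_i)$ and that (iii) rules out every spurious intersection event.

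Granting this, every vertex of $\ov\De$ is either a vertex of $\ov F_I$ or a vertex of $\De$ lying off $F_0$; the latter are automatically smooth, and each satisfies $\langle\eta_0,v\rangle<\ka_0$ strictly. Hence $\ov\De$ is a smooth polytope with facets $\ov F_1,\dots,\ov F_N$, and since $\eps:=\sum_{i\in I}\ka_i-\ka_0$ is small enough in the sense of Definition~\ref{def:blowup}, the polytope $\De=\ov\De\cap\{x\in\ft^*\mid\langle\eta_0,x\rangle\le\ka_0\}$ is precisely the blowup of $\ov\De$ along $\ov F_I$ with exceptional divisor $F_0$, completing the proof.
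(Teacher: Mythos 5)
Your forward implication is fine and is exactly what the paper does (it simply quotes Remark~\ref{rmk:blowint}). The problem is the backward implication. You correctly identify the crux --- that the collapse of $F_0$ onto $\ov F_I$ must be ``clean,'' i.e.\ that no new face incidences appear other than those through $\ov F_I$ --- but you then write ``Granting this\dots'' and never prove it. That deferred step is the entire content of the proposition: it is where hypotheses (i) and (iii) actually do their work, and your smoothness computation at the new vertices already presupposes its conclusion (you assume that the facets through a new vertex are exactly $\{F_i\}_{i\in I}$ together with the base conormals of one fiber of $F_0$, which is precisely what has to be established). Making the deformation argument rigorous --- constancy of the combinatorial type of $\De_t$ on the half-open interval and the absence of ``spurious intersection events'' --- would require an analysis at least as involved as a direct one, so as written the proof has a genuine gap at its central point.

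The paper avoids the deformation entirely and analyzes the endpoint polytope directly. It takes an arbitrary ``new'' vertex $\ov v=\ov F_{I'}\cap\ov F_K$ of $\ov\De$ (with $I'\subset I$ and $K\cap I=\emptyset$) and argues: each $\ov F_k$, $k\in K$, is convex and contains both $\ov v\notin\De$ and the nonempty facet $F_k\subset\De$, so it must cross $P(F_0)$, forcing $F_k\cap F_0\neq\emptyset$ and hence $K\subset J$; then (iii) (in contrapositive) gives $F_K\neq\emptyset$, and convexity of $\ov F_K$ gives $F_K\cap F_0\neq\emptyset$; the bundle structure (i) then makes $F_{I\ssminus\{i\}}\cap F_K\cap F_0$ a vertex of the simple polytope $\De$ for each $i\in I$, so $|I|+|K|=n$, which forces $I'=I$ and shows that $\ov v$ is a simple vertex lying on $\ov F_I$. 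Only after this does the unimodular substitution $\eta_{i_0}=\eta_0-\sum_{i\in I\ssminus\{i_0\}}\eta_i$ (which you do have) give smoothness. To repair your write-up, replace the ``granting this'' paragraph with this vertex-by-vertex argument.
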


\begin{proof}
We have already seen that if $\ov\Delta$ is smooth and 
 $\Delta$ is
the blowup of $\ov\Delta$ along $\ov{F}_I$
with exceptional divisor $F_0$,
then (i),  (ii) and (iii) hold;
see Remark~\ref{rmk:blowint}.

To prove the converse, first  note that
since $\Delta$ is compact, the positive span of the $\eta_i$
is all of $\ft$.   By assumption (ii), this  implies that 
the positive span of the $\eta_i$ for $i \geq 1$ is also all of $\ft$, 
and so $\ov\Delta$ is compact.

Next, consider a ``new'' vertex $\ov{v}$ of $\ov \Delta$, that is,
a vertex which satisfies $\langle  \eta_0,\ov v \rangle  > \kappa_0$
and hence does  not lie in $\Delta$.
Write $\ov{v} = \ov{F}_{I'} \cap \ov{F}_{K}$, where $I' \subset I$ 
and $K \cap I = \emptyset$.
Since the facet $F_k$ is not empty
and  $\ov{F}_k = P(F_k) \cap \ov\Delta$ is connected, 
the intersection $\ov{F}_k \cap F_0 = F_k \cap F_0$
is not empty for any $k \in K$.  Hence, $K \subset J$, and
so by assumption  (iii)  the face $F_K$  is
also nonempty.  Since $\ov{F}_K$ is connected, this implies that
$\ov{F}_{K} \cap F_0 = F_K \cap F_0$ is not empty.
By assumption (i), this implies that 
$F_{I \ssminus \{i\}} \cap F_{K} \cap F_0$ 
is a vertex of $\Delta$ for all $i \in I$.
Since $\Delta$ is simple, this implies that $|I| + |K| = n$.
Since there must be at least $n$ facets through $\ov{v}$, it also
implies that $|I'| = |I|$.  Hence $I'=I$ and also 
$\ov{v}$ is a simple vertex.
Since 
$\De$ is smooth, the vectors 
$\{ \eta_j\}_{j \in I \ssminus \{i\}}$, 
 $\{\eta_k\}_{k \in K}$
and $\eta_0$ span the lattice  
$\lat$
for all $i \in I$.
By part
(ii), this means that the vectors 
$\{\eta_j\}_{j\in I} $ and  $\{\eta_k\}_{k \in K}$ also span the lattice,
that is, that $v$ is a smooth vertex.

Since  $\Delta$ is smooth,
and every ``new'' vertex is  smooth, $\ov\Delta$ is also smooth.
Finally, since $\Delta$ is compact and every new vertex lies on $F_I$,
$F_I$ is not empty.
\end{proof}

\begin{rmk}\rm 
In some cases, the  polytope $\De$ can be blown down along
the facet $F_0$ for some values of $\ka \in \Cc_{\De}$, but not for
other values $\ka' \in \Cc_{\De}$;
see Figure~\ref{fig:2}. This is because condition (iii) may depend
on $\ka$.\smallskip

 \begin{figure}[htbp] 
     \centering
 \includegraphics[width=4in]{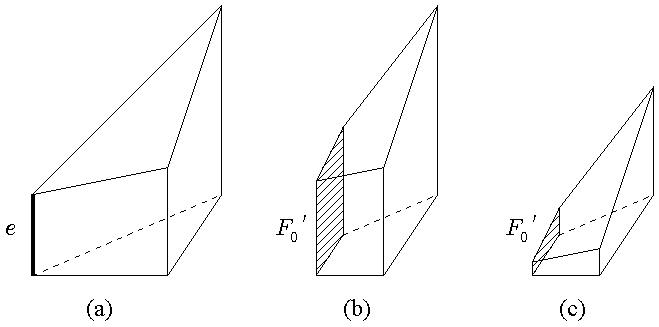} 
    \caption{(b) is the blowup of (a) along $e$.  When the top facet is
     moved down as in (c), the facet $F_0'$ no longer blows down.}
    \labell{fig:2}
 \end{figure}
 \begin{figure}[htbp] 
     \centering
 \includegraphics[width=5in]{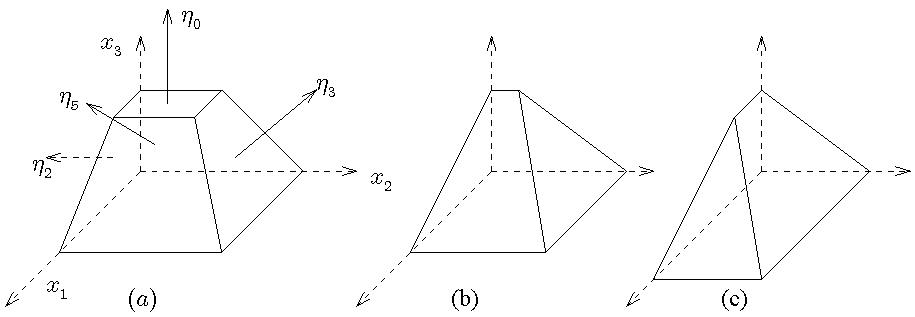} 
    \caption{(b) is the blowdown of (a) along $F_0$ with $I = \{4,5\}$;
    (c) is the blowdown with $I = \{2,3\}$.}
    \labell{fig:4}
 \end{figure}

\NI
Another possibility is that
the blowdown of $\Delta$ along $F_0$ 
depends
on the choice of $\kappa$.
For example, suppose that 
$\Delta = \bigcap_{i = 0}^5 \{x \in \R^3 
\mid \langle \eta_i, x\rangle \leq \kappa_i \}$
where
$$
\eta_0 = e_3, \ \eta_1 = -e_3,\  \eta_2 = -e_2,\  \eta_3 =  e_2 + e_3,  \ 
\eta_4 = -e_1, \ \eta_5 =  e_1+e_3,
$$
$\kappa = (1,0,0,\la,0, 2)$, and $\lambda > 1$; 
see 
Figure~\ref{fig:4}. 
In this case, $F_0$ can be viewed as a $\Delta_1$ bundle over $\Delta_1$
in two ways -- either the fiber facets are $F_{20}$ and $F_{30}$,
or the fiber facets are $F_{40}$ and $F_{50}$; so a priori
we can take $I = \{2,3\}$ or $I = \{4,5\}$. 
Either way, condition (ii) is also  satisfied.
If $\lambda > 2$, 
then condition (iii) also holds if we take $I = \{4,5\}$;
so $\Delta$ is the blowup of $\ov \Delta$ along the (non empty) face $F_{45}$.
Conversely, 
if $\lambda < 2$, 
then $\Delta$ is the blowup of $\ov \Delta$
along the (non empty) face $F_{23}$.  Finally, 
if $\lambda = 2$,
then condition (iii) is not satisfied in either case.  In fact, it is
easy to see that $\ov\Delta$ is not a simple polytope.
\end{rmk}

In practice, we will not directly prove that 
condition (ii) of Proposition~\ref{prop:blowdown}
holds;
instead, we will use the following technical lemma which
allows us to reduce to the simpler case of 
lower dimensional polytopes.

\begin{lemma}\labell{le:blowtech}
Let $\Delta =
\bigcap_{i = 0}^N \{ x \in \ft^* \mid \langle \eta_i,x\rangle \leq \kappa_i  \}$
be a smooth
polytope.
Assume that $F_0$ is a $\Delta_{|I| -1}$ bundle with
fiber facets 
$\{ F_i \cap F_0 \}_{i \in I}$.
Also assume that there exists $L \subset \{1,\ldots,N\}$
so that the face $F_L$ is  the blowup of a smooth polytope 
$\ov F_L$ along the face 
$\bigcap_{i \in I} P(F_i) \cap \ov F_L$
with exceptional divisor $F_0 \cap F_L$.
Then condition (ii) of Proposition~\ref{prop:blowdown}  is satisfied.
\end{lemma}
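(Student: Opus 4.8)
The plan is to derive condition (ii), namely $\eta_0 = \sum_{i \in I}\eta_i$, by combining two linear relations in $\ft$: one extracted from the simplex-bundle structure of $F_0$, the other from the blowdown structure of $F_L$. Throughout I identify the cotangent directions of a face with a quotient of $\ft$. For a subset $S \subset \{0,\dots,N\}$ write $V_S := \mathrm{span}\{\eta_s : s \in S\}$ and let $\pi_S \colon \ft \to \ft/V_S$ be the projection; since $\De$ is smooth, the primitive outward conormal of a facet $F_i \cap F_S$ of the face $F_S$, read in the lattice of $P(F_S)$, is exactly $\pi_S(\eta_i)$, and $\pi_S(\eta_i)$ is primitive because the relevant $\eta$'s form part of a lattice basis at a vertex.

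First I would exploit the bundle structure of $F_0$. Since $F_0$ is a $\Delta_{|I|-1}$ bundle with fiber facets $\{F_i \cap F_0\}_{i \in I}$, the primitive conormals of these $|I|$ fiber facets are $\{\pi_{\{0\}}(\eta_i)\}_{i \in I}$, and by Definition~\ref{def:bund} they are the images under $\iota$ of the outward conormals of the fiber $\Delta_{|I|-1}$. The outward conormals of a simplex sum to zero, so applying the linear map $\iota$ gives $\sum_{i \in I}\pi_{\{0\}}(\eta_i) = 0$ in $\ft/V_{\{0\}} = \ft/\langle \eta_0\rangle$. Hence $\sum_{i \in I}\eta_i$ is a scalar multiple of $\eta_0$; because $\eta_0$ is primitive and $\sum_{i \in I}\eta_i$ is a lattice vector, $\sum_{i \in I}\eta_i = c\,\eta_0$ for some integer $c$.

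Next I would use the hypothesis on $F_L$. By assumption $F_L$ is the blowup of $\ov F_L$ along $\bigcap_{i \in I}P(F_i) \cap \ov F_L$ with exceptional divisor $F_0 \cap F_L$, so by Definition~\ref{def:blowup} (equivalently, condition (ii) of Proposition~\ref{prop:blowdown} applied inside $P(F_L)$) the conormal of the exceptional divisor equals the sum of the conormals of the facets being blown up; in the quotient $\ft/V_L$ this reads $\pi_L(\eta_0) = \sum_{i \in I}\pi_L(\eta_i)$, i.e.\ $\eta_0 - \sum_{i \in I}\eta_i \in V_L$. Substituting $\sum_{i \in I}\eta_i = c\,\eta_0$ yields $(1-c)\eta_0 \in V_L$. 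Finally, since $F_0 \cap F_L$ is a nonempty proper facet of $F_L$, the hyperplane $P(F_0)$ neither contains nor is disjoint from $F_L$; were $\eta_0 \in V_L$, the function $\langle \eta_0, \cdot\rangle$ would be constant on $P(F_L)$, forcing $F_0 \cap F_L$ to be all of $F_L$ or empty, a contradiction. Thus $\eta_0 \notin V_L$, so $c = 1$ and $\eta_0 = \sum_{i \in I}\eta_i$, which is exactly condition (ii).

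The substantive content is the bookkeeping in the second and third paragraphs: identifying the primitive conormals of the facets of $F_0$ and of $F_L$ with the projections $\pi_{\{0\}}(\eta_i)$ and $\pi_L(\eta_i)$ (using smoothness for primitivity) and verifying $\eta_0 \notin V_L$. I expect this identification of conormals with quotient projections to be the main point requiring care; once it is in place the argument is pure linear algebra, combining the relation $\sum_{i\in I}\eta_i \in \langle\eta_0\rangle$ from the simplex fiber with the relation $\eta_0 - \sum_{i\in I}\eta_i \in V_L$ from the blowdown of $F_L$.
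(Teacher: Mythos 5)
Your proof is correct and takes essentially the same route as the paper's: you extract $\sum_{i\in I}\eta_i = c\,\eta_0$ from the simplex-bundle structure of $F_0$, extract $\eta_0 - \sum_{i\in I}\eta_i \in V_L$ from the blowdown structure of $F_L$, and conclude $c=1$ because $\eta_0 \notin V_L$ (as $F_0\cap F_L$ is a nonempty proper face of $F_L$). The only cosmetic differences are that you phrase things via quotient projections and lattice primitivity where the paper speaks of conormals being ``constant when restricted to'' $P(F_0)$ and $P(F_L)$, and you observe $c\in\Z$, which is not needed.
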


\begin{proof}
By 
the  definition of  a $\Delta_{|I|-1}$ bundle,  
$\sum_{i \in I} \eta_i$ 
is constant when restricted
to $P(F_0)$, that is,
$\sum_{i \in I} \eta_i = c \eta_0$ for
some real number $c$. 
Since $F_0 \cap F_L$ is not empty 
and $\De$ is simple, 
$\eta_0$ 
is nonconstant 
when restricted to $P(F_L)$. 
Since $F_L$ is the blowup of a smooth polytope $\ov F_L$ along
the face 
 $\bigcap_{i \in I} P(F_i) 
 \cap \ov F_L$ with exceptional
divisor $F_0 \cap F_L$,
$\eta_0 - \sum_{i \in I} 
\eta_i$ 
is constant on $P(F_L)$.
Therefore, $(1 - c)\eta_0$  
 is also constant on
$P(F_L)$.
Since $\eta_0$ 
is nonconstant on 
$P(F_L)$, this implies that $c=1$. 
\end{proof}

\begin{rmk}\labell{rmk:blowtech} \rm
Conversely, let $\Delta'$ be the blowup of 
a  polytope 
$\Delta$
along a face $F_I$ with exceptional divisor $F_0\,\!'$.
Let  $F_L\,\!'$ be a face of $\Delta'$ that meets $F_0\,\!'$.
If $|I \setminus (I \cap L)| \geq 2$, then $F_L\,\!' := F_L \cap \Delta'$ is the blowup of
$F_L$ along the face $F_I \cap F_L$ with exceptional divisor $F_0\,\!' \cap F_L$.
\end{rmk}

In most of the cases we consider, condition (iii)
of Proposition~\ref{prop:blowdown}
is extremely straightforward to check.
However, for the third case of Lemma~\ref{le:blowblow}, 
we will need the following lemma.

\begin{lemma} \labell{le:blowdown2} 
Let $\Delta =
\bigcap_{i = 0}^N \{ x \in \ft^* \mid \langle \eta_i, x\rangle \leq \kappa_i  \}$
be a smooth 
$4$-dimensional
 polytope.
Assume that $F_0$ is a $\Delta_1$ bundle over $\De_1\times \De_1$
with base facets $F_1 \cap F_0, \ldots, F_4 \cap F_0$, and that  
$F_{ij}: = F_i\cap F_j$ 
is not empty for any pair $1 \leq i < j \leq 4$.
Then condition (iii) of Proposition~\ref{prop:blowdown} is satisfied.
\end{lemma}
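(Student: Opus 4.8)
The plan is to verify condition (iii) of Proposition~\ref{prop:blowdown} directly: for every $K \subseteq J = \{1,2,3,4\}$ with $F_K = \emptyset$ I must show $\ov F_K = \emptyset$. First I would record the combinatorics of the base. Since $F_0$ is a $\Delta_1$ bundle over $\Delta_1\times\Delta_1$, it is combinatorially a $3$-cube whose six facets are the two fiber facets together with the four base facets $F_1\cap F_0,\dots,F_4\cap F_0$. Among these four, exactly two disjoint pairs fail to meet inside $F_0$, namely the two pairs of opposite edges of the square $\Delta_1\times\Delta_1$; after relabelling I may assume these pairs are $\{1,2\}$ and $\{3,4\}$, so that $F_{12}\cap F_0 = \emptyset$ and $F_{34}\cap F_0 = \emptyset$, while every other pairwise intersection inside $F_0$ is nonempty. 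By hypothesis $F_{12}$ and $F_{34}$ are themselves nonempty in $\Delta$, so each is a nonempty face of $\Delta$ that misses the facet $F_0$.

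The key step is to show $\ov F_{12} = F_{12}$, and symmetrically $\ov F_{34} = F_{34}$; that is, these faces do not grow when the inequality $\langle\eta_0,x\rangle\le\kappa_0$ is dropped. Since $\Delta\subseteq\ov\Delta$, the inclusion $F_{12}\subseteq\ov F_{12}$ and the identity $\ov F_{12}\cap\Delta = F_{12}$ are automatic, so I need only rule out points of $\ov F_{12}$ lying in the ``cap'' $\{x\in\ov\Delta : \langle\eta_0,x\rangle>\kappa_0\}$. This is a convexity argument: because $F_{12}$ is compact, lies in $\Delta$, and misses $F_0$, we have $\langle\eta_0,\cdot\rangle<\kappa_0$ strictly on $F_{12}$. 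If some $x\in\ov F_{12}$ satisfied $\langle\eta_0,x\rangle>\kappa_0$, then joining $x$ to a point of $F_{12}$ by a segment inside the convex set $\ov F_{12} = P(F_1)\cap P(F_2)\cap\ov\Delta$ would produce a point $z$ with $\langle\eta_0,z\rangle=\kappa_0$. Then $z\in\ov\Delta$ together with $\langle\eta_0,z\rangle=\kappa_0$ forces $z\in\Delta$, while $z\in P(F_1)\cap P(F_2)$ gives $z\in F_{12}$; but $\langle\eta_0,z\rangle=\kappa_0$ also gives $z\in F_0$, contradicting $F_{12}\cap F_0=\emptyset$.

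Finally I would assemble the pieces. If $K\subseteq\{1,2,3,4\}$ has $F_K=\emptyset$, then since all pairs $F_{ij}$ are nonempty we must have $|K|\ge 3$, and every such $K$ contains one of the opposite pairs $\{1,2\}$ or $\{3,4\}$. Assuming $\{1,2\}\subseteq K$, we get $\ov F_K\subseteq\ov F_{12}=F_{12}\subseteq\Delta$, whence $\ov F_K=\ov F_K\cap\Delta=F_K=\emptyset$, and symmetrically when $\{3,4\}\subseteq K$; this establishes condition (iii). The main obstacle is the middle step, controlling $\ov F_{12}$: this is the only place where the hypothesis that every $F_{ij}$ is nonempty does real work, and where one must argue that removing the $F_0$-inequality cannot create new faces. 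The surrounding combinatorial bookkeeping, once the opposite-pair structure of the cube $F_0$ is in hand, is routine.
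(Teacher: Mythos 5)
Your proof is correct and follows essentially the same route as the paper: relabel so that the two disjoint pairs of base facets of the cube $F_0$ are $\{1,2\}$ and $\{3,4\}$, show that $\ov F_{12}$ (and $\ov F_{34}$) cannot cross the hyperplane $P(F_0)$ because such a crossing point would lie in $F_{12}\cap F_0=\emptyset$, and then note that every $K$ with $F_K=\emptyset$ contains one of these pairs. The paper phrases the middle step via connectedness of $\ov F_{12}$ rather than your segment argument, but this is the same idea.
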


\begin{proof}
It follows immediately from the assumptions that (iii) holds for all 
$K \subset \{1,2,3,4\}$ with at most two elements.
By renumbering, we may assume that
$F_{12} \cap F_0$ and $F_{34} \cap F_0$ are empty.
Since $F_{12}$ is not empty and $\ov{F}_{12}: = P(F_{1}) \cap P(F_2) \cap \ov{\Delta}$ 
is connected, this implies that
$\ov{F}_{12}  \subset \Delta$.  Hence, condition (iii) is satisfied
for all $K \subset \{1,2,3,4\}$  which contain $\{1,2\}$.
A similar argument shows that condition (iii) is satisfied 
for all $K \subset \{1,2,3,4\}$  which contain $\{3,4\}$.
Since every subset of $\{1,2,3,4\}$ with more than $2$ elements contains
one of these pairs, 
this completes the proof.
\end{proof}

We end this section by  considering 
blowdowns of 
polygons.  This process is well understood; every 
smooth  
polygon with
more than four edges can be blown down to a trapezoid; see \cite{Fu}.
We shall need the following more precise version of this result.

\begin{lemma}\labell{le:2blowdown}
Let $\Delta$ be a smooth  convex 
$2$-dimensional polygon with
more than four edges. 

\begin{itemize}\item[(i)] 
If $e$ and $e'$ are parallel edges, then there exists 
an edge which is not equal to $e$ or $e'$,
which is not  adjacent to $e$, and
which can be blown down. 

\item[(ii)] 
Let $e, e', $ and  $e''$ be  adjacent edges
with outward conormals $\alpha$, $\alpha'$, and $\alpha''$,
respectively.
If $\alpha'$ is  not a positive 
linear combination of $\alpha$ and $\alpha''$,
then there is an edge which is not equal to $e, e'$ or $e''$
which can be blown down.
\end{itemize}
\end{lemma}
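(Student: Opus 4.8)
The plan is to work throughout with the primitive outward conormals $\alpha_1,\dots,\alpha_n$ of $\Delta$ listed in counterclockwise cyclic order and normalized so that $\det(\alpha_i,\alpha_{i+1})=1$ for every $i$; this normalization is exactly smoothness. For each $i$ there is then a unique integer $a_i$ with $\alpha_{i-1}+\alpha_{i+1}=a_i\alpha_i$, and the criterion recalled just before the lemma says that the edge with conormal $\alpha_i$ can be blown down precisely when $\alpha_i=\alpha_{i-1}+\alpha_{i+1}$, i.e. when $a_i=1$. So both parts reduce to producing an index $j$ with $a_j=1$ whose edge avoids the prohibited ones, and no separate feasibility check for the blowdown is needed.

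The engine will be the following Chain Lemma: \emph{if $\gamma_0,\dots,\gamma_{q+1}$ is a smooth convex chain (consecutive determinants $1$) with $q\ge 1$, all of whose vectors lie in a common open half-plane, then some interior vertex $\gamma_j$ satisfies $\gamma_{j-1}+\gamma_{j+1}=\gamma_j$.} To prove it I would note that $\mathcal{C}=\{\ell : \ell(\gamma_i)>0\ \forall i\}$ is a nonempty open cone and that each interior $\gamma_j$ is a vertex of the convex hull of the chain; hence I can choose $\ell\in\mathcal{C}$ whose maximum over the chain is attained uniquely at an interior vertex $\gamma_j$ (concretely, perturb the ``distance from the chord $\overline{\gamma_0\gamma_{q+1}}$'' functional slightly into $\mathcal{C}$). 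Then $0<\ell(\gamma_{j-1}),\ell(\gamma_{j+1})<\ell(\gamma_j)$, and substituting into $a_j\,\ell(\gamma_j)=\ell(\gamma_{j-1})+\ell(\gamma_{j+1})$ forces $0<a_j<2$, i.e. $a_j=1$. The open-half-plane hypothesis is used twice: to make $\mathcal{C}$ nonempty, and to rule out $a_j=0$ (which would require antipodal neighbors).

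For part (i), after a transformation in $\mathrm{GL}(2,\Z)$ I may assume $\alpha_e=(0,1)$ and $\alpha_{e'}=(0,-1)$. The other $n-2\ge 3$ conormals split into a right chain ($x>0$) and a left chain ($x<0$), and smoothness forces each to be nonempty (otherwise $(0,1)$ and $(0,-1)$ would be cyclically adjacent, with determinant $0$), so one of them, say the right chain $(0,-1)=\gamma_0,\gamma_1,\dots,\gamma_m,\gamma_{m+1}=(0,1)$, has $m\ge 2$ interior vertices. Here $\gamma_m$ is the edge adjacent to $e$ on that side. I would then apply the Chain Lemma to the \emph{truncated} chain $\gamma_0,\dots,\gamma_m$: it spans less than $\pi$, hence lies in an open half-plane, and it has the $m-1\ge1$ interior vertices $\gamma_1,\dots,\gamma_{m-1}$. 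The lemma yields a blowdownable $\gamma_j$ with $1\le j\le m-1$; its two neighbors are the same as in the whole polygon, so $a_j=1$ there too, and $\gamma_j$ differs from $\gamma_m$ (so it is not adjacent to $e$) and from $e,e'$, exactly as required.

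For part (ii), I would first translate the hypothesis. Writing $\alpha+\alpha''=a'\alpha'$ and using $\det(\alpha,\alpha')=\det(\alpha',\alpha'')=1$ gives $\det(\alpha,\alpha'')=a'$, so $\{\alpha,\alpha''\}$ is a basis when $a'\ne 0$ and the unique expansion $\alpha'=\tfrac1{a'}(\alpha+\alpha'')$ has positive coefficients iff $a'\ge1$; thus ``$\alpha'$ is not a positive combination of $\alpha$ and $\alpha''$'' is precisely $a'\le 0$. If $a'=0$ then $\alpha''=-\alpha$, so $e$ and $e''$ are parallel, and part (i) applied to the pair $(e,e'')$ gives a blowdownable edge $\ne e,e''$ and not adjacent to $e$; since $e'$ is adjacent to $e$, that edge is also $\ne e'$. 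If $a'<0$ then $\det(\alpha,\alpha'')<0$ shows the arc $\alpha\to\alpha'\to\alpha''$ turns through more than $\pi$, so the complementary chain formed by the conormals of the remaining $n-3\ge 2$ edges runs from $\alpha''$ to $\alpha$ through less than $\pi$ and lies in an open half-plane; the Chain Lemma then gives a blowdownable interior vertex of that chain, an edge different from $e,e',e''$. The hard part of the whole argument is the Chain Lemma and this translation: the real content is that each hypothesis pushes the relevant conormal chain into an open half-plane, which is exactly what guarantees a $(-1)$-type vertex there, while the fussiest bookkeeping is the sign analysis ($a_j\ge1$ via absence of antipodal neighbors) and, in (i), the truncation at the edge adjacent to $e$ that uses $n>4$ to retain an interior vertex.
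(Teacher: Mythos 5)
Your reduction to the criterion $\alpha_{j-1}+\alpha_{j+1}=\alpha_j$ and your translation of the hypothesis in (ii) into $\det(\alpha,\alpha'')\le 0$ are both correct, but the engine of your argument, the Chain Lemma, is \emph{false} as stated. The chain $\gamma_0=(1,0)$, $\gamma_1=(1,1)$, $\gamma_2=(1,2)$ (or any longer run $\gamma_j=(1,j)$) has all consecutive determinants equal to $1$ and lies in the open half-plane $\{x>0\}$, yet every interior vertex satisfies $\gamma_{j-1}+\gamma_{j+1}=2\gamma_j$. The flaw in your proof is the assertion that each interior $\gamma_j$ is a vertex of the convex hull of the chain: when $a_j=2$ the point $\gamma_j$ is the midpoint of its neighbors, a run of $a_j=2$'s makes the chain collinear, the ``distance from the chord'' functional is then identically zero on the chain, and every small perturbation of it into $\mathcal{C}$ attains its maximum at an endpoint. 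Lying in an open half-plane only gives you $a_j\ge 1$ at interior vertices; to force $a_j=1$ somewhere you need the chain to \emph{turn back}, i.e.\ a functional in $\mathcal{C}$ that is strictly larger at some interior vertex than at \emph{both} endpoints.

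The two applications are affected very differently. In (i) the conclusion you want is true and the repair is local: for the full right chain the first-coordinate functional $x_j$ vanishes at $\gamma_0=(0,-1)$ and at $\gamma_{m+1}=(0,1)$ and is positive on $\gamma_1,\dots,\gamma_m$; if $a_j\ge 2$ for all $1\le j\le m-1$ then $x_{j+1}-x_j\ge x_j-x_{j-1}\ge x_1>0$ makes $x_j$ strictly increasing up to $x_m$, whence $a_m x_m=x_{m-1}+x_{m+1}=x_{m-1}<x_m$ gives $a_m\le 0$, contradicting $a_m=\det(\gamma_{m-1},(0,1))=x_{m-1}>0$. (Your truncation discards exactly the endpoint $\gamma_{m+1}$ that this argument needs; once repaired, this is essentially the paper's proof, which uses the explicit functional $c_j=a_j+b_j$ and locates its last maximum.) In (ii) with $\det(\alpha,\alpha'')<0$, however, the complementary chain from $\alpha''$ to $\alpha$ spans an arc of length less than $\pi$ and no functional is forced to vanish at both of its endpoints, so nothing in your argument excludes the possibility that all its interior $a_j$ equal $2$; ruling that out requires genuinely new input. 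The paper's route is to observe that $\det(\alpha,\alpha'')\le 0$ forbids any edge from being parallel to $e$ or $e''$ and forbids two complementary edges from being parallel to each other, while every smooth polygon with more than three edges has a parallel pair; hence $e'$ is parallel to some complementary edge $f$, and part (i) applied to the pair $(e',f)$ yields a blowdownable edge that is neither $e'$ nor $f$ and is not adjacent to $e'$, hence is none of $e,e',e''$. You should either adopt that reduction or supply the arithmetic that excludes an all-$a_j\ge 2$ complementary chain; your $\det(\alpha,\alpha'')=0$ subcase is fine as written.
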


\begin{proof}
We begin with the first claim, following the
proof in \cite{Fu}.
Let $e = e_1 , e_2, \ldots, e_k = e'$ be a 
sequence of
edges in $\Delta$ with outward conormals $\alpha_1,\ldots,\alpha_k$, 
respectively.
Since $\Delta$ has more than four 
edges,
we may assume that $k > 3$.
Since $\alpha_1$ and $\alpha_2$ form an integral basis, we
may write $\alpha_j = -a_j \alpha_1 + b_j \alpha_2$ for each $j$,
and set $c_j = a_j + b_j$.
Since for each $j$ there is an integer $d_j$ such that $\alpha_j = \frac{1}{d_j}( \alpha_{j-1} + \alpha_{j+1})$,
 we see that
$c_j = \frac{1}{d_j}( c_{j-1} + c_{j+1})$. 
Note that $d>0$ since $e_1$ and $e_k$ are parallel. Hence
$c_3 = 1+d_2\geq 2 $ and $c_k = 1$. It follows that there exists
$\ell \geq 3$ with $c_\ell > c_{\ell+1}$ and $c_\ell \geq c_{\ell-1}$.
In this case, $d_\ell$ must be $1$,
and so $e_\ell$ can be blown down.

To prove (ii), 
note first that
every smooth convex 
polygon with more that three edges is the blowup of a trapezoid
and so must have (at least)
two edges which are parallel.  Our assumptions imply that 
$e'$ together with extensions of the two edges 
$e,  e''$ form a triangle.  It follows that
$e'$ must be parallel to another edge.
We can now apply the first part.
\end{proof}

\section{Examples of essential mass linear functions}
\labell{sec:ex}

In this section we give examples of 
(essential) 
mass linear functions on
polytopes.  We
consider two basic types of 
examples: bundles and 
blowups of double expansions.
The examples  
that we consider 
include $\Delta_2$ bundles over $\Delta_1$. 
By   
Proposition~\ref{prop:3d},
this implies
that in this section we construct every essential mass linear function
on a smooth polytope of dimension at most $3$.
More 
importantly,  the examples we consider include all the types of polytopes
described in Theorem~\ref{thm:4d}.  Therefore,  
we also construct
every essential mass linear function on a smooth $4$-dimensional polytope.

The results in this section are not needed for the proof
of the main theorem
since that  gives necessary rather than sufficient conditions
for mass linearity. 
 In fact, except for Corollary~\ref{cor:5plus}
(and several remarks),
this section  and \S\ref{sec:4dim} are completely independent.

\subsection{Essential mass linear functions on bundles}\labell{ss:calcul}

In this subsection, we find all essential mass linear functions
on each of the bundles described in part (a) of Theorem~\ref{thm:4d}.

To begin, 
consider
a polytope $Y$ that is  $\Delta_k$ bundle over $\Delta_1$. 
By definition there exists $a = (a_1, a_2,\ldots,a_k) \in \R^k$,
an identification of $\ft$ with $\R^{k+1}$, and $\kappa \in \R^{k+3}$ so
that
\begin{gather}\labell{eq:Yb}
Y  =  \bigcap_{i=1}^{k+3} 
\big\{ x \in \big(\R^{k+1}\big)^* \; \big| \;
\langle x,\eta_i \rangle \leq \kappa_i  \big\}, \quad \mbox{where}  \\
\notag
\eta_i =  -e_i \;\; \mbox{for all }   1 \leq i \leq k,  \;\;
\eta_{k+1} = \sum_{i=1}^k e_i,  \;\;
\eta_{k+2} = - e_{k+1},\;\;
 \mbox{and} \;\; \eta_{k+3} = e_{k+1} +   \sum_{i=1}^k a_i e_i.
\end{gather}
Here, $e_1,\ldots,e_{k+1}$ is the standard basis for $\R^{k+1}$. 
The fiber facets are $F_1,\dots,F_{k+1}$.
Conversely,  \eqref{eq:Yb}
describes a $\Delta_k$ bundle over $\Delta_1$ exactly if $\kappa \in \Cc_a$, where
\begin{gather*}
\Cc_a = \Bigg\{ \kappa \in \R^{k+3}  \; \Bigg| \; \sum_{i=1}^{k+1} \kappa_i > 0 \mbox{ and } 
\sum_{i=1}^{k} a_i \kappa_i 
+ \kappa_{k+2} + \kappa_{k+3} 
> 
\max (0,a_1,\dots,a_k) \sum_{i=1}^{k+1} \kappa_i 
\Bigg\}.
\end{gather*}

\begin{proposition}\labell{DkoverD1}
Let $Y$ be the $\Delta_k$ bundle over $\Delta_1$ associated to
$a \in \R^k$ as in \eqref{eq:Yb} above; set $a_{k+1} = 0$.
Then $H \in \ft$ is a mass linear function on $Y$ exactly if 
\begin{gather*}
H = \sum_{i=1}^{k+3} \gamma_i \eta_i, \quad \mbox{where} \quad 
\gamma_{k+2} + \gamma_{k+3} =  
\sum_{i=1}^{k+1} \gamma_i = 
\sum_{i=1}^k a_i \gamma_i = 0.
\end{gather*}
In this case, $\langle H, c_Y \rangle = \sum_{i=1}^{k+3} \gamma_i 
\kappa_i$.   Moreover,
$H$ is inessential exactly if
$$
\sum_{a_i = \alpha} \gamma_i = 0 \quad \forall \ \alpha \in \R,
$$
where 
the sum is over $i \in \{1,\dots,k+1\} $ such that $a_i = \alpha$.
\end{proposition}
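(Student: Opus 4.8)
The plan is to treat the three assertions — mass linearity together with the coefficient formula, and the inessentiality criterion — separately, reducing the only genuine analysis to one computation over the fiber simplex. First I would record the two linear relations among the conormals, $\sum_{i=1}^{k+1}\eta_i=0$ and $\sum_{i=1}^{k}a_i\eta_i+\eta_{k+2}+\eta_{k+3}=0$; since $Y$ is $(k+1)$–dimensional with $k+3$ facets these span the space of relations. Writing $R_1,R_2$ for their coefficient vectors, I would use Lemma~\ref{le:equiv} (in the form of Remark~\ref{rmk:equiv}) to compute the equivalence classes: $F_{k+2}\sim F_{k+3}$, no base facet is equivalent to a fiber facet, and for fiber facets $F_i\sim F_j$ exactly when $a_i=a_j$ (with the convention $a_{k+1}=0$). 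Indeed $F_i\sim F_j$ means some $\xi\in\ft^*$ annihilates all conormals except $\eta_i,\eta_j$, and solving these equations forces $a_i=a_j$.

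For mass linearity and the formula I would split a candidate $H=\sum\gamma_i\eta_i$ into its base part $\gamma_{k+2}\eta_{k+2}+\gamma_{k+3}\eta_{k+3}$ and its fiber part $\sum_{i=1}^{k+1}\gamma_i\eta_i=\iota(\Tilde H)$. If $\gamma_{k+2}+\gamma_{k+3}=0$ the base part is inessential (its nonzero coefficients lie in the single base class and sum to zero), so Proposition~\ref{prop:inessential} makes it mass linear with $\langle\,\cdot\,,c_Y\rangle=\gamma_{k+2}\kappa_{k+2}+\gamma_{k+3}\kappa_{k+3}$. The crux is the fiber part. From the defining inequalities, $Y$ is the region lying over the fiber simplex $T=\{x\mid x_i\ge-\kappa_i,\ \sum_{i=1}^k x_i\le\kappa_{k+1}\}$ with height $\ell(x)=\kappa_{k+2}+\kappa_{k+3}-\sum_i a_i x_i$ in the base direction, so $V(Y)=\int_T\ell$ and $\mu_{\iota(\Tilde H)}(Y)=\int_T\langle\Tilde H,x\rangle\,\ell(x)\,dx$. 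When $\sum_{i=1}^{k+1}\gamma_i=0$ the function $\Tilde H$ is inessential on the simplex $T$, so $\langle\Tilde H,c_T\rangle=\sum_{i=1}^{k+1}\gamma_i\kappa_i=:S$ by Proposition~\ref{prop:inessential}, and a short manipulation gives
\[
\langle\iota(\Tilde H),c_Y\rangle=S-\frac{\operatorname{Cov}_T(\Tilde H,a)}{\kappa_{k+2}+\kappa_{k+3}-\sum_i a_i(c_T)_i},
\]
where $\operatorname{Cov}_T(\Tilde H,a)$ denotes the covariance of $x\mapsto\langle\Tilde H,x\rangle$ and $x\mapsto\sum_i a_i x_i$ against the uniform measure on $T$.

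The point of the computation is that $T$ is always similar to the standard simplex, of size $s=\sum_{i=1}^{k+1}\kappa_i$, so $\operatorname{Cov}_T(\Tilde H,a)$ equals $s^2$ times the fixed covariance form of the standard simplex; using its $S_{k+1}$–symmetry, that form is proportional to $(k+1)$ times the identity minus the all–ones matrix, and together with $\sum_{i=1}^{k+1}\gamma_i=0$ this evaluates to a nonzero fixed multiple of $\sum_{i=1}^k a_i\gamma_i$. Since the displayed denominator is linear in $\kappa$ and not proportional to $s$, the quotient is a genuinely nonpolynomial rational function unless $\sum_{i=1}^k a_i\gamma_i=0$; and when that vanishes one is left with $\langle\iota(\Tilde H),c_Y\rangle=S$ and symmetric base facets. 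Summing the two parts proves sufficiency together with the formula. For necessity I would decompose a mass linear $H$ as $H'+\Tilde H$ using Proposition~\ref{prop:bund}, identify the base–symmetric summand as $\iota(\Tilde H)$ with $\Tilde H$ inessential on the fiber via Lemma~\ref{le:fibbun}, read off $\gamma_{k+2}+\gamma_{k+3}=0$ and $\sum_{i=1}^{k+1}\gamma_i=0$ from the two summands, and obtain $\sum_{i=1}^k a_i\gamma_i=0$ from the displayed formula; the resulting $\gamma_i$ are the canonical coefficients by Lemma~\ref{le:Hsum}.

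Finally, for the inessentiality criterion I would use that every representation $H=\sum\beta_i\eta_i$ has the form $\beta=\gamma+c_1R_1+c_2R_2$. Imposing $\sum_{i\in I}\beta_i=0$ on the base class forces $c_2=0$ (there $R_1$ vanishes while $R_2$ contributes $2c_2$, and $\gamma_{k+2}+\gamma_{k+3}=0$); summing the condition over all fiber classes and using $\sum_{i=1}^{k+1}\gamma_i=0$ then forces $c_1=0$; and the surviving requirements are exactly $\sum_{a_i=\alpha}\gamma_i=0$ for every value $\alpha$. Conversely these equalities let $\gamma$ itself serve as a witnessing representation. The main obstacle is the fiber–part computation of the second paragraph: because $F_{k+3}$ is slanted the affine fibers of the bundle are not congruent, so the failure of linearity of $\langle\iota(\Tilde H),c_Y\rangle$ is measured by a second moment of $T$, and the real content is that this second moment is nondegenerate and pairs $\Tilde H$ with $a$ to give precisely $\sum_i a_i\gamma_i$.
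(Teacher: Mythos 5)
Your proposal is correct and follows essentially the same route as the paper: split off the inessential base part $\gamma_{k+2}\eta_{k+2}+\gamma_{k+3}\eta_{k+3}$, reduce the fiber part to an explicit center-of-mass computation over the $\Delta_k$ fiber showing the sole obstruction is $\sum_{i=1}^k a_i\gamma_i=0$, and read off the inessentiality criterion from the equivalence classes computed via Lemma~\ref{le:equiv}. Your covariance formulation of the fiber computation is just a repackaging of the paper's Lemma~\ref{le:calcul} (the same second moments of the standard simplex appear, normalized differently), so no genuinely new ingredient is involved.
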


\begin{rmk}\labell{rmk:Dk}\rm (i)  Because $\sum_{i=1}^{k+1} \eta_i =0$ 
and $\eta_{k+2}+\eta_{k+3} = \sum_{i\le k} a_i\eta_i$, each $H \in \ft$ can be written as 
$\sum_{i=1}^{k+3} \gamma_i \eta_i$ where  
$\gamma_{k+2} + \gamma_{k+3} =  
\sum_{i=1}^{k+1} \gamma_i = 0$.  Therefore the most significant condition 
on $H$ above 
is that 
$\sum_{i\le k} a_i\ga_i = 0$.  Note that this holds
 for all $H$ exactly if 
$a_1 = \dots = a_k = 0$, that is, exactly if 
$Y$ is the product $\Delta_1 \times \Delta_k$.
Moreover, in this case every $H \in \ft$ is
inessential.
(More generally,  by [I, Theorem~1.20], the only 
polytopes for which all vectors $H \in \ft$ are mass linear are products of 
simplices.)
Otherwise, $Y$ admits a $k$-dimensional family of mass linear functions
and the inessential mass linear functions 
form a subspace of dimension $k + 2 - \left|\{a_1,\dots,a_k,0 \}\right|$.
\MS

\NI (ii) The polytope $Y$ is smooth exactly if $a \in \Z^k$.
In this case,
the corresponding toric manifold $M_Y$ is the $\C P^k$ bundle over $\C P^1$
associated to the action 
$$
e^{i\theta} \cdot [z_1:\dots:z_{k+1}]
= [e^{-a_1i\theta} z_1: \dots :  e^{-a_ki\theta} z_k: z_{k+1}].
$$
The polytope $Y$ is determined up to translation by two constants, namely 
$\la : = \sum_{i=1}^{k+1} \kappa_i$ and $h: =  \sum_{i=1}^k a_i \kappa_i + \kappa_{k+2} 
+ \kappa_{k+3}$;  
cf.\ the proof of Lemma \ref{le:calcul} below.
Note that
$\la$ determines the \lq\lq size" of the fiber, 
while $h$ determines that of (one section of) the base.\MS

\NI (iii)  At first glance, the restrictions 
on $H$ in Proposition~\ref{DkoverD1} may seem mysterious; we will now
give a geometric motivation.
Suppose that $H$ is mass linear and write
$\langle H, c_Y \rangle = \sum_{i=1}^{k+3} \gamma_i
\kappa_i$. Then  Lemma~\ref{le:Hsum} implies that
$H = \sum_{i=1}^{k+3} \gamma_i \eta_i$.
Because
$\De$ is a bundle over $\De_1$ with base facets
$F_{k+2}$ and $F_{k+3}$, Proposition~\ref{prop:bund} implies
that we must have
$\ga_{k+2} + \ga_{k+3} = 0$, the first condition in Proposition~\ref{DkoverD1}.
The remaining two conditions can
be interpreted in
terms of the existence of a vector
$\xi_H\in \ft^*$ such that  $\ga_i = \langle \eta_i, \xi_H\rangle$
for all $i$.
If we assume only that  $\gamma_1,\dots,\gamma_{k+3}$
satisfy
the first condition $\ga_{k+2}+ \ga_{k+3}= 0$ then,
because of the linear relations between  the conormals $\eta_i$,
the other two conditions
are satisfied exactly if  there exists a vector $\xi_H$ such that
$\ga_i= \langle\eta_i,\xi_H\rangle$ for all $i$.
 Therefore,
we do not need Proposition \ref{DkoverD1} to see that -- if we express mass 
linear functions as a linear combination of the $\eta_i$ using the natural coefficients provided by Lemma~\ref{le:Hsum} -- any mass linear $H \in \ft$
that is generated by some $\xi_H$ in the sense of Definition \ref{def:full} must satisfy the conditions of Proposition~\ref{DkoverD1}.  Conversely,  it follows from
Proposition \ref{DkoverD1} that
every 
mass linear $H$ is generated by some $\xi_H$; cf. Lemma~\ref{le:full1}.

\end{rmk}

The proof of Proposition~\ref{DkoverD1} rests mainly on the following direct calculation.

\begin{lemma}\labell{le:calcul}  
Let $Y$ be the $\Delta_k$ bundle over $\Delta_1$ associated to
$a \in \R^k$ as in \eqref{eq:Yb} above.
Let $H = \sum_{i=1}^{k+1}\ga_i\eta_i$, where $\sum_{i=1}^{k+1}\ga_i=0$. 
Then $H$ is mass linear on $Y$ 
if and only if 
$$
\sum_{i=1}^k\ga_i a_i = 0; \quad \mbox{in this case,} \quad \langle H,c_Y \rangle = 
\sum_{i=1}^{k+1} \gamma_i \kappa_i.
$$
\end{lemma}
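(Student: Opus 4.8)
The plan is to compute $\langle H, c_Y\rangle$ directly as the ratio of the $H$-moment to the volume of $Y$, exploiting the explicit ``graph over a simplex'' structure of a $\Delta_k$ bundle over $\Delta_1$. First I would rewrite $H$ in the coordinates of \eqref{eq:Yb}: since $\eta_j=-e_j$ for $j\le k$ and $\eta_{k+1}=\sum_{i=1}^k e_i$, the hypothesis $\sum_{i=1}^{k+1}\gamma_i=0$ gives $H=\sum_{j=1}^k(\gamma_{k+1}-\gamma_j)e_j$, so that $\langle H,x\rangle=\sum_{j=1}^k(\gamma_{k+1}-\gamma_j)x_j$ depends only on the fiber coordinates $x_1,\dots,x_k$. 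Next I would observe that, projecting off the last coordinate $x_{k+1}$, the polytope $Y$ is precisely the region lying over the simplex $\Sigma:=\{x\in\R^k\mid x_i\ge-\kappa_i,\ \sum_{i=1}^k x_i\le\kappa_{k+1}\}$, with $x_{k+1}$ ranging over the interval $-\kappa_{k+2}\le x_{k+1}\le\kappa_{k+3}-\sum_{i=1}^k a_i x_i$ of length $L(x)=\kappa_{k+2}+\kappa_{k+3}-\sum_{i=1}^k a_i x_i$. Because $\langle H,x\rangle$ does not involve $x_{k+1}$, Fubini reduces everything to integrals over $\Sigma$:
\[
\langle H,c_Y\rangle=\frac{\sum_{j=1}^k(\gamma_{k+1}-\gamma_j)\int_\Sigma x_j\,L(x)\,dx}{\int_\Sigma L(x)\,dx}.
\]

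To evaluate these I would translate $\Sigma$ to the standard simplex by setting $y_i=x_i+\kappa_i$, so that $\Sigma$ becomes $\{y_i\ge 0,\ \sum y_i\le\lambda\}$ with $\lambda=\sum_{i=1}^{k+1}\kappa_i$, and the height becomes $L=h-\sum_{i=1}^k a_i y_i$ with $h=\kappa_{k+2}+\kappa_{k+3}+\sum_{i=1}^k a_i\kappa_i$ (recovering the two invariants of Remark~\ref{rmk:Dk}(ii)). The required first and second moments $\int y_j\,dy$ and $\int y_i y_j\,dy$ of the standard simplex are given by the Dirichlet integral formula, which I would simply quote. Substituting and simplifying, the whole expression collapses to
\[
\langle H,c_Y\rangle=\sum_{i=1}^{k+1}\gamma_i\kappa_i+\frac{\lambda^2\,D}{(k+1)(k+2)\bigl(h-\tfrac{\lambda}{k+1}\sum_{i=1}^k a_i\bigr)},\qquad D:=\sum_{i=1}^k\gamma_i a_i,
\]
where the denominator is a positive multiple of the volume $V(Y)$. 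The pleasant feature is that, after collecting terms, every contribution except the $D$-term factors through $V(Y)$ and cancels, isolating a single error term proportional to $D$.

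The conclusion is then immediate in both directions. When $D=0$ the error term vanishes and $\langle H,c_Y\rangle=\sum_{i=1}^{k+1}\gamma_i\kappa_i$ is linear, so $H$ is mass linear with the stated coefficients. When $D\ne 0$, since $\lambda$ and $h$ are \emph{independent} linear functions of $\kappa$ that vary freely over the chamber $\Cc_a$, the error term is a genuinely non-affine rational function of $\kappa$, so $\langle H,c_Y\rangle$ is not linear and $H$ is not mass linear. I expect the main obstacle to be purely computational: carefully assembling the first- and second-moment contributions and verifying the cancellation that isolates the $D$-term. Keeping $\lambda$ and $h$ as the basic variables throughout should make both the algebra and the final non-linearity argument as clean as possible.
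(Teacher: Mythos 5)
Your proposal is correct and follows essentially the same route as the paper: both reduce to integrating the fiber length $h-\sum a_i y_i$ and its first moments over the standard simplex $\Delta_k^\lambda$ via the Dirichlet formula, arrive at the same expression $\sum\gamma_i\kappa_i$ plus an error term proportional to $\sum\gamma_i a_i$ with denominator a multiple of the volume, and conclude by noting that $\lambda$ and $h$ vary independently over the chamber. The only cosmetic difference is that you translate to general $\kappa$ at the outset, whereas the paper computes the special case $\kappa_1=\dots=\kappa_k=\kappa_{k+2}=0$ first and then invokes Remark~\ref{translate}.
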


\begin{proof}  
As a first step, fix $\kappa_1 = \dots = \kappa_k = 0$ and $\kappa_{k+2} = 0$,
and let $\kappa_{k+1} = \lambda$ and $\kappa_{k+3} = h$.
Let 
$\Delta_k^\la \subset \R^k$ 
denote the  $k$-simplex described by the inequalities
$$
x_i \geq 0 \quad \mbox{for all } 1 \leq i \leq k \qquad
\mbox{and} \qquad \sum_{i=1}^k x_i \leq  \lambda.
$$
An elementary calculation shows that for any non-negative 
integers $i_1,\ldots,i_k$, 
\begin{equation}\labell{integral}
\int_{\Delta_k^\la} x_1^{i_1} x_2^{i_2} \cdots x_k^{i_k} = 
\frac{ i_1!\, i_2! \cdots i_k!\, \lambda^{I + k}}{  (I + k)!},
\quad \mbox{where }
I  = \sum_{j=1}^k i_j. 
\end{equation}
Here by convention $0! = 1$.
Furthermore, both here and elsewhere 
we integrate with respect to
the standard measure $dx_1 \cdots dx_k$ on $\R^k$.
Since $Y$ is a $\Delta^\la_k$ bundle over $\Delta_1$,
 $Y$ has volume 
$$
V = \int_{\Delta_k^\la} \Big( h - \sum_{i = 1}^k a_i x_i  \Big)=
 \frac{(k+1) h \lambda^k -  \big(\sum_{i = 1}^k a_i\big) \lambda^{k+1} }{(k+1)!}.
$$
For $j \neq k+1$,
the moment $\mu_j$ 
of $Y'$ along the $x_j$ axis is
$$
\mu_j 
= \int_{\Delta_k^\la} \Big(h x_j - \sum_{i=1}^k a_i x_i x_j\Big)
= \frac{(k+2) h \lambda^{k+1} - 
 \big(a_j+ \sum_{i =1}^k a_i\big) \lambda^{k+2} }{(k+2)!}.
$$
Let $c_j := \mu_j/V$ 
denote the $j$'th component of the center of mass.
For $j \neq k+1$, 
$$
c_j = 
\frac{\lambda}{k+2}\  \frac{h (k+2)  - 
\lambda \big(a_j + \sum_{i=1}^k a_i\big)}
{h(k+1) - \lambda \sum_{i=1}^k a_i}.
$$
Since $\sum_{i=1}^{k+1} \gamma_i = 0$,  a straightforward calculation shows
that 
\begin{equation}\labell{Yform}
\langle H, c_{Y} \rangle = \sum_{i=1}^k (\gamma_{k+1} - \gamma_i) c_i
= \lambda \left(\gamma_{k+1}  + \frac{\lambda \sum_{i=1}^k \gamma_i a_i}
{(k+2)\big(h(k+1)  - \lambda \sum_{i=1}^k a_i\big)} \right).
\end{equation}
This is linear function of $h$ and $\lambda$ exactly
if $ \sum_{i=1}^k \gamma_i a_i = 0$.
Hence, if $H$ is mass linear, this sum must be zero.

To prove the converse, assume that 
$\sum_{i=1}^k \gamma_i a_i = 0$.
Given $\kappa \in \Cc_a$, note that by Remark~\ref{translate}
\begin{gather*}
Y(\kappa) = Y(0,\dots,0,\lambda,0,h) - (\kappa_1,\dots,\kappa_k,\kappa_{k+2}), \quad \mbox{where} \\
\lambda = \sum_{i=1}^{k+1} \kappa_i \quad \mbox{and} \quad h = 
\sum_{i=1}^k a_i \kappa_i
+ \kappa_{k+2} + \kappa_{k+3}. 
\end{gather*}
Hence, 
\eqref{Yform} implies that
$$
\langle H, c_{Y}(\kappa) \rangle  =
\langle H, c_Y(0,\dots,0,\lambda,0,h) \rangle - 
\langle H, (\kappa_1,\dots,\kappa_k,\kappa_{k+2}) \rangle
= \sum_{i=1}^{k+1} \kappa_i \gamma_i.
$$
This completes the proof. \end{proof}

We are now ready to prove our first main proposition.

\begin{proof}[Proof of Proposition~\ref{DkoverD1}.]
As explained in Remark \ref{rmk:Dk} (i) above,
every $H \in \ft$ 
can be written uniquely as
$H = \sum_{i=1}^{k+3} \gamma_i \eta_i$, where 
$\gamma_{k+2} + \gamma_{k+3}  = \sum_{i=1}^{k+1} \gamma_i = 0$.
By Lemma~\ref{le:equiv}, $F_{k+2}$ and $F_{k+3}$ are equivalent. 
Hence, $\gamma_{k+2} \eta_{k+2} + \gamma_{k+3} \eta_{k+3}$ is inessential,
and so
by Proposition~\ref{prop:inessential}
$$\langle \gamma_{k+2} \eta_{k+2} + \gamma_{k+3} \eta_{k+3} , c_Y(\kappa) \rangle = \gamma_{k+2} \kappa_{k+2}
+ \gamma_{k+3} \kappa_{k+3}..$$
On the other hand, by Lemma~\ref{le:calcul},
$ \Tilde{H} = \sum_{i=1}^{k+1} \gamma_i \eta_i$ is
mass linear
exactly if $\sum_{i=1}^k a_i \gamma_i = 0$, in which case
$$
\langle \Tilde{H}, c_Y \rangle = \sum_{i=1}^{k+1} \gamma_i \kappa_i.$$
The first two claims follow immediately.

To establish the conditions under which $H$ is inessential, note first
that
since $\gamma_{k+2} \eta_{k+2} + \gamma_{k+3} \eta_{k+3}$ is inessential,
$H$ is inessential exactly if $\Tilde{H}$ is inessential.
Further,
Lemma~\ref{le:equiv} implies that
for each pair $\{i,j\} \subset \{1,\dots,k+1\}$, we have
$F_i\sim F_j$ exactly if $a_i = a_j$.
Therefore  
the equivalence classes  of the relation $\sim$ on $\{F_1,\dots, F_{k+1}\}$
are precisely the sets   $\{F_i: a_i=\al\}$. 
Since $\Tilde{H}$ can be written uniquely as $\Tilde{H} = \sum_{i=1}^{k+1} \gamma_i
\eta_i$,
where $\sum_{i=1}^{k+1} \gamma_i = 0$, 
 it is inessential
exactly if $\sum_{a_i=\al}\ga_i = 0$ for each $\al$.
\end{proof}

Proposition~\ref{DkoverD1} 
immediately gives  all essential mass linear functions for $\Delta_3$ bundles over $\Delta_1$,
the polytopes $\ov \De$
 in case (a1) of Theorem~\ref{thm:4d}.

\begin{corollary}\labell{cor:Mabc}
Let $Y$ be the $\Delta_3$ bundle over $\Delta_1$ associated to
$a \in \R^3$ as in \eqref{eq:Yb} above; set $a_4 = 0$.
Then $H \in \ft$ is a mass linear function on $Y$ exactly if 
\begin{gather*}
H = \sum_{i=1}^{6} \gamma_i \eta_i, \quad \mbox{where} \quad 
\gamma_{5} + \gamma_{6} =  
\sum_{i=1}^{4} \gamma_i = 
\sum_{i=1}^3 a_i \gamma_i = 0.
\end{gather*}
In this case, 
$\langle H, c_Y \rangle = \sum_{i=1}^{6} \gamma_i \kappa_i$. 
Moreover, if $a_i \neq a_j$ for all $1 \leq i < j \leq 4$,
then $H$ is inessential exactly if 
$\gamma_1 = \gamma_2 = \gamma_3 = \gamma_4 = 0$,
and so there is a $1$-dimensional subspace of inessential functions
in the $3$-dimensional space of mass linear functions.
If $a_i=a_j$ for only one such pair $\{i,j\}$, there is
a $2$-dimensional family of inessential functions -- for example, if
$a_1 = a_2$ but $a_i \neq a_j$ for all $2 \leq i < j \leq 4$ 
then $H$ is inessential exactly if $\gamma_3 = \gamma_4 = 0$.
If more than one such equality holds, then every $H$ is inessential.
\end{corollary}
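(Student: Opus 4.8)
The plan is to derive everything from Proposition~\ref{DkoverD1} specialized to $k=3$, followed by an elementary linear-algebra case analysis. The first two assertions—that $H$ is mass linear precisely when $H=\sum_{i=1}^{6}\gamma_i\eta_i$ with $\gamma_5+\gamma_6=\sum_{i=1}^{4}\gamma_i=\sum_{i=1}^{3}a_i\gamma_i=0$, and that then $\langle H,c_Y\rangle=\sum_{i=1}^{6}\gamma_i\kappa_i$—are the verbatim $k=3$ instances of Proposition~\ref{DkoverD1}, so I would simply invoke it. For the stated dimension in the first case I would recall from Remark~\ref{rmk:Dk}(i) that every $H$ has a \emph{unique} normalized expression $H=\sum_{i=1}^{6}\gamma_i\eta_i$ subject to $\gamma_5+\gamma_6=\sum_{i=1}^{4}\gamma_i=0$, so the normalized representatives form a $4$-dimensional space; since the $a_i$ are distinct and $a_4=0$, the values $a_1,a_2,a_3$ are distinct and nonzero, so the remaining mass linear constraint $\sum_{i=1}^{3}a_i\gamma_i=0$ is nontrivial and cuts this down to dimension $3$.

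The heart of the matter is the inessentiality criterion. First I would record, via Lemma~\ref{le:equiv} exactly as in the proof of Proposition~\ref{DkoverD1}, that for $i,j\in\{1,2,3,4\}$ one has $F_i\sim F_j$ iff $a_i=a_j$, while $F_5\sim F_6$ and neither base facet is equivalent to a fiber facet. Hence the equivalence classes among the fiber facets are exactly the level sets of $i\mapsto a_i$ on $\{1,2,3,4\}$. Setting $S_\alpha:=\sum_{a_i=\alpha}\gamma_i$ (the sum over $i\in\{1,2,3,4\}$ with $a_i=\alpha$), Proposition~\ref{DkoverD1} asserts that $H$ is inessential exactly when $S_\alpha=0$ for every $\alpha$. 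The two remaining mass linear constraints translate into
$$\sum_\alpha S_\alpha=\sum_{i=1}^{4}\gamma_i=0\quad\text{and}\quad\sum_\alpha\alpha\,S_\alpha=\sum_{i=1}^{3}a_i\gamma_i=0,$$
where the second uses $a_4=0$ so that $\sum_{i=1}^{4}a_i\gamma_i=\sum_{i=1}^{3}a_i\gamma_i$.

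With this setup the three cases follow by inspecting the variables $S_\alpha$, indexed by the $m$ distinct values of $a$ on $\{1,2,3,4\}$. If all $a_i$ are distinct ($m=4$), every class is a singleton, so $S_{a_i}=\gamma_i$ and inessentiality is precisely $\gamma_1=\gamma_2=\gamma_3=\gamma_4=0$; together with $\gamma_5+\gamma_6=0$ this is a $1$-dimensional subspace of the $3$-dimensional mass linear space. If exactly one pair coincides ($m=3$), say $a_1=a_2$ with $a_2,a_3,a_4$ distinct, the classes are $\{1,2\},\{3\},\{4\}$, so inessentiality reads $\gamma_1+\gamma_2=0$, $\gamma_3=0$, $\gamma_4=0$; but $\gamma_3=\gamma_4=0$ already forces $\gamma_1+\gamma_2=0$ through $\sum_{i=1}^{4}\gamma_i=0$, so the condition reduces to $\gamma_3=\gamma_4=0$, giving a $2$-dimensional family. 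Finally, if more than one pair coincides then $m\le 2$, and the two displayed relations form a system in the at most two unknowns $S_\alpha$ whose coefficient matrix is a nonsingular Vandermonde matrix (its determinant is the difference of the two distinct values of $a$); hence every $S_\alpha=0$, so every mass linear $H$ is inessential.

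I do not expect a serious obstacle: the corollary is a specialization of Proposition~\ref{DkoverD1} together with a two-equation computation. The only points requiring care are the bookkeeping that converts the statement's phrasing in terms of coinciding pairs into the count $m$ of distinct values (no coincidence, one coincidence, and more than one coincidence corresponding to $m=4$, $m=3$, and $m\le 2$), and the verification in the $m=3$ case that the relation $\sum_{i=1}^{4}\gamma_i=0$ renders the class condition $\gamma_1+\gamma_2=0$ redundant.
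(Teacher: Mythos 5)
Your proposal is correct and follows the same route as the paper, which presents this corollary as an immediate specialization of Proposition~\ref{DkoverD1} to $k=3$ (with the dimension counts already recorded in Remark~\ref{rmk:Dk}(i) via the formula $k+2-\left|\{a_1,\dots,a_k,0\}\right|$ for the inessential subspace). Your explicit case analysis in terms of the class sums $S_\alpha$ and the Vandermonde observation is a clean way of making the paper's implicit bookkeeping precise.
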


Next we consider  the 
second class of polytopes  in
Theorem~\ref{thm:4d}.

\begin{defn}\labell{def:121}    
 A {\bf  $\mathbf{121}$-bundle} 
is a smooth polytope
$Z$ that is a $\Delta_1$ bundle over a polytope $Y$ which
is itself a  $\Delta_2$ bundle over $\Delta_1$.  
\end{defn}

Given a $121$-bundle $Z$, it is easy to see that we may
identify $\ft$ with $\R^4$ so that its outward conormals  are
\begin{gather}\labell{eq:de'}
\Tilde \eta_0 = (1,0,0,0), \; \Tilde \eta_1= (-1,0,0,0),
\; \eta'_2 = (0,-1, 0,0), \;
\eta'_3 = (0,0,-1,0), \\ \notag\eta'_4 = (d,1,1,0) , \;
\eta'_5 = (0,0,0,-1), \; \mbox{and} \; \eta'_6 = (a_1,a_2,a_3,1)
\end{gather}
for some $a \in \Z^3$ and some integer $d \ge 0$. 
Here the fiber $\De_1$ of $Z$
lives in the first coordinate direction, and $Y$ is the 
$\De_2$ bundle over $\De_1$  associated to $(a_2,a_3)$ and with conormals equal to the projections 
of the $\eta'_j$ for $2 \leq j \leq 6$ onto the last three coordinates.

\begin{proposition}
\labell{prop:doublebundle}
Let $Z$ be a  $121$-bundle as in \eqref{eq:de'} above.
Then $H \in \ft$ is a mass linear function on  $Z$ exactly if
\begin{gather*}
H =  \gamma_0 \Tilde \eta_0 + \gamma_1 \Tilde \eta_1 +
\sum_{i=2}^6 \gamma_i \eta'_i, \quad \mbox{where}  \\
\gamma_0 + \gamma_1 = d \gamma_0 = a_1 \gamma_0 = 0\ \mbox{ and }\
\gamma_2  + \gamma_3 + \gamma_4 = 
a_2 \gamma_2  + a_3 \gamma_3 = 
\gamma_5 + \gamma_6 
 = 0.
\end{gather*}
In this case 
$ \langle H, c_Z \rangle = \sum_{i=0}^6 \gamma_i \kappa_i.$
If $a_2 a_3 (a_2 - a_3) \neq 0$, then $H$ is inessential exactly if 
$\gamma_2
= \gamma_3 = \gamma_4 = 0$; otherwise, every $H$ is inessential.
\end{proposition}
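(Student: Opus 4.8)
The plan is to exploit the two bundle structures carried by a $121$-bundle $Z$. Primarily I will view $Z$ as the $\Delta_1$ bundle over $Y$ whose fiber facets are $\Tilde F_0$ and $\Tilde F_1$ (the facets with conormals $\Tilde\eta_0,\Tilde\eta_1$) and whose base facets $F'_2,\dots,F'_6$ correspond to the facets of the $\Delta_2$ bundle $Y$ over $\Delta_1$. The conditions on $\gamma_2,\dots,\gamma_6$ are precisely the conditions of Proposition~\ref{DkoverD1} applied to $Y$ (with $k=2$ and $(a^Y_1,a^Y_2)=(a_2,a_3)$), so the heart of the argument is to pass between mass linearity on $Z$ and on $Y$ via Proposition~\ref{prop:lift}, while separately pinning down the fiber coefficients $\gamma_0,\gamma_1$ and the role of $d$ and $a_1$.

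For the \emph{if} direction I would split $H=H_{\mathrm{fib}}+H_{\mathrm{base}}$, where $H_{\mathrm{fib}}=\gamma_0\Tilde\eta_0+\gamma_1\Tilde\eta_1$ and $H_{\mathrm{base}}=\sum_{i=2}^6\gamma_i\eta'_i$. The conditions $\gamma_2+\gamma_3+\gamma_4=a_2\gamma_2+a_3\gamma_3=\gamma_5+\gamma_6=0$ say exactly that $\Hat H:=\pi(H_{\mathrm{base}})$ is mass linear on $Y$ by Proposition~\ref{DkoverD1}; hence $H_{\mathrm{base}}$ is the lift of $\Hat H$ and is mass linear on $Z$ with symmetric fiber facets and $\langle H_{\mathrm{base}},c_Z\rangle=\sum_{i=2}^6\gamma_i\kappa_i$ by Proposition~\ref{prop:lift}(ii). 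For $H_{\mathrm{fib}}$, the conditions $d\gamma_0=a_1\gamma_0=0$ force either $\gamma_0=0$ (so $H_{\mathrm{fib}}=0$) or $d=a_1=0$; in the latter case $\Tilde F_0\sim\Tilde F_1$ by Lemma~\ref{le:equiv}, so $\gamma_0+\gamma_1=0$ makes $H_{\mathrm{fib}}$ inessential and hence mass linear with $\langle H_{\mathrm{fib}},c_Z\rangle=\gamma_0\kappa_0+\gamma_1\kappa_1$ by Proposition~\ref{prop:inessential}. Adding the two pieces (the pairing $\langle\,\cdot\,,c_Z\rangle$ is linear in $H$) gives mass linearity of $H$ and the formula $\langle H,c_Z\rangle=\sum_{i=0}^6\gamma_i\kappa_i$.

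For the \emph{only if} direction, write $\langle H,c_Z\rangle=\sum\gamma_i\kappa_i$ and $H=\sum\gamma_i\eta_i$ via Lemma~\ref{le:Hsum}. The decisive observation is a dichotomy for the fiber facets: since $\Tilde F_0\cap\Tilde F_1=\emptyset$, the facet $\Tilde F_0$ is not pervasive, so by Proposition~\ref{prop:asym} it is either symmetric or flat, and flatness of $\Tilde F_0$ means the conormals $\eta'_2,\dots,\eta'_6$ lie in a hyperplane, which a direct check shows is equivalent to $d=a_1=0$, i.e.\ to $\Tilde F_0\sim\Tilde F_1$. If both fiber facets are symmetric then $\gamma_0=\gamma_1=0$, the conditions $\gamma_0+\gamma_1=d\gamma_0=a_1\gamma_0=0$ hold trivially, and Proposition~\ref{prop:lift}(i) identifies $\pi(H)$ as a mass linear function on $Y$ whose base coefficients are $\gamma_2,\dots,\gamma_6$; Proposition~\ref{DkoverD1} then supplies the remaining three relations. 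If instead a fiber facet is asymmetric, then $d=a_1=0$, so $Z=\Delta_1\times Y$ splits off the first coordinate; here $d\gamma_0=a_1\gamma_0=0$ is automatic, $\gamma_0+\gamma_1=0$ follows by applying Proposition~\ref{prop:bund} to $Z$ viewed as a bundle over the $\Delta_1$ factor (its base facets $\Tilde F_0,\Tilde F_1$ are equivalent, so the inessential part has $\beta_0+\beta_1=0$ by Proposition~\ref{prop:inessential}), and the $Y$-relations again follow from Proposition~\ref{DkoverD1}.

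Finally, for the inessential characterization I would use that $H_{\mathrm{fib}}$ is always inessential (it is $0$, or has equivalent fiber facets with $\gamma_0+\gamma_1=0$), so since inessential functions form a vector space, $H$ is inessential iff $H_{\mathrm{base}}$ is. By Proposition~\ref{prop:lift}(iii) this holds iff $\Hat H$ is inessential on $Y$, which by the inessentiality clause of Proposition~\ref{DkoverD1} means $\sum_{a^Y_i=\alpha}\gamma_i=0$ for each value $\alpha\in\{a_2,a_3,0\}$. When $a_2a_3(a_2-a_3)\neq0$ these three values are distinct and the condition reads $\gamma_2=\gamma_3=\gamma_4=0$; when two of them coincide, the mass linearity relations $\gamma_2+\gamma_3+\gamma_4=a_2\gamma_2+a_3\gamma_3=0$ already force the grouped sums to vanish, so every mass linear $H$ is inessential. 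I expect the main obstacle to be the careful treatment of the fiber facets $\Tilde F_0,\Tilde F_1$---in particular establishing the flat/symmetric dichotomy and the product splitting when $d=a_1=0$---since Proposition~\ref{prop:lift} only speaks to the case of symmetric fiber facets, and everything involving $\gamma_0,\gamma_1,d,a_1$ must be handled directly.
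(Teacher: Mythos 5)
Your proposal is correct and follows essentially the same route as the paper: both arguments rest on the observation that the fiber facets $\Tilde F_0,\Tilde F_1$ are not pervasive, so by Proposition~\ref{prop:asym} they are symmetric unless $d=a_1=0$ (the flat case), in which case $Z=\Delta_1\times Y$ and Proposition~\ref{prop:bund} splits off the inessential piece $\gamma_0\Tilde\eta_0+\gamma_1\Tilde\eta_1$; the remaining conditions then come from Proposition~\ref{prop:lift} and Proposition~\ref{DkoverD1} exactly as you describe. The only difference is organizational — you case-split on whether the fiber facets are symmetric while the paper case-splits on $(d,a_1)$ — and your explicit verification that flatness of $\Tilde F_0$ forces $d=a_1=0$ is the same computation the paper leaves implicit.
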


\begin{proof}
Since $Z$ is a bundle with fiber $\Delta_1$, 
the fiber facets $\Tilde F_0$ and $\Tilde F_1$ are not pervasive.
If $d \neq 0$ or if $a_1 \neq 0$, then
these facets also are not flat.
Therefore, if $H \in \ft$ is mass linear
then Proposition~\ref{prop:asym} implies that $\Tilde F_0$ and $\Tilde F_1$
are symmetric.
On the other hand, if $d = a_1 = 0$  then $Z = \Delta_1 \times Y$, and so we
can also view $Z$ as a $Y$ bundle over $\Delta_1$ with {\em base facets}
$\Tilde F_0$ and $\Tilde F_1$.
Therefore, Proposition~\ref{prop:bund} implies that
$H$ is mass linear exactly if 
$H = \gamma_0 \Tilde \eta_0 + \gamma_1 \Tilde \eta_1 + \Tilde H$, 
where $\gamma_0 + \gamma_1 = 0$ and $\Tilde H$ is a mass linear function
on $Z$ so that $\Tilde F_0$ and $\Tilde F_1$ are symmetric.
Therefore, the
result is an immediate consequence of
Proposition~\ref{prop:lift} and Proposition~\ref{DkoverD1}.
\end{proof}

The following corollary is immediate.

\begin{cor}\labell{cor:db}
Let $Z$ be a $121$-bundle that admits a mass linear function so
that every facet is asymmetric. Then $Z = \De_1 \times Y$, where
$Y$ is a $\De_2$ bundle over $\De_1$.
\end{cor}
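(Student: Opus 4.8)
The plan is to deduce the corollary directly from Proposition~\ref{prop:doublebundle} by reading off what the hypothesis ``every facet is asymmetric'' says about the coefficients $\gamma_i$, and then observing that the resulting constraints force the defining data $d$ and $a_1$ of the $121$-bundle to vanish. Since Proposition~\ref{prop:doublebundle} already supplies the complete list of mass linear functions on $Z$ together with the explicit formula $\langle H, c_Z \rangle = \sum_{i=0}^6 \gamma_i \kappa_i$, essentially all the work has been done and only a short extraction remains.

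First I would translate the hypothesis into the language of coefficients. A facet $F_i$ is symmetric exactly when moving it does not change $\langle H, c_Z \rangle$, i.e.\ exactly when the coefficient of its support number $\kappa_i$ vanishes. Because Proposition~\ref{prop:doublebundle} gives $\langle H, c_Z \rangle = \sum_{i=0}^6 \gamma_i \kappa_i$, the assumption that every facet of $Z$ is asymmetric is equivalent to $\gamma_i \neq 0$ for all $i \in \{0,1,\dots,6\}$. In particular the fiber facet $\Tilde F_0$ is asymmetric, so $\gamma_0 \neq 0$.

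Next I would feed $\gamma_0 \neq 0$ into the mass-linearity constraints of Proposition~\ref{prop:doublebundle}. Among those constraints are $d\,\gamma_0 = 0$ and $a_1 \gamma_0 = 0$. Since $\gamma_0 \neq 0$, these force $d = 0$ and $a_1 = 0$. Finally I would observe the geometric consequence of $d = a_1 = 0$: in the conormal description \eqref{eq:de'} the two fiber conormals $\Tilde \eta_0 = (1,0,0,0)$ and $\Tilde \eta_1 = (-1,0,0,0)$ involve only the first coordinate, while with $d = a_1 = 0$ every remaining conormal $\eta'_2, \dots, \eta'_6$ has vanishing first coordinate and hence lies in $\{0\} \times \R^3$. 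Thus $Z$ splits as the product $\De_1 \times Y$, where $\De_1$ is the fiber in the first coordinate direction and $Y$ is the $\De_2$ bundle over $\De_1$ whose conormals are the projections of $\eta'_2,\dots,\eta'_6$ onto the last three coordinates; this is exactly the splitting already noted in the proof of Proposition~\ref{prop:doublebundle}.

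I do not expect a genuine obstacle here: the only point requiring care is the correct reading of ``every facet asymmetric'' off the formula for $\langle H, c_Z \rangle$, since the nonvanishing of the single coefficient $\gamma_0$ is what drives the entire argument. Once that is in place, the corollary is immediate.
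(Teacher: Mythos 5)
Your argument is correct and is exactly the route the paper intends: the paper simply declares the corollary "immediate" from Proposition~\ref{prop:doublebundle}, and your extraction ($\gamma_0\neq 0$ forces $d=a_1=0$ via the constraints $d\gamma_0=a_1\gamma_0=0$, whence $Z=\De_1\times Y$) is the intended one-line deduction, spelled out.
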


We now consider the third class of polytopes in Theorem \ref{thm:4d}.
Let $\De \subset \ft^*$ be a smooth $\De_2$ bundle over a $2$-dimensional 
polygon $\Hat\Delta \subset \R^2$. 
We aim 
to find all essential mass linear functions on $\Delta$.

First, we describe $\De$ in more detail.
Let $\Hat\eta_1,\ldots,\Hat\eta_k$ be the outward conormals to $\Hat\De$.  
Assume that the edges of $\Hat\Delta$ are labelled 
in {\bf order of 
adjacency}, that is,
so that  $e_i$ is adjacent to $e_{i-1}$ and $e_{i+1}$ for all $i$
(where we interpret the $i$ in cyclic order, 
that is,
moduli $k$.)
Then there is an identification of  $\ft^*$ with $\R^4$ 
and a pair of integers $(b_1^i,b_2^i)$ for all $1 \leq i \leq k$
so that 
the outward conormals to the fiber facets are
\begin{equation}\labell{eqD1}
\eta_{1} = (-1,0,0,0),\quad \eta_{2} = (0,-1,0,0),\quad \mbox{and} \quad
\eta_{3}=(1,1,0,0), 
\end{equation}
and the outward conormals to the base facets are
\begin{equation}\labell{eqD2}
\Hat\eta_i'  = (b_1^i,b_2^i,0,0) + \Hat \eta_i  \quad \mbox{for all } 1 \leq i \leq k,
\end{equation}
where we identify $\Hat\eta_i\in \R^2$ with its image in the plane $x_1=x_2=0$ of $\R^4$.
Moreover, we may assume that $(b_1^1,b_2^1) = (b_1^2,b_2^2) = (0,0)$.

\begin{proposition}\labell{prop:polybundle}
Let $\Delta$ be the $\Delta_2$ bundle over $\Hat\Delta$ defined above.
Let $P(\Hat\ka_1,\dots,\Hat\ka_k)$ be the 
polynomial
which gives the area of $\Hat\De$ 
for all $\Hat \kappa \in 
\Cc_{\Hat \Delta}$.  
Then  $H \in \ft$ 
is mass linear on $\Delta$ exactly if
$H = \Tilde H + \Hat H$,
where 
$\Hat H = \sum_{i=1}^k \Hat\gamma_i\,\!' \Hat\eta_i\,\!' \in \ft$ 
is the lift  
of an inessential
function on $\Hat\De$ and 
$\Tilde H$ 
is a  mass linear function on $\Delta$ of the form
$$
\Tilde H 
= \gamma_1 \eta_1 + \gamma_2 \eta_2 + \gamma_3 \eta_3
\quad \mbox{ with }\gamma_1 + \gamma_2 + \gamma_3 = 0.
$$
Moreover, $\Tilde H$ 
is mass linear if it is zero or if
there are 
real numbers  $r_3,\ldots, r_k$ so that 
\begin{itemize}
\item [(i)] $(b^i_1,b^i_2) = r_i (\gamma_2,-\gamma_1)$  for all 
$i \in \{3,\dots,k\}$, and 
\item [(ii)] either $P(0,0,r_3,\ldots,r_k)  = 0$  or $\gamma _1 \gamma_2 \gamma_3 = 0$.
\end{itemize}
In this case, 
$\langle  H, c_\Delta \rangle
= \sum_{i=1}^3 \gamma_i \kappa_i + \sum_{i=1}^k \Hat\gamma_i\,\!' \Hat\kappa_i\,\!'$.  
Finally,  $H$ is inessential exactly if
the bundle is trivial or if $\gamma_1 \gamma_2 \gamma_3 = 0$.
\end{proposition}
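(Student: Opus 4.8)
The plan is to reduce the whole statement to an analysis of the ``fiber part'' of $H$ and then to carry out one explicit volume computation over the fiber triangle.

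First I would establish the decomposition $H = \Tilde H + \Hat H$. The three fiber facets $F_1,F_2,F_3$ are pervasive, while for a polygon $\Hat\Delta$ with at least four edges no base facet is; so Proposition~\ref{prop:flat}(ii) writes $H = H' + \Tilde H_0$ with $H'$ inessential and $F_1,F_2,F_3$ symmetric for $H'$, and with every base facet symmetric for the mass linear function $\Tilde H_0$. (If $\Hat\Delta$ is a triangle I would use Proposition~\ref{prop:bund} instead, $\Delta$ then being a bundle over the simplex $\Delta_2$.) By Lemma~\ref{le:fibbun}(i), $\Tilde H_0 \in \iota(\Tilde\ft)$, so $\pi(H) = \pi(H')$; since $H'$ has symmetric fiber facets, Proposition~\ref{prop:lift}(i) shows $\pi(H)$ is mass linear on $\Hat\Delta$, hence inessential by Proposition~\ref{prop:2dim}. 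Taking $\Hat H$ to be the lift of $\pi(H)$ and $\Tilde H := H - \Hat H$, we get $\pi(\Tilde H) = 0$, so $\Tilde H$ lies in the span of $\eta_1,\eta_2,\eta_3$ and, using $\eta_1+\eta_2+\eta_3 = 0$, can be written $\sum_{i=1}^3\gamma_i\eta_i$ with $\gamma_1+\gamma_2+\gamma_3 = 0$. As $\Hat H$ is automatically mass linear (Proposition~\ref{prop:lift}(ii)) and the mass linear functions form a vector space, $H$ is mass linear exactly if $\Tilde H$ is; this reduces everything to $\Tilde H$.

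Next, and this is the heart of the matter, I would compute $\langle\Tilde H,c_\Delta\rangle$ directly. Because $\Tilde H$ pairs only with the fiber coordinates $x=(x_1,x_2)$, integrating the base coordinates out of $\Delta$ gives
\[
\langle\Tilde H,c_\Delta\rangle = \frac{\int_{\Delta_2}\langle\Tilde H,x\rangle\,P\bigl(\Hat\kappa'(x)\bigr)\,dx}{\int_{\Delta_2}P\bigl(\Hat\kappa'(x)\bigr)\,dx}, \qquad \Hat\kappa_i'(x) = \Hat\kappa_i' - b_1^i x_1 - b_2^i x_2,
\]
where the integral is over the fiber triangle cut out by $\eta_1,\eta_2,\eta_3$. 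A short moment computation on the simplex shows that, for a linear form $\ell$, the weighted integral $\int_{\Delta_2}\langle\Tilde H,x\rangle\,\ell(x)\,dx$ equals $\langle\Tilde H,c_{\Delta_2}\rangle\int_{\Delta_2}\ell$ precisely when the coefficient vector of $\ell$ is proportional to $(\gamma_2,-\gamma_1)$; expanding $P(\Hat\kappa'(x))$ to first order in the shears, the requirement that the ratio be linear in $\Hat\kappa'$ therefore forces each shear $(b_1^i,b_2^i)$ to be a multiple of $(\gamma_2,-\gamma_1)$, which is condition (i). Granting (i), every $\Hat\kappa_i'(x)$ depends on $x$ only through $u := \gamma_2 x_1 - \gamma_1 x_2$, so since $P$ is a homogeneous quadratic one has $P(\Hat\kappa'(x)) = P(\Hat\kappa') - u\,B(\Hat\kappa',r) + u^2\,P(r)$ with $P(r) = P(0,0,r_3,\dots,r_k)$. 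Using $\int_{\Delta_2}\langle\Tilde H,x\rangle\,u\,dx = \langle\Tilde H,c_{\Delta_2}\rangle\int_{\Delta_2}u$, the constant and linear-in-$u$ terms combine to give the value $\sum_{i=1}^3\gamma_i\kappa_i$; the only obstruction is the $u^2$-term, which survives unless $P(r)=0$, or unless $\gamma_1\gamma_2\gamma_3 = 0$, in which case $u$ is proportional to one of $x_1,x_2,x_1+x_2$ and the second moment $\int_{\Delta_2}\langle\Tilde H,x\rangle\,u^2\,dx$ again factors as $\langle\Tilde H,c_{\Delta_2}\rangle\int_{\Delta_2}u^2$ by the reflection symmetry of the simplex. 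This yields exactly condition (ii), together with $\langle\Tilde H,c_\Delta\rangle = \sum_{i=1}^3\gamma_i\kappa_i$; combined with the lift formula for $\Hat H$ from Proposition~\ref{prop:lift}(ii) this gives the stated expression for $\langle H,c_\Delta\rangle$. I expect the bookkeeping of these simplex moments --- and especially proving that a misaligned shear or a genuine $u^2$-dependence really does destroy linearity, with no accidental cancellation --- to be the main obstacle.

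Finally, for the inessentiality criterion I would note that $H$ is inessential exactly if $\Tilde H$ is, since $\Hat H$ is always inessential. When the bundle is trivial, $\Delta = \Delta_2\times\Hat\Delta$ and all three fiber facets remain equivalent, so any $\Tilde H$ with $\gamma_1+\gamma_2+\gamma_3=0$ is inessential. When $\gamma_1\gamma_2\gamma_3 = 0$, say $\gamma_3=0$, condition (i) makes every shear a multiple of $(1,1)$, so all base conormals lie in the hyperplane $\{x_1=x_2\}$ together with $\eta_3$; Lemma~\ref{le:equiv} then gives $F_1\sim F_2$, and $\Tilde H = \gamma_1(\eta_1-\eta_2)$ is inessential. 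Conversely, if the bundle is nontrivial and all $\gamma_i$ are nonzero, the same lemma shows that no two fiber facets are equivalent, so $\Tilde H$, and hence $H$, is essential.
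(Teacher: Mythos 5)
Your decomposition step (Propositions~\ref{prop:flat} and \ref{prop:bund} plus Proposition~\ref{prop:lift} to peel off the lift of an inessential function on $\Hat\De$, leaving $\Tilde H=\sum_{i=1}^3\ga_i\eta_i$ with $\sum\ga_i=0$) and your sufficiency computation (slice over the fiber triangle, expand $P(\Hat\ka'(x))=Q_0+Q_1u+Q_2u^2$ in $u=\ga_2x_1-\ga_1x_2$, identify $Q_2=P(0,0,r_3,\dots,r_k)$ as the obstruction, and handle $\ga_1\ga_2\ga_3=0$ by a reflection symmetry of the simplex) are both correct and essentially identical to the paper's Lemma~\ref{calcul2}. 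The closing inessentiality argument via Lemma~\ref{le:equiv} and \eqref{eqD2} also matches.

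The genuine gap is in your proof that condition (i) is \emph{necessary}. You assert that "expanding $P(\Hat\ka'(x))$ to first order in the shears, the requirement that the ratio be linear forces each shear to be a multiple of $(\ga_2,-\ga_1)$," and you yourself flag "no accidental cancellation" as the main obstacle --- but that is precisely the step that needs a proof. Before (i) is known, the numerator and denominator are genuinely two-variable quadratics in $x$ (not functions of $u$ alone), the first-order term is bilinear in $(\Hat\ka',x)$, and the denominator also varies with $\Hat\ka'$; ruling out cancellation between the degree-one and degree-two contributions to the rational function $N/D$ is a nontrivial multivariable analysis that your computation of $\int_{\Delta_2}\langle\Tilde H,x\rangle\ell(x)\,dx$ does not by itself settle. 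The paper circumvents this entirely: each base facet $G_i$ is itself a $3$-dimensional $\De_2$ bundle over $\De_1$ with structural constants $(a_1^i,a_2^i)$ related to the shears by the adjacency recursion \eqref{sides}; restricting $H$ to $G_i$ (Proposition~\ref{prop:symcent}) and invoking the already-proved Lemma~\ref{le:calcul} gives $\ga_1a_1^i+\ga_2a_2^i=0$ for every $i$, and then \eqref{sides} together with the normalization $(b^1_1,b^1_2)=(b^2_1,b^2_2)=(0,0)$ propagates this to $\ga_1b_1^i+\ga_2b_2^i=0$ for all $i$, which is (i). If you want to keep your direct $4$-dimensional route you must actually carry out the no-cancellation argument; otherwise you should import the paper's reduction to the base facets, after which your $u$-expansion takes over and completes the proof exactly as you describe.
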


Before proving this proposition, we 
consider the case that
$\Hat\De$ is a triangle or quadrilateral.

\begin{cor}\labell{cor:22bundle}
Let $\Delta$ be a $\De_2$ bundle over $\De_2$ defined as above.
\begin{itemize}
\item [(i)] Every mass linear function on $\De$
is inessential.
\item[(ii)] 
If $b_1^3 b_2^3 (b_1^3-b_2^3)\ne 0$, then $\De$ has a $2$-dimensional
family of mass linear functions,
and the fiber facets are symmetric for every mass linear
function.
\end{itemize}
\end{cor}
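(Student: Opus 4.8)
The plan is to derive both statements directly from Proposition~\ref{prop:polybundle}, specialized to $k=3$, where the base $\Hat\De = \De_2$ is a triangle. The one genuinely computational input I need is the shape of the area polynomial $P$ of a $2$-simplex. Since the edge directions are fixed and two of the three support numbers account for translations, moving the edges of a triangle only rescales it in a one-parameter way; hence $P$ is, up to a positive constant, the square of the unique translation-invariant positive combination $\sum a_i \Hat\ka_i$ of the support numbers. In particular $P(0,0,r_3) = c\,a_3^2\,r_3^2$ for some $c>0$, so
\[
P(0,0,r_3) = 0 \iff r_3 = 0.
\]
This is the only place the hypothesis ``$\Hat\De$ is a triangle'' enters, and checking it is really the crux; the rest is bookkeeping with Proposition~\ref{prop:polybundle}.

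For part (i) I would argue by contradiction. Suppose $H$ is essential. By the final sentence of Proposition~\ref{prop:polybundle} this forces the bundle to be nontrivial and $\ga_1\ga_2\ga_3 \neq 0$, so in particular $\Tilde H \neq 0$ and $\Tilde H$ must be mass linear via conditions (i) and (ii). Since $\ga_1\ga_2\ga_3 \neq 0$, condition (ii) requires $P(0,0,r_3) = 0$, whence $r_3 = 0$ by the displayed equivalence. Then condition (i) gives $(b_1^3,b_2^3) = r_3(\ga_2,-\ga_1) = (0,0)$; together with the normalization $(b_1^1,b_2^1)=(b_1^2,b_2^2)=(0,0)$ this makes the bundle trivial, contradicting nontriviality. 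Hence no essential mass linear function exists, proving (i) unconditionally.

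For part (ii), the hypothesis $b_1^3 b_2^3 (b_1^3 - b_2^3) \neq 0$ is designed precisely to force $\Tilde H = 0$. Indeed, if $\Tilde H \neq 0$ then condition (i) yields $b_1^3 = r_3\ga_2$ and $b_2^3 = -r_3\ga_1$, and using $\ga_1+\ga_2+\ga_3=0$ also $b_1^3 - b_2^3 = -r_3\ga_3$; the hypothesis then forces $r_3 \neq 0$ and all of $\ga_1,\ga_2,\ga_3$ nonzero, so $\ga_1\ga_2\ga_3 \neq 0$, and condition (ii) again demands $P(0,0,r_3)=0$, i.e.\ $r_3=0$, a contradiction. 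Therefore $\Tilde H = 0$, that is $\ga_1=\ga_2=\ga_3=0$, so the fiber facets $F_1,F_2,F_3$ are symmetric for every mass linear $H$. With $\Tilde H = 0$ the mass linear functions on $\De$ are exactly the lifts of functions on $\Hat\De = \De_2$; since every element of $\Hat\ft$ is inessential and mass linear on the simplex and the lift map is injective (being a section of $\pi$, cf.\ Proposition~\ref{prop:lift}), these form a $2$-dimensional family, giving the assertion of (ii). I expect no serious obstacle here beyond correctly tracking which of conditions (i), (ii) are triggered; the real content is the triangle-area fact isolated in the first paragraph.
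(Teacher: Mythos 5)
Your proof is correct and follows the same route as the paper: the paper's entire argument is the observation that for a triangle $P(0,0,r_3)\neq 0$ unless $r_3=0$, after which both claims follow from Proposition~\ref{prop:polybundle}. You have simply spelled out the bookkeeping (correctly), including the useful identity $b_1^3-b_2^3=-r_3\ga_3$ in part (ii).
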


\begin{proof}
If $\Hat\Delta$ is a simplex,
then $P(0,0,r_3) \neq 0$ unless $r_3 = 0$. 
Therefore, 
both claims follow
immediately from the  proposition above.  
\end{proof}

In contrast, many $\De_2$ bundles over quadrilaterals 
admit essential mass
linear functions. For example, let $\De$ be a generic 
$\Delta_1 \times \Delta_2$ bundle over $\Delta_1$.  As we mentioned
in the introduction, this implies that $\Delta$ can either be
viewed as a
 $121$-bundle
or as a $\Delta_2$ bundle over a trapezoid. 
Hence, we can use either Proposition~\ref{prop:doublebundle} or
Proposition~\ref{prop:polybundle} to show that $\De$ admits
essential mass linear functions.
In fact, if $W$ is a generic $\Delta_2 $ bundle over $\Delta_1$,
then $\Delta_1 \times W$ admits essential mass linear functions
with $7$ asymmetric facets. 
We spell out the details here because this example is rather special; 
as explained in 
the remarks (2) and (3) after Theorem~\ref{thm:4d},
no other  $4$-dimensional
polytopes  admit an essential mass linear function with more than $6$
asymmetric facets.

\begin{cor}\labell{cor:7}
Let $H \in \ft$ be an essential mass linear function on a
polytope $\Delta \subset \ft^*$ which is a $\Delta_2$
bundle over a polygon $\Hat \De$.   If $\Delta$ has 
more than six
asymmetric facets, then $\Delta =  
\Delta_1 \times Y$,
where $Y$ is a $\Delta_2$ bundle over $\Delta_1$.
\end{cor}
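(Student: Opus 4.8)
The plan is to invoke the complete classification of mass linear functions on $\Delta_2$ bundles over polygons from Proposition~\ref{prop:polybundle}, combine it with the two-dimensional classification in Proposition~\ref{prop:2dim} to pin down the base polygon $\Hat\De$, and then read the product structure directly off the conormals. First I would write $H = \Tilde H + \Hat H$ as in Proposition~\ref{prop:polybundle}, where $\Tilde H = \gamma_1\eta_1 + \gamma_2\eta_2 + \gamma_3\eta_3$ with $\gamma_1 + \gamma_2 + \gamma_3 = 0$ and $\Hat H$ is the lift of an inessential function on $\Hat\De$. Since $\langle H, c_\Delta \rangle = \sum_{i=1}^3 \gamma_i\kappa_i + \sum_{i=1}^k \Hat\gamma_i'\,\Hat\kappa_i'$, the coefficient of each fiber facet is $\gamma_i$ and of each base facet $\Hat F_i'$ is $\Hat\gamma_i'$; hence a base facet of $\Delta$ is asymmetric exactly when the corresponding edge of $\Hat\De$ is asymmetric for $\Hat H$. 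Because $H$ is essential, Proposition~\ref{prop:polybundle} forces the bundle to be nontrivial and $\gamma_1\gamma_2\gamma_3 \neq 0$, so all three fiber facets are asymmetric. Thus the number of asymmetric facets of $\Delta$ equals $3$ plus the number of asymmetric edges of $\Hat\De$.

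Next I would exploit that ``more than six'' asymmetric facets means $\Hat\De$ has at least four asymmetric edges. By Proposition~\ref{prop:2dim} a nonzero mass linear function on a polygon has at most four asymmetric edges, and four occur only when the polygon is $\De_1 \times \De_1$ with every edge asymmetric. Hence $\Hat\De = \De_1 \times \De_1$, so $k = 4$ and $\Delta$ has exactly seven asymmetric facets. Now I would apply conditions (i) and (ii) of Proposition~\ref{prop:polybundle} to the nonzero mass linear function $\Tilde H$: there exist $r_3, r_4 \in \R$ with $(b_1^i, b_2^i) = r_i(\gamma_2, -\gamma_1)$ for $i = 3,4$, and, since $\gamma_1\gamma_2\gamma_3 \neq 0$, with $P(0,0,r_3,r_4) = 0$. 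Labelling the edges in cyclic order so that $e_1 \parallel e_3$ and $e_2 \parallel e_4$, the area polynomial of the smooth parallelogram is $P(\Hat\kappa_1,\dots,\Hat\kappa_4) = (\Hat\kappa_1 + \Hat\kappa_3)(\Hat\kappa_2 + \Hat\kappa_4)$, so the vanishing condition reduces to $r_3 r_4 = 0$.

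Finally I would read off the product structure. Say $r_4 = 0$, the case $r_3 = 0$ being symmetric. Recalling the normalization $(b_1^1,b_2^1) = (b_1^2,b_2^2) = (0,0)$, both base facets in the parallel class $\{e_2, e_4\}$ then carry zero twisting, so after the identification \eqref{eqD1}--\eqref{eqD2} their conormals are $\pm e_4 \in \R^4$, while every remaining conormal $\eta_1,\eta_2,\eta_3,\Hat\eta_1',\Hat\eta_3'$ has vanishing fourth coordinate. Consequently the $x_4$-axis splits off as a direct factor, giving $\Delta = \De_1 \times Y$, where $Y \subset \R^3$ has conormals $\eta_1,\eta_2,\eta_3$ (lying in the plane $x_3 = 0$) together with $\Hat\eta_1' = (0,0,-1)$ and $\Hat\eta_3' = (r_3\gamma_2, -r_3\gamma_1, 1)$. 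By Lemma~\ref{le:recog}, or directly from the normal form \eqref{eq:Yb} with $k=2$, the polytope $Y$ is a $\De_2$ bundle over $\De_1$, which is the desired conclusion.

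The part I expect to be the main obstacle is the bookkeeping in the last two steps: correctly matching the cyclic edge labelling and the twisting normalization to the two parallel classes of the parallelogram, so that the identity $P(0,0,r_3,r_4) = r_3 r_4 = 0$ indeed kills the twisting on an entire parallel pair of base facets rather than on two mismatched edges. Once that is done the product splitting is immediate from reading the conormals, and everything else is a direct application of Propositions~\ref{prop:polybundle} and \ref{prop:2dim}.
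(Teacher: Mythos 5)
Your proposal is correct and follows essentially the same route as the paper: reduce via Proposition~\ref{prop:polybundle} and Proposition~\ref{prop:2dim} to $\Hat\De=\De_1\times\De_1$, then use $P(0,0,r_3,r_4)=r_3r_4=0$ to kill the twisting on one parallel pair of base facets. The only difference is that you spell out the final product splitting from the conormals, which the paper leaves implicit.
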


\begin{proof}
By Proposition~\ref{prop:polybundle},  this is impossible unless
$\Hat \De$ admits an inessential function with four asymmetric
facets.  Hence, $\Hat \De = \De_1 \times \De_1$.
We may assume that $\Hat\eta_1 = (-1,0), \Hat\eta_2 = (0,-1), 
\Hat\eta_3 = (1,0)$ and $\Hat\eta_4 = (1,0)$.
Then  $P(0,0,r_3,r_4) = r_3 r_4$,
so that $P(0,0,r_3,r_4)=0$ exactly if $r_3=0$ or $r_4 = 0$,
that is, exactly if 
$(b^3_1,b^3_2) = (0,0)$ or
$(b^4_1,b^4_2) = (0,0)$.
\end{proof}

The proof of Proposition \ref{prop:polybundle} is based on the following lemma.

\begin{lemma}\labell{calcul2}
Let $\Delta$ be the $\Delta_2$ bundle over $\Hat\Delta$ defined above.
Let $P(\Hat\ka_1,\dots,\Hat\ka_k)$ be the 
polynomial which gives the area of $\Hat\De$ for all 
$\Hat \kappa \in \Cc_{\Hat{\Delta}}$.  
Let $H = \sum_{i=1}^3 \gamma_i \eta_i$, where $\sum_{i=1}^3 \gamma_i = 0$.
If $H$ is not zero, then  $H$ is mass linear on $\Delta$  exactly if
there are real numbers 
$r_3,\ldots, r_k$ so that 
\begin{itemize}
\item [(i)] $(b^i_1,b^i_2) = r_i (\gamma_2,-\gamma_1)$  for all 
$i \in \{3,\dots,k\}$, and 
\item [(ii)] either $P(0,0,r_3,\ldots,r_k)  = 0$  or $\gamma _1 \gamma_2 \gamma_3 = 0$.
\end{itemize}
In this case, $\langle H, c_\Delta \rangle = \sum_{i=1}^3 \gamma_i \kappa_i$,
where $\kappa_i$ is the support number of the fiber facet $F_i$.
\end{lemma}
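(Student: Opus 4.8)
The plan is to mimic the proof of Lemma~\ref{le:calcul}: compute $\langle H, c_\Delta\rangle$ explicitly by slicing $\Delta$ along its fibers, and then read off from the resulting rational function exactly when it is linear. First I would slice as in the proof of Proposition~\ref{prop:lift}: writing $\alpha=(x_1,x_2)$ for the fiber coordinates dual to \eqref{eqD1}, the slice $\{x\in\Delta\mid (x_1,x_2)=\alpha\}$ is, for each $\alpha$ in the fiber triangle $T$ (cut out by $\kappa_1,\kappa_2,\kappa_3$), a copy of $\Hat\Delta$ with support numbers $\Hat\kappa_i^\alpha=\Hat\kappa_i\,\!'-(b_1^i\alpha_1+b_2^i\alpha_2)$, whose area is therefore the polynomial $P(\Hat\kappa^\alpha)$. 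Here I would note that for a bundle every slice $Y_\alpha$ is analogous to the base, so that $P$ is the correct area throughout $T$. Since $H=\sum_{i=1}^3\gamma_i\eta_i$ lies in the fiber directions, $H(x)=(\gamma_3-\gamma_1)\alpha_1+(\gamma_3-\gamma_2)\alpha_2$ depends only on $\alpha$, so that
\[
V(\Delta)=\int_T P(\Hat\kappa^\alpha)\,d\alpha,\qquad
\mu_H(\Delta)=\int_T H(\alpha)\,P(\Hat\kappa^\alpha)\,d\alpha,
\]
and $\langle H,c_\Delta\rangle=\mu_H(\Delta)/V(\Delta)$, which I would evaluate using the monomial integrals \eqref{integral} with $k=2$.

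Next I would expand the degree-two polynomial $P(\Hat\kappa^\alpha)$ into its parts of degree $0$, $1$, and $2$ in $\alpha$. The degree-zero part $P(\Hat\kappa\,\!')$ contributes the value of $H$ at the barycenter of $T$, which a short computation (using $\gamma_1+\gamma_2+\gamma_3=0$) gives as $\sum_{i=1}^3\gamma_i\kappa_i=:L$. Writing $\Tilde H=H-L$, which has zero average over $T$, the whole question reduces to when the degree-one and degree-two parts of $P(\Hat\kappa^\alpha)$ integrate to zero against $\Tilde H$. Indeed, scaling the base support numbers $\Hat\kappa\,\!'\mapsto\lambda\Hat\kappa\,\!'$ turns $\langle H,c_\Delta\rangle$ into a ratio of quadratics in $\lambda$; being affine in $\lambda$ by mass linearity yet a ratio of quadratics, this is forced to be constant, equal to $L$, which is exactly the vanishing of both $\int_T\Tilde H\cdot(\text{linear part})$ and $\int_T\Tilde H\cdot(\text{quadratic part})$.

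These two vanishing conditions then give (i) and (ii) respectively. For the linear part, $\int_T\Tilde H\,(b_1^i\alpha_1+b_2^i\alpha_2)$ is a second moment of $T$ contracted with $\nabla H$ and $(b_1^i,b_2^i)$; the relation $\gamma_1+\gamma_2+\gamma_3=0$ makes this pairing annihilate the direction $(\gamma_2,-\gamma_1)$, so only the component of each $(b_1^i,b_2^i)$ transverse to $(\gamma_2,-\gamma_1)$ survives. Requiring $\int_T\Tilde H\cdot(\text{linear part})=0$ for all $\Hat\kappa\,\!'$ then forces these transverse components to constitute a linear relation among the edge-length functions $\partial_iP$; since the only such relations are the Minkowski relations $\sum_i(\partial_iP)\,\Hat\eta_i=0$ and since $b^1=b^2=0$ in our normalization, all transverse components must vanish, which is exactly (i). Once (i) holds, $P(\Hat\kappa^\alpha)$ depends on $\alpha$ only through $s=\gamma_2\alpha_1-\gamma_1\alpha_2$, and the remaining obstruction is $\int_T\Tilde H\,s^2$, a third central moment of the triangle; the key computation is that this equals a nonzero multiple of $\gamma_1\gamma_2\gamma_3\,P(0,0,r_3,\dots,r_k)$, vanishing precisely under (ii). Conversely, when (i) and (ii) hold all obstructions vanish and $\langle H,c_\Delta\rangle=L=\sum_{i=1}^3\gamma_i\kappa_i$; I would then pass from the normalized fiber to arbitrary $\kappa\in\Cc_\Delta$ by the translation formula of Remark~\ref{translate}, as at the end of the proof of Lemma~\ref{le:calcul}.

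The main obstacle is the bookkeeping in these two moment computations — verifying cleanly that the transverse parts of the $(b_1^i,b_2^i)$ cannot be absorbed (which is where the Minkowski relations and the normalization $b^1=b^2=0$ enter), and carrying out the third-moment computation that produces the decisive factor $\gamma_1\gamma_2\gamma_3$. A final point requiring care is that $P(0,0,r_3,\dots,r_k)$ is a genuine constant (the $r_i$ being fixed by the bundle data), so that when $\gamma_1\gamma_2\gamma_3\,P(0,0,r_3,\dots,r_k)\ne0$ the extra term is a ratio of the form $\tau^5/V(\Delta)$ that is manifestly non-polynomial, confirming that (ii) is necessary.
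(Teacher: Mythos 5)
Your proposal is correct, and while its computational core (slicing along the fiber, writing the area of the slice over $\alpha$ as $P(\Hat\kappa\,\!'-B\alpha)$, expanding into parts of degree $0,1,2$ in $\alpha$, and integrating over the fiber triangle with \eqref{integral}) is the same as the paper's, you establish the necessity of condition (i) by a genuinely different route. The paper restricts $H$ to each base facet $G_i$, which is a $3$-dimensional $\Delta_2$ bundle over $\Delta_1$ with invariants $(a_1^i,a_2^i)$, applies Proposition~\ref{prop:symcent} and Lemma~\ref{le:calcul} to get $\gamma_1a_1^i+\gamma_2a_2^i=0$, and then propagates this to $\gamma_1b_1^i+\gamma_2b_2^i=0$ via the recursion \eqref{sides} and the normalization $b^1=b^2=0$; only then does it compute the integral under hypothesis (i). You instead reduce everything to the two scalar conditions $\int_T\Tilde H\cdot P_1=0$ and $\int_T\Tilde H\cdot P_2=0$ by the degree count in $\lambda$ (valid since $V(\lambda)$ has nonvanishing leading coefficient $P(\Hat\kappa\,\!')\,|T|$), and extract (i) from the first condition: contracting with the covariance matrix of $\Delta_2^\lambda$ indeed sends $\nabla H$ to $-\tfrac{\lambda^4}{24}(\gamma_1,\gamma_2)$, so the condition reads $\sum_i(\gamma_1b_1^i+\gamma_2b_2^i)\,\partial_iP\equiv0$, and your appeal to the Minkowski relations is sound --- the map $\Hat\kappa\mapsto(\partial_1P,\dots,\partial_kP)$ has exactly the translations as kernel, so its relation space is the $2$-dimensional span of the Minkowski relations, and $b^1=b^2=0$ with $\Hat\eta_1,\Hat\eta_2$ independent kills it. (This small lemma deserves an explicit line in a write-up.) Your deferred third-moment computation also checks out: $\int_T\Tilde H\,(\gamma_2x_1-\gamma_1x_2)^2=\tfrac{\lambda^5}{60}\gamma_1\gamma_2\gamma_3$, matching the correction term $\gamma_1\gamma_2\gamma_3\,Q_2\lambda^5/60$ visible in the paper's final formula, so (ii) falls out exactly as you predict. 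The trade-off: the paper's argument recycles the already-proved $3$-dimensional result and avoids discussing relations among the $\partial_iP$, whereas yours is self-contained in the $4$-dimensional computation and makes structurally transparent why the obstruction to mass linearity is exactly the pair of moments you isolate.
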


\begin{proof}
Let $G_i$ be a base facet.
Since $\Hat\Delta$ is smooth, there is an integer $m_i$ so
that 
\begin{equation}\labell{eq:m}
m_i \Hat\eta_i = \Hat\eta_{i-1} + \Hat\eta_{i+1},
\end{equation}
where as usual we interpret the $i$ in cyclic order. 
The facet $G_i$ is a $\Delta_2$ bundle over $\Delta_1$ with fiber
facets 
$F_1 \cap G_i,F_2\cap G_i,$ and $F_3 \cap G_i$,  
and base facets $G_{i-1} \cap G_i$ and $G_{i+1} \cap G_i$.
As such, it 
is determined by a pair of integers $(a^i_1,a^i_2)$.
One can check that
\begin{equation}\labell{sides}
(a^i_1,a^i_2) = (b^{i-1}_1, b^{i-1}_2) - m_i (b^{i}_1,b^i_2) + (b^{i+1}_1,
b^{i+1}_2).
\end{equation}

If $H  \in \ft $  
is mass linear on $\Delta$, then 
by Proposition~\ref{prop:symcent},
it must also be mass linear on $G_i$. 
Hence, Lemma~\ref{le:calcul} implies that $\gamma_1 a^i_1 + 
\gamma_2 a^i_2 = 0$ for all $i$.
Since $(b^1_1,b^1_2)
= (b^2_1,b^2_2) = (0,0)$ by assumption, this and Equation~(\ref{sides})
together imply that $\gamma_1 b^i_1 + \gamma_2 b^i_2 = 0$ for all $i$.
Therefore,  since $H \neq 0$,
there is a constant $r_i$ for each $1 \leq i \leq k$  such that
\begin{equation}\labell{multiple}
(b^i_1,b^i_2) = r_i (\gamma_2,-\gamma_1);
\end{equation}
note that $r_1=r_2=0$.

So now assume that \eqref{multiple} holds for all $i$.
As in Lemma~\ref{le:calcul}
it is convenient first to 
consider the case that 
the support numbers of the fiber 
facets $F_1$ and $F_2$ are $0$.
Let $\lambda$ denote the support number of $F_3$
and $\Hat\kappa_i$ denote the support number of the base facet $G_i$.
To get the volume $V$  of $\Delta$ we  integrate over the  
simplex $\Delta_2^\lambda$
the function  which gives  the area of the intersection of $\De$ with the 
$2$-plane where $x_1$ and $x_2$ are constant. 
Each such section is 
affine equivalent to the base polygon $\Hat\De$ with structural
constants 
$\Hat\kappa_i' = \Hat\kappa_i - r_i
(\ga_2 x_1 -\ga_1 x_2)$.
Since $P$ is a 
homogeneous
quadratic function  of the 
support numbers, the area of this 
section is 
$$
P\big(\Hat\kappa_i - r_i(\gamma_2 x_1 - \gamma_1 x_2)\big) = 
Q_0 + Q_1 (\gamma_2 x_1 - \gamma_1 x_2) +
Q_2(\gamma_2 x_1 - \gamma_1 x_2)^2,
$$  where  each 
$Q_d$ is a polynomial of degree $d$
in the $r_i$ and of degree $2-d$ in the $\Hat\kappa_i$; moreover
$Q_2$ is $P(r_1,\ldots, r_k)$. 
Therefore,
by \eqref{integral}
\begin{eqnarray*}
 V &=&   
 \int_{\Delta_2^\la}  Q_0 + Q_1 (\gamma_2 x_1 -\gamma_1 x_2) + 
Q_2 (\gamma_2 x_1 - \gamma_1 x_2)^2 \\
 & = & \frac {\la^2}{12}\Bigl(6 Q_0 +  2 (\gamma_2 - \gamma_1) Q_1 \la + 
 \big(\gamma_2^2  - \gamma_2 \gamma_1  + \gamma_1^2\big) Q_2 \la^2\Bigr).
\end{eqnarray*}
Similarly, the
moment $\mu_1$ along the $x_1$ axis is
\begin{eqnarray*}
 \mu_1&=&  
 \int_{\Delta_2^\la}  Q_0 x_1 + Q_1 (\gamma_2  x_1  - \gamma_1 x_2)x_1 + 
Q_2 (\gamma_2  x_1 - \gamma_1 x_2)^2 x_1 \\   
  & = & \frac {\la^3}{120}\Bigl(20  Q_0 +   
(10 \gamma_2 -  5 \gamma_1) Q_1 \la + 
  \big(6\gamma_2^2 - 4\gamma_1 \gamma_2 + 2 \gamma_1^2\big)  Q_2 \la^2\Bigr).
\end{eqnarray*}
By symmetry the moment $\mu_2$ along the $x_2$ is given by
interchanging $\gamma_2$ and $-\gamma_1$.  
As before, a straightforward (though  tedious) calculation
shows that
\begin{eqnarray*}
\langle H, c_\Delta \rangle & = &
(\gamma_3 - \gamma_1) \frac{\mu_1}{V} + (\gamma_3 - \gamma_2) \frac{\mu_2}{V}\\
 &=& \lambda \left( \gamma_3 + \frac{ \gamma_1 \gamma_2 \gamma_3 Q_2 \la^2}
{5\Bigl(6 Q_0 + 2(\gamma_2 - \gamma_1) Q_1 \lambda  + 
\big(\gamma_2^2 - \gamma_1 \gamma_2 + \gamma_1^2\big) Q_2 \lambda^2\Bigr)} \right).
\end{eqnarray*}
This is a linear function exactly if $\gamma_1 \gamma_2 \gamma_3 = 0$
or $Q_2 = 0$.

Together, these two paragraphs imply that if $H$ is mass linear on
$\Delta$, then $(b_1^i,b_2^i) = r_i(\gamma_2,-\gamma_1)$ for all $i$ and
either $P(0,0,r_3,\dots,r_k) = 0$ or $\gamma_1 \gamma_2 \gamma_3 = 0$.
It remains to show that if $H$ is mass linear, then
we must have $\langle H, c_\Delta \rangle =
\sum_{i=1}^3 \gamma_i \kappa_i$.  We calculated 
$\langle H, c_\Delta \rangle = \la \ga_3$ above in the special case when $\ka_1=\ka_2=0$ and $\ka_3=\la$.    Just as at the end of the proof of 
Lemma~\ref{le:calcul}, the general case follows by using Remark \ref{translate}.
\end{proof}

We are now ready to complete the proof.

\begin{proof}[Proof of Proposition~\ref{prop:polybundle}]
If $\Hat \Delta = \Delta_2$, then by Proposition~\ref{prop:bund}
we can write 
$H ' =\Hat H + \Tilde H$, where
\begin{itemize}
\item $\Hat H$ is inessential and the fiber facets are $\Hat H$-symmetric, and
\item $\Tilde H$ is mass linear and the base facets are $\Tilde H$-symmetric.
\end{itemize} 
On the other hand,
if $\Hat \Delta$ contains more than three edges, then the 
base facets are the nonpervasive facets.
Hence, in this case the
same claim follows from Proposition~\ref{prop:flat}.
It then follows from
Proposition \ref{prop:lift} that 
$\Hat H$ is the lift of
an inessential  function on $\Hat\De$.

Now consider  
$\Tilde H$.   Since all its asymmetric facets are fiber facets,
Lemma~\ref{le:Hsum}  implies that  
$\Tilde H$ lies in the span of
the conormals to the
fiber facets. Since $\sum_{i=1}^3 \eta_i = 0$, this means that
there are constants $\gamma_i$ so that  
$\Tilde H = \sum_{i=1}^3 \gamma_i \eta_i$,
where $\sum_{i=1}^3 \gamma_i = 0$.
Therefore, by Lemma~\ref{calcul2}, (i) and (ii) hold.  
Moreover $H$ is inessential exactly if  
$\Tilde H$ is.   If $\ga_1\ga_2\ga_3\ne 0$ then $H'$ is inessential 
exactly if the three fiber facets $F_1,F_2, F_3$ are equivalent.
But by \eqref{eqD2} this happens only if all $b_j^i=0$, that is, if 
 the bundle $\De\to \Hat\De$ is trivial.
\end{proof}

We now consider the topological  
implication of
Proposition~\ref{prop:polybundle}.

\begin{prop}\labell{prop:geo3dep}  Let $\De$ be a $\De_2$-bundle  over a polygon 
$\Hat\De \subset \R^2$,
and let $M_\De$ and $M_{\Hat \De}$ denote the associated toric manifolds.
Then $\De$ admits an essential mass linear function exactly if
there
exist integers $\gamma_1$ and $\gamma_2$,
and  
a $T_{\Hat\De}$-equivariant principal 
$S^1$-bundle $L$ over $M_{\Hat \Delta}$
such that
\begin{itemize}
\item [(i)] the (ordinary) Euler  class $\chi \in 
H^2(M_{\Hat \Delta};\Z)$ of $L$ is not trivial but
has vanishing square,
\item [(ii)] $\gamma_1 \gamma_2 (\gamma_2 - \gamma_1) \neq 0$, and
\item [(iii)]  $M_\De$ is  $T_{\De}$-equivariantly diffeomorphic to  $L \times_{S^1} \CP^2$,
where $S^1$ acts on $\CP^2$ by 
$\lambda \cdot [z_1:z_2:z_3]
= [\lambda^{\gamma_2} z_1: \lambda^{-\gamma_1} z_2: z_3].$
\end{itemize}
\end{prop}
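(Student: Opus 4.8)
The plan is to translate the combinatorial classification of Proposition~\ref{prop:polybundle} into topology. By that proposition, $\De$ carries an essential mass linear function precisely when there is a nonzero $\Tilde H = \gamma_1\eta_1 + \gamma_2\eta_2 + \gamma_3\eta_3$ with $\gamma_1 + \gamma_2 + \gamma_3 = 0$ and $\gamma_1\gamma_2\gamma_3 \neq 0$, together with numbers $r_3,\dots,r_k$, not all zero, such that the twisting data of the bundle obey $(b_1^i,b_2^i) = r_i(\gamma_2,-\gamma_1)$ for all $i$ and $P(0,0,r_3,\dots,r_k) = 0$ (the condition $\gamma_1\gamma_2\gamma_3\neq 0$ together with nontriviality being exactly what makes $\Tilde H$ essential, by the last line of that proposition). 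Since the $(b_1^i,b_2^i)$ are integral, after rescaling $\Tilde H$ I may assume $\gamma_1,\gamma_2$ are coprime integers and each $r_i\in\Z$. The whole proof is then a dictionary: the common direction $(\gamma_2,-\gamma_1)$ becomes the circle $S^1\subset T_{\Tilde\De}$ acting on the fiber $\CP^2 = M_{\Tilde\De}$, the integers $r_i$ become the equivariant bundle $L$, and the vanishing of the area polynomial $P$ becomes $\chi^2 = 0$.

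First I would construct $L$. Setting $r_1 = r_2 = 0$, let $\chi := \sum_{i=1}^k r_i\,\Hat D_i \in H^2(M_{\Hat\De};\Z)$, where $\Hat D_i$ is Poincar\'e dual to $\Phi_{\Hat\De}^{-1}(\Hat F_i)$, and let $L\to M_{\Hat\De}$ be the $T_{\Hat\De}$-equivariant principal $S^1$-bundle whose ordinary Euler class is $\chi$ (equivalently, the equivariant line bundle whose support data are the $r_i$). Because $M_{\Hat\De}$ is a toric surface, the area polynomial factors as $P(\Hat\ka) = \tfrac12\bigl(\sum_i \Hat\ka_i\Hat D_i\bigr)^2$ via $[\om_{\Hat\De}] = \sum_i\Hat\ka_i\Hat D_i$, so $\chi^2 = 2\,P(0,0,r_3,\dots,r_k)$. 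Hence condition (i) --- $\chi\neq 0$ with $\chi^2 = 0$ --- is exactly the pair of clauses ``bundle nontrivial'' (some $r_i\neq 0$) and ``$P(0,0,r_3,\dots,r_k) = 0$.'' Condition (ii) is the essentiality requirement $\gamma_1\gamma_2\gamma_3\neq 0$ rewritten for the exponents of the displayed action: with $\gamma_3 = -\gamma_1-\gamma_2$ one has $\gamma_1\gamma_2\gamma_3 = -\gamma_1\gamma_2(\gamma_1+\gamma_2)$, and after the sign change (replacing $\gamma_1$ by $-\gamma_1$) needed to match the weights $(\gamma_2,-\gamma_1,0)$ of the $S^1$-action this becomes $\gamma_1\gamma_2(\gamma_2-\gamma_1)$; equivalently, (ii) says that $S^1 = \{(\gamma_2,-\gamma_1)\}$ acts on $\CP^2$ with three distinct weights, so that its fixed points are isolated.

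The heart of the matter is the equivariant diffeomorphism (iii). Here I would use that $M_\De$ is the $\CP^2$-bundle over $M_{\Hat\De}$ associated to the polytope bundle (see [I, Remark~5.2]) and show that the hypothesis $(b_1^i,b_2^i) = r_i(\gamma_2,-\gamma_1)$ forces its structure group to reduce from $T_{\Tilde\De}$ to the single circle $S^1 = \{(\gamma_2,-\gamma_1)\}$. Concretely I would compare fans: once the base parts $\Hat\eta_i$ and the proportional twists $r_i(\gamma_2,-\gamma_1)$ in \eqref{eqD1}--\eqref{eqD2} are separated, the conormals of $\De$ are exactly those of the toric manifold of the associated bundle $L\times_{S^1}\CP^2$, where $S^1$ acts through $(\gamma_2,-\gamma_1)$, i.e.\ by $\lambda\cdot[z_1:z_2:z_3] = [\lambda^{\gamma_2}z_1:\lambda^{-\gamma_1}z_2:z_3]$. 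This yields the $T_\De$-equivariant identification. The converse runs the same dictionary backwards: from $L$ with $\chi\neq 0$, $\chi^2 = 0$ and the stated action, reading $\chi = \sum r_i\Hat D_i$ recovers integers $r_i$ and the direction $(\gamma_2,-\gamma_1)$ for which $M_\De = L\times_{S^1}\CP^2$ is the toric manifold of a $\De_2$-bundle with $(b_1^i,b_2^i) = r_i(\gamma_2,-\gamma_1)$ and $P(0,0,r_3,\dots,r_k) = 0$, so Proposition~\ref{prop:polybundle} (via Lemma~\ref{calcul2}) supplies the essential mass linear function.

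I expect the main obstacle to be the bookkeeping in (iii): one must verify that ``twisting only along $(\gamma_2,-\gamma_1)$'' really is the associated-bundle construction $L\times_{S^1}\CP^2$ with the precise exponents stated, and that the \emph{equivariant} structure of $L$ is the one rendering the diffeomorphism $T_\De$-equivariant rather than merely smooth. The cohomological identity $\chi^2 = 2\,P(0,0,r_3,\dots,r_k)$ is the other delicate point, since it is what converts the combinatorial vanishing of the area polynomial into the geometric statement $\chi^2 = 0$; it rests on the identification $[\om_{\Hat\De}] = \sum_i\Hat\ka_i\Hat D_i$ and the standard presentation of the cohomology ring of a toric surface.
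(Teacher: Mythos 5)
Your proposal is correct and follows essentially the same route as the paper: both reduce to the criterion of Proposition~\ref{prop:polybundle}, rescale $\gamma_1,\gamma_2$ to coprime integers so the $r_i$ are integral, realize $M_\De$ as the associated bundle $L\times_{S^1}\CP^2$ via the proportionality $(b_1^i,b_2^i)=r_i(\gamma_2,-\gamma_1)$, and convert $P(0,0,r_3,\dots,r_k)=0$ into $\int\chi^2=0$ using $[\om_{\Hat\De}]=\sum\Hat\ka_i X_i$ and the Duistermaat--Heckman identification of $P$ with the symplectic volume (the content of Lemma~\ref{le:geo3dep}, whose second part also supplies the linear independence of the $X_i$, $i\ge 3$, that you implicitly use to equate ``some $r_i\neq 0$'' with $\chi\neq 0$).
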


\begin{proof}
We may assume that $\De$ 
is described by \eqref{eqD1} and \eqref{eqD2},
where $(b_1^1,b_2^1) = (b_1^2,b_2^2) = 0$
and where $\Hat \eta_1,\dots,\Hat \eta_k$ are the outward conormals to $\Hat \De$.
Let $P$ be the polynomial which gives the 
area of $\Hat \De$.
By Proposition~\ref{prop:polybundle}, $\De$ admits an essential mass linear function
exactly if
there exist real numbers $\gamma_1, \gamma_2$ and $r_3,\dots,r_k$
such that 
\begin{itemize}
\item[(a)]
the $r_i$'s are not all zero but
$P(0,0,r_3,\dots,r_k) = 0$, 
\item [(b)]
$\gamma_1 \gamma_2 (\gamma_2 - \gamma_1) \neq 0$, and
\item [(c)] 
$(b_1^i,b_2^i) = r_i (\gamma_2,-\gamma_1)$ for all $i$.
\end{itemize}
Further, by multiplying $\gamma_1$ and $\gamma_2$ by a suitable constant,
we may assume that $\gamma_1$ and $\gamma_2$ are mutually prime integers,
so that each $r_i$ is also in $\Z$.

By  [I, Remark 5.2], 
$M_\De$ is a $\C P^2$ bundle over $M_{\Hat\De}$.
More specifically, 
identify $M_{\Hat\De}$ with the symplectic 
quotient $\C^k/\!/\Hat K$ for a suitable subtorus $\Hat K\subset (S^1)^k$,
and let $(S^1)^2$ act on $\C P^2$ by 
$\lambda \cdot [z_1:z_2:z_3]=
[\lambda_1 z_1: \lambda_2 z_2: z_3]$.
Then 
$M_\Delta$
is the $\CP^2$ bundle associated to the
homomorphism  $\rho \colon \Hat K \to (S^1)^2$
given by
$$
\rho(\exp(x)) = \exp\Big( \sum x_i b_1^i, \sum x_i b_2^i \Big)
\quad\mbox{ for all }x= (x_1,\ldots,x_k) \in \Hat\fk \subset \R^k.
$$
Next observe that the torus $(S^1)^k$ acts on $M_{\Hat\De}$ via its quotient
$T_{\Hat\De} = (S^1)^k/\Hat K$.    
Moreover, there is a one-to-one correspondence between $(S^1)^k$ equivariant
principal
$S^1$-bundles over $M_{\Hat \De}$,
representations of $(S^1)^k$, and $k$-tuples $r \in \Z^k$.
Hence, $M_\De$ is the $\CP^2$ bundle associated to an equivariant 
principal
$S^1$ bundle 
over $M_{\Hat \De}$ exactly if there 
exist 
integers $\gamma_1,\gamma_2$, and $r_1,\dots,r_k$ such
that $(b_1^i,b_2^i) = r_i (\gamma_2,-\gamma_1)$ for all $i$.
In this case, $S^1$ acts on $\CP^2$ by $\lambda \cdot 
[z_1:z_2:z_3]
= [\lambda^{\gamma_2} z_1: \lambda^{-\gamma_1} z_2: z_3]$

Thus conditions (ii) and (iii) in the proposition are equivalent to conditions 
(b) 
and (c).  
To complete the proof we must show that condition (i) is equivalent to 
(a). 
 This is accomplished in
Lemma~\ref{le:geo3dep} below.
\end{proof}

\begin{lemma} \labell{le:geo3dep} 
Let $\Hat \De \subset \Hat \ft^*$ be an $n$-dimensional 
 polytope with facets $F_1,\dots,F_k$;
let $M_{\Hat \De} = \C^k /\!/ \Hat K$ be the associated toric manifold.
Given $r \in \Z^k$,  let $\chi$ be the Euler class of the 
principal $S^1$-bundle associated to the induced homomorphism from 
$\Hat K \subset (S^1)^k$ to $S^1$.
Let
$P(\Hat \kappa)$
be 
the polynomial which 
gives the volume of $\Hat \De(\Hat \kappa)$ for all 
$\Hat \kappa \in \Cc_{\Hat \De}$. Then  
\begin{equation}\labell{eq:chi}
P(r_1,\ldots, r_k) = 0 \;\;\Longleftrightarrow\;\;
\int_{M_{\Hat\Delta}} \chi^n = 0.
\end{equation}
Moreover, if 
$\bigcap_{i=1}^n F_i \neq  \emptyset$  and
$r_1 = \dots = r_n=0$, then
$\chi=0$ exactly if $r_i=0$ for all $n<i\le k$.
\end{lemma}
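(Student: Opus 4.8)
The plan is to express both sides of \eqref{eq:chi} in terms of the toric divisor classes of $M_{\Hat\De}$ and then compare. Write the Delzant presentation as the short exact sequence of tori $0 \to \Hat K \to (S^1)^k \xrightarrow{\pi} T_{\Hat\De} \to 0$, where on Lie algebras $\pi \colon \R^k \to \Hat\ft$ sends the standard basis vector $e_i$ to the primitive outward conormal $\Hat\eta_i$. For each $i$ let $u_i \in H^2(M_{\Hat\De};\Z)$ denote the Euler class of the line bundle associated to the restriction to $\Hat K$ of the standard character $e_i^* \in \big((S^1)^k\big)^*$; equivalently, $u_i$ is the class of the divisor $\Phi_{\Hat\De}^{-1}(F_i)$. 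Two standard facts drive the argument. First, by Duistermaat--Heckman the class of the reduced form is $[\om_{\Hat\ka}] = \sum_{i=1}^k \Hat\ka_i u_i$, so that $P(\Hat\ka) = \tfrac1{n!}\int_{M_{\Hat\De}} \big(\sum_i \Hat\ka_i u_i\big)^n$. Second, by construction the character $r = \sum_i r_i e_i^*$ restricts on $\Hat K$ to the homomorphism defining $\chi$, whence $\chi = \sum_{i=1}^k r_i u_i$.

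For the first assertion, I would note that $P(\Hat\ka)$ and $\tfrac1{n!}\int_{M_{\Hat\De}}\big(\sum_i\Hat\ka_i u_i\big)^n$ are polynomials in $\Hat\ka$ that agree on the open chamber $\Cc_{\Hat\De}$, and hence agree identically. Evaluating this identity at the point $\Hat\ka = r$ and substituting $\chi = \sum_i r_i u_i$ gives $P(r_1,\dots,r_k) = \tfrac1{n!}\int_{M_{\Hat\De}}\chi^n$. Since $n! \neq 0$, this is exactly \eqref{eq:chi}.

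For the ``moreover'' statement, I would use the presentation $H^2(M_{\Hat\De};\Z) \cong \Hat K^* \cong \Z^k/\pi^*(\Hat\ft^*_\Z)$, where $\pi^*(\xi) = \big(\langle \Hat\eta_1,\xi\rangle,\dots,\langle\Hat\eta_k,\xi\rangle\big)$ and $u_i$ is the image of $e_i^*$. Under this identification $\chi$ is the image of $r$, so $\chi = 0$ holds exactly when $r = \pi^*(\xi)$ for some $\xi \in \Hat\ft^*$, i.e. $r_i = \langle \Hat\eta_i, \xi\rangle$ for all $i$. Now the hypothesis $\bigcap_{i=1}^n F_i \neq \emptyset$ means these $n$ facets meet in a vertex, so by smoothness $\Hat\eta_1,\dots,\Hat\eta_n$ is a basis of $\Hat\ft$; together with $r_1 = \dots = r_n = 0$ this forces $\langle\Hat\eta_i,\xi\rangle = 0$ for $i \le n$, hence $\xi = 0$. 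Thus $\chi = 0$ gives $r_i = \langle\Hat\eta_i,0\rangle = 0$ for every $i$, in particular for $n < i \le k$; the reverse implication is trivial.

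The routine content is entirely the standard toric dictionary, so the points demanding care are the conventions. The main obstacle is to ensure that the \emph{same} classes $u_i$ occur in both key formulas --- that the Duistermaat--Heckman derivative $\partial[\om_{\Hat\ka}]/\partial\Hat\ka_i$ is precisely the divisor class coming from $e_i^*|_{\Hat K}$ --- and to justify evaluating the polynomial identity for $P$ at the integer point $r$, which lies outside $\Cc_{\Hat\De}$; the latter is immediate, since a polynomial is determined by its values on any open set. Any overall sign ambiguity in the convention $[\om_{\Hat\ka}] = \pm\sum_i \Hat\ka_i u_i$ is harmless, as it only changes $\int_{M_{\Hat\De}}\chi^n$ by $(-1)^n$, and any lattice-normalization constant in $P$ is nonzero and so does not affect the vanishing.
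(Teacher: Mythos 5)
Your proposal is correct and follows essentially the same route as the paper: the equivalence \eqref{eq:chi} is obtained by identifying $P(\Hat\kappa)$ with $\tfrac{1}{n!}\int_{M_{\Hat\De}}\bigl(\sum_i \Hat\kappa_i X_i\bigr)^n$ via the Duistermaat--Heckman measure and then evaluating the polynomial identity at $\Hat\kappa = r$, and the ``moreover'' clause comes from the fact that the only relations among the divisor classes are $\sum_i \langle \Hat\eta_i,\xi\rangle X_i = 0$, which together with the vertex/smoothness hypothesis kills $\xi$. Your quotient presentation $H^2 \cong \Z^k/\pi^*(\Hat\ft^*_\Z)$ is just an explicit form of the Stanley--Reisner statement the paper cites, so there is no substantive difference.
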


\begin{proof}
Fix $\Hat \kappa \in \Cc_{\Hat \De}.$
There exists a symplectic form
$\omega$ on $M_{\Hat \De}$  with moment map
$\Hat \Phi \colon M_{\Hat \De} \to \Hat \ft^*$ 
such that $\Hat \Phi(M_{\Hat \De}) = \Hat \De(\Hat \kappa)$.
On the one hand, the symplectic form $\omega$ represents the cohomology
class $\sum_i \kappa_i X_i$, where  $X_i \in H^2(M_{\Hat \De})$
represents the Poincar\'e dual to the compact submanifold
$\Hat \Phi^{-1}(F_i)$ for all $1 \leq i \leq k$.
On the other hand,
since $M_{\Hat \De}$ is a toric manifold the Duistermaat-Heckman measure 
on $\Hat \ft^*$ is given by Lebesgue measure 
on $\Hat \De(\Hat \kappa)$ and vanishes outside $\Hat \De (\Hat \kappa)$.
(Recall that the Duistermaat-Heckman measure is the 
pushforward of the Liouville 
measure $\frac{1}{n!} \omega^n$ on $M$ under the moment map.)
Therefore, 
$$P(\Hat \kappa) = \frac{1}{n!}\int_{M_{\Hat \De}} \omega^n = \frac{1}{n!}\int_{M_{\Hat \De}}
\Big( \sum  \kappa_i X_i \Big)^n \quad \forall\ \Hat \kappa \in \Cc_{\Hat \De}. $$
Since both sides are polynomials, and since $\chi = \sum r_i X_i$, the
first claim follows.

The second 
claim
holds because
the outward conormals to $F_1,\dots,F_n$ form a basis for the lattice 
in $\Hat \ft$. Hence,
by the standard Stanley--Reisner presentation 
for the cohomology ring of a toric manifold,  $H^2(M_{\Hat\De};\Z)$ is freely generated by
$X_{n+1},\dots, X_k$.
\end{proof}

This completes our discussion of bundles that support 
essential mass linear functions.
We end this section with some supplementary results.
First, we 
determine the number of
asymmetric facets for each of the polytopes $\ov \De$ described
in case (a) of
Theorem~\ref{thm:4d}.

\begin{rmk}\labell{rmk:asym}\rm
Let $H \in \ft$ be an essential mass linear function on a polytope $\ov \De$.
\begin{itemize}
\item If $\ov \De$ is a $\Delta_3$ bundle over $\Delta_1$, as in case (a1),
then
at least  $3$  of the $4$
 fiber facets are asymmetric; 
if one base facet is asymmetric then both are.  
Thus the number of asymmetric facets 
can be
anywhere between $3$ and $6$.
\item If $\ov \Delta$ is a $121$-bundle and the conormals
to the three pervasive facets are linearly independent then the three
pervasive facets are asymmetric;
the two fiber facets are symmetric unless $\ov \Delta$ is the product
$\Delta_1 \times Y$, where $Y$ is a $\Delta_2$ bundle over $\Delta_1$, in which
case they may both be asymmetric; finally,
if one of the remaining two 
facets is  asymmetric  then both are.
Thus there are $3$, $5$, or $7$ asymmetric facets,
with $7$ 
impossible unless $\ov \Delta = \Delta_1 \times Y$.

\item If $\ov \De$ is a $\Delta_2$ bundle over a polygon $\Hat \De$  
as in case (a3),
the three fiber facts are asymmetric. The base facets
are symmetric unless $\Hat \De$ is a $\Delta_1$
bundle over $\Delta_1$. In that case,
two base facets that correspond to equivalent facets of $\Hat \De$
may both be asymmetric.
As in the previous case, there can be $3$, $5$, or $7$ asymmetric
facets, with $7$ impossible unless $\ov \Delta = \Delta_1 \times Y$,
where $Y$ is a $\Delta_2$ bundle over $\Delta_1$.
\end{itemize}
The first two claims follow trivially from Corollary~\ref{cor:Mabc} and
Proposition~\ref{prop:doublebundle}. In order to see that the
last claim follows
from Propositions~\ref{prop:inessential}, \ref{prop:2dim}, and
\ref{prop:polybundle}, 
and Corollary~\ref{cor:7},
remember that there is no essential mass linear
function on any $\Delta_2$ bundle over 
$\Delta_2$ by 
Corollary~\ref{cor:22bundle}.
\end{rmk}

By Lemma~\ref{le:symblow} and Proposition~\ref{prop:blow4},
an essential  mass linear function on a polytope will still be  
essential and
mass linear if the polytope  is blown up by either of the two types
of blowups described in Theorem~\ref{thm:4d} -- blowups 
along  symmetric $2$-faces  and blowups of type $(F_{ij},G)$.
Moreover, the
conclusions above allow us to  analyze the ways that the
bundles $\ov \De$ listed in case (a) of Theorem~\ref{thm:4d}
can be blown up 
in 
these ways.
Therefore, we can now find
all essential mass linear functions of the type described
in case (a)  of Theorem~\ref{thm:4d}.

\begin{rmk}\rm\labell{rmk:blowable}
Let $H \in \ft$ be an essential mass linear function on a polytope
$\ov \De$,
where $\ov \De$ is one of the polytopes described in case (a) of
Theorem~\ref{thm:4d}.   
\begin{itemize}
\item  [(i)]
If $\ov \De$ has exactly three asymmetric 
facets,
then it must have symmetric $2$-faces (which can be blown up).
Otherwise,  $\ov \De$ 
 does not have any symmetric $2$-faces.
\item [(ii)]
By Proposition~\ref{prop:blow4}, 
blowups of type $(F_{ij},G)$ are not possible unless  $\ov \De$ has
four asymmetric facets.
However, the bundles in case (a) do not have four asymmetric facets
unless $\ov \De$ is a $\De_3$ bundle over
$\De_1$,
all four fiber facets are asymmetric, and the base facets are symmetric.
In this case,
blowups of this type are possible exactly if there exist fiber
facets $F_i$ and $F_j$ so that $\gamma_i + \gamma_j = 0$,
where $\gamma_k$ is the support number of $F_k$ in 
$\langle c_{\ov \De} , H \rangle.$
\item  [(iii)] By Lemma~~\ref{le:symblow} and Proposition~\ref{prop:blow4},
any polytope $\Delta'$  obtained from $\ov \De$ by a sequence
of blowups of these types will itself have symmetric $2$-faces.
Type $(F_{ij},G)$ blowups of $\De'$ are possible exactly
if we are in the situation described in part (ii) above.
\end{itemize}
\end{rmk}

Finally, we can now
construct an example in which mass linearity is destroyed by
an expansion, as promised in Remark~\ref{rmk:fibexp}.

\begin{example}\labell{ex:expanii}\rm
Let $Y \subset (\R^3)^*$ be  the  
$\Delta_2$ bundle over $\Delta_1$ associated to $a \in \R^2$
as in \eqref{eq:Yb}.
Assume that $a_1 a_2 (a_2 - a_1) \neq 0$, that is,
that none of the fiber facets are equivalent.
By Proposition \ref{DkoverD1}
$$
\Tilde H = a_2 \eta_1 - a_1 \eta_2 + (a_2-a_1) \eta_3 = 
(a_1 - 2 a_2, 2a_1 -a_2, 0) \in \R^3.
$$
is an essential mass linear function on $Y$. 

Let $\Delta \subset \ft^*$ be the $1$-fold expansion of $Y$ along 
the base facet with conormal $(0,0,-1)$,
 and let $H 
= (a_1 - 2a_2, 2 a_1 - a_2, 0 ,0)
\in \ft$ be the image of $\Tilde H$ under the natural 
inclusion.  Then $\Delta$ is a $\Delta_2$ bundle over $\Delta_2$;
the outward conormals to the base facets are
$$
 \Hat \eta_1 = (a_1,a_2,1,0),\;\; \Hat \eta_2 = (0,0,0,-1),\;\;
\mbox{and} \; \; \Hat \eta_3=  (0,0,-1,1).
$$
By  Corollary~\ref{cor:22bundle} every
mass linear function on $\De$ is inessential.
Hence, since no  fiber facet of $\De$ is equivalent to
any other facet, every mass linear function on $\De$ has the form
$H' = \sum_{i=1}^3 \ga_i\Hat\eta_i$,
where  $\sum\ga_i = 0$.
Therefore, $H$ is not mass linear.
\end{example}

\subsection{Blowups of double expansions}\labell{ss:blowex}

In this subsection, 
we find all essential mass linear functions of the type described
in case (b) of Theorem~\ref{thm:4d}
To begin, we classify mass linear functions on double 
expansions with symmetric fiber-type facets,
showing that they are all inessential.

\begin{lemma}\labell{le:dexpan0}  Let $\Delta \subset \ft^*$ be the double 
expansion of a polytope $\Tilde\Delta$ 
along facets $\Tilde F_1$ and $\Tilde F_2$.
Let $\Hat F_1$ and $\Hat F_2$ ($\Hat F_3$ and $\Hat F_4$) be the 
base-type facets associated to $\Tilde F_1$ (
respectively, $\Tilde F_2$).
Let
$H \in \ft$ 
be a
mass linear function on $\De$
with symmetric fiber-type facets.  Then $H$ is inessential, and
\begin{equation*}
H = \sum_{i=1}^4 \gamma_i \Hat\eta_i, \quad \mbox{where}  
\  
\begin{cases}
\gamma_1 + \gamma_2 + \gamma_3 + \gamma_4 = 0 & \mbox{if } \Tilde F_1 \sim \Tilde F_2, \  \mbox{and} \\
\gamma_1 + \gamma_2 = \gamma_3 + \gamma_4 = 0 & \mbox{if } \Tilde F_1 \not\sim \Tilde F_2.
\end{cases}
\end{equation*}
Conversely, any function of this form  is inessential.
\end{lemma}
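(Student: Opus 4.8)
My plan is to prove the two directions separately, using throughout the facet‑equivalence structure of a double expansion recorded in Remark~\ref{rmk:dint}(ii): one always has $\Hat F_1 \sim \Hat F_2$ and $\Hat F_3 \sim \Hat F_4$, while all four base‑type facets lie in a single equivalence class exactly when $\Tilde F_1 \sim \Tilde F_2$. The only facets other than base‑type facets that can be equivalent to a base‑type facet are fiber‑type facets $\Tilde F_j'$ (by the double‑expansion analogue of Remark~\ref{rmk:expint}(ii)), and these will always carry coefficient zero, so they do not affect any sum over an equivalence class. For the converse this is all that is needed: given $H=\sum_{i=1}^4 \gamma_i \Hat\eta_i$ satisfying the stated relations, I would set $\beta_i = \gamma_i$ on the base‑type facets and $\beta_j = 0$ on every fiber‑type facet, and check directly that $\sum_{j \in I}\beta_j = 0$ for each equivalence class $I$. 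When $\Tilde F_1 \not\sim \Tilde F_2$ the classes reduce to $\{\Hat F_1,\Hat F_2\}$ and $\{\Hat F_3,\Hat F_4\}$, giving $\gamma_1+\gamma_2 = \gamma_3+\gamma_4 = 0$; when $\Tilde F_1 \sim \Tilde F_2$ all four fall in one class, giving $\sum_{i=1}^4 \gamma_i = 0$. In both cases these are exactly the hypotheses, so $H$ is inessential by definition.

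For the forward direction I would first exploit the symmetry of the fiber‑type facets: each such facet contributes coefficient zero to the linear function $\langle H, c_\Delta\rangle$, so $\langle H, c_\Delta\rangle = \sum_{i=1}^4 \gamma_i \Hat\kappa_i$ and hence $H = \sum_{i=1}^4 \gamma_i \Hat\eta_i$ by Lemma~\ref{le:Hsum}; in particular every asymmetric facet is base‑type. The main work is to show $H$ is inessential. Since $\Hat F_3 \sim \Hat F_4$, Lemma~\ref{le:ea} yields an inessential $H'$ such that $H-H'$ is symmetric on $\Hat F_3$ and has the same symmetry type as $H$ on every other facet; applying Lemma~\ref{le:ea} once more to the pair $\Hat F_1 \sim \Hat F_2$ yields an inessential $H''$ so that $H-H'-H''$ is in addition symmetric on $\Hat F_1$. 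The fiber‑type facets stay symmetric throughout, so the only facets of $H-H'-H''$ that can be asymmetric are $\Hat F_2$ and $\Hat F_4$.

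Now I would apply Proposition~\ref{prop:2asym}. If $\Tilde F_1 \not\sim \Tilde F_2$, then $\Hat F_2 \not\sim \Hat F_4$; were $H-H'-H''$ nonzero it would have at least two asymmetric facets, which could only be $\Hat F_2$ and $\Hat F_4$, hence exactly two, hence equivalent by the same proposition, a contradiction, so $H-H'-H'' = 0$. If $\Tilde F_1 \sim \Tilde F_2$, then $\Hat F_2 \sim \Hat F_4$ and Proposition~\ref{prop:2asym} gives that $H-H'-H''$ is either zero or inessential. In every case $H = H' + H'' + (H-H'-H'')$ is a sum of inessential functions, hence inessential. To extract the precise relations among the $\gamma_i$, I would use that the inessential representation of $H$ is unique: by Proposition~\ref{prop:inessential} the coefficients of any inessential representation agree with the coefficients of the linear function $\langle H, c_\Delta\rangle$, which I have already identified as $\gamma_i$ on the base‑type facets and $0$ on the fiber‑type facets. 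The defining conditions $\sum_{j\in I}\beta_j = 0$ on the equivalence classes then read off as precisely the stated relations, exactly as in the converse.

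I expect the genuine obstacle to be this forward inessentiality step. Because a double expansion has a richer facet‑equivalence structure than a single expansion, one cannot simply quote the single‑expansion correspondence of Remark~\ref{rmk:fibexp}; the argument instead peels off the two equivalent pairs with Lemma~\ref{le:ea}, and the delicate points are verifying that exactly $\Hat F_2$ and $\Hat F_4$ survive both reductions and that their equivalence---which dictates which case of Proposition~\ref{prop:2asym} applies---is controlled precisely by whether $\Tilde F_1 \sim \Tilde F_2$.
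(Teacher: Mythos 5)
Your proposal is correct and follows essentially the same route as the paper's own proof: Remark~\ref{rmk:dint}(ii) for the equivalence classes of the base-type facets, Lemma~\ref{le:ea} applied twice to reduce to at most two asymmetric facets, Proposition~\ref{prop:2asym} to conclude that $H$ is inessential, and then reading off the coefficient relations from the equivalence classes. The paper's argument is simply terser; your case split on whether $\Hat F_2 \sim \Hat F_4$ is a more explicit unpacking of the same appeal to Proposition~\ref{prop:2asym}.
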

\begin{proof}  
By Remark \ref{rmk:dint} (ii), 
$\Hat F_1 \sim \Hat F_2 \sim \Hat F_3 \sim \Hat F_4$  
if $\Tilde F_1 \sim \Tilde F_2$,  while  $\Hat F_1 \sim \Hat F_2 \not\sim \Hat F_3 \sim \Hat F_4$
if $\Tilde F_1 \not\sim \Tilde F_2$.
Thus, the last statement is clear.  Moreover, by
Lemma~\ref{le:ea} 
applied twice, 
there exists an inessential
function $H'$ so that at most two facets
are 
$(H - H')$-asymmetric.
Therefore $H$ is inessential by  Proposition \ref{prop:2asym}.
Since, by hypothesis, the only asymmetric facets are $\Hat F_1,\dots,\Hat F_4$,
this implies that it has the given form.
\end{proof}
 
The following proposition clarifies exactly which of
the  blowup operations allowed in Theorem \ref{thm:4d}
are needed in order for $H$ to become essential. 
 We restrict to the $4$-dimensional case, though the result can be extended to higher dimensions without too much difficulty. 
Blowups of type $(F_{ij},g)$ are defined in Definition \ref{def:edge}. 
Note that in
the $4$-dimensional case a  
symmetric $3$-face $g$
is just a symmetric facet $G$.

\begin{proposition}\labell{prop:essblow2}
Let $\Delta \subset \ft^*$ be the double expansion of a smooth
polygon $\Tilde \Delta$ 
along edges $\Tilde F$ and $\Tilde F'$, 
and let  $H \in \ft$ be  a mass linear function on $\De$
such that  
the fiber-type facets are the symmetric facets.
Let $F_1$ and $F_2$ ($F_3$ and $F_4$) be
the base-type facets associated to $\Tilde F$ (respectively, $\Tilde F'$).
Consider  a polytope $\Delta'$  that is obtained from $\Delta$ by a 
sequence of blowups, where
each blowup is either along a symmetric face or 
of type $(F_{ij}, G)$.
Then $H$ is essential on $\Delta'$ exactly if
one of the following occurs.
\begin{itemize}
\item  
$\Tilde F \not\sim \Tilde F'$ and 
one of the blowups is
of type $(F_{ij}, {G})$,
where $i \in \{1,2\}$ and $j \in \{3,4\}$.

\item  $\Tilde F \sim \Tilde F'$ and 
there exists $\{i,j,k\} \subset \{1,2,3,4\}$ such that
one of the blowups is of type $(F_{ij}, {G})$
and another is of type $(F_{ik}, {G'})$.
\end{itemize}
\NI
Moreover,
in either 
case
there exists a blowup $\Delta'$ of this type 
so that $H$ is essential 
exactly if $|\gamma_1| = |\gamma_2| = |\gamma_3| = |\gamma_4|$,
the polygon $\Tilde \De$ is not a 
triangle,
and it
contains an edge $\Tilde e$ with endpoints  $\Tilde e\cap \Tilde F$ and $\Tilde e\cap \Tilde F'$.
\end{proposition}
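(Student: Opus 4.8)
The plan is to invoke the already-established first part of the proposition to reduce the existence question to the realizability of the cross-blowups it singles out, and then to read conditions (1)--(3) off the coefficient constraint of Definition~\ref{def:edge} together with the combinatorial criterion of Remark~\ref{rmk:dint}. Write $H=\sum_{i=1}^4\gamma_i\Hat\eta_i$ as in Lemma~\ref{le:dexpan0}, so that $H$ is inessential with $\gamma_1+\gamma_2=\gamma_3+\gamma_4=0$ when $\Tilde F\not\sim\Tilde F'$ and $\gamma_1+\gamma_2+\gamma_3+\gamma_4=0$ when $\Tilde F\sim\Tilde F'$; by Remark~\ref{rmk:dint}(ii) the facets satisfy $F_1\sim F_2$ and $F_3\sim F_4$, with all four equivalent exactly when $\Tilde F\sim\Tilde F'$. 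Since the symmetric facets of $\Delta$ are precisely the fiber-type facets, a blowup of type $(F_{ij},G)$ on $\Delta$ has $G$ a fiber-type facet $\Tilde F_k\,\!'$. The relevant blowups are the cross ones with $i\in\{1,2\}$, $j\in\{3,4\}$, and for these Remark~\ref{rmk:dint}(i) shows that the edge $F_i\cap F_j\cap G$ meets all four base-type facets exactly when the edge $\Tilde F_k$ of $\Tilde\Delta$ meets both $\Tilde F$ and $\Tilde F'$; in a simple polygon the latter means $\Tilde F_k=\Tilde e$ has its endpoints on $\Tilde F$ and $\Tilde F'$, the edge of condition~(3).

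First I would extract the coefficient condition~(1). A valid cross-blowup $(F_{ij},G)$ requires $\gamma_i+\gamma_j=0$ with $i\in\{1,2\}$, $j\in\{3,4\}$; combined with the relations above this forces $|\gamma_1|=|\gamma_2|=|\gamma_3|=|\gamma_4|$, and since Proposition~\ref{prop:blow4}(iii) lets an inessential $H$ become essential only when all four facets are asymmetric, the common value must be nonzero (otherwise $H=0$). Conversely, whenever $(1)$ holds some cross pair satisfies $\gamma_i+\gamma_j=0$. Because the coefficients are unchanged by every admissible blowup (Lemma~\ref{le:symblow}(i) and Proposition~\ref{prop:blow4}(ii)), condition~(1) is both forced by and sufficient for the coefficient side of each decisive blowup, at whatever stage it occurs.

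This settles the case $\Tilde F\not\sim\Tilde F'$, where by the first part only a single cross-blowup is needed and where $\Tilde\Delta$ cannot be a triangle (all edges of a triangle are equivalent, so condition~(2) is automatic). For the backward direction I would take $G=\Tilde e\,\!'$ from condition~(3) and an admissible cross pair from condition~(1), form the type $(F_{ij},G)$ blowup---whose edge meets every asymmetric facet by the discussion above---and conclude that $H$ becomes essential by Proposition~\ref{prop:blow4}(iii). For the forward direction, the existence of any admissible cross-blowup meeting all asymmetric facets forces, through Remark~\ref{rmk:dint}(i), the presence of the edge $\Tilde e$; the one point needing care is when the symmetric facet used is an exceptional divisor rather than a fiber-type facet of $\Delta$, which I would handle by tracing the relevant face back to the combinatorics of $\Delta$ near $\Tilde F$ and $\Tilde F'$.

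The case $\Tilde F\sim\Tilde F'$ is where the real content lies, and the triangle exclusion of condition~(2) is the main obstacle. Here the first part demands two cross-blowups $(F_{ij},G)$ and $(F_{ik},G')$; after the first, Lemma~\ref{le:blowequiv} splits the four facets into two equivalence pairs and $H$ is still inessential, so the second (crossing the new pairs) is genuinely required. The key structural observation is that $\Tilde F\sim\Tilde F'$ forces, via Lemma~\ref{le:equiv}, all remaining edges of $\Tilde\Delta$ to be mutually parallel, hence at most two of them: thus $\Tilde\Delta$ is either a triangle or a trapezoid whose parallel sides are the edges other than $\Tilde F,\Tilde F'$. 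In the trapezoid case there are two distinct such edges $\Tilde e_1,\Tilde e_2$, each meeting both $\Tilde F$ and $\Tilde F'$, so one can perform the two cross-blowups using the \emph{different} symmetric facets $\Tilde e_1\,\!'$ and $\Tilde e_2\,\!'$; because the two edges together involve five distinct facets they are disjoint in the $4$-dimensional polytope, so neither interferes with the other and both meet all asymmetric facets, and two applications of Proposition~\ref{prop:blow4}(iii) make $H$ essential. In the triangle case, by contrast, a direct check shows the double expansion is the simplex $\Delta_4$ (its five conormals sum to zero), which has a single fiber-type facet $G$; the two required cross-blowups would then have to share $G$, their edges $F_i\cap F_j\cap G$ and $F_i\cap F_k\cap G$ meet in the vertex $F_i\cap F_j\cap F_k\cap G$, and the first blowup severs the second edge from an asymmetric facet, so no admissible second blowup exists. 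Making this last step airtight---in particular ruling out that an exceptional divisor could serve as the second symmetric facet---is the crux of the argument and the place I would spend the most effort.
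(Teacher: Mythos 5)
Your overall strategy mirrors the paper's: reduce everything to tracking equivalence classes and coefficients through the allowed blowups via Lemma~\ref{le:blowequiv}, Lemma~\ref{le:symblow} and Proposition~\ref{prop:blow4}, and use Remark~\ref{rmk:dint} to translate the admissibility of a cross blowup into the existence of the edge $\Tilde e$. Your observation that $\Tilde F\sim\Tilde F'$ forces $\Tilde\De$ to be a triangle or a trapezoid is correct and a pleasant addition, and your treatment of the triangle case agrees with the paper's.

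There is, however, a genuine gap in the sufficiency argument for the case $\Tilde F\sim\Tilde F'$. You assert that ``the first part demands two cross-blowups'' and then realize both of them with cross pairs on the distinct facets $\Tilde e_1\,\!'$ and $\Tilde e_2\,\!'$. But the first part only demands two blowups of type $(F_{ij},G)$ and $(F_{ik},G')$ sharing the index $i$; it does not demand that both pairs be cross pairs, and indeed they sometimes cannot be. Take $\gamma_1=\gamma_3=1$, $\gamma_2=\gamma_4=-1$ (a perfectly valid inessential $H$ with $|\gamma_1|=\dots=|\gamma_4|$). The only cross pairs with $\gamma_i+\gamma_j=0$ are $\{1,4\}$ and $\{2,3\}$, which share no index; by Lemma~\ref{le:blowequiv} the blowups along $F_{14}\cap G$ and $F_{23}\cap G'$ leave the equivalence classes $\{F_1,F_4\}$ and $\{F_2,F_3\}$ intact, so $H$ stays inessential. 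For this coefficient pattern one of the two decisive blowups must be along $F_{12}\cap G'$ or $F_{34}\cap G'$, and your argument never shows that such an edge meeting all four asymmetric facets exists. This is exactly what the paper supplies at this point: it notes that when $\Tilde\De\neq\De_2$ there is a second edge $\Tilde e'\neq\Tilde e$ with an endpoint on $\Tilde F$, and the corresponding fiber-type facet $G'$ makes the blowup of type $(F_{12},G')$ (or $(F_{34},G')$) admissible. Separately, in the necessity direction you correctly flag, but do not close, the possibility that the symmetric facet $G$ used in a decisive blowup is an exceptional divisor; the paper's argument (if $F_{ij}\cap G_0$ met every asymmetric facet then so would $F_{ij}\cap f$ for the blown-up face $f$, which lies on a symmetric facet --- impossible) is needed there and should be included.
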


\begin{proof}
As before, by 
Lemma~\ref{le:symblow} and Proposition~\ref{prop:blow4}, 
$H$ is mass linear on each intermediate blowup,  
the exceptional divisors are all symmetric,
and the coefficients $\gamma_k$ remain constant under blowup.
Moreover, $H$ is inessential on $\De$ by Lemma \ref{le:dexpan0}.

Assume first 
that $\Tilde F \not\sim \Tilde F'$.
(In particular, 
$\Tilde \Delta$ cannot be a triangle.)
By Remark~\ref{rmk:dint} (ii), this implies 
that
$F_1 \sim F_2 \not\sim F_3 \sim F_4$.
If all blowups are along symmetric faces, then 
$H$ is inessential by 
Lemma \ref{le:symblow}.  
Similarly, Proposition~\ref{prop:blow4} and
Lemma~\ref{le:blowequiv} imply that
that $H$ remains
inessential  under any blowup  of type $(F_{ij}, {G})$ if $\{i,j\}$ 
is $\{1,2\}$ or $\{3,4\}$.
but is essential on $\De'$ 
after a blowup of type $(F_{ij}, {G})$
with $i \in \{1,2\}$ and $j \in \{3,4\}$.

Since 
$H$ is inessential
$\gamma_1 + \gamma_2 = \gamma_3 + \gamma_4 = 0$, 
and so
blowups of the latter type
are not allowed unless 
$|\gamma_1| = |\gamma_2| = |\gamma_3| = |\gamma_4|$. So 
assume that this equation 
holds.
Given an edge
$\Tilde e$ of
$\Tilde\De$
consider the  corresponding symmetric fiber-type facet
$G$ of $\Delta$.
By Remark~\ref{rmk:dint}  (i),
if 
$\Tilde e$ has endpoints $\Tilde e \cap \Tilde F$ and $\Tilde e \cap \Tilde F'$, then 
$F_{ij} \cap G$ intersects every base-type facet for every 
$i \in \{1,2\}$ and $j \in \{3,4\}$.
Thus, we are allowed to blow up along the edge $F_{ij} \cap G$ 
for some such $\{i,j\}$,
and this blowup makes $H$ essential.
Conversely,  
assume that
$\Tilde \Delta$ does not contain any edge $\Tilde e$ 
with endpoints $\Tilde e \cap \Tilde F$ and $\Tilde e \cap \Tilde F'$. Then  
Remark~\ref{rmk:dint} (i) implies that 
$F_{ij} \cap G$ does not  meet every asymmetric facet for any
$i \in \{1,2\}$,  $j \in \{3,4\}$, and symmetric facet $G$.
Moreover, the allowed blowups  cannot create a new  symmetric facet $G_0$ 
so that $F_{ij} \cap G_0$ 
intersects every asymmetric facet.
To see this, 
let $\Delta'$ be the blowup of $\Delta$ along a face  $f$
with exceptional divisor $G_0$. 
If $F_{ij} \cap G_0$  intersects every asymmetric facet,
then Remark~\ref{rmk:blowint} implies that 
$F_{ij} \cap f$ intersects every asymmetric facet.
Since $f$ must lie on at least one symmetric facet, this is impossible.
Therefore, 
the function
$H$ remains inessential on all allowed blowups of $\ov\De$.

Now assume that $\Tilde F \sim \Tilde F'$.
By Remark~\ref{rmk:dint} (ii), 
$F_1 \sim F_2 \sim F_3 \sim F_4$. 
Lemma~\ref{le:blowequiv}, Lemma~\ref{le:symblow}, and 
Proposition~\ref{prop:blow4} together  imply
that $H$ is essential on
$\Delta'$ 
exactly if there exists $\{i,j,k\} \subset \{1,2,3,4\}$ such that
one of the blowups is 
of type $(F_{ij}, {G})$
and another is
of type $(F_{ik}, {G'})$.

Since $\gamma_1 + \gamma_2 + \gamma_3 + \gamma_4 = 0$,
blowups of these types are not allowed unless $|\gamma_1|
=|\gamma_2| = |\gamma_3| = |\gamma_4|$; so assume that this
equation holds.
If we do perform these two blowups, then  one of them is along an
edge $F_{ij} \cap {G}$ with $i\in \{1,2\}$ and $j\in \{3,4\}$.
Just as in the previous case, this implies that $\Tilde \De$  
contains an edge $\Tilde e$ with endpoints 
$\Tilde e  \cap \Tilde F$ and $\Tilde e \cap \Tilde F'$. 
If  $\Tilde \De \neq \De_2$, then it is clear that there 
exists another edge $\Tilde e' \neq \Tilde e$
with endpoint $\Tilde e' \cap \Tilde F$, so that these two blowups are possible.
Therefore, to finish the proof 
it remains to check that if $\Tilde \De = \De_2$ suitable blowups are not possible.
Since $\De$ is then a $4$-simplex it has one symmetric facet, which 
we  call $G$.
By renumbering the $F_i$ if necessary, we may assume that the first blowup is along
$F_{12} \cap G$; let us call this blowup $\Delta''$ and the exceptional divisor $G'$.
By Remark~\ref{rmk:blowint}, $F_{12} \cap G$ and $F_{34} \cap G'$ are both  empty in $\Delta''$.
Therefore, neither $F_{ij} \cap G$ nor $F_{ij} \cap G'$ intersect
every asymmetric facet for any $i \in \{1,2\}$ and $j \in \{3,4\}$.
As we have already seen, this remains true under all further  allowed blowups.
Hence we cannot blow up in such a way to make $H$ essential.
 \end{proof}

\begin{rmk}\rm 
Let $\Delta$ and $H$ be as in Proposition~\ref{prop:essblow2} above,
and assume that 
the conditions described in its last sentence hold. 

(i) 
If $\Tilde \De$ has  four edges (the fewest possible),
then $\Tilde F$ and $\Tilde F'$ are either parallel
or equivalent (or both). If they are parallel,
then $\De$ is a $\De_3$-bundle over $\De_1$  with fiber facets $F_1, \dots,
F_4$; if we write $\De$ as in  \eqref{eq:Yb}, then $a_1 = a_2$ and $a_3 = 0$.
See, for instance,  Example \ref{ex:7}.    
In this case,
the mass linear functions that arise when we think of $\De$ as a double 
expansion are special cases of those considered in Corollary \ref{cor:Mabc}.
In contrast, if $\Tilde F$ and $\Tilde F'$ are equivalent,
the polytope $\De$ is a $\De_1$-bundle over $\De_3$ 
with base facets $F_1,\dots, F_4$.

(ii) $\Delta$ has a symmetric $2$-face (which can be blown up)
exactly if $\Tilde \De$ has more than four edges.
Blowups of type $(F_{ij},G)$ are always possible; indeed, they
are required.
\end{rmk}

The final remark in this subsection will be relevant 
to our discussion in \S\ref{ss:min}
  of the minimality of the polytope $\ov\De$; 
 cf. Remark~\ref{rmk:intmin}.

\begin{rmk}\rm
\labell{rmk:vertexblow}
Again let
$\Delta$ and $H$ be as in Proposition~\ref{prop:essblow2},  
but now assume that $\Tilde F$ and $\Tilde F'$ are adjacent edges.
Then the base-type facets intersect in a vertex $F_{1234}$
and the blowup $\De''$ of $\De$ at
$F_{1234}$ is the double expansion of  $\Tilde \De''$
along $\Tilde F \cap \Tilde \De''$ and $\Tilde F' \cap \Tilde \De''$,
where $\Tilde \De''$ itself is the blowup of $\Tilde \De$ at
the vertex
$\Tilde F \cap \Tilde F'$.
In particular, $\Tilde \De''$ is not a triangle and the exceptional divisor
meets the edges $\Tilde F \cap \Tilde \De''$ and $\Tilde F' \cap \Tilde \De''$.
Moreover, by Lemma~\ref{le:inessblow}, $H$ is an inessential function on
$\De''$. Hence, there exist a blowup $\De'$  of $\De''$
of the type described in Proposition~\ref{prop:essblow2} such that $H$
is essential on $\De'$ exactly if $|\gamma_1| = |\gamma_2| = |\gamma_3| =
|\gamma_4|$; cf. Proposition~\ref{prop:4indepblow}.
\end{rmk}

\section{$4$-dimensional polytopes}\labell{sec:4dim}

In this section, 
we establish the propositions used in \S\ref{ss:out}
to prove Theorem~\ref{thm:4d}.    It follows that the examples
constructed in the previous section, together with their blowups,
are the
only essential mass linear functions on 
smooth $4$-dimensional polytopes.
The first two subsections
analyze polytopes with three or four pervasive
asymmetric facets, while the third considers the remaining cases.

\subsection{Three asymmetric facets}\labell{ss:3perv}

This subsection analyzes
mass linear functions on 
$4$-dimensional
polytopes 
with exactly three asymmetric facets.
Our first main result, Proposition~\ref{prop:3dep}, addresses the
case that the conormals to these asymmetric facets are linearly dependent;
the case that the conormals are linearly independent is considered in 
Proposition~\ref{prop:3indep}.
Many of the results in this subsection are valid in all dimensions.
In particular, our first lemma implies that whenever 
their are exactly three asymmetric facets, each one is pervasive.

\begin{lemma}\labell{le:3comb}
Fix $H \in \ft$.
Let $\Delta \subset \ft^*$  
have
 exactly three asymmetric facets $F_1, F_2,$ and $F_3$.
Then every symmetric face 
intersects  $F_{12}$, $F_{13}$, and $F_{23}$
(and hence $F_1$, $F_2$, and $F_3$).
Moreover, every
symmetric face contains a $2$-dimensional 
triangular symmetric subface.
\end{lemma}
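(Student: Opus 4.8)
The plan is to establish both assertions simultaneously by induction on $\dim f$, with Proposition~\ref{prop:symcent} doing most of the work. First I would record the standing consequences of that proposition for an arbitrary symmetric face $f$ (here $H$ is mass linear, as assumed throughout this subsection): the restriction $H|_f$ is mass linear on $f$, intersection gives a bijection between the asymmetric facets $F_1,F_2,F_3$ of $\Delta$ and those of $f$, and the coefficient $\gamma_i$ of $F_i$ equals that of $F_i\cap f$. Consequently each $F_i\cap f$ is a nonempty asymmetric facet of $f$ and these are its only asymmetric facets; in particular $f$ meets every $F_i$, and since a polytope carrying three asymmetric facets must be at least two-dimensional, every symmetric face has $\dim f\ge 2$. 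What then remains is the pairwise intersections $f\cap F_{ij}\neq\emptyset$ together with the existence of a triangular symmetric subface.

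For the base case $\dim f=2$ I would invoke the two-dimensional classification. Here $f$ is a polygon carrying a nonzero mass linear function with exactly three asymmetric edges, so Proposition~\ref{prop:2dim} forces $f=\Delta_2$: a $\Delta_1$ bundle over $\Delta_1$ has only two asymmetric edges and $\Delta_1\times\Delta_1$ has four. Thus $f$ is a triangle whose three edges are $F_1\cap f,\,F_2\cap f,\,F_3\cap f$, these meet pairwise at its vertices, and $f$ serves as its own triangular symmetric subface.

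For the inductive step, suppose $\dim f=d\ge 3$. Since $f$ is a simple $d$-polytope it has at least $d+1\ge 4$ facets, of which only three are asymmetric, so it has a symmetric facet $G=f\cap F_k$. Because the coefficient of $F_k\cap f$ in $\langle H,c_f\rangle$ equals that of $F_k$ in $\langle H,c_\Delta\rangle$, the facet $F_k$ is symmetric in $\Delta$; hence $G$ is itself a symmetric face of $\Delta$, now of dimension $d-1$. Applying the inductive hypothesis to $G$ produces a triangular symmetric subface $t\subseteq G\subseteq f$, giving the second assertion for $f$, and gives $F_{ij}\cap G\neq\emptyset$ for each pair; since $G\subseteq f$ this yields $F_{ij}\cap f\neq\emptyset$, giving the first.

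The hard part will be the bookkeeping of the descent rather than any computation: the one genuinely delicate point is verifying that the symmetric facet $G$ of $f$ is a symmetric face of the ambient $\Delta$, which is precisely where the coefficient-matching clause of Proposition~\ref{prop:symcent} is needed. Beyond that, the only external input is the two-dimensional classification pinning the base case to a triangle, and the induction then propagates both conclusions upward with no further estimates.
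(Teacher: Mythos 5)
Your proof is correct and is essentially the paper's argument reorganized: the paper picks a \emph{minimal} symmetric face $g$ inside the given symmetric face and observes via Proposition~\ref{prop:symcent} that $g$ has exactly three facets $F_1\cap g$, $F_2\cap g$, $F_3\cap g$, hence must be a $2$-dimensional triangle, whereas you reach the same triangle by downward induction through symmetric facets, invoking Proposition~\ref{prop:2dim} in the base case where the paper simply counts facets. Both arguments turn on the same two clauses of Proposition~\ref{prop:symcent} (the facet bijection and the coefficient matching), so there is no gap.
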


\begin{proof}{}  
Every symmetric face contains 
a symmetric face $g$
which is minimal in the sense that it 
does not properly contain another  symmetric face.
By Proposition~\ref{prop:symcent}, $g$ is a polytope
with exactly three facets, 
$F_1 \cap g, F_2 \cap g$, and $F_3 \cap g$.
This is only possible if $g$ is a $2$-dimensional triangle, 
and so 
it
intersects $F_{12}$, $F_{13}$, and $F_{23}$.
\end{proof}

We first assume that 
the conormals to
the three asymmetric faces are linearly dependent.

\begin{prop}\labell{prop:3dep}
Let $H \in \ft$ be a mass linear function on 
$\Delta \subset \ft^*$
with exactly three asymmetric facets $F_1$,
$F_2$, and $F_3$ with linearly dependent conormals.  
Then $\Delta$ is a $\Delta_2$ bundle over the face $F_{12}$,  
and the base facets 
are the symmetric facets.
\end{prop}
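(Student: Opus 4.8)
The plan is to verify the bundle structure through the recognition criterion of Lemma~\ref{le:recog}, taking the fiber to be the simplex $\Delta_2$ cut out by $F_1,F_2,F_3$ and the base to be the $2$-face $F_{12}$. First I would record the easy consequences of the hypotheses. Since $\Delta$ has exactly three asymmetric facets, every other facet is symmetric, and Lemma~\ref{le:3comb} shows that each symmetric face meets $F_{12}$, $F_{13}$ and $F_{23}$; in particular $F_1,F_2,F_3$ are pervasive and pairwise intersect, so their conormals $\eta_1,\eta_2,\eta_3$ are pairwise independent. Together with the hypothesis that they are linearly dependent, this forces them to span a $2$-dimensional subspace $V$ with a relation $c_1\eta_1+c_2\eta_2+c_3\eta_3=0$ in which every $c_i\neq 0$. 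Smoothness at any vertex of $F_{12}$ shows that $\eta_1,\eta_2$ are part of a lattice basis, so $V\cap\lat$ is a rank-$2$ direct summand, and the same smoothness forces $F_{123}=\emptyset$, since three facets through a common vertex would have independent conormals. Thus $F_1,F_2,F_3$ already form the combinatorial triangle required of the fiber, and $V$ is the candidate fiber-conormal space; this establishes the second bullet of Lemma~\ref{le:recog} immediately.

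Next I would analyze a symmetric facet $G$. By Proposition~\ref{prop:symcent}, $H$ restricts to a mass linear function on the $3$-dimensional polytope $G$ whose asymmetric facets are exactly $F_1\cap G$, $F_2\cap G$, $F_3\cap G$, and the projections of $\eta_1,\eta_2,\eta_3$ to $P(G)$ remain dependent. Running through the list of Proposition~\ref{prop:3d}, the case $\Delta_3$ is excluded by dependence, and any configuration in which two of the three asymmetric facets are the disjoint base facets of a $\Delta_2$ bundle over $\Delta_1$ is excluded because our three asymmetric facets pairwise intersect; the two surviving cases both leave $G$ combinatorially equivalent to $\Delta_1\times\Delta_2$, with the three $F_i\cap G$ realized as the facets of the $\Delta_2$-factor (and, by Lemma~\ref{prodsimp}, a simplex bundle in either reading). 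The two consequences I would extract are that $G$ has exactly two symmetric facets and that the two $\Delta_1$-factor facets of $G$ are disjoint, so that no three symmetric facets of $\Delta$ meet. Hence the symmetric facets, with adjacency given by nonempty intersection, form a $2$-regular, triangle-free graph.

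Finally I would organize everything around the polygon $F_{12}$. Because $F_3$ misses $F_{12}$ (as $F_{123}=\emptyset$) while every symmetric facet meets $F_{12}$ by Lemma~\ref{le:3comb}, and one checks that such an intersection is an edge of the polygon $F_{12}$, the symmetric facets correspond bijectively to the edges of $F_{12}$, and the degree-two adjacency found above matches the single edge-cycle bounding $F_{12}$. Using the local product description of each symmetric facet, and the fact that each symmetric $2$-face $G_j\cap G_{j'}$ is a triangle with edges $F_i\cap G_j\cap G_{j'}$, I would then check that the full intersection pattern of $\{F_1,F_2,F_3\}\cup\{G_j\}$ is exactly that of $\Delta_2\times F_{12}$, and that the fiber vertex $F_1\cap F_2=F_{12}$ and any base vertex $F_{12}\cap G_j\cap G_{j'}$ yield faces analogous to $F_{12}$ and to $\Delta_2$, respectively. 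Lemma~\ref{le:recog} then identifies $\Delta$ as a $\Delta_2$ bundle over $F_{12}$ whose base facets are the symmetric facets.

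The main obstacle is precisely this last step: promoting the purely local product structure on each symmetric facet to the global combinatorial equivalence $\Delta\cong\Delta_2\times F_{12}$. The scaffolding that makes this tractable is the polygon $F_{12}$ together with the triangle-free $2$-regular adjacency graph of the symmetric facets, which together pin down how the symmetric facets glue around the base; the conormal condition and the two ``analogous face'' conditions of Lemma~\ref{le:recog} are comparatively routine once the intersection lattice has been matched.
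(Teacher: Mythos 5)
Your proof follows the same skeleton as the paper's: linear dependence plus smoothness gives $F_{123}=\emptyset$, Lemma~\ref{le:3comb} supplies the intersection data, and Lemma~\ref{le:recog} converts a combinatorial equivalence $\Delta\cong\Delta_2\times F_{12}$ into the bundle structure. Where you diverge is in how the combinatorial equivalence is obtained. The paper extracts it directly from Lemma~\ref{le:3comb}: every symmetric face meets $F_{12}$, $F_{13}$ and $F_{23}$, and in a simple polytope the map $G\mapsto G\cap F_{12}$ is a bijection from symmetric facets to facets of $F_{12}$ that preserves which collections have nonempty intersection; together with $F_{123}=\emptyset$ this already matches the face lattice of $\Delta_2\times F_{12}$, in any dimension. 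You instead classify each symmetric facet $G$ via Proposition~\ref{prop:3d} and assemble the $2$-regular adjacency cycle of symmetric facets around the polygon $F_{12}$. That analysis is correct as far as it goes, but it is both heavier than necessary and the source of a real limitation: it requires the symmetric facets to be $3$-dimensional and $F_{12}$ to be a polygon, i.e.\ $\dim\Delta=4$, whereas the proposition is stated (and the paper's argument works) with no dimension hypothesis --- note that the neighboring Proposition~\ref{prop:3indep} explicitly says ``$4$-dimensional'' while this one deliberately does not. If you only need the $4$-dimensional case, your argument goes through. One small misstatement: the adjacency graph of the symmetric facets is exactly the boundary cycle of $F_{12}$, so it \emph{is} a triangle when $F_{12}=\Delta_2$ (the $\Delta_2$ bundle over $\Delta_2$ case); what you actually prove, and all you need, is that no three symmetric facets have a common point.
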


\begin{proof}{}
Since the outward conormals to the $F_i$ are linearly dependent 
the triple intersection $F_{123}$ must be empty.
Thus, Lemma~\ref{le:3comb} implies that $\De$ is combinatorially equivalent
to the product $\Delta_2 \times F_{12}$.
Hence, by Lemma~\ref{le:recog}, $\De$ is
a $\Delta_2$ bundle over the face
$F_{12}$ with fiber facets $F_1$, $F_2$, and $F_3$.  
\end{proof}

We next  
assume that the conormals
to the three asymmetric facets $F_1, F_2$, and $F_3$
are linearly independent. 
Then the three affine planes $P(F_i)$ which contain the asymmetric facets
intersect in an 
affine subspace  $\ell_{123}$ that contains the 
(possibly empty) face $F_{123}$.
Define a graph $\Gamma$ as follows: its  vertices $V$
are the   vertices in $F_{12} \ssminus F_{123}$ and its edges $E$
are the  
edges of
$F_{12}$  
that have both endpoints in $V$
and are not parallel to $\ell_{123}$.

\begin{lemma}\labell{etaindep}
Fix $H \in \ft$.
Let $\Delta \subset \ft^*$ be a smooth 
polytope with exactly three asymmetric facets $F_1$,
$F_2$, and $F_3$ with linearly independent 
conormals.

\begin{itemize}\item[(i)]
Let $Y$ be a 
symmetric $3$-face 
 of $\De$ that 
contains two 
symmetric $2$-faces,  
and assume that $Y \cap F_{12}$ is not parallel to $\ell_{123}$. 
Then $Y$ is a $\Delta_1$ bundle over $\Delta_2$, and the
symmetric facets are the fiber facets.
\item[(ii)]
If the associated graph $\Gamma$ is connected then $F_1 \sim F_2 \sim F_3$.
\end{itemize}
\end{lemma}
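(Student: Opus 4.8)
\textbf{Proof proposal for Lemma~\ref{etaindep}.}

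The plan is to handle the two parts separately, since they are of quite different character. Part (i) is essentially a recognition problem: I want to show that the $3$-face $Y$ is a $\Delta_1$ bundle over $\Delta_2$ with the symmetric facets as fiber facets. First I would use Proposition~\ref{prop:symcent} to pass from $(\Delta,H)$ to $(Y,H|_Y)$: since $Y$ is symmetric, $H|_Y$ is mass linear on $Y$, and intersection induces a bijection between the asymmetric facets of $\Delta$ and those of $Y$. Thus $Y$ has exactly three asymmetric facets, namely $F_1 \cap Y$, $F_2 \cap Y$, $F_3 \cap Y$, whose conormals remain linearly independent (their projections to $P(Y)$ must still be independent, which is where the hypothesis that $Y \cap F_{12}$ is not parallel to $\ell_{123}$ enters — it guarantees the three asymmetric facets of $Y$ do not share a common edge coming from $\ell_{123}$, so their triple intersection in $Y$ is empty). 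Having established that $Y$ is a $3$-dimensional mass linear polytope with three asymmetric facets of linearly independent conormals and at least two symmetric facets, I would invoke the $3$-dimensional classification, Proposition~\ref{prop:3d}. Going down the list, the options with three asymmetric facets that also admit two or more symmetric facets force $Y$ to be a $\Delta_1$ bundle over $\Delta_2$, with the base facets being the three asymmetric ones and the fiber facets being symmetric. (The simplex $\Delta_3$ has no symmetric facet once all three of its listed asymmetric facets are fixed; the $\Delta_2$ bundle over $\Delta_1$ and the products have the wrong count or dependence pattern.) This pins down the structure claimed.

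For part (ii) I would argue that connectivity of $\Gamma$ propagates the equivalence relation across all of $F_{12} \setminus F_{123}$. The idea is that each edge $e \in E$ of $\Gamma$ lies in a symmetric $3$-face $Y_e$ of the type covered by part (i): the edge $e$ is a non-$\ell_{123}$-parallel edge of $F_{12}$ joining two vertices off $F_{123}$, and the facet meeting $F_{12}$ along $e$ (together with $F_1$ or $F_2$) produces such a $3$-face. Applying part (i) to $Y_e$, I get that within $Y_e$ the facets corresponding to $F_1, F_2, F_3$ are fiber facets of a $\Delta_1$ bundle over $\Delta_2$, and by Proposition~\ref{prop:3d} (or directly from the bundle structure together with Lemma~\ref{le:equiv}) the three asymmetric facets of $Y_e$ are mutually equivalent. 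I would then lift this local equivalence back to $\Delta$: equivalence of facets is detected by the conormal condition of Lemma~\ref{le:equiv}, which is a global statement about $\Delta$, so once I know $F_1 \cap Y_e \sim F_2 \cap Y_e \sim F_3 \cap Y_e$ inside $Y_e$ for one suitable $Y_e$, the corresponding relation $F_1 \sim F_2 \sim F_3$ should follow in $\Delta$ — this is the step I expect to require the most care.

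The main obstacle will be justifying that the symmetric $3$-faces $Y_e$ attached to the edges of $\Gamma$ genuinely exist and satisfy the hypotheses of part (i), and then that the equivalence proven inside $Y_e$ transfers correctly to $\Delta$. Concretely, I need: every edge $e \in E$ is contained in some symmetric facet $G$ of $\Delta$, so that $Y_e := G \cap (\text{an appropriate facet})$ is a symmetric $3$-face containing two symmetric $2$-faces and meeting $F_{12}$ in a line not parallel to $\ell_{123}$. Here the definition of $\Gamma$ (edges not parallel to $\ell_{123}$, endpoints off $F_{123}$) is tailored precisely to make part (i) applicable. The connectivity hypothesis then lets me chain these local equivalences: since $\sim$ is an equivalence relation, and since each $Y_e$ forces the triple $\{F_1,F_2,F_3\}$ into a single class, a single connected component already suffices to conclude $F_1 \sim F_2 \sim F_3$ globally. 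I would finish by remarking that the global nature of Lemma~\ref{le:equiv} means no gluing of local data across vertices is actually needed beyond exhibiting one good $Y_e$; the connectivity of $\Gamma$ is what guarantees that at least one such $Y_e$ exists when $F_{123}$ may be empty and the combinatorics of $F_{12}$ could otherwise be degenerate.
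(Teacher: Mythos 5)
For part (i) your route differs from the paper's but is essentially viable. The paper does not invoke Proposition~\ref{prop:3d} here: it uses Lemma~\ref{le:3comb} to see that $Y$ is combinatorially $\Delta_1\times\Delta_2$ (the two symmetric $2$-faces are triangles with edges $F_i\cap g$, and since the edge $F_{12}\cap Y$ meets both of them, $F_{123}\cap Y=\emptyset$), and then Lemma~\ref{prodsimp} together with the linear independence of the restricted conormals to rule out the $\Delta_2$-over-$\Delta_1$ structure. Your classification argument can be completed, but note that a $\Delta_2$ bundle over $\Delta_1$ with two asymmetric base facets and one asymmetric fiber facet is \emph{not} excluded by conormal independence or by the count of symmetric facets; to kill it you must observe that the symmetric facets of $Y$ are symmetric $2$-faces of $\Delta$ and hence triangles by Lemma~\ref{le:3comb}, whereas the symmetric fiber facets in that configuration would be quadrilaterals. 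Your phrase ``wrong count or dependence pattern'' glosses over exactly this case.

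Part (ii) contains a genuine gap: the step you yourself flag as needing the most care --- transferring $F_1\cap Y_e\sim F_2\cap Y_e\sim F_3\cap Y_e$ inside $Y_e$ up to $F_1\sim F_2\sim F_3$ in $\Delta$ --- is false, and your conclusion that ``a single good $Y_e$ suffices'' cannot be right. Equivalence of facets does not lift from a face to the ambient polytope; this is precisely the failure recorded in Lemma~\ref{le:bund}(ii) and Remark~\ref{rmk:symcent}. Concretely, in case (b) of the proof of Proposition~\ref{prop:3indep} the graph $\Gamma$ is disconnected but each component contains an edge, so good faces $Y_e$ exist and part (i) makes their three asymmetric facets mutually equivalent \emph{inside} $Y_e$; yet such $\Delta$ (a $121$-bundle) can carry an essential $H$, so $F_1,F_2,F_3$ are not all equivalent in $\Delta$. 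The connectivity hypothesis is therefore doing real work beyond producing one good edge. The paper's argument is: by part (i) the two symmetric $2$-faces at the endpoints of each edge of $\Gamma$ are parallel (they are the fiber facets of a $\Delta_1$ bundle), so connectivity of $\Gamma$ forces \emph{all} symmetric $2$-faces to be parallel; since every symmetric facet contains one (Lemma~\ref{le:3comb}), the conormals of all symmetric facets lie in a fixed codimension-$2$ subspace, and Lemma~\ref{le:equiv} applied with $I=\{1,2,3\}$ then gives $F_1\sim F_2\sim F_3$. You need this global parallelism argument, not a local equivalence in one $3$-face.
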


\begin{proof}{} 
Let $Y$ be a  
symmetric $3$-face 
that  
contains two 
symmetric $2$-faces,
 and
assume that $Y \cap F_{12}$ is not parallel to $\ell_{123}$.  
We now apply Lemma~\ref{le:3comb}. 
Since the edge $F_{12} \cap Y$  
meets both symmetric faces of $Y$, 
$F_{123}\cap Y = \emptyset$. Hence,
$Y$ is combinatorially equivalent
to $\Delta_1 \times \Delta_2,$ where the symmetric faces are triangular.
Since $Y \cap F_{12}$ is not parallel to $\ell_{123}$,
the conormals to the $F_i$
remain linearly independent when restricted to $P(Y)$.
Hence,  Lemma~\ref{prodsimp}
implies that  $Y$ is a $\De_1$ bundle over $\De_2$;
the symmetric faces are the fiber facets.
This proves (i).

Since $\Delta$ is simple,
Lemma~\ref{le:3comb} implies that intersection with $F_{12}$ 
induces a one-to-one correspondence
between the set of  
symmetric $2$-faces and 
the vertex set $V$ of $\Ga$.
It also induces
a one-to-one correspondence between 
the set of  
symmetric $3$-faces $Y$ that contain
two symmetric $2$-faces  
so that $Y \cap F_{12}$ is not parallel to $\ell_{123}$,
and the edge set $E$ of $\Ga$.  Moreover, in this case 
claim (i) 
implies
that the two symmetric  
$2$-faces are parallel.
Hence, 
two symmetric $2$-faces $X$ and $X'$ 
are parallel if the vertices $X \cap F_{12}$ and $X' \cap F_{12}$
lie in the same component of $\Ga$.

If $\Ga$ is connected, this implies that all 
symmetric $2$-faces are parallel.
By Lemma~\ref{le:3comb}, every symmetric facet must contain a 
symmetric $2$-face, so this implies that the 
conormals to all the symmetric facets
lie in a codimension  $2$ subspace.  Hence, 
by Lemma~\ref{le:equiv},
the  three asymmetric facets are equivalent.
\end{proof}

We now specialize to the $4$-dimensional case. 
The definition of a $121$-bundle may be found in Definition \ref{def:121}.

\begin{prop}\labell{prop:3indep} Let $H \in \ft$
be an essential  mass linear function on a smooth
$4$-dimensional polytope $\De \subset \ft^*$
with exactly three asymmetric facets 
$F_1,  F_2,$ and $F_3$
with linearly independent conormals.
Then there exists a smooth polytope $\ov\Delta \subset \ft^*$ so that:
\begin{itemize}
\item $H$ is an essential mass linear function on $\ov\Delta$.
\item One of the following statements is true:
\begin{itemize}
\item 
$\ov\Delta$ is a $\Delta_3$ bundle over $\Delta_1$,
and the base facets and one fiber facet of $\ov\Delta$ are 
the symmetric facets.
\item 
$\ov\Delta$  is a 
$121$-bundle
and the nonpervasive facets of $\ov\Delta$ are 
the symmetric facets.
\end{itemize}
\item 
$\Delta$ can be obtained from $\ov\Delta$ by a series of blowups along
symmetric $2$-faces.
\end{itemize}
\end{prop}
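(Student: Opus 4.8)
The plan is to blow $\De$ down one symmetric facet at a time until the intersection polygon $F_{12}$ becomes minimal, and then to recognize the resulting polytope $\ov\De$ as one of the two bundles.

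First I would extract what essentiality gives us. If $F_1$, $F_2$, and $F_3$ were all equivalent, then applying Lemma~\ref{le:ea} to these three facets would produce an inessential $H'$ with $F_1$ and $F_2$ both $(H-H')$-symmetric; since the only remaining possibly-asymmetric facet is $F_3$, Proposition~\ref{prop:2asym} would force $H-H'=0$, making $H$ inessential, a contradiction. Hence $F_1,F_2,F_3$ are not all equivalent, so by Lemma~\ref{etaindep}(ii) the graph $\Gamma$ is disconnected. Since $F_{12}$ is a convex polygon and at most two of its edges can be parallel to the line $\ell_{123}$, disconnectedness of $\Gamma$ forces $F_{12}$ to have exactly two edges parallel to $\ell_{123}$: when $F_{123}\neq\emptyset$ these are the edge $F_{123}$ itself together with one opposite edge $e^*$, and when $F_{123}=\emptyset$ they are two opposite edges, each cut out by a symmetric facet.

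Next I would set up the blowdown. By Lemma~\ref{le:3comb} every symmetric facet of $\De$ meets $F_{12}$, hence cuts out an edge of $F_{12}$; conversely every edge of $F_{12}$ other than $F_{123}$ arises this way. An edge of $\Gamma$ (a non-parallel edge of $F_{12}$ both of whose endpoints avoid $F_{123}$) corresponds, via the proof of Lemma~\ref{etaindep}, to a symmetric facet $G$ that is a $\De_1$ bundle over $\De_2$ whose two fiber facets are symmetric; their intersection $f$ is then a symmetric $2$-face, and $G$ is the exceptional divisor of the blowup of $\De$ along $f$. To blow $G$ down I would check the hypotheses of Proposition~\ref{prop:blowdown}: condition (i) is the bundle structure of $G$, while condition (ii) follows from Lemma~\ref{le:blowtech} applied with $F_L=F_{12}$, since blowing down the corresponding edge of the polygon $F_{12}$ is an ordinary $2$-dimensional blowdown. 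Using Lemma~\ref{le:2blowdown} to locate a blowdownable $\Gamma$-edge while protecting the two $\ell_{123}$-parallel edges, I would blow down $G$; by Lemma~\ref{le:symblow} this preserves mass linearity and essentiality and leaves the coefficients unchanged, so the new polytope still has exactly the asymmetric facets $F_1,F_2,F_3$ with independent conormals, and $F_{12}$ has lost one edge. Iterating, I reach a polytope $\ov\De$ whose polygon $F_{12}$ is a quadrilateral, consisting of the two $\ell_{123}$-parallel edges together with one edge from each of the two arcs.

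Finally I would identify $\ov\De$. Because every symmetric facet meets $F_{12}$ (Lemma~\ref{le:3comb}), any facet not meeting $F_{12}$ lies among $F_1,F_2,F_3$; as $F_1,F_2$ contain $F_{12}$, the only candidate is $F_3$, which meets $F_{12}$ precisely when $F_{123}\neq\emptyset$. Thus $\ov\De$ has exactly $6$ facets when $F_{123}\neq\emptyset$ and exactly $7$ facets (the extra one being $F_3$) when $F_{123}=\emptyset$. In the first case the fiber facets $F_1,F_2,F_3$ together with the facet cutting out $e^*$ have conormals spanning a $3$-dimensional subspace and $\ov\De$ is combinatorially $\De_3\times\De_1$, so Lemma~\ref{le:recog} identifies it as a $\De_3$ bundle over $\De_1$ with the asserted symmetric facets; in the second case the same lemma, applied iteratively together with Lemma~\ref{prodsimp}, identifies $\ov\De$ as a $121$-bundle (Definition~\ref{def:121}) with the nonpervasive facets symmetric. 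By construction the passage from $\ov\De$ back to $\De$ is a sequence of blowups along symmetric $2$-faces, so $H$ is essential on $\ov\De$ by Lemma~\ref{le:symblow}, completing the proof. The step I expect to be the main obstacle is the bookkeeping in the blowdown: guaranteeing at each stage that Lemma~\ref{le:2blowdown} yields a blowdownable edge that is genuinely a $\Gamma$-edge (so its facet really is a $\De_1$ bundle over $\De_2$ collapsing a symmetric $2$-face), while never being forced onto one of the two protected parallel edges or onto an edge adjacent to $F_{123}$ whose facet may be a $\De_3$ rather than a bundle, together with verifying condition (iii) of Proposition~\ref{prop:blowdown} and the hypotheses of Lemma~\ref{le:recog} in the minimal configuration.
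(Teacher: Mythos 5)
Your strategy coincides with the paper's: essentiality forces $F_{12}$ to have exactly two edges parallel to $\ell_{123}$ (via Lemma~\ref{etaindep}(ii), Lemma~\ref{le:ea} and Proposition~\ref{prop:2asym}); one then blows down non-parallel edges of $F_{12}$ using Lemma~\ref{le:2blowdown}(i), Proposition~\ref{prop:blowdown} and Lemma~\ref{le:blowtech}; and one identifies the resulting four-edge configuration as one of the two bundles. The bookkeeping you flag does work out: taking $e$ in Lemma~\ref{le:2blowdown}(i) to be the parallel edge containing $F_{123}$ (when nonempty) guarantees that the blown-down edge is a genuine $\Gamma$-edge, so its facet is a $\De_1$ bundle over $\De_2$ by Lemma~\ref{etaindep}(i) and condition (i) of Proposition~\ref{prop:blowdown} holds.

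However, two of the steps you defer are genuine gaps rather than routine checks. First, condition (iii) of Proposition~\ref{prop:blowdown} is not vacuous when $F_{123}=\emptyset$: you must show that $\ov{F}_{123}$ is still empty after the blowdown. The paper does this by observing that $\ell_{123}$ is a line in $P(F_{12})$ disjoint from $F_{12}$ and parallel to two of its edges, so blowing down an edge that is \emph{not} parallel to $\ell_{123}$ cannot make the polygon reach it. Second, and more seriously, the identification of the seven-facet minimal polytope as a $121$-bundle does not follow from Lemma~\ref{le:recog} and Lemma~\ref{prodsimp} alone (Lemma~\ref{prodsimp} concerns $\De_k\times\De_n$, whereas here the combinatorial type is $\De_1\times\De_1\times\De_2$). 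The crux is to prove that the two symmetric facets $G_1,G_2$ cutting out the $\ell_{123}$-parallel edges of $F_{12}$ have parallel conormals, so that $\De$ fibers over $\De_1$ with fiber $G_1$; the paper obtains this by applying Lemma~\ref{etaindep}(i) to a third symmetric facet $G_3$ to see that $G_{13}$ and $G_{23}$ are parallel, and then arguing that if $G_1$ and $G_2$ were not parallel, the conormal of $G_3$ would lie in the span of $\eta_1,\eta_2,\eta_3$, contradicting that $G_3\cap F_{12}$ is not parallel to $\ell_{123}$. Without this argument the $121$-bundle alternative in your conclusion is unproved.
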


\begin{proof}
The $2$-dimensional polygon $F_{12}$ has at most two edges parallel to $\ell_{123}$.
If it has at most one such edge,
then the associated graph
$\Gamma$ is connected, and so $F_1 \sim F_2 \sim F_3$ by part (ii) of Lemma~\ref{etaindep}.
By Lemma~\ref{le:ea}, this implies that there exists an inessential function
$H' \in \ft$ so that the function $\Tilde H = H - H'$ has at most one asymmetric facet.
By Proposition~\ref{prop:2asym}, this implies that $H$ is inessential.
Therefore
$F_{12}$ has two edges parallel to 
$\ell_{123}$.
We now consider the following cases.
\MS

\NI
{\bf Case (a)}:
{\it  $F_{12}$ has exactly four edges and  $F_{123}\ne \emptyset$.} 
\smallskip

The edge $F_{123}$ is parallel to $\ell_{123}$.
Let $G_1 \cap F_{12}$, $G_2  \cap F_{12}$ and 
$G_3 \cap F_{12}$ be the remaining edges of $F_{12}$, where each $G_i$ is
a symmetric facet and $G_1 \cap F_{12}$ is parallel to $\ell_{123}$.
Then the  
conormals to  $F_1, F_2, F_3$, and $G_1$ are linearly dependent and
the  intersections 
$G_1 \cap F_3 \cap F_{12}$
and 
$ G_2 \cap G_3 \cap F_{12}$ are empty,
but the remaining  edges of $F_{12}$ do intersect.
Hence, Lemma~\ref{le:3comb} implies that $\Delta$ is combinatorially
equivalent to $\Delta_3 \times \Delta_1$, where $G_2 \cap G_3$ and $G_1 \cap
F_{123}$ are both empty.
Hence, by Lemma~\ref{le:recog},
$\Delta$ is $\De_3$ a bundle over $\De_1$;  
the fiber facets are $F_1, F_2, F_3$, and $G_1$.
\MS

\NI
{\bf Case (b)}:
{\it  $F_{12}$ has exactly four edges and  $F_{123}= \emptyset$.} 
\smallskip

Let $G_1 \cap F_{12}$,  $G_2 \cap F_{12}$,
$G_3 \cap F_{12}$, and $G_4 \cap F_{12}$
be the edges of $F_{12}$, 
where each $G_i$ is
a symmetric facet and $G_1 \cap F_{12}$ and $G_2 \cap F_{12}$
are parallel to $\ell_{123}$.
Then
the intersections $G_1\cap G_2 \cap F_{12}$ and $G_3 \cap G_4 \cap F_{12}$
are empty,
but the remaining edges of $F_{12}$ do intersect.
Lemma~\ref{le:3comb} implies that 
$\Delta$ is combinatorially equivalent to 
$\Delta_1 \times \Delta_1 \times \Delta_2$, where
$G_{12}, G_{34}$ and $F_{123}$ are empty.

Let $\eta_i$ and $\alpha_j$ denote the outward conormals to
$F_i$ and $G_j$, respectively.
Since $G_1 \cap F_{12}$ and $G_2 \cap F_{12}$ are parallel to $\ell_{123}$,
$\alpha_1$ and $\alpha_2$ both lie in the subspace spanned by
$\eta_1,\eta_2,$ and $\eta_3$.
Moreover, applying part (i) of Lemma~\ref{etaindep} to $G_3$,
we find that its two symmetric faces
$G_{13}$ 
and $G_{23}$ are parallel. 
If $G_1$ and  $G_2$ are not parallel,
this implies that $\alpha_3$ lies in the plane spanned by 
$\alpha_1$ and $\alpha_2$.
Hence, by the claim above, $\alpha_3$ lies in the subspace spanned
by $\eta_1, \eta_2,$ and $\eta_3$.   But $G_3$ is not parallel
to $\ell_{123}$, so this is impossible.
Hence, $G_1$ is parallel to $G_2$.
 
Therefore, by Lemma~\ref{le:recog},
$\Delta$ is a $\Delta_1$ bundle over the polytope $G_1$.
Moreover, since $G_1 \cap F_{12}$ is parallel to $\ell_{123}$,
$G_1$ itself is a $\Delta_2$ bundle over $\Delta_1$ 
with fiber facets $F_1 \cap G_1$, $F_2 \cap G_1$, and $F_3 \cap G_1$.
\MS

\NI
{\bf Case (c)}:
{\it  The general case.} 
\smallskip

If $F_{12}$ has four edges, the  result follows from (a) or (b).
So assume that $F_{12}$ has more than four edges.
The face $F_{12}$ is a $2$-dimensional smooth polygon with two  
edges which are parallel to $\ell_{123}$.
With the possible exception of one of these parallel edges,
every edge has 
the form $G \cap F_{12}$, where $G$ is a symmetric facet.
Therefore, 
by part (i) of Lemma~\ref{le:2blowdown}
there exist a symmetric facet $G$
so that the edge $G \cap F_{12}$ can be blown down in $F_{12}$,
is not parallel to $\ell_{123}$, and
is adjacent to $G' \cap F_{12}$ and $G'' \cap F_{12}$,
where $G'$ and $G''$ are also symmetric facets.

We claim that $\Delta$ is the blow-up of a smooth polytope $\ov\De$
along the face $\ov G' \cap \ov G''$ with exceptional divisor $G$.
To prove this, we check the three conditions of 
Proposition~\ref{prop:blowdown}. 
First, 
by part (i) of Lemma~\ref{etaindep},
 $G$ is a $\Delta_1$ bundle over $\Delta_2$ with fiber facets
$G' \cap G$ and $G'' \cap G$  and base facets $F_1 \cap G $,
$F_2 \cap G$, and $F_3 \cap G$, so condition (i) holds.
Second, the previous paragraph implies that
$F_{12}$ is the blowup of a smooth polytope 
$ F_{12}'$
along the vertex 
$P(G') \cap P(G'') \cap F_{12}'$
with exceptional divisor $G \cap F_{12}$,
and so by Lemma~\ref{le:blowtech}, condition (ii) also holds.
Finally, if $F_{123} \neq \emptyset$, then condition (iii)  
holds trivially. 
On the other hand, 
if $F_{123} = \emptyset$, then $\ell_{123}$ is a line in $P(F_{12})$
which does not intersect $F_{12}$ and is parallel to two of its edges.
Since $G \cap F_{12}$ is not parallel to $\ell_{123}$,
the polygon obtained from blowing down $G \cap F_{12}$ in $F_{12}$
will not intersect $\ell_{123}$.  Hence, $\ov{\Delta}$ also
will not intersect $\ell_{123}$, that
is, $\ov{F}_{123}$ is empty.

By Lemmas~\ref{le:blowcent} and \ref{le:symblow}, $H$ is an essential mass 
linear function on $\ov \De$
and  $\langle H, c_{\ov\De} \rangle
=\langle H, c_\Delta \rangle$. 
The result now follows by induction.
\end{proof}

\subsection{Four asymmetric facets}\labell{ss:4asym}

We now  
analyze mass linear functions on 
$4$-dimensional
polytopes with
exactly
four asymmetric facets, each of which is pervasive.
As before, we first consider the case that the conormals to these asymmetric facets
are linearly dependent, and then the case that they are linearly independent; see
Proposition~\ref{prop:4dep} and Proposition~\ref{prop:4indep}.
We
begin by considering the combinatorics.

Let $H \in \ft$ be a mass linear function on a 
 polytope $\Delta$ with
exactly four asymmetric facets $F_1, F_2, F_3,$ and $F_4$.
Proposition~\ref{prop:symcent} implies that
each 
symmetric $2$-face $g$ has exactly
four asymmetric edges: $F_1 \cap g, \ldots, F_4 \cap g$. 
Moreover $H$ is mass linear on $g$. Therefore,
Proposition~\ref{prop:2dim} implies
that 
$g$ has no symmetric edges.
Hence, for exactly two of the six pairs $\{i,j\} \subset \{1,2,3,4\}$, 
the intersection 
$F_{ij} \cap g$ is empty;
we will refer to this 
set of two pairs
as the {\bf rectangle order} of the face.

Suppose first
that $\Delta$ is
 $3$-dimensional.
Then Proposition~\ref{prop:3d} implies that $\Delta$ has at most two symmetric
facets; 
moreover, the following are true.
\begin{itemize}
\item[(i)]
If $\Delta$ has no symmetric facets, then
$\Delta$ is the simplex $\Delta_3$.
\item[(ii)]
If $\Delta$ has one symmetric facet 
then $\Delta$ is a $\Delta_2$ bundle over $\Delta_1$, and
the symmetric facet is a fiber facet. 
\item[(iii)]
If $\Delta$ has two symmetric facets 
then $\Delta$ is a $\Delta_1$ bundle over $\Delta_1 \times \Delta_1$,
and the symmetric facets are fiber facets.
\end{itemize}

Now assume that $\Delta$ is $4$-dimensional,
and let the {\bf symmetric subspace}  $\Ss \subset \Delta$ be the
union of the symmetric facets.
By Proposition~\ref{prop:symcent}, each $3$-dimensional symmetric 
facet $G$ is a polytope with four asymmetric facets
and $H$  is a mass linear function on $G$.
By the 
discussion
above,  this implies that $G$ intersects
at most two other symmetric facets.
Moreover, if $G$ does not intersect any symmetric facets, it is
the simplex
$\De_3$.
If $G$ intersects just one other symmetric facet $G'$,
then $G$ is  
a $\Delta_2$ bundle over $\Delta_1$, and $G' \cap G$ is a fiber facet.
Once the rectangle order
of $G' \cap G$ is specified,  
there are two possibilities for the combinatorics of $G$.
Assume, for example,  that
$F_1$ and $F_2$ are opposite in the rectangle order of $G' \cap G$.
If $F_{12} \cap G$ is empty, then $F_{34} \cap G$
is not, and  $F_1 \cap G$ and $F_2 \cap G$
are the triangular faces of $G$.   
Conversely, if $F_{34} \cap G$ is
empty then $F_{12} \cap G$ is not, and $F_3 \cap G$ and $F_4 \cap G$
are the triangular faces.
In contrast, 
if $G$ intersects two other symmetric facets,
then both 
symmetric $2$-faces
of $G$
have the same rectangle order, and $G$ is determined
combinatorially by this rectangle order.
This proves the following result.

\begin{lemma}\labell{le:symorder} 
Let $H \in \ft$ be a a mass linear function on a $4$-dimensional
polytope $\Delta \subset \ft^*$ 
with exactly four asymmetric
facets.  Let $C$ be a connected
component of the symmetric subspace $\Ss \subset \Delta$.
Then every 
 $2$-face in $C$ has the same rectangle order.
\end{lemma}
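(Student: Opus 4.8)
The plan is to package the local combinatorial analysis carried out just above the statement into a connectivity argument. First I would pin down what ``$2$-face in $C$'' means for the rectangle order: since $\Delta$ is a simple $4$-polytope, any nonempty intersection of two facets is a codimension-$2$ face, so a symmetric $2$-face is exactly a nonempty intersection $G\cap G'$ of two distinct symmetric facets, and it lies in $C$ precisely when $G,G'$ do. Thus the faces to which a rectangle order is attached are exactly the intersections of pairs of symmetric facets belonging to $C$.

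The local ingredient is already contained in the paragraphs preceding the lemma: for any single symmetric facet $G$, all symmetric $2$-faces contained in $G$ carry the same rectangle order. Indeed, restricting $H$ to $G$ and invoking Proposition~\ref{prop:3d}, the facet $G$ meets at most two other symmetric facets. If it meets none, $G=\Delta_3$ and has no symmetric $2$-face; if it meets exactly one, $G$ is a $\Delta_2$ bundle over $\Delta_1$ with a unique symmetric $2$-face; and if it meets two, $G$ is a $\Delta_1$ bundle over $\Delta_1\times\Delta_1$, so its two symmetric $2$-faces are parallel copies of the base square and display the four asymmetric facets in the same cyclic order, hence have the same rectangle order. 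So a single rectangle order is attached to each symmetric facet that has any symmetric $2$-face at all.

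The global statement then follows by propagation. I would form the adjacency graph on the symmetric facets of $C$, joining two of them when they meet (necessarily in a symmetric $2$-face). This graph is connected: if it broke into two parts $\mathcal P$ and $\mathcal Q$ with no edges between them, then $\bigcup\mathcal P$ and $\bigcup\mathcal Q$ would be disjoint nonempty closed sets covering $C$, contradicting the connectedness of $C$. Now given symmetric $2$-faces $g,g'$ in $C$, choose symmetric facets $G\supset g$ and $G'\supset g'$ and a path $G=H_0,H_1,\dots,H_p=G'$ in this graph; set $h_i:=H_i\cap H_{i+1}$, a symmetric $2$-face. Applying the local ingredient inside $H_0$ (to $g$ and $h_0$), inside each $H_i$ with $0<i<p$ (to $h_{i-1}$ and $h_i$), and inside $H_p$ (to $h_{p-1}$ and $g'$) shows successively that all of these faces, and in particular $g$ and $g'$, share one rectangle order.

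Since the real combinatorial content is the local classification, which is already in hand, I do not expect a serious obstacle; the only points demanding care are the identification of a symmetric $2$-face with an intersection of two symmetric facets (which rests on simplicity) and the verification that the two fiber facets of a $\Delta_1$ bundle over $\Delta_1\times\Delta_1$ genuinely carry the same rectangle order — both already implicit in the discussion preceding the lemma.
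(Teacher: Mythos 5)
Your proposal is correct and follows essentially the same route as the paper: the paragraph preceding the lemma establishes exactly your local ingredient (via Proposition~\ref{prop:symcent} and the $3$-dimensional classification, each symmetric facet carries a single rectangle order on all of its symmetric $2$-faces), and the paper then leaves the propagation along the connected component implicit, which you have simply spelled out. No gaps.
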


The analysis above also
implies that, after renumbering  the $F_i$ if necessary, 
each  component
 $C$ of $\Ss$ 
has one of the following four types.
\begin{enumerate}
\item [(a)] The component $C$ contains only one symmetric facet.
\item [(b)] The component $C$ contains two symmetric facets $G$ and $G'$
that  
each intersect only one other symmetric facet, 
and $F_{12} \cap G = F_{34} \cap G' = \emptyset$.
The remaining symmetric facets in $C$ 
each intersect two symmetric facets.
\item [(c)] The component $C$ contains two symmetric facets $G$ and $G'$
that 
each intersect only one other symmetric facet, 
and $F_{12} \cap G = F_{12} \cap G' = \emptyset$.
The remaining symmetric facets in $C$ each intersect two symmetric facets.
\item  [(d)] Every symmetric facet in $C$ intersects two other symmetric facets.
\end{enumerate}

\begin{lemma}\labell{le:4comb} 
Let $H \in \ft$ be a a mass linear function on a $4$-dimensional
polytope $\Delta \subset \ft^*$ 
with exactly  four asymmetric facets
$F_1,\dots, F_4$,
each of which is pervasive.

\begin{itemize}\item[(i)] If $F_{1234} = \emptyset$,
the symmetric subspace  $\Ss \subset \Delta$ 
has two components.  Otherwise, it has one component.

\item[(ii)]  Each of the four triple intersections $F_{ijk}$ is nonempty.

\item[(iii)] Each  
component of the symmetric subspace $\Ss \subset \Delta$ 
has type (a) or (b) above.
\end{itemize}
\end{lemma}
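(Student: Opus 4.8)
The plan is to classify the connected components of $\Ss$ through the combinatorial types recorded before the lemma, and then to read off (i) and (ii) from that classification. Throughout I use that, since $\De$ is a smooth $4$-polytope it has at least five facets, of which exactly four are asymmetric, so $\Ss\neq\emptyset$; that by the discussion preceding the lemma each symmetric facet meets at most two other symmetric facets, so the dual graph of each component $C$ is a point, a path, or a cycle (interior nodes being cubes $\Delta_1^3$, endpoints prisms $\Delta_2\times\Delta_1$, an isolated node a simplex $\Delta_3$); and that, by Lemma~\ref{le:symorder}, $C$ carries a single rectangle order, say $\{\{1,2\},\{3,4\}\}$. First I would record the numerical constraints: restricting $H$ to a symmetric $2$-face of $C$ and applying Proposition~\ref{prop:symcent} with Proposition~\ref{prop:2dim} gives $\gamma_1+\gamma_2=0$ and $\gamma_3+\gamma_4=0$; restricting to any symmetric facet and using Proposition~\ref{prop:3d} gives $\gamma_1+\gamma_2+\gamma_3+\gamma_4=0$; and all four $\gamma_i\neq 0$ since the $F_i$ are asymmetric. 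In particular at most two of the three possible rectangle orders can occur, as the third would force every $\gamma_i=0$.

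The core of (iii) is a conormal computation. In a cube node $G$ of $C$ the asymmetric facets occur in the two opposite (parallel) pairs dictated by the rectangle order, so $F_1\cap G$ and $F_2\cap G$ are parallel facets of the cube; hence $\eta_1+\eta_2$ lies in the line $\langle\alpha_G\rangle$ spanned by the conormal of $G$. If a component contained two cubes $G,G'$ — in particular any cycle (type (d)), or any path of type (c) with at least two interior nodes — then choosing them adjacent, so that $\alpha_G$ and $\alpha_{G'}$ are independent, gives $\eta_1+\eta_2\in\langle\alpha_G\rangle\cap\langle\alpha_{G'}\rangle=\{0\}$. But $\eta_1+\eta_2=0$ forces the parallel facets $F_1,F_2$ to be disjoint, contradicting the pervasiveness of $F_1$. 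This rules out every cycle and every long type-(c) chain at once. The remaining, and hardest, cases are the \emph{short} type-(c) chains — two prisms glued along a rectangle, or two prisms with a single cube between them — where only one cube (or none) is available and the clean line argument fails. Here the plan is to exploit that $H$ is mass linear on each endpoint prism: by Proposition~\ref{prop:3d} and the $\Delta_2$-over-$\Delta_1$ computation of Lemma~\ref{le:calcul}, the two asymmetric fiber facets of such a prism must be equivalent in it, which by Lemma~\ref{le:equiv} confines $\eta_3,\eta_4$ and the neighbouring conormals to a small subspace. Combining this at both equally oriented endpoints — and using that both again push $F_{12}$ off the component, so that $F_{12}\neq\emptyset$ must be supplied by a second component necessarily carrying a different rectangle order — should again collapse either to $\eta_1+\eta_2=0$ or to all three rectangle orders occurring, both impossible. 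This short-chain exclusion is the main obstacle.

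Granting (iii), part (ii) is immediate by exhibiting a vertex on each triple. A type-(a) component is a simplex $\Delta_3$ whose four facets are $F_1\cap G,\dots,F_4\cap G$; any three of them meet in a vertex of $\De$, so every $F_{ijk}\neq\emptyset$. A type-(b) component has endpoint prisms of the two opposite triangular types: the $\{1,2\}$-endpoint $G$ contains the vertices $F_{134}\cap G$ and $F_{234}\cap G$, while the $\{3,4\}$-endpoint $G'$ contains $F_{123}\cap G'$ and $F_{124}\cap G'$, so all four triples are nonempty. Since $\Ss\neq\emptyset$, one such component exists and (ii) follows.

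Finally (i) follows from (ii) by a nerve-and-duality argument on $\partial\De\cong S^3$. Let $X=\bigcup_{i=1}^4 F_i$; the sets $F_i$ and all their mutual intersections are faces, hence convex and contractible when nonempty, so by the nerve lemma $X$ is homotopy equivalent to the nerve of $\{F_i\}$. By pervasiveness all six edges are present and by (ii) all four triangles are present, so the nerve is the full $2$-skeleton of a tetrahedron: it is the sphere $\partial\Delta_3$ when $F_{1234}=\emptyset$ and the contractible solid tetrahedron when $F_{1234}\neq\emptyset$. Since $\Ss$ is the closure of $\partial\De\setminus X$, its components are those of the complement, and Alexander duality in $S^3$ gives $|\pi_0(S^3\setminus X)|=1+\mathrm{rank}\,\widetilde H^2(X)$, which equals $2$ in the first case and $1$ in the second. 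This yields (i) and completes the proof modulo the short type-(c) exclusion singled out above.
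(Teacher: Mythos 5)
Your proof of part (iii) --- the step on which (i) and (ii) both rest --- has a genuine gap, and the partial argument you do give rests on a false claim. A ``cube node'' $G$ is a (possibly non-trivial) $\Delta_1$ bundle over $\Delta_1\times\Delta_1$ whose \emph{fiber} facets are the two symmetric ones; the facets $F_1\cap G$ and $F_2\cap G$ are \emph{base} facets over opposite edges of the square, and for a non-trivial bundle these are not parallel. What the bundle structure actually gives is $\eta_1+\eta_2\in\mathrm{span}(\alpha_G,\alpha_{G'})$, where $G'$ is an adjacent symmetric facet (compare the computation in the proof of Lemma~\ref{4comp}, where one only concludes that the various conormals lie in the plane spanned by $\eta_1+\eta_2$ and $\eta_3+\eta_4$), not $\eta_1+\eta_2\in\langle\alpha_G\rangle$. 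Two adjacent cubes then pin $\eta_1+\eta_2$ down only to the line $\langle\alpha_{G'}\rangle$ of their common facet, and around a cycle the relation $\alpha_{i-1}+\alpha_{i+1}\in\langle\alpha_i\rangle$ forces all the $\alpha_i$ into a single $2$-plane, so the intersection of the spans never shrinks to $\{0\}$ and no contradiction with pervasiveness emerges. Thus neither type (d) nor the long type-(c) chains are actually excluded by your argument, and you explicitly concede that the short type-(c) chains (two prisms, or two prisms with one cube between them) are not handled at all. Since (ii) is deduced from (iii) and (i) from (ii), the whole lemma is left unproven.

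The paper's route avoids conormals entirely and closes exactly the cases you cannot: it works inside the polygons $F_{ij}$. One first checks that $\Ss\cap F_{ij}\neq\emptyset$ for every pair and lists its possible homeomorphism types (a circle, or line segments whose two ends are adjacent to prescribed asymmetric edges of $F_{ij}$, with at most two segments and only when $F_{1234}=\emptyset$). A component $C$ of type (c) or (d) satisfies $C\cap F_{12}=\emptyset$, so a second component $C'$ must exist to supply $\Ss\cap F_{12}\neq\emptyset$; but $C'$ meets at least four of the polygons $F_{ij}$, and the coexistence of its segment with a circle, or with a segment both of whose ends abut the same edge, contradicts the list of allowed types. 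This is worth internalizing: the exclusion is topological/combinatorial, not metric. On the positive side, your derivation of (ii) from (iii) by exhibiting vertices on the endpoint prisms is fine, and your nerve-plus-Alexander-duality proof of (i) from (ii) is a correct and rather elegant alternative to the paper's direct reading of the cases $F_{1234}=\emptyset$ versus $F_{1234}\neq\emptyset$; but neither can be invoked until (iii) is established.
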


\begin{proof}\,
To begin, consider the
$2$-dimensional polygon $F_{ij}$  for any $i \neq j$;
let $\{i,j,k,\ell\} = \{1,2,3,4\}$.
By assumption, the asymmetric facets are pervasive, so 
$F_{ij}$  cannot be empty.
If the intersections $F_k \cap F_{ij}$ and $F_\ell \cap F_{ij}$
are nonempty, they are edges of this polygon. 
All other edges lie in  $\Ss \cap F_{ij}$.
Since every $2$-dimensional polygon  
has  at least three edges,  
the set $\Ss \cap F_{ij}$ cannot be empty.
Therefore, 
after possibly switching $k$ and $\ell$,
 there are four possibilities:
\begin{itemize}
\item[(1)]
$F_k \cap F_{ij}  = F_\ell \cap F_{ij} = \emptyset$; this implies that 
$\Ss \cap F_{ij}$
is homeomorphic to a circle.\SSS
\item[(2)]
$F_k \cap F_{ij} \neq \emptyset$ but $F_\ell \cap F_{ij} = \emptyset$;
this implies that $\Ss \cap F_{ij}$ is homeomorphic to a line
segment and both its ends are adjacent to $F_k \cap F_{ij}$.\SSS
\item[(3)]
$F_k \cap F_{ij} \neq \emptyset$ and $F_\ell
 \cap F_{ij} \neq  \emptyset$
but $F_k \cap F_\ell \cap F_{ij}
= F_{1234}
=\emptyset$; this implies that  $\Ss \cap F_{ij}$
has two components, 
each component is homeomorphic to a line segment, and each has one end
adjacent to $F_k \cap F_{ij}$ and the other to $F_\ell \cap F_{ij}$.\SSS
\item[(4)] $F_k \cap F_\ell \cap F_{ij} = F_{1234} \neq \emptyset$;
this implies that  $\Ss  \cap F_{ij}$ 
is homeomorphic to a line segment,
one end is adjacent to $F_k \cap F_{ij}$ and the other to 
$F_\ell \cap F_{ij}$. 
\end{itemize}

Now consider a component $C$ of 
$\Ss$; by Lemma~\ref{le:symorder} we may 
assume that $F_1$ and $F_2$ are opposite in the rectangle order
of every  
symmetric $2$-face in $C$.
We will evaluate each of the cases (a) through (d) listed 
before the lemma.
\begin{enumerate}
\item[(a)] In this case, $C \cap F_{ij}$ is a single edge 
(and hence 
is
homeomorphic to a line segment)
for all $i \neq j$.
One end of this edge is adjacent to $F_k \cap F_{ij}$ and the other to 
$F_\ell \cap F_{ij}$,
where $\{i,j,k,\ell\} = \{1,2,3,4\}$.
\item[(b)]  
In this case,
the  intersection   $C \cap F_{13}$
is homeomorphic to a line 
segment; 
one end
is  adjacent to  $F_4 \cap F_{13}$ and the other
to  $F_2   \cap F_{13}$.
Similarly,  
each of the intersections $C \cap F_{23}$, $C \cap F_{14}$, and $C \cap F_{24}$
is homeomorphic to a line segment,
the  ends of which are adjacent to different edges.
Finally, 
each of the intersections   $C \cap F_{12}$ and  $C \cap F_{34}$ 
is a single edge, 
the ends of which are adjacent to different edges.
\item[(c)] In this case, 
the intersection $C \cap F_{13}$
is homeomorphic to a line segment, and both ends of the segment
are adjacent to the edge $F_4 \cap F_{13}$. 
Similarly,  
each of the intersections $C \cap F_{23}$, $C \cap F_{14}$, and $C \cap F_{24}$ 
is homeomorphic to a line segment,
the ends of which are adjacent to the same edge.
However, the intersection $C \cap F_{12}$ 
is empty.
\item[(d)] In this case, 
each of  the intersections  
$C \cap F_{13}, C \cap F_{14}, C \cap F_{23},$ and 
$C \cap F_{24}$ 
is homeomorphic to a circle,
while $C \cap F_{12} = C \cap F_{34} = \emptyset$.
\end{enumerate}

We now show that the last two cases cannot occur.
Assume first that there is a component $C$  of type (c).
This cannot be the only component of $\Ss$, because then
$\Ss  \cap F_{12} =  \emptyset$, which we
showed to be impossible at the beginning of the proof.
So there exists another component $C'$ of $\Ss$.
Since $C' \cap F_{ij}$ is nonempty for at least four pairs 
$i \neq j$,
at least one polygon
$F_{ij}$ must intersect the space of symmetric facets
in at least two components, at least one of which is homeomorphic to
a line segment, the ends of which are adjacent to the the same edge.
As we saw above, this is
impossible. Hence there are no components of type (c).
Similarly, no polygon $F_{ij}$ can intersect the space of
symmetric facets in at least two components, one of 
which is homeomorphic to a circle.
Therefore, there are no
components of type (d). This proves (iii).

Since each component 
of $\Ss$ has type (a) or (b)
its intersection with each polygon $F_{ij}$ is homeomorphic to
a line segment, the ends of which are adjacent to different edges.
Therefore cases (1) and (2) for  $S\cap F_{ij}$
are impossible. Statement (ii) follows immediately.
Finally, if $F_{1234} = \emptyset$ we are in case (3),
and if it is not we are in case (4).
This proves (i).
\end{proof}

We will use the next lemma to identify facets that can be blown down.

\begin{lemma}\labell{le:blowblow}
Let $H \in \ft$ be a mass linear function
on a $4$-dimensional smooth polytope $\Delta \subset \ft^*$ 
with exactly four asymmetric facets $F_1,\ldots,F_4$,
each of which is pervasive.
Let $G$ be a symmetric facet.
Assume that the edge $G \cap F_{13}$ of the polygon $F_{13}$ can be blown down,
and that 
$F_1$ and $F_3$ are not opposite in the
rectangle order of any 
symmetric $2$-face of $G$.
Then $\Delta$ is the blowup of a smooth polytope
$\ov{\De} \subset \ft^*$ along a face $f$  with exceptional divisor $G$.
Moreover, one of the following holds:
\begin{enumerate}
\item[(i)] $G$ has no symmetric facets and  $f$ is the vertex $\ov{F}_{1234}$.
\item[(ii)] $G$ has exactly one symmetric facet $G' \cap G$,  
and $f$ is the edge $\ov{G}' \cap \ov{F}_{ij}$,  where $F_i$ and $F_j$
are opposite in the rectangle order of  $G' \cap G$.
Moreover, $f$ intersects each $\ov{F}_k$.
\item[(iii)] $G$ has two symmetric facets $G' \cap G$ and $G'' \cap G$,
and  $f = \ov{G}' \cap \ov{G}''$.
\end{enumerate}
\end{lemma}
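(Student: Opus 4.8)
The plan is to realize the blowdown directly, by verifying the three conditions of Proposition~\ref{prop:blowdown} with $F_0=G$ in each of the three structural cases for $G$. First I would record the structure of $G$. Since $G$ is symmetric, Proposition~\ref{prop:symcent} shows that $H|_G$ is mass linear and that $G$ has exactly four asymmetric facets $F_1\cap G,\dots,F_4\cap G$. By the $3$-dimensional classification recalled just before Lemma~\ref{le:symorder} (which rests on Proposition~\ref{prop:3d}), $G$ is therefore the simplex $\Delta_3$ with no symmetric facets, a $\Delta_2$ bundle over $\Delta_1$ with one symmetric fiber facet $G'\cap G$, or a $\Delta_1$ bundle over $\Delta_1\times\Delta_1$ with two symmetric fiber facets $G'\cap G,G''\cap G$; these are cases (i), (ii), (iii). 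In each case the bundle structure pins down the fiber-facet index set $I$ and the face $f=\ov F_I$: namely $I=\{1,2,3,4\}$, $f=\ov F_{1234}$ in case (i); $I$ consisting of the indices of $G'$ and of the opposite pair $\{F_a,F_b\}$ of fiber facets in the rectangle order of $G'\cap G$, with $f=\ov G'\cap\ov F_{ab}$, in case (ii); and $I$ the indices of $G',G''$, with $f=\ov G'\cap\ov G''$, in case (iii). Condition (i) of Proposition~\ref{prop:blowdown} then holds by construction, and the base-facet index set $J$ of the bundle $G$ is contained in $\{1,2,3,4\}$ in all cases.

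Next I would establish condition (ii), that $\eta_G=\sum_{i\in I}\eta_i$, by applying Lemma~\ref{le:blowtech} with $L=\{1,3\}$, so $F_L=F_{13}$. The hypothesis that $G\cap F_{13}$ can be blown down exhibits $F_{13}$ as the blowup of $\ov F_{13}$ along a vertex, with exceptional divisor $G\cap F_{13}$; the task is to identify that vertex with $\bigcap_{i\in I}P(F_i)\cap\ov F_{13}$. Here I would use the hypothesis that $F_1$ and $F_3$ are not opposite in the rectangle order of any symmetric $2$-face of $G$: tracing the endpoints of the edge $G\cap F_{13}$ through the simplex/prism/cube structure of $G$, this forces the two edges of $F_{13}$ adjacent to $G\cap F_{13}$ to be $F_2\cap F_{13}$ and $F_4\cap F_{13}$ in case (i) (where the condition is vacuous), one symmetric edge $G'\cap F_{13}$ together with the asymmetric fiber-partner edge $F_b\cap F_{13}$ in case (ii), and $G'\cap F_{13},G''\cap F_{13}$ in case (iii). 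Since $\ov F_{13}\subset P(F_1)\cap P(F_3)$, in each case the two adjacent planes cut out exactly $\bigcap_{i\in I}P(F_i)\cap\ov F_{13}$, so Lemma~\ref{le:blowtech} yields condition (ii).

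Then I would dispatch condition (iii), where pervasiveness does the work. Since the four asymmetric facets are pervasive, $F_i\cap F_j\neq\emptyset$ for every pair, and by Lemma~\ref{le:4comb}(ii) every triple $F_{ijk}$ is nonempty. Thus condition (iii) is vacuous in case (i) ($J=\emptyset$) and trivial in case (ii), where $J\subset\{1,2,3,4\}$ has two elements and the corresponding faces are all nonempty. In case (iii), $J=\{1,2,3,4\}$ and the only face that can be empty is $F_{1234}$; since $G$ is a $\Delta_1$ bundle over $\Delta_1\times\Delta_1$ with base facets $F_1\cap G,\dots,F_4\cap G$ and all $F_{ij}$ nonempty, condition (iii) follows from Lemma~\ref{le:blowdown2}. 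Proposition~\ref{prop:blowdown} now gives that $\ov\Delta$ is smooth and $\Delta$ is its blowup along $f=\ov F_I$ with exceptional divisor $G$. The remaining assertion of case (ii), that $f$ meets each $\ov F_k$, I would obtain a posteriori: by Remark~\ref{rmk:blowint} the exceptional divisor $G$ is a $\Delta_2$ bundle over the edge $f$ whose two base facets $F_c\cap G,F_d\cap G$ project onto the endpoints of $f$, so $f$ meets $\ov F_c,\ov F_d$, while $f\subset\ov F_a\cap\ov F_b$ by definition.

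The main obstacle is the combinatorial identification in the verification of condition (ii): determining precisely which two edges of the polygon $F_{13}$ are adjacent to $G\cap F_{13}$. This is the crux, since it is exactly what matches the blowdown vertex inside $F_{13}$ to the face $\ov F_I$ and so lets Lemma~\ref{le:blowtech} produce the relation $\eta_G=\sum_{i\in I}\eta_i$; it is here, and only here, that the hypothesis on the rectangle order of $F_1$ and $F_3$ is indispensable, as it is what guarantees that (in cases (ii) and (iii)) at least one adjacent edge is symmetric and lands on $P(G')$ (or $P(G'')$).
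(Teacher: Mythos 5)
Your proposal is correct and follows essentially the same route as the paper's proof: split into the three cases for the symmetric facet $G$ via Propositions~\ref{prop:symcent} and \ref{prop:3d}, verify conditions (i)--(iii) of Proposition~\ref{prop:blowdown} with condition (ii) obtained from Lemma~\ref{le:blowtech} applied to $F_{13}$, and handle condition (iii) by pervasiveness together with Lemma~\ref{le:blowdown2} in the cube case. Your identification of the two edges of $F_{13}$ adjacent to $G\cap F_{13}$ in each case, and the a posteriori verification via Remark~\ref{rmk:blowint} that $f$ meets every $\ov F_k$ in case (ii), match the paper's argument.
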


\begin{proof}
By Proposition~\ref{prop:symcent}, $G$ itself is a $3$-dimensional
polytope with exactly four asymmetric facets  $F_1 \cap G, \ldots,
F_4 \cap G$, and the restriction of $H$ to $G$ is mass linear.
As before, Proposition~\ref{prop:3d} implies that there are only three possibilities:
\MS

\NI
{\bf Case (i)}:  
{\it  $G$ has no symmetric facets; it is a simplex with facets $F_1 \cap G, 
\ldots, F_4 \cap G$ .}
\smallskip

Condition (i) of Proposition~\ref{prop:blowdown} is clearly satisfied.
Since the edge $G \cap F_{13}$ of the polygon $F_{13}$
can be blown down, and $G \cap F_{13}$
is adjacent to $F_2 \cap F_{13}$
and $F_4 \cap F_{13}$, $F_{13}$ is the blowup of a smooth polygon
$F_{13}\,\!'$ along the vertex $P(F_2) \cap P(F_4) \cap F_{13}\,\!'$.
Hence, Lemma~\ref{le:blowtech} implies that condition (ii) of 
Proposition~\ref{prop:blowdown} is also satisfied.
Finally, 
condition (iii) of Proposition~\ref{prop:blowdown} is trivial in this case.
Hence, the claim follows by Proposition~\ref{prop:blowdown}.
\MS

\NI
{\bf Case (ii)}:  
{\it  $G$ has one symmetric facet $G' \cap G$; it is a $\Delta_2$
bundle over $\Delta_1$ and $G' \cap G$ is a fiber face.}
\smallskip

Since $F_1$ and $F_3$ are not opposite in the 
rectangle order of $G' \cap G$, one is a base facet and one is a fiber facet.
Hence, we may renumber so that
$G$ is a $\Delta_2$ bundle over $\Delta_1$ with fiber facets
$F_1 \cap G$, $F_2 \cap G$, and $G' \cap G$ 
and base facets 
$F_3 \cap G$ and $F_4 \cap G$. 
In particular,  condition (i) of Proposition~\ref{prop:blowdown} is satisfied.
Since the edge $G \cap F_{13}$ of the polygon $F_{13}$ can be blown down, 
and $G \cap F_{13}$ 
is adjacent to $F_2 \cap F_{13}$ and $G' \cap F_{13}$, 
$F_{13}$ is the blowup of a smooth polygon $F_{13}\,\!'$ 
along the 
vertex $P(F_2) \cap P(G') \cap  F_{13}\,\!'$.
Hence, Lemma~\ref{le:blowtech} implies that condition (ii) of 
Proposition~\ref{prop:blowdown} is also satisfied.
Since each asymmetric facet is pervasive,  
$F_{34}\neq \emptyset$.
Hence, condition (iii) of Proposition~\ref{prop:blowdown} is satisfied. 
Therefore,
Proposition~\ref{prop:blowdown} implies that $\Delta$ is the
blowup of a smooth polytope $\ov\Delta \subset \ft^*$
along the edge $\ov{G}' \cap \ov{F}_{12}$ with exceptional 
divisor $G$.
Finally, 
Remark~\ref{rmk:blowint}
implies
that 
$\ov{G}' \cap \ov{F}_{12}$  intersects both $\ov{F}_3$ and $\ov{F}_4$.
\MS

\NI
{\bf Case (iii)}:  
{\it  $G$ has two  symmetric facets $G' \cap G$ and $G'' \cap G$; 
it is a $\Delta_1$
bundle over $\Delta_1 \times \Delta_1$ with fiber facets
$G' \cap G$ and $G'' \cap G$ and base facets $F_1 \cap G, \ldots,
F_4 \cap G$.}
\smallskip

Clearly, 
condition (i) of Proposition~\ref{prop:blowdown} is satisfied.
Since the edge $G \cap F_{13}$ of the polygon $F_{13}$ can be blown down, 
and $G$ is adjacent to $G' \cap F_{13}$ and $G'' \cap F_{13}$,
$F_{13}$ is the blowup of a smooth polygon  $F_{13}\,\!'$ 
along the
vertex $P(G') \cap P(G'') \cap F_{13}\,\!'$.
Hence, Lemma~\ref{le:blowtech} implies that condition 
(ii) of Proposition~\ref{prop:blowdown} is satisfied.
Finally, since each asymmetric facet is pervasive, 
$F_{ij}\neq \emptyset$
for all pairs $1 \leq i < j \leq 4$.
Hence, Lemma~\ref{le:blowdown2} implies that
condition (iii) of Proposition~\ref{prop:blowdown}  is satisfied.
Hence, the claim follows by Proposition~\ref{prop:blowdown}.
\end{proof}

We can now prove our first main proposition in this subsection.

\begin{prop}\labell{prop:4dep}
Let $H \in \ft$ be a mass linear function on a smooth
$4$-dimensional polytope $\Delta \subset \ft^*$
with exactly four asymmetric facets $F_1,\ldots,F_4$
with linearly dependent  conormals.
Assume that each asymmetric facet is pervasive.
Then there exists a smooth polytope $\ov\Delta \subset \ft^*$ so that
\begin{itemize}
\item $H$ is a mass linear function on $\ov\Delta$.
\item $\ov\Delta$ is a $\Delta_3$ bundle over $\Delta_1$, 
and the base
facets of $\ov\Delta$ are 
the symmetric facets.
\item $\Delta$ can be obtained from $\ov\Delta$ by a series of blowups.
Each blowup is either along a  
symmetric $2$-face or along
an edge of 
type $(\ov{F}_{ij}, \ov{G})$ for some symmetric 
facet $\ov G$ of $\ov \De$. 
\end{itemize}
Moreover, if $H$ is inessential on $\ov \De$ then the polytope $\ov \De$
is the double expansion of a trapezoid along two parallel edges and 
the asymmetric facets are the base-type facets.
\end{prop}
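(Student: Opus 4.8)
\emph{Proof proposal for the final sentence.} The plan is to put $\ov\Delta$ in the explicit normal form \eqref{eq:Yb} and then recognize it as a double expansion by means of Lemma~\ref{lemma:dexpan}. By the main part of the proposition, $\ov\Delta$ is a $\Delta_3$ bundle over $\Delta_1$ whose symmetric facets are exactly its two base facets; hence the four asymmetric facets $F_1,\dots,F_4$ are precisely the fiber facets. First I would write $\ov\Delta$ as in \eqref{eq:Yb} with $k=3$, governed by parameters $a=(a_1,a_2,a_3)$ and the convention $a_4:=0$, so that $F_1,\dots,F_4$ carry the conormals $-e_1,-e_2,-e_3,\,e_1+e_2+e_3$ and the base facets carry $-e_4$ and $e_4+\sum_{i=1}^3 a_ie_i$. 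By Corollary~\ref{cor:Mabc} we may write $\langle H,c_{\ov\Delta}\rangle=\sum_{i=1}^6\gamma_i\kappa_i$, where $\gamma_5=\gamma_6=0$ because the base facets are symmetric, $\gamma_1,\dots,\gamma_4\neq 0$ because the fiber facets are asymmetric, and $H$ is inessential exactly when $\sum_{a_i=\alpha}\gamma_i=0$ for every value $\alpha$.

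The next step is equivalence bookkeeping. By Lemma~\ref{le:equiv} (as in the proof of Proposition~\ref{DkoverD1}), two fiber facets satisfy $F_i\sim F_j$ if and only if $a_i=a_j$, so the value-classes of $a_1,\dots,a_4$ are exactly the equivalence classes among $F_1,\dots,F_4$. Since every $\gamma_i$ is nonzero, the inessentiality relation forces each value-class to contain at least two indices; as there are four indices, the classes form either a single block of four or two blocks of two. The block-of-four case gives $a=(0,0,0)$ and $\ov\Delta=\Delta_1\times\Delta_3$, with all four fiber facets mutually equivalent; but in the setting in which the proposition is applied (Theorem~\ref{thm:4d}) $H$ is essential on $\Delta$, so the blowups carrying $\ov\Delta$ to $\Delta$ must turn $H$ from inessential into essential, and by Proposition~\ref{prop:blow4}(iii) this requires an edge blowup $(F_{ij},g)$ with $F_i\not\sim F_j$ --- impossible when all fiber facets are equivalent. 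Hence we are in the two-block case; after permuting the first three coordinates we may assume $a=(\alpha,\alpha,0)$ with $\alpha\neq 0$, and $F_1\sim F_2$, $F_3\sim F_4$.

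Now I would apply Lemma~\ref{lemma:dexpan}. We have $F_1\sim F_2$ and $F_3\sim F_4$, and since $\ov\Delta$ is combinatorially $\Delta_3\times\Delta_1$ and any two facets of $\Delta_3$ meet, $F_{12}\neq\emptyset$ and $F_{34}\neq\emptyset$. The lemma then identifies $\ov\Delta$ as the double expansion of $\Tilde\Delta:=F_{13}$ along $\Tilde F:=F_2\cap F_{13}$ and $\Tilde F':=F_4\cap F_{13}$, with base-type facets $F_1,\dots,F_4$ --- which are exactly the asymmetric facets, as required. To see that $\Tilde\Delta$ is a trapezoid and that the expansion is along parallel edges, I would restrict the remaining conormals to the plane $P(F_{13})=\{x_1=-\kappa_1,\,x_3=-\kappa_3\}$ with coordinates $(x_2,x_4)$: this yields the edge conormals $(-1,0)$ from $F_2$, $(1,0)$ from $F_4$, and $(0,-1)$ and $(\alpha,1)$ from the two base facets. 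Thus $F_{13}$ is a smooth quadrilateral whose edges $\Tilde F,\Tilde F'$ have anti-parallel conormals $(\mp1,0)$ and are therefore parallel, while $(0,-1)$ and $(\alpha,1)$ are non-parallel because $\alpha\neq 0$; so $\Tilde\Delta$ is a genuine trapezoid, expanded along its two parallel edges.

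The main obstacle I anticipate is not a hard estimate but the bookkeeping that ties the abstract structure to the concrete geometry: showing that ``$H$ inessential with all four fiber facets asymmetric'' forces the $2+2$ equivalence pattern (and correctly disposing of the degenerate $a=(0,0,0)$ case, where $F_{13}$ collapses to a parallelogram), and then matching the base-type and fiber-type facets produced by Lemma~\ref{lemma:dexpan} with the explicit edges of $F_{13}$ so that the parallelism of $\Tilde F$ and $\Tilde F'$, and hence the identity of $\Tilde\Delta$ as a trapezoid, become visible. The normal form \eqref{eq:Yb} together with Corollary~\ref{cor:Mabc} is what makes both of these transparent.
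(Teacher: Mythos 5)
Your argument is correct and follows the same route the paper takes: the paper's own proof of this last sentence simply observes that inessentiality forces each asymmetric fiber facet to be equivalent to another one and then declares it ``straightforward to check'' that $\ov\De$ is the double expansion of a trapezoid along its two parallel sides; your normal-form computation via \eqref{eq:Yb}, Corollary~\ref{cor:Mabc}, and Lemma~\ref{lemma:dexpan} is exactly the check being waved at (the paper even points to Proposition~\ref{prop:essblow} as an alternative route to the double-expansion structure).

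One loose end: your disposal of the all-equivalent case $a=(0,0,0)$ appeals to $H$ being essential on $\De$, which is a hypothesis of Theorem~\ref{thm:4d} but not of Proposition~\ref{prop:4dep} as stated, so as written your argument only proves the final sentence under an extra assumption. The cleaner fix is to note that the identical application of Lemma~\ref{lemma:dexpan} goes through in that case too (any pairing $F_1\sim F_2$, $F_3\sim F_4$ works, and your own coordinate computation with $\alpha=0$ exhibits $F_{13}$ as a parallelogram, i.e.\ a degenerate trapezoid expanded along two parallel edges), so the conclusion holds in all cases rather than being vacuously excluded.
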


\begin{proof}
Since the  conormals to the $F_i$ are linearly dependent, 
$F_{1234} = \emptyset$.
By part (i) of Lemma~\ref{le:4comb}, this implies that the  symmetric subspace 
$\Ss \subset \Delta$ has two components. 
Moreover, by Lemma~\ref{le:symorder}, the rectangle order is the
same on every 
symmetric $2$-face in each component of $\Ss$.
Hence, after possibly renumbering, $F_1$ and $F_3$ are not
opposite on the rectangle order of any 
symmetric $2$-face.

If both components of $\Ss$ contain a single symmetric facet, then 
each symmetric facet is a $3$-simplex.
Since the two symmetric facets don't intersect,
$F_{1234} = \emptyset$,  and
the conormals to $F_1,\dots,F_4$ lie in a hyperplane,
this implies that $\De$ is a $\Delta_3$ bundle over $\Delta_1$,
and the base facets are the symmetric facets.

So assume on the contrary that at least one component of $\Ss$
contains more than one symmetric facet.
Since $F_1$ and $F_3$ are not opposite in the rectangle order of any
symmetric $2$-face,
 every symmetric facet intersects $F_{13}$.  
Moreover, by
part (ii) of Lemma~\ref{le:4comb}, the triple intersections
$F_2 \cap F_{13}$ and $F_4 \cap F_{13}$ are not empty.
Since there are at least $3$ symmetric facets, this 
implies that the smooth convex polygon $F_{13}$
has  more than four edges. Also,
since the $\eta_i$ are linearly dependent, 
the two edges $F_2 \cap F_{13}$ and $F_4 \cap F_{13}$ 
are parallel.
Hence, by part (i) of Lemma~\ref{le:2blowdown},
there exists a  symmetric facet $G$ of $\Delta$ so that
the edge $G \cap F_{13}$ of $F_{13}$ can be blown 
down; moreover,
$G \cap F_{13}$ 
is not
adjacent to both 
$F_2 \cap F_{13}$ and $F_3 \cap F_{13}$.
Hence, $G$  has 
at least one 
symmetric $2$-face $g$.
Since $F_1$ and $F_3$ are not opposite in the rectangle order
of any symmetric face of $G$, we may renumber so that $F_1$ and $F_2$
are opposite on the rectangle order of 
$g$.
By Proposition~\ref{prop:symcent}, $g$ has four asymmetric edges $F_1 \cap g,
\ldots, F_4 \cap g$ and the restriction of $H$ to $g$ is mass linear.
Since $F_1 \cap F_2 \cap g = \emptyset$ and $F_3 \cap F_4 \cap g =
\emptyset$, 
Proposition~\ref{prop:2dim}
implies that $\gamma_1  + \gamma_2
= \gamma_3 + \gamma_4 = 0$, where $\gamma_i$ is the coefficient
of the support number of $F_i$ in the linear function $\langle H,
c_\Delta \rangle$.
Lemma~\ref{le:blowblow} implies that
$\Delta$ can be obtained  from
a smooth polytope $\ov\Delta$ by blowing up
along a face $f$, where $f$ is either a  
$2$-face  
of the form $\ov{G}\,\!' \cap \ov{G}\,\!''$ where $G'$ and $G''$ are symmetric facets, or  an 
edge of the form $f = \ov{F}_{12} \cap \ov{G}\,\!'$ or $f = \ov{F}_{34} \cap \ov{G}\,\!'$,
where $G'$ is a symmetric 
facet;  moreover,  in the latter case, $f$
intersects each $\ov{F}_i$.
By Lemma~\ref{le:blowcent}, $\langle H, c_{\ov\Delta} \rangle = 
\langle H, c_{\Delta} \rangle $. 
The first claim 
now follows by induction.

Finally, assume that $H$ is inessential on $\ov \De$.
Since the asymmetric facets are the four fiber facets,  
each fiber facet is equivalent to at least one other fiber facet.
It is straightforward 
to check that this implies that
$\ov \De$ is the double expansion of a trapezoid along
the two parallel sides. (Alternatively, $\ov \De$ is a double
expansion by Proposition~\ref{prop:essblow}.)
\end{proof}

To deal with the case when 
the
asymmetric facets are linearly 
independent, we
need one final technical lemma.

\begin{lemma}\labell{4comp}  
Let $H \in \ft$ be a mass linear function
on a $4$-dimensional smooth polytope 
$\Delta \subset \ft^*$  
with exactly four asymmetric facets $F_1,\ldots,F_4$
with linearly independent conormals. 
If $F_1$ and $F_2$ are opposite in the rectangle order of 
each   
symmetric $2$-face,
then $F_1 \sim F_2$.
\end{lemma}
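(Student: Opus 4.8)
The plan is to verify the equivalence criterion of Lemma~\ref{le:equiv}. Writing $\eta_i$ for the outward conormal of $F_i$, it suffices to show that the subspace $V\subset\ft$ spanned by the conormals of all facets other than $F_1$ and $F_2$ --- that is, by $\eta_3$, $\eta_4$ together with the conormals of all symmetric facets --- is a hyperplane. Since $\eta_1,\dots,\eta_4$ are linearly independent they form a basis of $\ft$, so I can organize the whole argument around the genuinely two-dimensional plane $N:=\mathrm{span}(\eta_1+\eta_2,\eta_3+\eta_4)$ and show that every symmetric conormal lies in $N$.

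First I would analyze a symmetric $2$-face $g$. By Proposition~\ref{prop:symcent} the restriction $H|_g$ is mass linear and $g$ has exactly four asymmetric edges $F_1\cap g,\dots,F_4\cap g$; since $H|_g\neq 0$, Proposition~\ref{prop:2dim} forces $g=\Delta_1\times\Delta_1$, a genuine rectangle. By hypothesis $F_1,F_2$ are opposite in its rectangle order, so $F_1\cap g$ and $F_2\cap g$ are parallel edges, and then $F_3\cap g$, $F_4\cap g$ are the other parallel pair. Hence the images of $\eta_1$ and $\eta_2$ (resp.\ of $\eta_3$ and $\eta_4$) in $\ft/N_g$ are negatives of one another, where $N_g$ is the two-dimensional conormal space of $P(g)$; this gives $\eta_1+\eta_2\in N_g$ and $\eta_3+\eta_4\in N_g$. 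As these two vectors are independent, $N_g=N$ for every symmetric $2$-face $g$. Since a symmetric $2$-face is the intersection of two symmetric facets, this shows that the conormal of any symmetric facet that meets another symmetric facet lies in $N$.

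The main obstacle is the symmetric facets $G$ that contain no symmetric $2$-face, for which the "parallel $2$-faces" idea says nothing directly. Such a $G$ meets no other symmetric facet, so by the type analysis preceding Lemma~\ref{le:4comb} (equivalently, Proposition~\ref{prop:3d} applied to the mass linear $H|_G$) it is a simplex $\Delta_3$ with facets $F_1\cap G,\dots,F_4\cap G$. For a smooth $3$-simplex the primitive outward conormals of the four facets sum to zero; these are the images of $\eta_1,\dots,\eta_4$ in $\ft/\langle\alpha_G\rangle$, so $\eta_1+\eta_2+\eta_3+\eta_4$ is a nonzero multiple of $\alpha_G$. Hence $\alpha_G$ is a multiple of $(\eta_1+\eta_2)+(\eta_3+\eta_4)\in N$, so $\alpha_G\in N$ as well. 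Combining the two cases, every symmetric conormal lies in $N$.

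Finally I would pin down the dimension. The inclusion just proved gives $V\subseteq N+\mathrm{span}(\eta_3,\eta_4)=\mathrm{span}(\eta_1+\eta_2,\eta_3,\eta_4)$, a hyperplane, so it remains to prove equality, for which it is enough to produce one symmetric conormal with nonzero $(\eta_1+\eta_2)$-component; then $\eta_1+\eta_2\in V$ and $\dim V=3$. If no such conormal existed, every symmetric conormal would be a multiple of $\eta_3+\eta_4$. But a $\Delta_3$-type symmetric facet has conormal a multiple of $\eta_1+\cdots+\eta_4\not\parallel\eta_3+\eta_4$, so in this situation no symmetric facet could be a simplex; each would then contain a symmetric $2$-face and hence meet another symmetric facet, while all having mutually parallel conormals and therefore being pairwise disjoint --- a contradiction. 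Thus $\dim V=3$, and Lemma~\ref{le:equiv} yields $F_1\sim F_2$.
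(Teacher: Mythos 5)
Your proof is correct, and in its main case it takes a genuinely different route from the paper. Both arguments reduce the lemma to showing that every symmetric conormal lies in the $2$-plane $N=\mathrm{span}(\eta_1+\eta_2,\eta_3+\eta_4)$ before invoking Lemma~\ref{le:equiv}, and both treat a symmetric facet meeting no other symmetric facet identically (it is a $\Delta_3$, so its conormal is proportional to $\eta_1+\cdots+\eta_4\in N$). The difference is in the remaining symmetric facets: the paper organizes each component of the symmetric subspace into a chain $G_1,\dots,G_k$ via Lemma~\ref{le:4comb}(iii), identifies the end facets as $\Delta_2$ bundles over $\Delta_1$ and the intermediate ones as $\Delta_1$ bundles over $\Delta_1\times\Delta_1$, and propagates containment in two auxiliary planes inward from both ends before intersecting them; you instead note that each symmetric $2$-face $g$ is a lattice rectangle whose opposite edge pairs are $\{F_1\cap g,F_2\cap g\}$ and $\{F_3\cap g,F_4\cap g\}$, so its $2$-dimensional conormal space contains, hence equals, $N$ --- and that conormal space is spanned by the conormals of the two symmetric facets through $g$. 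This is more local, shorter, and bypasses Lemma~\ref{le:4comb}, whose stated hypotheses include pervasiveness of the asymmetric facets (a hypothesis not present in Lemma~\ref{4comp} itself, though it holds wherever the paper applies it). You are also somewhat more careful at the last step: the paper only remarks that $V$ lies in a hyperplane, whereas Lemma~\ref{le:equiv} requires $\dim V=3$ exactly, and your observation that otherwise all symmetric facets would be pairwise disjoint yet each forced to meet another closes this. The only ingredients left implicit are routine: the images of the $\eta_i$ in $\ft/N_g$ are primitive (by smoothness of $\Delta$), and at least one symmetric facet exists because a $4$-dimensional polytope has at least five facets.
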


\begin{proof}\,
First, note that by Proposition~\ref{prop:symcent}, every symmetric facet $G$ 
is itself a $3$-dimensional smooth polytope with exactly four
asymmetric faces 
$F_1 \cap G, \ldots,F_4 \cap G$, 
and   
the restriction of $H$ to $G$ 
is mass linear.

Let $\eta_i$ be the outward  conormal to $F_i$.
If $G$ has no symmetric facets,
then by Proposition~\ref{prop:3d} $G$ is a $3$-simplex with
with facets $F_1 \cap G, \ldots, F_4 \cap G$. Hence,
$\eta_1 + \eta_2 + \eta_3 + \eta_4$ is a multiple of $\alpha$,
the outward conormal to $G$.
Since the $\eta_i$ are linearly independent, this
implies that $\alpha$ 
is a multiple of $\eta_1 + \eta_2 + \eta_3 + \eta_4.$

Otherwise, the component of $\Ss$ which contains
$G$ contains more than one symmetric facet.
By part (iii) of Lemma~\ref{le:4comb}
we may label these symmetric facets $G_1,\ldots,G_k$ 
so that $G_i \cap G_{i+1} \neq \emptyset$ 
for all $i$  
but otherwise $G_i \cap G_j = \emptyset$,
and so that $F_{12} \cap G_1 = \emptyset = F_{34} \cap G_k$.
Let $\alpha_i$ denote the outward conormal to $G_i$.
The polytope  $G_1$ has one symmetric facet 
$G_2 \cap G_1$; moreover, $F_{12} \cap G_1 = \emptyset$.
Hence by Proposition~\ref{prop:3d}, $G$ is a $\Delta_2$ bundle over $\Delta_1$
with fiber facets $F_3 \cap G_1$, $F_4 \cap G_1$, and $G_2 \cap G_1$.
Hence
 $\alpha_2 + \eta_3 + \eta_4$ is
a multiple of $\alpha_1$, and so  
$\alpha_2$ lies
in the plane spanned by $\eta_3 + \eta_4$ and $\alpha_1$.
If $k > 2$,
the polytope $G_2$ has two symmetric facets $G_1 \cap G_2$
and $G_3 \cap G_2$.
Hence,  by Proposition~\ref{prop:3d}, $G$ is a $\Delta_1$ bundle
over $\Delta_1 \times \Delta_1$
with fiber facets $G_1 \cap G_2$ and $G_3 \cap G_2$. Hence,
$\alpha_1 + \alpha_3$ is
a multiple of $\alpha_2$, and so  
$\alpha_3$ lies in the plane
spanned by $\alpha_1$ and $\alpha_2$, and hence in the plane
spanned by $\eta_3 + \eta_4$ and $\alpha_1$. 
Continuing in this way, 
$\alpha_j$ lies in the plane
spanned by $\eta_3 + \eta_4$ and $\alpha_1$ for all $j$.
Since $F_{34} \cap G_k = \emptyset$,
a similar argument shows that 
that $\alpha_j $ lies in the plane spanned by $\eta_1 + \eta_2$ and $\alpha_k$ for
all $j$.
Since  
the $\alpha_j$ are not  all parallel and 
the $\eta_i$ are linearly independent,
this implies that $\alpha_j$ lies in the plane
spanned by $\eta_1 + \eta_2$ and $\eta_3 + \eta_4$ for all $j$.

In short, the conormal to every symmetric facet lies in the plane spanned by
$\eta_1 + \eta_2$ and $\eta_3 + \eta_4$.
Thus, the conormal to every facet except $F_1$ and $F_2$ lies in the hyperplane
spanned by $\eta_1 + \eta_2$, $\eta_3$, and $\eta_4$.
By Lemma~\ref{le:equiv}, this implies that $F_1$ is equivalent to $F_2$.
\end{proof}

\begin{proposition}\labell{prop:4indep}
Let $H  \in \ft$ be a  mass linear function on
a $4$-dimensional smooth polytope
$\Delta \subset \ft^*$  
with exactly four asymmetric facets $F_1,\ldots,F_4$ with linearly
independent conormals. 
Assume that every asymmetric facet is pervasive.
Then there exists a smooth polytope $\ov\Delta \subset \ft^*$ so that:
\begin{itemize}
\item $H$ is an inessential  mass linear function on $\ov\Delta$.
\item  $\ov\Delta$ is the double expansion of a smooth polygon $\Tilde \De$, 
and the fiber-type facets are the symmetric facets.
\item $\Delta$ can be obtained from $\ov\Delta$ by a series of blowups.
Each blowup is  along a 
symmetric $2$-face or 
is  of type $(F_{ij},G)$.
\end{itemize}
\end{proposition}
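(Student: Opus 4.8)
The plan is to follow the architecture of the proof of Proposition~\ref{prop:4dep}, but with the bundle recognition replaced by the double-expansion recognition of Lemma~\ref{lemma:dexpan}, and with Lemma~\ref{le:2blowdown}(i) replaced by Lemma~\ref{le:2blowdown}(ii): because the conormals $\eta_1,\dots,\eta_4$ are now linearly independent, the two asymmetric edges of a polygon $F_{ij}$ are non-parallel rather than parallel. I would argue by induction on the number of facets, at each stage either recognizing $\Delta$ directly as a double expansion or presenting it as an allowed blowup of a smaller smooth polytope carrying the same linear function $\langle H,c\rangle$.

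First I would record the combinatorial data. By Lemma~\ref{le:4comb} the symmetric subspace $\Ss$ has two components when $F_{1234}=\emptyset$ and one otherwise, each component being of type (a) or (b); by Lemma~\ref{le:symorder} the rectangle order is constant on each component. The goal is to reach a labelling with $F_1\sim F_2$ and $F_3\sim F_4$: since the $F_i$ are pervasive we then have $F_{12}\neq\emptyset$ and $F_{34}\neq\emptyset$, so Lemma~\ref{lemma:dexpan} presents the polytope as the double expansion of the polygon $F_{13}$ along $F_2\cap F_{13}$ and $F_4\cap F_{13}$, with base-type facets $F_1,\dots,F_4$ and every fiber-type facet symmetric; Lemma~\ref{le:dexpan0} then gives at once that $H$ is inessential there.

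I would then split into three cases. If $\Ss$ carries no symmetric $2$-face, it consists of one or two disjoint symmetric simplices; arguing as in the first part of the proof of Lemma~\ref{4comp}, the conormal of each such simplex is parallel to $\eta_1+\eta_2+\eta_3+\eta_4$, and feeding this into Lemma~\ref{le:equiv} yields $F_1\sim F_2$ and $F_3\sim F_4$, so the recognition step applies with $\ov\Delta=\Delta$. If symmetric $2$-faces exist and some pair, say $\{1,2\}$, is opposite in the rectangle order of all of them, then Lemma~\ref{4comp} gives $F_1\sim F_2$, and the complementary pair gives $F_3\sim F_4$; again $\ov\Delta=\Delta$ works. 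Otherwise symmetric $2$-faces exist but no pair is opposite in all of them, which by Lemma~\ref{le:symorder} forces two components with distinct rectangle orders and hence a component $C$ with at least two symmetric facets. Choosing $i,j$ not opposite on $C$, the polygon $F_{ij}$ has more than four edges, and its two asymmetric edges are non-parallel, so Lemma~\ref{le:2blowdown}(ii) supplies a symmetric facet $G$ with a symmetric $2$-face whose edge $G\cap F_{ij}$ can be blown down. Lemma~\ref{le:blowblow} then exhibits $\Delta$ as the blowup of a smaller smooth polytope $\ov\Delta$ along a symmetric $2$-face (its case (iii)) or along an edge of type $(\ov F_{k\ell},\ov G')$ (its case (ii)); the balancing identity $\gamma_k+\gamma_\ell=0$ required for the latter follows by applying Proposition~\ref{prop:2dim} to the restriction of $H$ to a symmetric $2$-face of $G$ on which $F_k,F_\ell$ are opposite. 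By Lemma~\ref{le:blowcent} the function $\langle H,c\rangle$ is unchanged, and the induction closes.

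The main obstacle will be the bookkeeping of rectangle orders across the one or two components and the verification that every reduction is genuinely of one of the two permitted types. In particular I must guarantee that the facet $G$ produced by Lemma~\ref{le:2blowdown}(ii) always possesses a symmetric $2$-face, so that Lemma~\ref{le:blowblow} never falls into its vertex case (i), and that in the edge case the blown-down edge meets every asymmetric facet and satisfies $\gamma_k+\gamma_\ell=0$. The delicate point is that Lemma~\ref{4comp} is vacuous precisely in the minimal configurations where $\Ss$ reduces to disjoint simplices, so that case must be disposed of by the direct equivalence computation rather than by Lemma~\ref{4comp}.
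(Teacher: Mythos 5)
Your overall architecture matches the paper's: the paper's Case (a) (all symmetric $2$-faces share a rectangle order, including the vacuous situation) is your cases (i) and (ii), resolved via Lemma~\ref{4comp} and Lemma~\ref{lemma:dexpan} followed by Lemma~\ref{le:dexpan0}; and the inductive reduction in the remaining case runs, as in the paper, through Lemma~\ref{le:blowblow}, Proposition~\ref{prop:2dim} for the identities $\gamma_1+\gamma_3=\gamma_2+\gamma_4=0$, and Lemma~\ref{le:blowcent}. (Your worry that Lemma~\ref{4comp} is ``vacuous'' when $\Ss$ consists of disjoint simplices is unfounded: its hypothesis is then vacuously satisfied and the first paragraph of its proof is exactly your direct computation with $\eta_1+\eta_2+\eta_3+\eta_4$, so your cases (i) and (ii) collapse into the paper's single Case (a).)

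The genuine gap is in your case (iii), at the sentence ``its two asymmetric edges are non-parallel, so Lemma~\ref{le:2blowdown}(ii) supplies a symmetric facet $G$ \dots whose edge $G\cap F_{ij}$ can be blown down.'' Lemma~\ref{le:2blowdown}(ii) does not take non-parallelism as input; it requires exhibiting three \emph{adjacent} edges $e,e',e''$ whose middle conormal is not a positive combination of the outer two, and even then it only excludes those three edges from being the one that blows down --- you must still rule out the asymmetric edges and show the blown-down edge is $G_i\cap F_{ij}$ for a symmetric $G_i$ that possesses a symmetric $2$-face. The paper supplies exactly this missing input with a fan argument: since all triple intersections $F_{ijk}$ are nonempty but $F_{1234}=\emptyset$, the boundary of the cone $\Cc=\bigl\{\sum a_i\eta_i \mid a_i>0\bigr\}$ separates the conormals of the symmetric facets into the two components $C$ and $C'$ of $\Ss$. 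One then takes the polygon $F_{12}$ with $\{1,2\}$ opposite in the rectangle order of $C'$ (not merely ``not opposite in $C$'' --- the wrong choice of pair leaves both chains multi-edged); its edges are, in cyclic order, $F_3\cap F_{12}$, the chain $G_1\cap F_{12},\dots,G_k\cap F_{12}$ from $C$, $F_4\cap F_{12}$, and a single edge $G'\cap F_{12}$ from $C'$, and the cone separation forces the conormal of $G'\cap F_{12}$ to fail to be a positive combination of those of $F_3\cap F_{12}$ and $F_4\cap F_{12}$. That is precisely the hypothesis of Lemma~\ref{le:2blowdown}(ii) for the triple $(F_4\cap F_{12},\,G'\cap F_{12},\,F_3\cap F_{12})$, and the blown-down edge is then necessarily some $G_i\cap F_{12}$ with $k\ge 2$, so $G_i$ has a symmetric $2$-face and Lemma~\ref{le:blowblow} avoids its vertex case. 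Without this separation argument your induction has no engine: a smooth polygon with more than four edges always has a blowdownable edge, but nothing in your argument prevents it from being an asymmetric edge or $G'\cap F_{12}$.
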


\begin{proof} 
Let $\eta_1,\dots,\eta_4$ be the outward conormals to $F_1,\dots,F_4$.
\\

\noindent{\bf Case (a):} 
{\it Every  
symmetric $2$-face has the same rectangle 
order} 

Assume, for example, that
$F_1$ and $F_2$ are opposite in every such face. 
Lemma~\ref{4comp} implies that
$F_1$ and $F_2$ are equivalent.
Similarly, $F_3$ and $F_4$ are equivalent.   
Since $F_1,\dots,F_4$ are pervasive, Lemma~\ref{lemma:dexpan} implies that
$\De$ is a double expansion, and $F_1,\dots,F_4$ are the base-type facets.
Hence,  by Lemma~\ref{le:dexpan0}, $H$ is inessential,
and the proposition holds with  $\ov\De=\De$.
\\

\noindent{\bf Case (b):}
{\it The general case}

By Lemma~\ref{le:symorder}, every
$2$-face in 
each component of the symmetric subspace  has the same rectangle order.
Hence, if $\Ss$ has one component, we are in Case (a).
Therefore, by Lemma~\ref{le:4comb}, we may assume that $F_{1234} = \emptyset$
and that $\Ss$ has two components; moreover, 
none of the triple intersections  $F_{ijk}$ are empty.
Consider the fan associated to the polytope $\De$. 
Since
$F_{ijk} \neq \emptyset$,
the set of non-negative linear combinations of $\eta_i, \eta_j$,
and $\eta_k$ is a convex cone in this fan 
for each triple $\{i,j,k\} \subset \{1,2,3,4\}$.
Deleting the
union 
${\mathcal B}$ of these cones divides $\ft$ into two open regions.
Since $\eta_1,\ldots,\eta_4$ are 
linearly
independent, one of these
regions is the open  cone 
$\Cc := \big\{ \sum_{i=1}^4 a_i \eta_i \; \big| \; a_i > 0\big\}$;
denote the other by $\Cc'$.
Each cone in the fan lies
entirely in the closure of one of these regions.
On the other hand, since $F_{1234}=\emptyset$,
the cone 
$\Cc$ 
itself does not lie in the fan,
and
so there must be another facet whose outward conormal lies in  $\Cc$.
Hence this boundary 
${\mathcal B}$
divides the rays of the fan corresponding to the 
symmetric facets into two nonempty sets, which  must
correspond to the two components of $\Ss$.  
Let $C$ be the component of $\Ss$ corresponding to the symmetric
facets whose conormal lies in $\Cc$; let $C'$ denote the other component.

If either component 
has only one symmetric facet $G$ then $G$ is a simplex, and so it 
has no 
symmetric $2$-faces.
Therefore, we are in Case (a).
So assume on the contrary that $\Cc$ contains more than one symmetric facet.
After renumbering, we may assume that
$F_1$ and $F_3$ are opposite in the rectangle order of the 
symmetric $2$-faces in  the component $C$, and $F_1$ and $F_2$ are opposite
in the rectangle order of the  
symmetric $2$-faces  in 
$C'$.
Since both components contain more than
one symmetric facet, by Lemma~\ref{le:4comb} each
component is of type (b).
Hence, the  edges of the  $2$-dimensional smooth polytope $F_{12}$
are (in order)  $F_3 \cap F_{12}, G_1 \cap F_{12}, \ldots, G_k
\cap F_{12}, F_4 \cap F_{12}, G' \cap F_{12}$,
where $G_1,\ldots,G_k$ are the symmetric facets in $C$,
and $G'$ is 
one of the end  symmetric facets in $C'$.
Moreover, restricting to the plane containing $F_{12}$,
the fact that the conormal to $G_i$ is contained in $\Cc$ 
for all $i$ implies that
the outward conormals to the edges $G_i\cap F_{12}$ are all positive
linear combinations of the outward conormals to $F_3 \cap F_{12}$
and $F_4 \cap F_{12}$.  Hence
the outward conormal to $G' \cap F_{12}$ cannot be.
Therefore, by part (ii) 
of Lemma~\ref{le:2blowdown},
there is at least one edge $G_i \cap F_{12}$ 
that can be blown down in $F_{12}$.

Note that $G_i$ has at least one  
symmetric $2$-face $g$. 
Since 
$F_{13} \cap g = \emptyset$ and $F_{24} \cap g = \emptyset$,
Proposition~\ref{prop:2dim} implies that $\gamma_1 + \gamma_3 = \gamma_2 + \gamma_4 = 0$.
Since $F_1$ and $F_2$ are not opposite in the rectangle order of the
symmetric faces of $G_i$, Lemma~\ref{le:blowblow} implies that $\Delta$ can be obtained
from a smooth polytope $\ov \De$ by blowing up along a face $f$, where
$f$ is either a  
symmetric $2$-face of the form $\ov G_{i-1} \cap \ov G_{i+1}$,
or an edge of the form $\ov F_{13} \cap \ov G'$ or $\ov F_{24} \cap \ov G'$, where $G'$ is
a symmetric face.
By Lemma~\ref{le:blowcent}, $\langle H, c_{\ov\Delta} \rangle = 
\langle H, c_{\Delta} \rangle $. 
The result now follows by induction.
\end{proof}

\subsection{More than four asymmetric facets}
\labell{s:more4}

To finish the proof of Theorem \ref{thm:4d}  
outlined in \S\ref{ss:out} we 
now analyze mass linear functions on $4$-dimensional polytopes with
more than four asymmetric facets, each of which is pervasive.
As we see in Corollary \ref{cor:5plus} this case does not 
occur for an essential $H$.
Additionally,
 we classify polytopes which admit essential mass linear
functions with nonpervasive asymmetric facets.

\begin{proposition}\labell{prop:5plus}
Let $H \in \ft$
be a mass linear function
on a $4$-dimensional smooth polytope  
$\Delta \subset \ft^*$ with more than four asymmetric facets.
Assume
that every asymmetric facet is pervasive. 
Then one of the following statements is true:
\begin{enumerate}
\item  $\Delta$ is the four-simplex $\Delta_4$, or
\item $\Delta$ is a  $\Delta_2$ bundle over $\Delta_2$.
\end{enumerate}
\end{proposition}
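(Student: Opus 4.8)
The plan is to divide the argument according to whether or not $\Delta$ has a symmetric facet, since the two situations call for quite different tools.

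\textbf{Case 1: every facet of $\Delta$ is asymmetric.} Here I would invoke [I, Corollary~A.8], which guarantees that a polytope carrying a mass linear function all of whose facets are asymmetric is combinatorially equivalent to a product of simplices. As $\Delta$ is $4$-dimensional, the only possibilities are $\Delta_4$, $\Delta_3\times\Delta_1$, $\Delta_2\times\Delta_2$, $\Delta_2\times\Delta_1\times\Delta_1$, and $\Delta_1^4$. The hypothesis that every facet is pervasive rules out every product with a $\Delta_1$ factor: the two facets coming from a $\Delta_1$ factor correspond to disjoint facets of the product, so neither is pervasive. This leaves $\Delta_4$ and $\Delta_2\times\Delta_2$. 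In the former case $\Delta$ is the simplex $\Delta_4$ (a smooth polytope combinatorially equivalent to a simplex is affinely the standard one). In the latter, smoothness lets me apply Lemma~\ref{prodsimp} to upgrade the combinatorial equivalence to the conclusion that $\Delta$ is a $\Delta_2$ bundle over $\Delta_2$, as required.

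\textbf{Case 2: $\Delta$ has a symmetric facet $G$.} By Proposition~\ref{prop:symcent}, $H|_G$ is mass linear on the $3$-dimensional polytope $G$, and intersection with $G$ sets up a bijection between the asymmetric facets of $\Delta$ and those of $G$; hence $G$ itself has more than four asymmetric facets. Consulting Proposition~\ref{prop:3d}, the only $3$-dimensional polytopes with more than four asymmetric facets are a $\Delta_2$ bundle over $\Delta_1$ (with all five facets asymmetric) and $\Delta_1\times\Delta_1\times\Delta_1$ (with all six facets asymmetric). In both cases \emph{every} facet of $G$ is asymmetric, so $G$ meets no other symmetric facet of $\Delta$.

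The bulk of the work, and what I expect to be the main obstacle, is the global combinatorial reconstruction of $\Delta$ from the local structure of $G$. First I would rule out $G\cong\Delta_1^3$: the six pervasive asymmetric facets of $\Delta$ restrict to the six facets of $\Delta_1^3$, which split into three pairs $\{F_i,F_j\}$ with $F_i\cap F_j\cap G=\emptyset$, and I would argue that such a pairing is incompatible with the pairwise pervasiveness of all six facets in a simple $4$-polytope (using powerfulness, [I, Proposition~A.2], to control the edges through the vertices off $G$), so that this subcase cannot actually occur. In the surviving subcase $G\cong\Delta_2\times\Delta_1$, I would show that $G$ is the \emph{unique} symmetric facet and that the five asymmetric facets together with $G$ realize exactly the face lattice of $\Delta_2\times\Delta_2$—the three fiber facets of $G$ matching one family of facets and the triple $\{G,F_{j_1},F_{j_2}\}$ the other—after which Lemma~\ref{prodsimp} once more yields that $\Delta$ is a $\Delta_2$ bundle over $\Delta_2$. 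The delicate points throughout are the bookkeeping of which triple intersections vanish (inside $G$ versus inside $\Delta$) and the repeated transfer of combinatorial constraints between $G$ and $\Delta$ via Proposition~\ref{prop:symcent}.
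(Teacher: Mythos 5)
Your Case 1 (all facets asymmetric) is correct and coincides with the paper's argument: [I, Corollary~A.8], pervasiveness eliminating the $\Delta_1$ factors, and Lemma~\ref{prodsimp}. Your setup for Case 2 also matches the paper: Proposition~\ref{prop:symcent} plus Proposition~\ref{prop:3d} reduce to the two subcases where the symmetric facet $G$ is a $\Delta_2$ bundle over $\Delta_1$ (five asymmetric facets) or $(\Delta_1)^3$ (six asymmetric facets), and in either subcase no two symmetric facets of $\Delta$ meet. However, the hard part of Case 2 is left as a plan rather than a proof, and one step of that plan hides a genuine gap.

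Concretely: in the subcase where $G$ is a $\Delta_2$ bundle over $\Delta_1$, you assert that you ``would show that $G$ is the \emph{unique} symmetric facet,'' but nothing in your argument rules out a second, disjoint symmetric facet $G'$ of the same type. The paper has to work to exclude this: an Euler characteristic count of the faces, edges, and vertices not lying on any symmetric facet yields the relation $|V| + 2|S| = 5$, so $|S|\in\{1,2\}$, and the configuration $|S|=2$ is then eliminated only by an explicit computation with the conormals (placing $\eta_1,\dots,\eta_4$ in standard position, deducing $F_3\sim F_4$, and deriving a contradiction from the combinatorics of the restriction $H|_{F_3}$ via Lemma~\ref{le:ea} and Proposition~\ref{prop:3d}). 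Only after $|S|=1$ is established does the ``$n+2$ facets implies product of simplices'' fact give the face lattice of $\Delta_2\times\Delta_2$ and let Lemma~\ref{prodsimp} finish. Similarly, your exclusion of $G\cong(\Delta_1)^3$ is only gestured at via powerfulness; the paper instead slices $\Delta$ by hyperplanes parallel to $G$ and derives a contradiction with pervasiveness and simplicity. Neither of these omissions is a wrong turn — the paper's proof confirms your outline is the right one — but as written the proposal does not contain the arguments that actually close the two non-occurring configurations, and the uniqueness of the symmetric facet in particular cannot be taken for granted.
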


\begin{proof}
Assume first that every facet is asymmetric.
By [I, Corollary A.8]
this implies that
$\Delta$ is combinatorially equivalent to the product of simplices.
Since every facet is pervasive, $\Delta$ is 
either  a $4$-simplex or is combinatorially equivalent to 
$\De_2 \times \De_2$.
In the second case, by Lemma~\ref{prodsimp}, $\Delta$
is a $\Delta_2$ bundle over $\Delta_2$.

Therefore, 
we may assume that $\Delta$ has at least one symmetric facet.
Label the asymmetric facets 
$F_1,\ldots,F_k$.  
Proposition~\ref{prop:symcent} implies that
every symmetric face has $k$ asymmetric facets
and that the restriction of $H$ to this face is mass linear.
Since 
Proposition~\ref{prop:2dim} 
implies 
that 
a $2$-dimensional polygon with a mass linear function has at most four 
asymmetric
edges, 
there are no 
symmetric $2$-faces.
Hence,  no symmetric facets of $\De$ intersect, and
each symmetric facet has no symmetric faces.
Thus, Proposition~\ref{prop:3d}
implies that there
are only two possibilities
with $k > 4$.
\MS

\NI
{\bf Case (a)}:  {\it 
 $\Delta$ has five asymmetric facets, and each symmetric facet
is a $\Delta_2$ bundle over $\Delta_1$.}\smallskip

Let $S$ 
denote the set of symmetric facets.
Since every symmetric facet
is  a  $\Delta_2$ bundle over $\Delta_1$,
there are $5|S|$ 
$2$-dimensional faces, 
$9|S|$ edges, and $6|S|$ vertices
that do lie on a symmetric facet.
Since the
five
asymmetric facets are pervasive
there are ten $2$-dimensional 
faces that 
do not
lie on any symmetric facet.
Let $E$ and $V$ be the sets
of edges and vertices, respectively,
that  
do not lie on any symmetric facet.
Since  
the
Euler characteristic of $\Delta$ is $0$,
$$
 5 + |S| - 10 - 5|S| + |E| + 9|S| - |V| - 6|S| = 0,
 $$
and hence $|E| = 5 +  |S| + |V|$.

Each vertex in $V$ lies on four edges in $E$, and
each vertex that  
lies on a symmetric facet lies on
exactly one edge in $E$.
Since exactly two vertices lie on each edge,
$2|E| = 4|V| + 6|S|$.
Combined, these equations yield 
$|V| + 2|S| = 5$. 
Since by assumption $S \neq \emptyset$, this
implies that $|S| =1$ or $2$.

If $|S| = 1$, then $\De$ has $6$ facets.
It is well known that any $n$-dimensional polytope with $n+2$ facets,
such as $\Delta$, is a product of
two 
simplices;
for a proof in the current setting see ~\cite[Prop~1.1.1]{Tim}.
Since $\Delta$ has at most one facet that is not pervasive, this means that
$\De$ is combinatorially equivalent to $\De_2\times \De_2$.
By Lemma~\ref{prodsimp}, this implies that
$\De$ is a $\De_2$ bundle over $\De_2$.

So assume 
instead
that there are two symmetric facets, $G$ and $G'$.
Then $|V| =1 $ and $|E| = 8$.
Since no edge in $E$ can connect two vertices in the same symmetric facet, 
and $G$ and $G'$ each  
have $6$ vertices,
there must be four edges that join $G$ to $G'$
and two edges that join each to the vertex in $V$.
By renumbering, we may assume that
$F_{1234} \neq \emptyset$, that
the edges $F_{123}$ and $F_{124}$ intersect $G$ but not $G'$, 
and that $F_{234}$ and $F_{134}$ intersect $G'$ but not $G$.
Since both $G$ 
and $G'$ are $\Delta_2$ bundles over $\Delta_1$,
this is only possible if  the fiber  facets of $G$
are  $F_1 \cap G$, $F_2\cap G$, and $F_5 \cap G$, 
and the base facets $F_3 \cap G$ and $F_4 \cap G$.
Similarly, the fiber facets of $G'$ are
$F_3 \cap G'$, $F_4 \cap G'$, and $F_5 \cap G'$,
and the base facets are $F_1 \cap G'$ and $F_2 \cap G'$.
This
implies that the remaining four edges
in $E$ are $F_{135}, F_{145}, F_{235}$, and $F_{245}$.

Now let $\eta_i$ denote the outward conormal to the
facet $F_i$, and $\alpha$ and $\alpha'$ denote the
outward conormals to $G$ and $G'$, respectively.
Since $F_{1234} \neq \emptyset $  
and $\De$ is smooth,
there is a change of basis so that
$\eta_1 = (1,0,0,0)$, $\eta_2 = (0,1,0,0)$, $\eta_3 = (0,0,1,0)$,
and $\eta_4 = (0,0,0,1)$.  
Since the fiber facets of $G$  
are  $F_1 \cap G$, $F_2\cap G$, and $F_5 \cap G$,
we must have 
$\eta_1 + \eta_2 +  \eta_5 = A \alpha$
for some integer $A$. 
Similarly, $\eta_3 + \eta_4 + \eta_5 = A'\alpha'$,
for some integer $A'$.  
Since $\eta_1 + \eta_2  \neq \eta_3 + \eta_4$,  
we may assume without loss of  generality that 
$A' \neq 0$.
The polygon $F_{12}$ has only three edges: $F_3 \cap F_{12}$, $F_4 \cap F_{12}$,
and $G \cap F_{12}$.
Hence, it is the standard $2$-simplex, that is,  $\alpha = (x,y, -1,-1)$
for some integers $x$ and $y$.
Therefore, $\eta_5 = (A x - 1, A y - 1, -A, -A)$.
Since $A' \neq 0$, 
$$
 \alpha' = \frac{1}{A'}(A x - 1, A y -1, 1-A,1-A).
$$
Thus, the facets $F_3$ and $F_4$ are equivalent.
By Lemma~\ref{le:ea},  
this implies that
there exists an inessential function $H'$ so that 
the mass linear function $\Tilde{H} := H - H'$ has the following property:
$F_3$ is $\Tilde{H}$-symmetric, but $F_1, F_2$, and $F_5$ are 
$\Tilde{H}$-asymmetric.
Since $F_3$ is pervasive by hypothesis, and $\De$ has seven facets,
$F_3$ has six facets.  
Moreover, by Proposition~\ref{prop:symcent}, $\Tilde{H}$ is
mass linear on $F_3$ with at least three asymmetric facets.
Thus we may apply Proposition \ref{prop:3d} to $F_3$.
Both the  
polytopes on this list with six facets are combinatorially equivalent 
to $\De_1\times \De_1\times \De_1$.  
But we saw above that $F_3\cap G$ is a base facet of $G$ and so is a triangle.  This is impossible. Hence this case also does not occur.

\MS

\NI
{\bf Case (b)}:  {\it 
 $\Delta$ has six asymmetric facets, and each symmetric facet is
$\Delta_1 \times \Delta_1 \times \Delta_1$.} \smallskip

Let $G$ be a symmetric facet.
Consider the slices 
$Q^\la$ 
through $\Delta$ parallel to $G$.
More precisely, consider how the the parallel
planes $P(F_i)\cap Q^\la$ and $P(F_j)\cap Q^\la$ 
associated to opposite faces of
the box come together as we move the slice through $\Delta$.
If one (or two) of the pairs of planes come together before
the other pairs (or pair),
then the remaining facets will not intersect.
This contradicts the claim that they are pervasive.
So assume that the  three pairs of planes come together at the
same time. 
Since $\Delta$ is simple
this point cannot lie in the polytope,
that is, it must be cut off by some symmetric facet.
But then none of the opposite pairs intersect,
which again contradicts the claim that they are pervasive.
Thus, this case does not occur.\end{proof}

This has a number of corollaries.

\begin{cor}\labell{cor:5plus}
In the situation of Proposition \ref{prop:5plus} every mass linear function on $\De$ is inessential.
\end{cor}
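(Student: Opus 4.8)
The plan is to apply Proposition~\ref{prop:5plus} to reduce to the two combinatorial types it allows, namely $\De_4$ and $\De_2$ bundles over $\De_2$, and then to rule out essential mass linearity in each case. First I would invoke the proposition: since $H$ is mass linear on a smooth $4$-dimensional $\De$ with more than four asymmetric facets, all of which are pervasive, either $\De = \De_4$ or $\De$ is a $\De_2$ bundle over $\De_2$.

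For the first case, $\De_4$ is a simplex, and by the remark following the definition of inessential functions in \S\ref{ss:intro} (the example of $\De_k$), every pair of facets of a simplex is equivalent, so every $H \in \ft$ is inessential. In particular the given $H$ is inessential, which settles this case immediately.

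For the second case, $\De$ is a $\De_2$ bundle over $\De_2$. Here I would appeal directly to Corollary~\ref{cor:22bundle}(i), which states that \emph{every} mass linear function on a $\De_2$ bundle over $\De_2$ is inessential. Thus $H$ is inessential in this case as well. Combining the two cases proves the corollary.

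The two cases are both handled by results already established earlier in the excerpt, so there is essentially no obstacle: the only thing to check is that Proposition~\ref{prop:5plus} genuinely applies under the hypotheses of Corollary~\ref{cor:5plus} (i.e.\ ``the situation of Proposition~\ref{prop:5plus}'' means precisely a mass linear $H$ on a smooth $4$-dimensional polytope with more than four asymmetric facets, each pervasive), and that the classification it produces feeds correctly into the simplex-equivalence observation and Corollary~\ref{cor:22bundle}. Since mass linearity is a standing hypothesis, the corollary follows at once.
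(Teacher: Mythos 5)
Your proof is correct and coincides with the paper's own argument: the paper likewise cites Corollary~\ref{cor:22bundle} for the $\De_2$ bundle over $\De_2$ case and the fact that every $H\in\ft$ is inessential on $\De_4$ for the simplex case. Nothing further is needed.
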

\begin{proof}
By Corollary~\ref{cor:22bundle},
all mass linear functions on $\De_2$ bundles over $\De_2$ are inessential.
Similarly, every $H \in \ft$ is inessential on $\Delta_4$.
\end{proof}

\begin{cor}\labell{cor:asym1}
Let $H \in \ft$ be a mass linear function on a $4$-dimensional 
smooth polytope $\De \subset \ft^*$.
There is an inessential function $H' \in \ft$ such that $H-H'$ has 
at most $4$ asymmetric facets.
\end{cor}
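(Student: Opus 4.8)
The plan is to first reduce to the case in which every asymmetric facet of the relevant function is pervasive, and then to invoke the classification results already established. The key observation is that once all asymmetric facets are pervasive, either there are at most four of them or the polytope is so rigid that every mass linear function on it is inessential.

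First I would apply Proposition~\ref{prop:flat}(ii) to write $H = H' + \Tilde H$, where $H'$ is inessential and $\Tilde H := H - H'$ is mass linear with all of its nonpervasive facets symmetric. Consequently every asymmetric facet of $\Tilde H$ is pervasive. If $\Tilde H$ has at most four asymmetric facets, then $H - H' = \Tilde H$ has at most four asymmetric facets and $H'$ is the desired inessential function, so this case is immediate.

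It then remains to treat the case in which $\Tilde H$ has more than four asymmetric facets, all of which are pervasive. Here I would apply Proposition~\ref{prop:5plus} to $\Tilde H$ to conclude that $\De$ is either the simplex $\De_4$ or a $\De_2$ bundle over $\De_2$. By Corollary~\ref{cor:5plus}, every mass linear function on such a $\De$ is inessential; in particular $H$ itself is inessential. Taking $H' = H$ then gives $H - H' = 0$, which trivially has at most four asymmetric facets.

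I do not expect a serious obstacle, since the substantive work is carried out by Propositions~\ref{prop:flat} and \ref{prop:5plus} together with Corollary~\ref{cor:5plus}. The only point requiring care is the verification that subtracting the inessential $H'$ furnished by Proposition~\ref{prop:flat}(ii) leaves a mass linear function whose asymmetric facets are \emph{precisely} the pervasive ones, so that the hypotheses of Proposition~\ref{prop:5plus} are genuinely met before it is applied.
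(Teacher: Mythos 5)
Your proposal is correct and follows essentially the same route as the paper: reduce via Proposition~\ref{prop:flat}(ii) to the case where all asymmetric facets are pervasive, then invoke Proposition~\ref{prop:5plus}; the paper even notes your use of Corollary~\ref{cor:5plus} as an alternative to its appeal to Proposition~\ref{prop:bund} in the bundle case. Your closing worry is moot, since Proposition~\ref{prop:5plus} only requires that every asymmetric facet be pervasive (which Proposition~\ref{prop:flat}(ii) delivers), not that the asymmetric facets coincide exactly with the pervasive ones.
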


\begin{proof}
By Proposition~\ref{prop:flat} (ii), we may assume without loss of generality
that every asymmetric facet is pervasive.
Hence, Proposition~\ref{prop:5plus} implies that if $\Delta$ has more than four asymmetric facets then
either $\Delta$ is the four-simplex $\Delta_4$, or it is a $\Delta_2$ bundle over $\Delta_2$.
In the first case, every $H \in \ft$ is inessential; in the second case, the result follows from
Proposition~\ref{prop:bund}.
(Alternately, $H$ is inessential by Corollary~\ref{cor:5plus}.)
\end{proof}

We now consider the final case.

\begin{proposition}\labell{prop:nonperv}
Let $H \in \ft$ be 
a
mass linear function on a smooth $4$-dimensional 
polytope $\Delta \subset \ft^*$.
Assume that at least one asymmetric facet is not pervasive.
If $\Delta$ has at most four asymmetric facets, $H$ is inessential.
If $\Delta$ has more than four asymmetric facets, then
one of the following statements is true:
\begin{itemize}
\item  $\De$ is a $\Delta_3$ bundle over $\Delta_1$; 
\item $\De$ is a $121$-bundle;
\item $\De$ is a $\Delta_2$ bundle over a 
polygon which is a $\Delta_1$ bundle over $\Delta_1$;  or
\item $\De$ is a $\De_1$ bundle over the product $(\De_1)^3$ and $H$
is inessential; moreover, either the base facets are the asymmetric
facets or every  facet is asymmetric and $\De = (\De_1)^4$.
\end{itemize}
\end{proposition}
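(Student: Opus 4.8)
The plan is to use the nonpervasive asymmetric facet to present $\Delta$ as a bundle over $\Delta_1$, and then push the whole problem down to the three–dimensional fiber, where Proposition~\ref{prop:3d} is available. First I would apply Proposition~\ref{prop:flat}(i): since some asymmetric facet $F$ is not pervasive, $\Delta$ is an $F$-bundle over $\Delta_1$, so $F$ is a smooth $3$-dimensional fiber and the two base facets — one of which is $F$ itself — can be written $F$ and $F'$. By Proposition~\ref{prop:bund} I may decompose $H = H' + \Tilde H$, where $H'$ is inessential with symmetric fiber facets and $\Tilde H$ is mass linear with symmetric base facets; since $H'$ is inessential, $H$ is essential (resp.\ inessential) exactly when $\Tilde H$ is. Because the base facets are $\Tilde H$-symmetric and the fiber facets are $H'$-symmetric, the asymmetric facets of $H$ split as $\{F,F'\}$ together with the $\Tilde H$-asymmetric fiber facets. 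Both base facets are in fact asymmetric: $F$ is $H$-asymmetric by hypothesis, hence $H'$-asymmetric, and since $F\sim F'$ by Lemma~\ref{le:bund}(i) while $\{F,F'\}$ is a full equivalence class disjoint from the fiber facets by Lemma~\ref{le:bund}(iii), inessentiality of $H'$ forces the two coefficients to cancel, so $F'$ is asymmetric too. Thus $\Tilde H$ has exactly two fewer asymmetric facets than $H$.

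With this bookkeeping the case of at most four asymmetric facets is immediate: then $\Tilde H$ has at most two asymmetric facets, so Proposition~\ref{prop:2asym} shows $\Tilde H$ is inessential (it is either zero or has exactly two equivalent asymmetric facets), whence $H$ is inessential. This is consistent with the fact that $\Delta$ cannot have exactly three asymmetric facets under our hypothesis, since by Lemma~\ref{le:3comb} three asymmetric facets would all be pervasive.

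For more than four asymmetric facets, $\Tilde H$ has at least three asymmetric facets, all of them fiber facets. I would then restrict to the symmetric base facet $F$: by Proposition~\ref{prop:symcent} the restriction $\bar H := \Tilde H|_F$ is mass linear on the $3$-dimensional polytope $F$, and its asymmetric facets correspond bijectively to those of $\Tilde H$, so $\bar H$ has more than two asymmetric facets. Proposition~\ref{prop:3d} then pins $F$ down to one of five types: $\Delta_3$; a $\Delta_1$ bundle over $\Delta_2$; a $\Delta_2$ bundle over $\Delta_1$; a $\Delta_1$ bundle over $\Delta_1\times\Delta_1$; or $(\Delta_1)^3$. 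The remaining step is to feed each type back through $\Delta = F$-bundle over $\Delta_1$ and match it to the four listed conclusions. When $F=\Delta_3$ this is immediate. In the other cases $F$ is itself an iterated simplex bundle whose innermost fiber is global in $\Delta$, and I would use Lemma~\ref{le:recog} (with Lemma~\ref{prodsimp} to recognize product-of-simplices factors as bundles) to reorder the two-step tower so that the inner simplex becomes the fiber of $\Delta$ over a $3$-dimensional base: this identifies $\Delta$ as a $121$-bundle when the inner fiber is a $\Delta_1$ bundle over $\Delta_2$, as a $\Delta_2$ bundle over a $\Delta_1$ bundle over $\Delta_1$ when the inner fiber is $\Delta_2$, and as a $\Delta_1$ bundle over $(\Delta_1)^3$ in the last two cases. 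Throughout I would track symmetry and essentiality: $\bar H$ essential forces $\Tilde H$, and hence $H$, essential by the (noninvertible) direction of Remark~\ref{rmk:symcent}; in the final family the reassembled fiber facets are symmetric, so Proposition~\ref{prop:lift}(iii) together with the fact that products of simplices carry only inessential mass linear functions gives that $H$ is inessential; and when every facet of $\Delta$ is asymmetric (so $F=(\Delta_1)^3$ with all facets asymmetric), Corollary~\ref{cor:asym2} yields $\Delta = (\Delta_1)^4$.

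The main obstacle is this last step: correctly reassembling and reordering the iterated bundle structures and verifying, through the recognition lemmas, that each two-step tower over $\Delta_1$ really is one of the precise forms in the statement — in particular recognizing the $121$-bundle and confirming that the base in the final family is a genuine product $(\Delta_1)^3$ rather than a merely combinatorial one. Some care is also needed to keep track of which facets remain symmetric after reordering, since the conclusions assert not only the bundle type but also that the asymmetric facets are exactly the base (or fiber) facets of the reassembled structure.
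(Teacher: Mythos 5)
Your argument is, up to the last step, the paper's own proof: Proposition~\ref{prop:flat}(i) exhibits $\De$ as an $F$-bundle over $\Delta_1$, Proposition~\ref{prop:bund} splits $H=H'+\Tilde H$, and your observation that the second base facet is forced to be asymmetric (because the two base facets form a full equivalence class by Lemma~\ref{le:bund}, so the coefficients of the inessential $H'$ on them must cancel) is exactly the bookkeeping the paper needs but leaves implicit when it passes from ``at most four $H$-asymmetric facets'' to ``at most two $\Tilde H$-asymmetric facets.'' The case analysis via Propositions~\ref{prop:symcent} and \ref{prop:3d} applied to $(F,\Tilde H|_F)$, and the reassembly of the two-step bundle towers, also track the paper, which is equally terse about the reordering.

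The one genuine flaw is your appeal to Corollary~\ref{cor:asym2} to conclude $\De=(\De_1)^4$ when every facet is asymmetric: the proof of that corollary in \S\ref{s:more4} invokes Proposition~\ref{prop:nonperv} itself, so the citation is circular. You must argue this case directly, which is what the paper's phrase ``repeating the argument above for each asymmetric facet'' accomplishes: in this subcase all eight facets are asymmetric and non-pervasive, so Proposition~\ref{prop:flat}(i) applies to each opposite pair in turn, exhibiting each pair as the base facets of a bundle over $\Delta_1$ and hence as an equivalence class; by Lemma~\ref{le:equiv} the conormals of the remaining six facets then span a hyperplane for each of the four pairs, and intersecting these hyperplanes together with smoothness at a vertex forces the eight conormals to be $\pm$ a lattice basis, i.e.\ $\De=(\De_1)^4$. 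The same iteration --- applying Proposition~\ref{prop:flat}(i) to every non-pervasive asymmetric facet rather than only to the one supplied by the hypothesis --- is also what you need in the other subcase to see that the base of the fourth family is a genuine product $(\De_1)^3$ rather than merely a combinatorial one, the point you flagged but did not resolve.
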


\begin{proof}
By the first part of Proposition \ref{prop:flat},  
$\De$ is a bundle over $\De_1$.
Therefore, by Proposition~\ref{prop:bund}, 
we can write $H = H' + \Tilde H$, where $H'$ is an inessential
function, the fiber facets are $H'$-symmetric, and the two base facets
are $\Tilde H$-symmetric.

If at 
most
four facets are $H$-asymmetric, then at most two facets
are $\Tilde H$-asymmetric.  Hence, $\Tilde H$ (and thus $H$)
is inessential by Proposition~\ref{prop:2asym}.

On the other hand, if at least five facets are $H$-asymmetric, then
at least three facets are $\Tilde H$-asymmetric.
Therefore, by Proposition~\ref{prop:symcent}, $\Tilde H$ is mass linear on
$F$  with  at least three asymmetric facets.
Hence, we may apply Proposition~\ref{prop:3d} to $F$.
If $F = \De_3$,   $F$ 
is a $\Delta_1$ bundle over $\Delta_2$, or 
$F$ is a $\Delta_2$ bundle over $\Delta_1$ then we are clearly in  one of
the first three cases listed above.
So suppose that $F$ is a $\De_1$ bundle over $\De_1 \times \De_1$ and
the base facets of $F$ are the $\Tilde H$-asymmetric facets. 
Repeating the argument above
for each asymmetric facet, we see that $\De$ is a $\De_1$ bundle
over $(\De_1)^3$ and the base facets of $\De$ are the $H$-asymmetric facets.
Simillary, if $F$ is $(\De_1)^3$ and every facet of $F$ is $\Tilde H$-asymmetric, then $\De = (\De_1)^4$ and every facet of $\De$ is $H$-asymmetric by a similar
argument.
In either case, 
Proposition \ref{prop:flat} (ii) implies that $\De$ supports no 
essential mass linear functions.
\end{proof}

\begin{cor}\labell{cor:asym2}
Let $H \in \ft$ be a mass linear function on a $4$-dimensional smooth polytope $\De \subset \ft^*$.
The polytope $\Delta$ has at most eight asymmetric facets.
If it has eight then
$\Delta$ is the hypercube 
$\Delta_1 \times \Delta_1 \times \Delta_1 \times \Delta_1$,
and $H$ is inessential.
Moreover, if it has exactly seven asymmetric facets
then $\Delta$ is the product $\De_1 \times Y$, where
$Y$ is a $\De_2$ bundle over $\De_1$.
\end{cor}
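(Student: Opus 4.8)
The plan is to show that once $\De$ has more than four asymmetric facets its combinatorial type is pinned down by Propositions~\ref{prop:5plus} and~\ref{prop:nonperv}, and then to read off the number of facets in each resulting case. If $\De$ has at most four asymmetric facets all three assertions are trivial or vacuous, so I would assume $\De$ has more than four. By Proposition~\ref{prop:asym} every asymmetric facet is pervasive or flat, which splits the argument in two. If every asymmetric facet is pervasive, Proposition~\ref{prop:5plus} shows that $\De$ is either $\De_4$ or a $\De_2$ bundle over $\De_2$; these have five and six facets respectively, so at most six facets can be asymmetric. If instead some asymmetric facet is not pervasive, Proposition~\ref{prop:nonperv} shows that $\De$ is a $\De_3$ bundle over $\De_1$ ($6$ facets), a $121$-bundle ($7$ facets), a $\De_2$ bundle over a $\De_1$ bundle over $\De_1$ ($7$ facets), or a $\De_1$ bundle over $(\De_1)^3$ ($8$ facets). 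In every case $\De$ has at most eight facets and hence at most eight asymmetric facets, which proves the first assertion.

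For the eight-facet claim, eight asymmetric facets force $\De$ to have at least eight facets, so we must be in the last case above, namely a $\De_1$ bundle over $(\De_1)^3$, and moreover every facet must be asymmetric. Proposition~\ref{prop:nonperv} then states directly that $\De = (\De_1)^4$ and that $H$ is inessential.

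For the seven-facet claim, the facet counts above rule out the all-pervasive case (at most six asymmetric facets), the $\De_3$ bundle over $\De_1$ (six facets), and the $\De_1$ bundle over $(\De_1)^3$ (whose asymmetric facets number either six or eight by Proposition~\ref{prop:nonperv}, never seven). Hence $\De$ is either a $121$-bundle or a $\De_2$ bundle over a four-gon $\Hat\De$, and since each has exactly seven facets, every facet is asymmetric. If $\De$ is a $121$-bundle, Corollary~\ref{cor:db} immediately gives $\De = \De_1 \times Y$ with $Y$ a $\De_2$ bundle over $\De_1$, as required.

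The remaining, and most delicate, case is when $\De$ is a $\De_2$ bundle over a four-gon $\Hat\De$ with all seven facets asymmetric. Here I would use Proposition~\ref{prop:polybundle}: writing $H = \Tilde H + \Hat H$ as there, the three asymmetric fiber facets force the fiber coefficients $\gamma_1,\gamma_2,\gamma_3$ to be nonzero, so $\gamma_1\gamma_2\gamma_3 \neq 0$; consequently $H$ is inessential exactly if the bundle is trivial. If the bundle is trivial then $\De = \De_2 \times \Hat\De$, and the four asymmetric base facets mean (via the coefficient correspondence of Proposition~\ref{prop:polybundle}) that all four edges of $\Hat\De$ are asymmetric, so $\Hat\De = \De_1\times\De_1$ by Proposition~\ref{prop:2dim} and $\De = \De_2\times\De_1\times\De_1 = \De_1\times(\De_2\times\De_1)$ has the desired form. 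If the bundle is nontrivial then $H$ is essential, and Corollary~\ref{cor:7} applies (seven exceeds six asymmetric facets) to give $\De = \De_1\times Y$. I expect this final case to be the main obstacle, since it is the only place where one must combine the essential classification of Corollary~\ref{cor:7} with an inessential, trivial-bundle analysis and invoke the two-dimensional classification to identify the base as $\De_1\times\De_1$.
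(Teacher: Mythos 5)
Your proof is correct and follows essentially the same route as the paper's: Proposition~\ref{prop:5plus} forces a nonpervasive asymmetric facet once there are at least seven asymmetric ones, Proposition~\ref{prop:nonperv} then pins down the combinatorial type, and Corollaries~\ref{cor:db} and~\ref{cor:7} dispose of the $121$-bundle and $\De_2$-bundle cases. The one place you go beyond the paper's very terse argument is worth noting: Corollary~\ref{cor:7} is stated only for \emph{essential} $H$, and your separate treatment of the inessential case --- where Proposition~\ref{prop:polybundle} with $\ga_1\ga_2\ga_3 \neq 0$ forces the bundle to be trivial and Proposition~\ref{prop:2dim} then identifies the base as $\De_1 \times \De_1$ --- supplies a detail that the paper's citation leaves implicit.
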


\begin{proof}
Assume that $\De$ has at least seven asymmetric facets.
Proposition~\ref{prop:5plus} shows that there exists an
asymmetric facet which is not pervasive.
Hence 
Proposition~\ref{prop:nonperv} implies that $\De$ is
a $121$-bundle, a $\De_2$ bundle over a polygon, or $(\De_1)^4$.
Therefore, the claim follows immediately from
Corollaries~\ref{cor:db} and \ref{cor:7}. 
\end{proof}

\begin{cor}\labell{cor:asym3}
Let $H \in \ft$ be a mass linear function on a $4$-dimensional smooth 
polytope $\De \subset \ft^*$ so that every facet is asymmetric.
Then one of the following is true:
\begin{itemize}
\item $\Delta$ is the four-simplex $\De_4$,
\item $\Delta$ is a $\Delta_3$ bundle over $\Delta_1$,
\item $\Delta$ is a $\Delta_2$ bundle over $\Delta_2$
\item $\Delta$ is the product $\Delta_1 \times Y$, where
$Y$ is a $\Delta_2$ bundle over $\Delta_1$, or
\item $\Delta$ is the product $(\Delta_1)^4$.
\end{itemize}
\end{cor}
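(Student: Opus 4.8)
The plan is to reduce the classification to the structure theorems already proved for polytopes with more than four asymmetric facets, letting the hypothesis that \emph{every} facet is asymmetric collapse the various bundle shapes that appear onto the five listed normal forms. First I would record the elementary bound that a smooth $4$-dimensional polytope has at least five facets---a simple $n$-polytope with only $n+1$ facets is the simplex $\De_n$---so that, since every facet of $\De$ is asymmetric, $\De$ automatically has more than four asymmetric facets. Consequently the ``at most four asymmetric facets'' alternatives of the propositions invoked below are vacuous. I would then dichotomize on pervasiveness. If every facet is pervasive, Proposition~\ref{prop:5plus} applies verbatim and yields that $\De$ is either the four-simplex $\De_4$ or a $\De_2$ bundle over $\De_2$, both already on the list.

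The work is in the complementary case, where some facet is not pervasive; since all facets are asymmetric, some \emph{asymmetric} facet is non-pervasive, so Proposition~\ref{prop:nonperv} applies and presents four possibilities. A $\De_3$ bundle over $\De_1$ is already listed. For a $121$-bundle, Corollary~\ref{cor:db}---whose hypothesis is exactly that $\De$ carries a mass linear function with every facet asymmetric---forces $\De=\De_1\times Y$ with $Y$ a $\De_2$ bundle over $\De_1$. For a $\De_2$ bundle over a $\De_1$ bundle over $\De_1$, I would count facets: the base quadrilateral contributes four base facets and the $\De_2$ fiber three fiber facets, so $\De$ has exactly seven facets, all asymmetric, and Corollary~\ref{cor:asym2} then identifies $\De$ as $\De_1\times Y$. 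In the remaining case, a $\De_1$ bundle over $(\De_1)^3$, the sub-alternative in which only the base facets are asymmetric is excluded by our hypothesis (it would leave the two fiber facets symmetric), so $\De=(\De_1)^4$. These four outcomes, together with the pervasive case, exhaust the five items in the statement.

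The steps are all routine, so the main thing to watch is bookkeeping rather than any genuine obstacle. In particular I would verify that the facet count in the $\De_2$-bundle-over-quadrilateral case is precisely seven, so that the ``exactly seven asymmetric facets'' clause of Corollary~\ref{cor:asym2} applies, and I would confirm there is no circularity: the proof of Corollary~\ref{cor:asym2} rests only on Propositions~\ref{prop:5plus} and \ref{prop:nonperv} and Corollaries~\ref{cor:db} and \ref{cor:7}, none of which refer back to the present statement. The conceptual point, which I would emphasize, is that in each non-pervasive alternative it is exactly the hypothesis ``every facet asymmetric'' that rules out the more general bundle shapes and pins $\De$ down to one of $\De_4$, a $\De_3$ bundle over $\De_1$, a $\De_2$ bundle over $\De_2$, $\De_1\times Y$, or $(\De_1)^4$.
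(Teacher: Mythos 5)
Your proposal is correct and takes essentially the same route as the paper: it reduces to Propositions~\ref{prop:5plus} and \ref{prop:nonperv} according to whether every (asymmetric) facet is pervasive, and then uses the corollaries on $121$-bundles and $\Delta_2$ bundles over polygons to collapse the remaining bundle shapes onto the listed normal forms. The only cosmetic difference is that you dispatch the $\Delta_2$-bundle-over-quadrilateral case by counting seven facets and citing Corollary~\ref{cor:asym2}, whereas the paper cites Corollary~\ref{cor:7} directly---but since Corollary~\ref{cor:asym2} is itself proved via Corollary~\ref{cor:7}, this is the same argument.
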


\begin{proof}
By assumption, $\De$ must have at least five asymmetric facets.
If every asymmetric facet is pervasive, the claim follows immediately
from Proposition~\ref{prop:5plus}.
On the other hand, if there exists a asymmetric facet that is
not pervasive, then  Proposition~\ref{prop:nonperv} implies that $\De$ is
a $\De_3$ bundle over $\De_1$, a $121$-bundle, a $\De_2$ bundle over
a polygon, or the product $(\De_1)^4$.
Therefore, the claim follows immediately from
Corollaries~\ref{cor:db} and \ref{cor:7}. 
\end{proof}

\section{Further results}\labell{s:furth}

This section contains several 
results that 
are not needed for the proof of the main theorem.
  \S\ref{ss:min} addresses 
the question of which of the polytopes $\ov\De$ in Theorem 
\ref{thm:4d} are minimal. Then in \S\ref{ss:fullML} we use Theorem
\ref{thm:4d} to show that in dimensions $\le 4$ every mass linear function
 is fully mass linear.  Finally, in  \S\ref{ss:blml} we 
 discuss the question of which blowup operations preserve mass linearity, 
considering both vertex and edge blowups. 
Additionally,  we
show that a vertex blowup never converts an inessential function to an essential one.
On the other hand, edge blowups of type $(F_{ij},g)$  may do this, but only if the underlying polytope is a double expansion.

\subsection{Minimality}\labell{ss:min}

We now consider which of the polytopes $\ov\De$ in Theorem 
\ref{thm:4d} are minimal.
In particular, we show that in most cases 
we can blow down $\Delta$
in the two allowed ways to obtain a minimal polytope $\ov \Delta$;
the exceptions  occur in cases (a2) and (b).

We begin with a useful technical lemma.

\begin{lemma}\labell{le:noblow}
Let $F$ and $F'$ be (distinct) equivalent facets of a polytope $\Delta$.
Then neither $F$ 
nor $F'$ can be blown down.
\end{lemma}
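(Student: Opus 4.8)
The plan is to argue by contradiction, reducing immediately to Lemma~\ref{le:blowequiv}(ii), which asserts that the exceptional divisor of a blowup is never equivalent to any other facet. Suppose, for contradiction, that $F$ can be blown down. By the definition of blowing down given at the start of \S\ref{ss:blowd}, this means that $\Delta$ is itself the blowup of some smooth polytope $\ov\Delta$ along a face $\ov f$, with $F$ playing the role of the exceptional divisor. I would then apply Lemma~\ref{le:blowequiv}(ii) to this presentation of $\Delta$ as a blowup of $\ov\Delta$: the conclusion is that $F$, being the exceptional divisor, is not equivalent to any other facet of $\Delta$.

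This is the desired contradiction, since by hypothesis $F \sim F'$ and $F' \neq F$. Hence $F$ cannot be blown down. The same argument, with the roles of $F$ and $F'$ interchanged, shows that $F'$ cannot be blown down either, which completes the proof.

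The only point that requires any care is to confirm that the facet equivalence relation appearing in the hypothesis of the present lemma is literally the same relation to which Lemma~\ref{le:blowequiv}(ii) refers: both are the robust equivalence relation on the facets of the polytope $\Delta$ (recall from \S\ref{ss:review} that the local and global versions of this relation agree, by [I, Corollary~3.5]). Once this identification is noted, there is no genuine obstacle, and the statement follows with no further computation. In short, the entire content of the lemma is that equivalence to another facet is an obstruction to being an exceptional divisor, which is exactly the contrapositive of Lemma~\ref{le:blowequiv}(ii).
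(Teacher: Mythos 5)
Your argument is correct, but it is organized differently from the paper's. You reduce the statement to Lemma~\ref{le:blowequiv}(ii) (the exceptional divisor of a blowup is never equivalent to another facet), applied to the presentation of $\Delta$ as the blowup of $\ov\Delta$ with exceptional divisor $F$ that the definition of blowing down provides; the contradiction with $F\sim F'$ is then immediate, and your remark that the equivalence relation in both statements is the same robust relation on the facets of $\Delta$ is exactly the point that needs checking. The paper instead argues directly: by Lemma~\ref{le:equiv} the equivalence $F\sim F'$ produces a vector $\xi\in\ft^*$ parallel to every facet other than $F$ and $F'$; compactness forces $\langle\eta,\xi\rangle$ and $\langle\eta',\xi\rangle$ to have opposite (nonzero) signs, so the conormal $\eta$ of $F$ cannot equal a sum of the other conormals, i.e.\ condition (ii) of Proposition~\ref{prop:blowdown} fails. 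The two proofs rest on the same underlying facts (Lemma~\ref{le:equiv} plus compactness), since the proof of Lemma~\ref{le:blowequiv}(ii) uses precisely those ingredients in the codimension-one-span formulation; your version buys brevity and avoids repeating the sign argument, while the paper's version is self-contained at this point and makes explicit which hypothesis of the blowdown criterion is being violated. Either is acceptable.
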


\begin{proof}
Let $\eta$ and $\eta'$ be the outward conormals to $F$ and $F'$, respectively.
By Lemma~\ref{le:equiv}, 
there  exists a vector $\xi \in \ft^*$ that is 
parallel to all the other facets.   Since $\Delta$ is compact,
$\langle \eta, \xi \rangle$ and $\langle \eta', \xi \rangle$ have opposite signs.
Therefore, $\eta$ 
cannot be written as the positive sum of  
outward conormals without including 
$\eta$ itself.
\end{proof}

As we show in the next proposition, 
polytopes of type (a1) 
which admit an essential mass linear function
are all minimal.

 \begin{prop}\labell{prop:a1blow} 
Let $\De$ be a
$\Delta_3$ bundle over $\Delta_1$ that admits an essential mass linear function.  Then $\De$  is minimal.
\end{prop}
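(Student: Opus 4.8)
The plan is to argue by counting facet equivalence classes, exploiting the fact that blowing down always produces a polytope with exactly one fewer facet. Write $\De=Y$ as the $\Delta_3$ bundle over $\Delta_1$ associated to some $a=(a_1,a_2,a_3)$ as in \eqref{eq:Yb}, with conormals $\eta_1,\dots,\eta_6$; the fiber facets are $F_1,\dots,F_4$ and the base facets are $F_5,F_6$ (set $a_4=0$). First I would record the equivalence classes of $Y$: by Lemma~\ref{le:bund} the two base facets $F_5$ and $F_6$ form a single class which contains no fiber facet, while by Lemma~\ref{le:equiv} (cf.\ the proof of Corollary~\ref{cor:Mabc}) two fiber facets $F_i,F_j$ are equivalent exactly when $a_i=a_j$. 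Thus, if $m$ denotes the number of distinct values among $a_1,a_2,a_3,0$, the polytope $Y$ has precisely $m+1$ facet equivalence classes.

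The next step is to use the hypothesis that $H$ is essential. By Corollary~\ref{cor:Mabc}, an essential mass linear function can exist on $Y$ only when at least three of the values $a_1,a_2,a_3,0$ are distinct, i.e.\ $m\ge 3$; otherwise every mass linear function is inessential. Hence $Y$ has at least four facet equivalence classes.

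Finally I would derive a contradiction from the assumption that some facet $F_0$ of $Y$ can be blown down. By definition this means $Y$ is the blowup of a smooth polytope $\ov Y$ along a face $\ov f=\ov F_I$, with $F_0$ the exceptional divisor. Since blowing up adds exactly one facet, $\ov Y$ is a smooth (hence simple) $4$-dimensional polytope with only five facets, and a simple $n$-polytope with $n+1$ facets is combinatorially a simplex; thus $\ov Y$ is combinatorially $\Delta_4$. In a smooth $4$-simplex any three of the five conormals are linearly independent and therefore span a hyperplane, so by Lemma~\ref{le:equiv} all five facets of $\ov Y$ are pairwise equivalent. Applying Lemma~\ref{le:blowequiv} to the blowup $Y=\ov Y'$ then shows that $Y$ has exactly three equivalence classes of facets, namely $\{F_i':i\in I\}$, $\{F_j':j\notin I\}$, and the exceptional divisor $F_0$. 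This contradicts the lower bound $m+1\ge 4$ obtained above, forcing $Y$ to be minimal.

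I expect the main obstacle to be the two structural inputs of the last paragraph: confirming that a smooth $4$-polytope with five facets is (combinatorially) $\Delta_4$ with all facets pairwise equivalent, and checking that the equivalence-class count coming from Lemma~\ref{le:blowequiv} for a blowup of a simplex is always exactly three, regardless of $|I|$. Both are elementary but must be verified with care, since the entire argument reduces to comparing the numbers $3$ and $m+1\ge 4$. A more computational alternative — directly testing condition (ii) of Proposition~\ref{prop:blowdown}, i.e.\ whether $\eta_0=\sum_{i\in I}\eta_i$ can hold for a candidate facet — is possible but messier; moreover for any facet lying in a nontrivial equivalence class it is already excluded by Lemma~\ref{le:noblow}, so the only delicate case there would be a fiber facet of unique $a$-value, which the class-counting argument disposes of uniformly.
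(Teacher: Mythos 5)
Your proof is correct, but it takes a genuinely different route from the paper's. The paper proves the sharper Lemma~\ref{le:a1blow}: it runs each facet of $Y$ through the blowdown criteria of Proposition~\ref{prop:blowdown} (base facets are excluded by Lemma~\ref{le:noblow}; a fiber facet blows down only if its conormal equals $\alpha_1+\alpha_2$), concluding that $Y$ blows down only for $a\in\{(-1,0,0),(0,-1,0),(0,0,-1),(1,1,1)\}$, and then invokes Corollary~\ref{cor:Mabc} to see that none of these admit essential functions. You instead compare two counts of the (intrinsic) facet-equivalence classes: essentiality forces at least three distinct values among $a_1,a_2,a_3,0$ and hence at least $4$ classes, while any blowdown would exhibit $Y$ as the blowup of a smooth $4$-polytope with five facets, necessarily a smooth $\Delta_4$ whose facets are all pairwise equivalent, so that Lemma~\ref{le:blowequiv} forces exactly $3$ classes. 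All the ingredients check out: a polytope with $n+1$ facets is a simplex, any three of the five conormals of a smooth $\Delta_4$ span a hyperplane so Lemma~\ref{le:equiv} applies, $2\le|I|\le 4$ so both $I$ and its complement are nonempty, and the class count $m+1$ for $Y$ follows from Lemma~\ref{le:bund} and the proof of Proposition~\ref{DkoverD1}. Your argument is softer and shorter -- it never identifies which facet might blow down or what the blowdown would be -- whereas the paper's computation buys the explicit classification of which $\Delta_3$ bundles over $\Delta_1$ fail to be minimal (they are exactly the blowups of $\Delta_4$ along an edge). As a sanity check, your two counts agree on those exceptional bundles: for $a=(1,1,1)$ both descriptions give exactly $3$ classes.
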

\begin{proof}
Let
  $\Delta$ be the $\Delta_3$ bundle over $\Delta_1$ associated to 
$a \in \R^3$
as in \eqref{eq:Yb}.  
By Lemma~\ref{le:a1blow} below, 
some facet of  $\Delta$ can be
blown down  exactly if $\sum_{j=1}^3 a_j e_j$ is the conormal to one of the fiber facets,
that is,
exactly if
$a$ is  $(-1,0,0), (0,-1,0), (0,0,-1)$, or $(1,1,1)$.
By Corollary~\ref{cor:Mabc}, none of these bundles 
admit essential mass linear functions. 
\end{proof}

\begin{lemma}\labell{le:a1blow}
Let $\Delta$ be a $\Delta_3$ bundle over $\Delta_1$.
The base facets cannot be blown down, and a 
fiber facet $F$ with outward conormal $\eta$
can be blown down exactly if $\eta = \alpha_1 + \alpha_2$,
where $\alpha_1$ and $\alpha_2$ are the outward conormals to the base facets. 
In that case $\De$ 
is the blowup of 
 a $4$-simplex.
\end{lemma}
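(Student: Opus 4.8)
The plan is to read off the conormals of $\De$ from \eqref{eq:Yb} with $k=3$: the four fiber facets $F_1,\dots,F_4$ have conormals $\eta_1=-e_1,\ \eta_2=-e_2,\ \eta_3=-e_3,\ \eta_4=e_1+e_2+e_3$, and the two base facets $F_5,F_6$ have conormals $\alpha_1=\eta_5=-e_4$ and $\alpha_2=\eta_6=e_4+\sum_{i=1}^3 a_i e_i$; observe that $\alpha_1+\alpha_2=\sum_{i=1}^3 a_i e_i$ and, crucially, that $\sum_{i=1}^4\eta_i=0$. First I would dispose of the base facets: since $\De$ is a bundle over $\De_1$ and any two facets of $\De_1$ are equivalent, Lemma~\ref{le:bund}(i) gives $F_5\sim F_6$, so by Lemma~\ref{le:noblow} neither base facet can be blown down.

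Next I would analyse a fiber facet $F_j$ for $1\le j\le 4$. As a face of the bundle it is combinatorially a triangular prism $\De_2\times\De_1$, with three ``rectangular'' facets $F_i\cap F_j$ ($i\le 4$, $i\ne j$) arising from the edges of the fiber $\De_3$, and two ``triangular'' facets $F_5\cap F_j,\ F_6\cap F_j$ arising from the base facets. By Lemma~\ref{prodsimp} the only bundle structures on $F_j$ are as a $\De_2$ bundle over $\De_1$, whose fiber facets are the three rectangles (so $I=\{1,2,3,4\}\ssminus\{j\}$), or as a $\De_1$ bundle over $\De_2$, whose fiber facets are the two triangles (so $I=\{5,6\}$).

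The heart of the argument is the blowdown criterion. If $F_j$ can be blown down, then by Remark~\ref{rmk:blowint} the exceptional divisor $F_j$ must be a $\De_{|I|-1}$ bundle with fiber facets $\{F_i\cap F_j\}_{i\in I}$ and, by condition (ii) of Proposition~\ref{prop:blowdown}, $\eta_j=\sum_{i\in I}\eta_i$. For the rectangle case $I=\{1,2,3,4\}\ssminus\{j\}$, the relation $\sum_{i=1}^4\eta_i=0$ forces $\sum_{i\in I}\eta_i=-\eta_j$, which cannot equal $\eta_j$; so this structure never yields a blowdown. Hence only $I=\{5,6\}$ survives, and condition (ii) reads exactly $\eta_j=\eta_5+\eta_6=\alpha_1+\alpha_2$. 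I expect this step --- correctly enumerating the two prism bundle structures and using $\sum_{i=1}^4\eta_i=0$ to eliminate the $\De_2$-bundle option --- to be the only genuine subtlety; everything else is bookkeeping.

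For the converse I would assume $\eta_j=\alpha_1+\alpha_2$ and verify the three hypotheses of Proposition~\ref{prop:blowdown} with $I=\{5,6\}$. Condition (i) holds because $F_j$ is the $\De_1$ bundle over $\De_2$ with fiber facets $F_5\cap F_j,\ F_6\cap F_j$ (here one may invoke Lemma~\ref{le:blowtech} applied to the polygon $F_{j}\cap F_i$ if a reduction to lower dimension is preferred); condition (ii) is the hypothesis; and condition (iii) holds trivially, since $F_K\neq\emptyset$ for every $K\subset J=\{1,2,3,4\}\ssminus\{j\}$, these being intersections of at most three facets of the fiber $\De_3$. Proposition~\ref{prop:blowdown} then exhibits $\De$ as the blowup of the smooth polytope $\ov\De=\bigcap_{i\ne j}\{x\mid\langle\eta_i,x\rangle\le\kappa_i\}$ along $\ov F_{56}$ with exceptional divisor $F_j$. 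To identify $\ov\De$, I would note that its five conormals $\{\eta_i:i\ne j\}$ sum to $0$ (using $\sum_{i=1}^4\eta_i=0$ and $\eta_j=\eta_5+\eta_6$); combined with smoothness and the fact that a simple $4$-polytope with five facets is combinatorially $\De_4$, this shows $\ov\De$ is a $4$-simplex, completing the proof.
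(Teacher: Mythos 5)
Your proposal is correct and follows essentially the same route as the paper's proof: base facets are handled via equivalence and Lemma~\ref{le:noblow}, the dichotomy $\eta=\eta_2+\eta_3+\eta_4$ versus $\eta=\alpha_1+\alpha_2$ comes from the two possible prism bundle structures fed into Proposition~\ref{prop:blowdown}, the first option is killed by $\sum_{i=1}^4\eta_i=0$, and the converse is the same verification of conditions (i)--(iii). The only difference is cosmetic: you make the enumeration of bundle structures via Lemma~\ref{prodsimp} explicit where the paper leaves it implicit.
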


\begin{proof}
Since the base facets are equivalent, the first claim follows 
from Lemma~\ref{le:noblow}.
 
Let $G_1$ and $G_2$ be the base facets, and let
$F_1,\dots,F_4$ be the fiber facets with outward conormals $\eta_1,\dots,\eta_4$.
Note that, for example,  $F_1$ is a $\Delta_2$ bundle over $\Delta_1$ with 
fiber facets $F_{12}, F_{13}$, and $F_{14}$  and with
base facets $G_1 \cap F_1$ and $G_2 \cap F_1$.
Therefore, by Proposition~\ref{prop:blowdown},
$F_1$ cannot be blown down unless either $\eta_1 = \eta_2 + \eta_3 + \eta_4$
or $\eta_1 = \alpha_1 + \alpha_2$.

 Since the first equation does not hold,
let us assume that $\eta_1 = \alpha_1 + \alpha_2$.
Then 
the two facets  $G_1 \cap F_1$ and $G_2 \cap F_1$ of $F_1$ are parallel, and so, 
we can also view $F_1$ as a (trivial) $\Delta_1$ bundle over
$\Delta_2$ with {\em fiber} facets $G_1 \cap F_1$ and $G_2 \cap F_1$. 
Thus,
conditions (i) and (ii) of Proposition~\ref{prop:blowdown} are both satisfied.
Finally, since $F_{234} \subset \Delta$ is  nonempty, condition (iii) is vacuous.  
Therefore the claims in the first sentence hold.
The last statement holds because $\De_4$ is the only $4$-dimensional polytope with $5$ facets.
\end{proof}

In 
contrast,
as we show in the next lemma,
there exist polytopes of type (a2) that admit essential mass linear functions
but 
are not minimal. 
Note that,  in most cases, the
blowup described below is not one of the two types allowed
in the main theorem.

\begin{lemma}\labell{le:a2blow0}
Let $\Delta \subset \ft^*$ be a $\Delta_3$ bundle over $\Delta_1$ 
with fiber facets $F_1,\dots,F_4$.  
Let $H \in \ft$ be a mass linear function on $\Delta$ 
such
that $F_1$ is symmetric.
The blowup  $\Delta'$  of $\Delta$ along the edge
$F_{234}$  
is a $121$-bundle; 
(see Definition~\ref{def:121}).
Moreover,   
$H$ is 
a mass linear function on 
$\Delta'$.  Finally,
$H$ is inessential on $\Delta'$ exactly if it is inessential on $\Delta$.
\end{lemma}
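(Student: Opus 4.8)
The claim has three assertions: that $\Delta'$ is a $121$-bundle, that $H$ remains mass linear on $\Delta'$, and that essentiality is preserved. My plan is to dispatch these in order, using the blowup machinery of \S\ref{ss:blow} and the structural description of $\Delta_3$ bundles over $\Delta_1$ from \S\ref{ss:calcul}. First I would set up coordinates: write $\Delta$ as in \eqref{eq:Yb} with $k=3$, so that the fiber facets $F_1,\dots,F_4$ have conormals $\eta_1=-e_1$, $\eta_2=-e_2$, $\eta_3=-e_3$, $\eta_4 = e_1+e_2+e_3$, and the base facets $F_5,F_6$ have conormals $\eta_5=-e_4$, $\eta_6 = e_4 + \sum_{i=1}^3 a_i e_i$. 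Blowing up along the edge $F_{234}=F_2\cap F_3\cap F_4$ introduces an exceptional divisor with conormal $\eta_0\,\!' = \eta_2+\eta_3+\eta_4 = e_1$, by Definition~\ref{def:blowup}.

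\textbf{Identifying $\Delta'$ as a $121$-bundle.}
The core geometric step is to check that $\Delta'$ satisfies the recognition criteria for a $121$-bundle. Here the key observation is that $\eta_0\,\!' = e_1 = -\eta_1$, so the two facets $F_1$ and $F_0\,\!'$ now have opposite conormals; together they cut out a $\Delta_1$ factor in the first coordinate direction, which will be the innermost fiber. I would verify directly that $\Delta'$ is combinatorially a $\Delta_1$ bundle over a polytope $Y$, where $Y$ is itself the $\Delta_2$ bundle over $\Delta_1$ obtained by projecting out the first coordinate, matching the structure of \eqref{eq:de'}. Concretely, one identifies $\ft$ with $\R^4$ so that the remaining conormals $\eta_2,\eta_3,\eta_4,\eta_5,\eta_6$ project to the conormals of the $\Delta_2$ bundle over $\Delta_1$; the role of the parameter $d$ in \eqref{eq:de'} is played by the component of $\eta_4$ (or equivalently the original bundle data $a$) in the first coordinate. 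The smoothness of $\Delta'$ is automatic from the remark following Definition~\ref{def:blowup}, and the combinatorial conditions of Lemma~\ref{le:recog} follow from Remark~\ref{rmk:blowint}, since $F_1$ and $F_0\,\!'$ are the only facets with conormals spanning the fiber direction and $F_{234}$ being a genuine edge guarantees the needed nonempty intersections.

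\textbf{Mass linearity and essentiality.}
Once $\Delta'$ is identified as a blowup, the mass linearity of $H$ on $\Delta'$ should follow from the blowup lemmas. The subtlety is which lemma applies: since $F_1$ is symmetric, the edge $F_{234}$ lies in $F_2\cap F_3\cap F_4$, and I would check whether this is a blowup along a symmetric face (Lemma~\ref{le:symblow}) or of type $(F_{ij},g)$ (Proposition~\ref{prop:blow4}), or whether I must invoke Lemma~\ref{le:newblow} directly by decomposing $\Delta = \Delta'\cup W$ and showing $\langle H, c_W\rangle = \langle H, c_\Delta\rangle$. The cleanest route is probably to observe that $F_{234}$ meets every asymmetric facet (since $F_1$ symmetric forces the asymmetric facets to be among $F_2,F_3,F_4$ and the base facets, all of which meet this edge) and then apply whichever blowup result matches. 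For the final essentiality claim, I expect to use Lemma~\ref{le:blowequiv}: blowing up along $F_{234}$ can only merge or split equivalence classes in a controlled way, and combined with the explicit inessentiality criterion of Corollary~\ref{cor:Mabc} for $\Delta$ and Proposition~\ref{prop:doublebundle} for $\Delta'$, one reads off that $H$ is inessential on one exactly when inessential on the other. \textbf{The main obstacle} will be the bookkeeping in identifying $\Delta'$ precisely as a $121$-bundle in the normal form \eqref{eq:de'} --- matching the blown-up conormals to the parameters $(d,a_1,a_2,a_3)$ and confirming the combinatorial equivalences --- rather than any deep conceptual difficulty; the mass linearity and essentiality then follow from the established lemmas almost formally.
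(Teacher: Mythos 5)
Your identification of $\Delta'$ as a $121$-bundle and your observation that $\eta_0\,\!' = \eta_2+\eta_3+\eta_4 = -\eta_1$ are both correct and match the paper (which dismisses the first claim as ``easy''). The gap is in the mass linearity step, where you list three possible routes and commit to none of them. The first two do not apply: $F_{234}$ is not a symmetric face (its three defining facets $F_2,F_3,F_4$ are in general asymmetric), so Lemma~\ref{le:symblow} is unavailable; and a blowup of type $(F_{ij},g)$ requires the edge to be of the form $F_{ij}\cap g$ with $g$ a \emph{symmetric} facet and $\gamma_i+\gamma_j=0$, which need not hold here, so Proposition~\ref{prop:blow4} is also unavailable. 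Your fallback --- ``observe that $F_{234}$ meets every asymmetric facet and then apply whichever blowup result matches'' --- is not a proof: meeting every asymmetric facet is only the \emph{necessary} condition from Lemma~\ref{le:blowcent}(iii), and the paper has no general result saying it suffices (this is exactly the content of Question~\ref{q:4}, and Lemma~\ref{blowsimp} shows additional hypotheses are genuinely needed).

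The missing idea is the one you already have the raw material for but never deploy. Writing $\Delta = \Delta'\cup W$, the piece $W$ cut off by the blowup is bounded by $P(F_0\,\!')$, which is \emph{parallel} to $F_1$ since $\eta_0\,\!'=-\eta_1$; hence $W$ is itself a $\Delta_3$ bundle over $\Delta_1$ analogous to $\Delta$, obtained from $\Delta$ by pushing the hyperplane supporting $F_1$ inward. Because $F_1$ is symmetric, moving that hyperplane does not change $\langle H, c\rangle$, so $\langle H, c_W\rangle = \langle H, c_\Delta\rangle$, and Lemma~\ref{le:newblow} then yields mass linearity on $\Delta'$. This is precisely the paper's argument. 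For the essentiality claim your plan (Lemma~\ref{le:blowequiv} plus the explicit classifications in Corollary~\ref{cor:Mabc} and Proposition~\ref{prop:doublebundle}) would work, though the paper gets it more directly from Lemma~\ref{le:bund}(iii) --- base facets are never equivalent to fiber facets, so since the blowup is along $I=\{2,3,4\}$ every equivalence between asymmetric facets is either internal to $\{F_2,F_3,F_4\}$ or is the pair of base facets, and both kinds survive the blowup by Lemma~\ref{le:blowequiv}.
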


\begin{proof}  
The first claim is easy. 
To prove the second, 
decompose
$\Delta$ as $\Delta' \cup W$, where $W$ is also a $\Delta_3$ bundle over $\Delta_1$.
Because the exceptional divisor
is parallel to 
$F_1$,
the polytope $W$
is analogous to $\De$.    Since  
$F_1$ is symmetric, 
this implies that  
$\langle H, c_{\De} \rangle = \langle H, c_{W} \rangle.$
Thus $H$ is mass linear on $c_{\De'}$ by Lemma \ref{le:newblow}.

Finally, by  
Lemma~\ref{le:bund}(iii), 
the base facets of $\Delta$
are not equivalent to any fiber facet.
Hence,  Lemma~\ref{le:blowequiv}  implies that two asymmetric facets
are equivalent in $\Delta$ exactly if the corresponding facets of $\Delta'$ are equivalent,
and the exceptional divisor $F_0'$ is not equivalent to any other  facet.
By the definition of inessential and Proposition~\ref{prop:inessential}, this proves the last claim.
\end{proof}

We next show that all other polytopes $\De'$ of type (a2) 
that admit essential mass linear functions
are minimal.
We write $\De'$ as in \eqref{eq:de'} and
denote by $F_i\,\!'$ 
the facet with outward conormal $\eta'_i$, and 
by $\Tilde F_j$ 
the
facet with outward conormal $\Tilde \eta_j$.

\begin{prop}\labell{prop:a2blow} 
Let  $H$ be an essential mass linear function
on a $121$-bundle $\De'$
that is the blowup of another polytope $\Delta$.
Then $H$ is an 
essential
mass linear function on $\De$ and
$\Delta$ is a $\Delta_3$ bundle over $\Delta_1.$ Further,
exactly three of the fiber facets 
of $\De$
are asymmetric,
and the blowup is along the intersection of those three facets.
\end{prop}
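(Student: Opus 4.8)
The plan is to reverse-engineer the blowup structure from the classification of polytopes that can be blown down to a $121$-bundle. First I would invoke Lemma~\ref{le:blowcent}: since $H$ is mass linear on the blowup $\De'$, it is mass linear on $\De$ with the same linear function $\langle H, c_\De\rangle = \langle H, c_{\De'}\rangle$, and the exceptional divisor $F_0\,\!'$ of the blowup is symmetric. Combined with Lemma~\ref{le:blowequiv}, which says $F_i\,\!'\sim F_j\,\!'$ in $\De'$ exactly if $F_i\sim F_j$ in $\De$ (for the relevant pairs) and that $F_0\,\!'$ is equivalent to no other facet, this will let me transfer the essentiality of $H$ from $\De'$ back to $\De$: blowing down cannot create equivalences that would make $H$ inessential, so $H$ remains essential on $\De$.

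Next I would identify which facet of $\De'$ is the exceptional divisor and what face of $\De$ it is blown up from. The key structural input is Proposition~\ref{prop:doublebundle}: for a $121$-bundle $Z$ written as in \eqref{eq:de'}, the essential mass linear functions force $a_2a_3(a_2-a_3)\neq 0$ and have $\gamma_2,\gamma_3,\gamma_4$ not all zero, while the two fiber facets $\Tilde F_0,\Tilde F_1$ are symmetric. By Remark~\ref{rmk:asym}, an essential $H$ on a $121$-bundle (whose three pervasive conormals are independent) has the three pervasive facets asymmetric and the two $\De_1$-fiber facets symmetric; so $\De'$ has exactly three asymmetric facets $F_2\,\!',F_3\,\!',F_4\,\!'$. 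I would then use the fact that $\De'$ is the blowup of $\De$ to pin down $F_0\,\!'$: since $F_0\,\!'$ is symmetric and equivalent to no facet, and blowing it down must return a smooth $4$-polytope, I would apply Proposition~\ref{prop:blowdown} (via the recognition that the exceptional divisor of a blowup along an edge $f=F_{ijk}$ is a $\De_2$-bundle over that edge). The structure of the $121$-bundle forces $F_0\,\!'$ to be the exceptional divisor of an edge blowup along the triple intersection of three fiber-type facets.

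To carry this out concretely I would examine the conormals \eqref{eq:de'} and determine, using Lemma~\ref{le:a1blow} as a template, which facet of $\De'$ can be blown down to produce a $\De_3$-bundle over $\De_1$. Blowing down $F_0\,\!'$ and checking conditions (i)--(iii) of Proposition~\ref{prop:blowdown}: condition (i) identifies $F_0\,\!'$ as a $\De_2$-bundle over the edge $f$; condition (ii), $\eta_0 = \sum_{i\in I}\eta_i$, is exactly the relation that the exceptional conormal is the sum of the three conormals meeting along $f$; condition (iii) follows from pervasiveness of the asymmetric facets, which forces the relevant triple intersections in $\ov\De$ to persist. Once $\De$ is identified as a $\De_3$-bundle over $\De_1$, Corollary~\ref{cor:Mabc} describes its mass linear functions, and I would read off that exactly three of the four fiber facets are asymmetric (the fourth corresponds to the symmetric $\De_1$-fiber direction that survives as a fiber facet of $\De_3$). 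The blowup face $f$ is then the intersection $F_{234}$ of those three asymmetric fiber facets, matching Lemma~\ref{le:a2blow0} run in reverse.

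The main obstacle will be establishing that $\De$ is precisely a $\De_3$-bundle over $\De_1$ rather than some other polytope that happens to blow down to the $121$-bundle. The delicate point is verifying the combinatorial conditions of Proposition~\ref{prop:blowdown} --- in particular condition (iii) on which faces survive the blowdown --- using only the equivalence relations and the known essential mass linear structure, without a direct coordinate computation. I expect to handle this by combining Lemma~\ref{le:bund}(iii) (base facets of a bundle are never equivalent to fiber facets) with Lemma~\ref{le:blowequiv} to control how the equivalence classes reorganize under blowdown, and then to appeal to Lemma~\ref{prodsimp} to upgrade the resulting combinatorial $\De_3\times\De_1$ structure to an honest bundle. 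The three-versus-four asymmetric facet count then follows from Remark~\ref{rmk:asym} applied to the $\De_3$-bundle, closing the argument.
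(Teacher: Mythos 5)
Your overall route is the same as the paper's: use Proposition~\ref{prop:doublebundle} (equivalently Propositions~\ref{prop:flat}, \ref{prop:lift} and \ref{DkoverD1} applied to the base) to force $a_2a_3(a_2-a_3)\neq 0$ and to identify the three pervasive facets $F_2\,\!',F_3\,\!',F_4\,\!'$ as the asymmetric ones, then run Proposition~\ref{prop:blowdown} against the explicit conormals \eqref{eq:de'} to show that the only facet that can possibly be blown down is $\Tilde F_0$, that this forces $d=1$, and that the result is the $\Delta_3$ bundle over $\Delta_1$ with the blowup along $F_{234}$; this is exactly the content of the paper's Lemma~\ref{le:a2blow}, and Lemma~\ref{le:blowcent} then gives mass linearity on $\Delta$ with unchanged coefficients.

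The one step whose justification is wrong as written is the transfer of essentiality from $\De'$ down to $\De$. You assert that ``blowing down cannot create equivalences that would make $H$ inessential,'' but Lemma~\ref{le:blowequiv} says precisely the opposite: $F_i\,\!'\sim F_j\,\!'$ requires not only $F_i\sim F_j$ but also that $\{i,j\}$ be contained in or disjoint from $I$, so blowing down \emph{can} create new equivalences among asymmetric facets and turn an essential function into an inessential one --- this is exactly the mechanism of Example~\ref{ex:7} and of case (b) of Theorem~\ref{thm:4d}, and Lemma~\ref{le:blowcent}(iv) only gives the implication in the direction you do not need. The claim is rescued here only because the blowup face turns out to be $F_{234}$, so that every pair of asymmetric facets is either contained in $I=\{2,3,4\}$ or disjoint from it (and by Lemma~\ref{le:bund}(iii) a base facet of the $\Delta_3$ bundle is never equivalent to a fiber facet), whence the equivalence classes of asymmetric facets agree in $\De$ and $\De'$; this is the paper's Lemma~\ref{le:a2blow0}. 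You must therefore identify the blowup face \emph{before} concluding that $H$ is essential on $\De$, rather than asserting the transfer at the outset.
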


\begin{proof} 
Write $\De'$ as in Equation~\eqref{eq:de'}.
By Proposition \ref{prop:flat} we may subtract an inessential function 
from $H$ to get an essential function $\Tilde H$ such that the four 
nonpervasive
facets 
are $\Tilde H$-symmetric, but each pervasive 
facet is
$\Tilde H$-symmetric exactly
if it is $H$-symmetric.
Since the fiber facets $\Tilde F_0$ and $\Tilde F_1$
are $\Tilde H$-symmetric, Proposition~\ref{prop:lift} implies that $\Tilde H$
is the lift of an essential mass linear function on the  base of $\Delta'$.
Hence, since the base of $\Delta'$  is the
$\De_2$ bundle over $\De_1$ associated to $(a_2,a_3)$, 
Proposition~\ref{DkoverD1} 
implies that $a_2 a_3 (a_2 - a_3) \neq 0$ and the three non-pervasive
facets $F_2\,\!', F_3\,\!',$ and $ F_4\,\!'$ are $\Tilde H$-asymmetric,
and hence $H$-asymmetric.  
Therefore by Lemma \ref{le:a2blow}
below, 
$d = 1$,
$\De$ is a $\De_3$ bundle over $\De_1$, 
and the blowup is along 
the 
intersection of the three
fiber facets of $\De$ corresponding to 
$F_2\,\!',F_3\,\!',$ and $F_4\,\!'$.  
By Lemma \ref{le:blowcent}(ii), $H$ is mass linear on $\De$, 
and all the claims in the last sentence hold.  
Finally, $H$ is essential on $\De$ by Lemma~\ref{le:a2blow0}.
\end{proof}

\begin{lemma} \labell{le:a2blow} Suppose that the 
$121$-bundle 
$\De'$ of Equation \eqref{eq:de'} is the blowup of a polytope $\De$.
 If $a_2 a_3 (a_2 - a_3) \neq 0$, then
$d=1$ and $\Delta$ is the $\Delta_3$ bundle over $\Delta_1$ associated to $(a_1,a_2,a_3)$.  Moreover
 the blowup is along the intersection of 
the three fiber facets of $\De$ corresponding to the facets 
$F_2\,\!',F_3\,\!',$ and $F_4\,\!'$ of $\De'$.
\end{lemma}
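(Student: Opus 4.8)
The plan is to use the blowdown criterion of Proposition~\ref{prop:blowdown} to determine which facet of $\De'$ can be the exceptional divisor, and then simply to read off $\De$ from its conormals. Since $\De'$ is a blowup, some facet $F_0$ is the exceptional divisor, so by Proposition~\ref{prop:blowdown} it satisfies conditions (i) and (ii) there; the strategy is to show that the hypothesis $a_2a_3(a_2-a_3)\neq 0$ forces $F_0=\Tilde F_0$ and $d=1$. First I would record the facet equivalences of $\De'$. Because $\De'$ is a $\De_1$ bundle over the $\De_2$ bundle $Y$ over $\De_1$, Lemma~\ref{le:bund} reduces this to equivalences in $Y$: by Lemma~\ref{le:equiv} the two base facets $F_5'$ and $F_6'$ of $Y$ are equivalent, while the three fiber facets $F_2',F_3',F_4'$ are pairwise inequivalent precisely because the associated numbers $a_2,a_3,0$ are distinct under genericity, and no fiber facet $\Tilde F_0,\Tilde F_1$ is equivalent to a base facet. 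By Lemma~\ref{le:noblow} the equivalent pair $F_5',F_6'$ cannot be blown down, so $F_0\in\{\Tilde F_0,\Tilde F_1,F_2',F_3',F_4'\}$.

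Next I would eliminate every candidate except $\Tilde F_0$ using condition (ii), that the conormal of $F_0$ equals the sum of the conormals of its fiber facets. The facets $F_2',F_3',F_4'$ are each combinatorially a $3$-cube, hence only a $\De_1$ bundle, so their fiber facets form one of the three opposite pairs; a direct check of the three pairwise sums shows none equals the conormal of the facet in question — and it is exactly here that genericity rules out the pair $\{F_5',F_6'\}$, whose sum is $(a_1,a_2,a_3,0)$. The facets $\Tilde F_0$ and $\Tilde F_1$ are each analogous to $Y$, and their conormals in $P(\Tilde F_j)\cong\R^3$ are precisely those of $Y$. Since under genericity the base conormals $(0,0,-1)$ and $(a_2,a_3,1)$ of $Y$ are not parallel, Lemma~\ref{prodsimp} shows $Y$ is a $\De_2$ bundle over $\De_1$ but not a $\De_1$ bundle over $\De_2$; hence the only simplex-bundle structure has fiber facets $F_2',F_3',F_4'$, and condition (ii) forces the conormal of $F_0$ to equal $\eta'_2+\eta'_3+\eta'_4=(d,0,0,0)$. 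For $\Tilde F_1$ this would be $(-1,0,0,0)$, impossible as $d\geq 0$; for $\Tilde F_0$ it is $(1,0,0,0)$, which forces $d=1$.

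Having identified $\Tilde F_0$ as the exceptional divisor and $d=1$, I would finish by deleting the inequality with conormal $\Tilde\eta_0=(1,0,0,0)$ and observing that the remaining six conormals
$$
(-1,0,0,0),\ (0,-1,0,0),\ (0,0,-1,0),\ (1,1,1,0),\ (0,0,0,-1),\ (a_1,a_2,a_3,1)
$$
are exactly those of the normal form \eqref{eq:Yb} for the $\De_3$ bundle over $\De_1$ associated to $(a_1,a_2,a_3)$; Lemma~\ref{le:recog} then identifies $\De$ as this bundle, the four fiber conormals lying in the subspace $\{x_4=0\}$. Finally, by Proposition~\ref{prop:blowdown} with $I=\{2,3,4\}$, the blowup is along the face $\bigcap_{i\in\{2,3,4\}}P(F_i)\cap\ov\De=\ov F_2\cap\ov F_3\cap\ov F_4$, the intersection of the three fiber facets corresponding to $F_2',F_3',F_4'$, as claimed.

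I expect the main obstacle to be the bookkeeping that eliminates every candidate other than $\Tilde F_0$: showing that the cube facets $F_2',F_3',F_4'$ cannot be blown down, and that $\Tilde F_0,\Tilde F_1$ admit no $\De_1$-bundle structure so that their only possible fiber facets are $F_2',F_3',F_4'$. Both rely on combining the combinatorial type of each facet (via Lemma~\ref{prodsimp}) with the explicit conormals, and it is precisely in these two steps that the hypothesis $a_2a_3(a_2-a_3)\neq 0$ enters. Once this verification is complete, the value $d=1$ and the identification of $\De$ follow immediately.
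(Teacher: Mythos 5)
Your proposal is correct and follows essentially the same route as the paper's proof: rule out $F_5',F_6'$ via Lemma~\ref{le:noblow}, rule out $F_2',F_3',F_4'$ by checking the three candidate conormal sums on each (combinatorial) cube facet against Proposition~\ref{prop:blowdown}, and then force the exceptional divisor to be $\Tilde F_0$ with $d=1$ since $\Tilde\eta_1$ would give $d=-1$. The only cosmetic differences are that you exclude the $\{F_5',F_6'\}$ fiber-pair on $\Tilde F_j$ via condition (i) (non-parallel conormals) where the paper uses condition (ii), and you describe $F_i'$ as a $3$-cube where the paper calls it a $\De_1$ bundle over a trapezoid; both come to the same computation.
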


\begin{proof}
Since $F_5\,\!' \sim F_6\,\!'$, neither facet can be blown down by  Lemma~\ref{le:noblow}.
Next, fix $i \in \{2,3,4\}$, and observe that
$F_i\,\!'$ is a $\Delta_1$ bundle over a 
trapezoid.
The three non-intersecting pairs of facets
of $F_i\,\!'$
are $\Tilde F_0 \cap F_i\,\!'$ and $\Tilde F_1 \cap F_i\,\!'$, 
$F_{ij}\,\!'$ and $F_{ik}\,\!'$ where $\{i,j,k\}=\{2,3,4\}$, and 
$F_{i5}\,\!'$ and $F_{i6}\,\!'$.
It is easy to check that  
$\eta_i' \neq \Tilde \eta_0 + \Tilde \eta_1$ and
$\eta_i' \neq \eta_j' + \eta'_k$.  Further, because 
$a_2 a_3 (a_2 - a_3) \neq 0$, we also have $\eta'_i \neq \eta'_5 + \eta'_6$. 
Hence, Proposition~\ref{prop:blowdown} implies that $F_i\,\!'$ cannot be blown down.

So fix $j \in \{0,1\}$, and assume that $\Tilde F_j$ can be blown down.
Note that $\Tilde F_j$ is a $\Delta_2$ bundle over $\Delta_1$
with fiber facets $F_i\,\!' \cap \Tilde F_j, i=2,3,4$
and base facets $F_5\,\!' \cap \Tilde F_j$ and $F_6\,\!' \cap \Tilde F_j$.
Moreover, since $a_2 a_3 (a_2 - a_3) \neq  0$,
$\Tilde \eta_j \neq \eta'_5 + \eta'_6$.
Therefore, Proposition~\ref{prop:blowdown} implies that 
$\Delta'$ is the blowup of $\Delta$ along the edge $F_{234}$,
which (is not empty and) meets $F_5$ and $F_6$.
(Here, $F_i$ is the facet of $\Delta$ such that 
$F_i\,\!' = F_i \cap \Delta'$.)
In particular, $\Tilde \eta_j = \eta'_2 + \eta'_3 + \eta'_4$.
Since $d \geq 0$ by assumption, this implies that $j = 0$ and 
$d = 1$.
Therefore, $\Delta$ is the $\Delta_3 $ bundle over $\Delta_1$ associated to $(a_1,a_2,a_3)$, and the blowup is along three of its fiber facets.
\end{proof}

\begin{rmk}\labell{rmk:a2blow}\rm  
Lemma \ref{le:a2blow} shows that 
if $a_2 a_2 (a_2 - a_3) \neq 0$, then the $121$-bundle $\De'$ of
Equation~\eqref{eq:de'}
is minimal unless $d=1$.  
In other words
$\Delta'$ cannot be blown down unless
the sum of the outward conormals to  
its three pervasive facets
is the outward conormal to a fiber facet. 
However, this 
condition
is
not sufficient because condition (iii) in Proposition 
\ref{prop:blowdown} may fail for certain values of $\ka$. 
 Here the base facets of 
$\Tilde F_0$ are given by its intersection with $F_5'$ and $F_6'$, and
these facets may intersect when we remove $\Tilde F_0$;
Figure \ref{fig:2} illustrates a similar $3$-dimensional situation in which $\De_2$ is replaced by $\De_1$.
\end{rmk}

We next consider polytopes of type (a3),
that is, $\De_2$ bundles over polygons.
As we show below, every polytope of type (a3) that admits
an  essential mass linear function  can be obtained from
a minimal polytope of type (a3) that admits an essential mass linear
function by a series of blowups along  symmetric $2$-faces.
However, these minimal polytopes may have arbitrarily many facets.

\begin{lemma}\labell{le:3depblow} Let $\Delta$ be  a $\De_2$ bundle over 
a polygon $\Hat\De$.
Let 
$G_1, 
G_2, \dots, G_N$ be the base facets  of $\De$, and let
$\alpha_j$ be the outward conormal to $G_j$ for all $j$.
Assume that the edges of $\Hat \De$ corresponding to $G_j$ and $G_{j+1}$ are
adjacent for all $j$. 
(We interpret the $G_j$ in cyclic order.)
Then $G_i$ can be blown down exactly if $\alpha_i = \alpha_{i-1} + \alpha_{i+1}$.
In this case, $\Delta$ is the blowup of a polytope $\ov \Delta$ along
the face $P(G_{i+1}) \cap P(G_{i-1}) \cap \ov \De$.
\end{lemma}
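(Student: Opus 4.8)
The plan is to read the statement off Proposition~\ref{prop:blowdown}, applied with exceptional divisor $F_0 = G_i$ and index set $I$ equal to the pair of indices of the base facets $G_{i-1}$ and $G_{i+1}$, so that $\ov F_I = P(G_{i-1}) \cap P(G_{i+1}) \cap \ov\De$. The starting point is the structure of $G_i$ itself: as recorded in the proof of Lemma~\ref{calcul2}, the base facet $G_i$ is a $\Delta_2$ bundle over $\Delta_1$ with fiber facets $F_1 \cap G_i, F_2 \cap G_i, F_3 \cap G_i$ and base facets $G_{i-1}\cap G_i$ and $G_{i+1}\cap G_i$; in particular $G_i$ is combinatorially $\Delta_2 \times \Delta_1$. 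I will write $\eta_1, \eta_2, \eta_3$ for the conormals to the fiber facets of $\De$, recalling from \eqref{eqD1} that $\eta_1 + \eta_2 + \eta_3 = 0$.

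For the forward implication I would argue as follows. Suppose $G_i$ can be blown down; then $\De$ is the blowup of some smooth $\ov\De$ with exceptional divisor $G_i$, so by the necessity part of Proposition~\ref{prop:blowdown} there is a set $I$ for which $G_i$ is a $\Delta_{|I|-1}$ bundle with fiber facets $\{F_j \cap G_i\}_{j\in I}$ and $\alpha_i = \sum_{j\in I}\eta_j$. Since $G_i$ is combinatorially $\Delta_2\times\Delta_1$ and is not a simplex, its fibre in any bundle presentation has dimension at most $2$, so $|I|\in\{2,3\}$. The case $|I|=3$ is excluded because the fiber facets of a $\Delta_2$ bundle are pairwise intersecting, the only pairwise intersecting triple of facets of $G_i$ is $\{F_1, F_2, F_3\}\cap G_i$, and then condition (ii) would force $\alpha_i = \eta_1+\eta_2+\eta_3 = 0$, which is impossible. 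Hence $|I|=2$; since the two fiber facets of a $\Delta_1$ bundle do not intersect and the only non-intersecting pair of facets of $G_i$ is $\{G_{i-1}\cap G_i,\, G_{i+1}\cap G_i\}$, we get $I = \{i-1, i+1\}$, and condition (ii) reads exactly $\alpha_i = \alpha_{i-1} + \alpha_{i+1}$.

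For the converse I would assume $\alpha_i = \alpha_{i-1}+\alpha_{i+1}$ and verify conditions (i)--(iii) of Proposition~\ref{prop:blowdown} for $I = \{i-1,i+1\}$. Condition (ii) is the hypothesis. For condition (i), projecting the relation modulo $\alpha_i$ shows that the conormals of $G_{i-1}\cap G_i$ and $G_{i+1}\cap G_i$ become negatives of one another in $\ft/\langle\alpha_i\rangle$; together with the combinatorial equivalence $G_i\cong\Delta_2\times\Delta_1$, the recognition Lemma~\ref{le:recog} then reinterprets $G_i$ as a $\Delta_1$ bundle over $\Delta_2$ with fiber facets $G_{i-1}\cap G_i, G_{i+1}\cap G_i$ and base facets $F_1\cap G_i, F_2\cap G_i, F_3\cap G_i$, so $J=\{1,2,3\}$. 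For condition (iii), the only $K\subseteq J$ with $F_K=\emptyset$ is $K=\{1,2,3\}$, and since $\eta_1+\eta_2+\eta_3=0$ the planes $P(F_1), P(F_2), P(F_3)$ have no common point at all, whence $\ov F_{123}=\emptyset$. Proposition~\ref{prop:blowdown} then yields that $\De$ is the blowup of $\ov\De$ along $\ov F_I = P(G_{i-1})\cap P(G_{i+1})\cap\ov\De$, giving the final assertion.

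I expect the main obstacle to be the combinatorial bookkeeping in the forward direction, namely pinning down $I$ as the pair of base indices: this requires ruling out the $\Delta_2$-bundle reading of $G_i$ and carefully matching the intersection pattern of the facets of $\Delta_2\times\Delta_1$ against the fibre/base structure demanded by Proposition~\ref{prop:blowdown}. The backward direction, by contrast, is a routine check of the three blowdown conditions once the projection argument for condition (i) is in place.
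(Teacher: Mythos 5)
Your proposal is correct and follows essentially the same route as the paper: both reduce everything to Proposition~\ref{prop:blowdown}, use the $\De_2$-bundle structure of $G_i$ to see that the only candidate index sets are the fiber triple and the base pair, kill the fiber triple via $\eta_1+\eta_2+\eta_3=0$, and verify conditions (i)--(iii) for the converse (the paper phrases the forward direction contrapositively and simply asserts the reinterpretation of $G_i$ as a trivial $\De_1$ bundle over $\De_2$, which your projection argument justifies). One small correction: your claim that $\{F_1,F_2,F_3\}\cap G_i$ is the \emph{only} pairwise intersecting triple of facets of $G_i\cong\De_2\times\De_1$ is false (two side facets together with an end facet also pairwise intersect); the right way to pin down $I=\{1,2,3\}$ in the $|I|=3$ case is the observation you already use for $|I|=2$, namely that the complementary pair of base facets in a $\De_2$ bundle over $\De_1$ must be disjoint, and the only disjoint pair is $\{G_{i-1}\cap G_i,\,G_{i+1}\cap G_i\}$.
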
 

\begin{proof}   
Let $F_1$, $F_2$, and $F_3$ be the fiber facets of $\De$.
The facet $G_i$ is a $\Delta_2$ bundle over $\Delta_1$
with fiber facets 
$F_1 \cap G_i$, $F_2 \cap G_i$, and $F_3 \cap G_i$, 
and with base facets $G_{i-1} \cap G_i$ and $G_{i+1} \cap G_i$.

Assume first that $\alpha_i \neq \alpha_{i+1} + \alpha_{i-1}$.
Since also $\alpha_i \neq 0 = \eta_1 + \eta_2 + \eta_3$,
Proposition~\ref{prop:blowdown} implies that $G_i$ cannot be blown down.

So assume instead that
$\alpha_i = \alpha_{i+1} + \alpha_{i-1}$.
In this case, 
we can also consider $G$ as a (trivial) $\Delta_1$ bundle
over $\Delta_2$, 
and condition (ii)
in
Proposition~\ref{prop:blowdown} is clearly satisfied.
Moreover,
since $P(F_1) \cap P(F_2) \cap P(F_3) = \emptyset$  and since
$F_K \neq  \emptyset$ for any 
$K \varsubsetneq \{1,2,3\}$ 
condition (iii) is also satisfied.
Hence the claim follows from  Proposition~\ref{prop:blowdown}.
\end{proof}

\begin{rmk}\rm  
Note that the edge of $\Hat \De$ associated to $G_i$ can be blown down exactly if
$\Hat\alpha_i = \Hat\alpha_{i-1} + \Hat\alpha_{i+1}$, that is
the vector $\alpha_i -\alpha_{i-1} - \alpha_{i+1}$ lies in the span of the fiber conormals.
The condition
$\alpha_i = \alpha_{i-1} + \alpha_{i+1}$ given above is stronger;
it also implies 
that the bundle $\De\to \Hat\De$ is trivial when restricted to $G_i$.
\end{rmk}

\begin{lemma}\labell{le:a3fiber}
Let $H \in \ft$ be an essential mass linear function on a polytope $\Delta \subset \ft^*$
that is a $\Delta_2$ bundle over a polygon.  Then no fiber facets can be blown down.
\end{lemma}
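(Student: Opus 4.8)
The plan is to derive the result from two facts already established in the excerpt: an essential mass linear function on a $\Delta_2$ bundle over a polygon leaves every fiber facet asymmetric, whereas an exceptional divisor is always symmetric. Combining these gives a one-line contradiction, so the proof is short and the work is entirely in citing the right statements.

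First I would record that every fiber facet of $\Delta$ is asymmetric. Since $\Delta$ is a $\Delta_2$ bundle over a polygon $\Hat\Delta$ and $H$ is \emph{essential}, Proposition~\ref{prop:polybundle} applies: it gives a decomposition $\langle H, c_\Delta\rangle = \sum_{i=1}^3 \gamma_i \kappa_i + \sum_i \Hat\gamma_i\,\!'\,\Hat\kappa_i\,\!'$, and its last sentence says $H$ is inessential exactly when the bundle is trivial or $\gamma_1\gamma_2\gamma_3 = 0$. As $H$ is essential, the bundle is nontrivial and $\gamma_1\gamma_2\gamma_3 \neq 0$, so each of $\gamma_1,\gamma_2,\gamma_3$ is nonzero. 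The point that needs a sentence of care is the identification of $\gamma_i$ with the symmetry coefficient of the fiber facet $F_i$: by definition $\gamma_i$ is the coefficient of the support number $\kappa_i$ of $F_i$ in $\langle H, c_\Delta\rangle$, and a facet is symmetric precisely when that coefficient vanishes. Hence $\gamma_i \neq 0$ means $F_i$ is asymmetric, for each $i = 1,2,3$. (This conclusion is also recorded in Remark~\ref{rmk:asym}.)

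Next I would invoke the general principle that a facet which can be blown down must be symmetric. Suppose, for contradiction, that some fiber facet $F_i$ can be blown down; by the definition in \S\ref{ss:blowd} this means $\Delta$ is the blowup of a smooth polytope $\ov\Delta$ along a face, with $F_i$ as the exceptional divisor. Applying Lemma~\ref{le:blowcent}(i) with $\Delta$ playing the role of the blowup and $\ov\Delta$ that of the base --- legitimate since $H$ is mass linear on $\Delta$ --- forces the exceptional divisor $F_i$ to be symmetric. This contradicts the asymmetry of $F_i$ from the previous paragraph, so no fiber facet can be blown down.

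There is no real obstacle to overcome here; both ingredients are in hand, and the only delicate step is the bookkeeping translating essentiality (via $\gamma_1\gamma_2\gamma_3 \neq 0$) into asymmetry of every fiber facet. Once that translation is made explicit, Lemma~\ref{le:blowcent}(i) closes the argument immediately.
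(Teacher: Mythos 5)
Your proof is correct and is essentially identical to the paper's: the paper also deduces from Proposition~\ref{prop:polybundle} that essentiality forces all fiber facets to be asymmetric, and then concludes via Lemma~\ref{le:blowcent}(i) that none can be the exceptional divisor of a blowdown. Your extra sentences simply make explicit the translation from $\gamma_1\gamma_2\gamma_3\neq 0$ to asymmetry, which the paper leaves implicit.
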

 
\begin{proof}
Since $H$ is essential,
Proposition~\ref{prop:polybundle} implies that 
the
fiber facets are all asymmetric.
Hence, the claim follows from
part (i) of Lemma~\ref{le:blowcent}.
\end{proof}

\begin{prop}\labell{prop:3depblow} 
Let $H \in \ft$ be an essential  mass linear function on a polytope $\De' \subset \ft^*$ that 
is a $\Delta_2$ bundle over a polygon $\Hat \De'$.
Then there exists a {\em minimal} polytope $\De$ 
so that $H$ is essential on $\De$ and
$\De'$ can be obtained from $\De$ by a series of blowups along 
symmetric $2$-faces;
moreover, 
$\De$ is a $\Delta_2$ bundle over a polygon $\Hat \De$.
\end{prop}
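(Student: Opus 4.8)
The plan is to argue by induction on the number of facets of $\De'$, at each stage blowing down a single facet along a symmetric $2$-face while preserving both the bundle structure and the essentiality of $H$. If $\De'$ is already minimal there is nothing to prove, so suppose some facet can be blown down. By Lemma~\ref{le:a3fiber} no fiber facet can be blown down, so the blowable facet is a base facet $G_i$. Since $H$ is mass linear on $\De'$, Lemma~\ref{le:blowcent}(i) shows that the exceptional divisor $G_i$ is symmetric, and Lemma~\ref{le:3depblow} shows that $G_i$ can be blown down exactly when $\alpha_i = \alpha_{i-1}+\alpha_{i+1}$, in which case $\De'$ is the blowup of a smooth polytope $\ov\De$ along the face $f = \ov G_{i-1}\cap \ov G_{i+1}$ with exceptional divisor $G_i$.

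The decisive point is to show that $f$ is symmetric. Since $\De'$ is simple, $f$ lies on exactly the two facets $\ov G_{i-1}$ and $\ov G_{i+1}$, and the coefficients of these facets are unchanged by the blowdown (Lemma~\ref{le:blowcent}); so $f$ is symmetric precisely when $G_{i-1}$ and $G_{i+1}$ are symmetric in $\De'$, which is exactly the condition making the reverse operation a blowup along a symmetric $2$-face. Viewing $G_i$ as a $3$-dimensional polytope, it is itself a $\De_2$ bundle over the edge $\hat e_i$ of $\Hat\De'$, with fiber facets $F_1\cap G_i,F_2\cap G_i,F_3\cap G_i$ and base facets $G_{i-1}\cap G_i$ and $G_{i+1}\cap G_i$. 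By Proposition~\ref{prop:symcent} the restriction $H|_{G_i}$ is mass linear, and its fiber facets are asymmetric because $F_1,F_2,F_3$ are; so Proposition~\ref{prop:3d} applied to $G_i$ shows that its two base facets are either both symmetric or both asymmetric. In the first case $f$ is symmetric and we are done, so the real work is to exclude the second case.

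The main obstacle is ruling out the possibility that $G_{i-1}$ and $G_{i+1}$ are both asymmetric, and here I would argue globally using essentiality of $H$. Writing $H=\Tilde H+\Hat H$ as in Proposition~\ref{prop:polybundle}, a base facet is symmetric precisely when the corresponding edge of $\Hat\De'$ is symmetric for the inessential function $\hat h$ on $\Hat\De'$. If a neighbour of $G_i$ is asymmetric then $\hat h\neq 0$, so by Proposition~\ref{prop:2dim} the polygon $\Hat\De'$ is $\De_2$, a parallelogram, or a trapezoid; and since $\alpha_i=\alpha_{i-1}+\alpha_{i+1}$ projects to $\hat\alpha_i=\hat\alpha_{i-1}+\hat\alpha_{i+1}$, the edge $\hat e_i$ is blowable in $\Hat\De'$. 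As the only non-minimal polygon on that list is the trapezoid obtained by blowing up $\De_2$, the polygon $\Hat\De'$ must be this trapezoid, and a direct check identifies $\hat e_i$ as its $(-1)$-edge, whose two neighbours are the asymmetric parallel edges. I would then use that blowing down $G_i$ forces the bundle $\De'\to\Hat\De'$ to be trivial over $G_i$ (the remark following Lemma~\ref{le:3depblow}), which makes the twisting constant of one neighbouring edge vanish; combined with the essentiality condition $P(0,0,r_3,\dots,r_k)=0$ of Proposition~\ref{prop:polybundle}, Lemma~\ref{le:geo3dep} evaluates the remaining term as the square of the surviving twisting constant times the self-intersection of the complementary section of the Hirzebruch surface. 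That self-intersection is nonzero, so all the twisting constants must vanish, contradicting essentiality. This self-intersection computation is the crux of the proof.

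Once $f$ is known to be symmetric the remaining steps are routine. Since $\hat\alpha_i=\hat\alpha_{i-1}+\hat\alpha_{i+1}$, the blowdown of $G_i$ blows down the edge $\hat e_i$ of the base, and Lemma~\ref{le:recog} identifies $\ov\De$ as a $\De_2$ bundle over the blown-down polygon $\ov{\Hat\De}$. Because $f$ is symmetric, Lemma~\ref{le:symblow}(ii) guarantees that $H$ is essential on $\ov\De$; in particular $\ov{\Hat\De}$ cannot be a triangle, since otherwise $\ov\De$ would be a $\De_2$ bundle over $\De_2$ and $H$ would be forced to be inessential by Corollary~\ref{cor:22bundle}. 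As $\ov\De$ has one fewer facet than $\De'$, the induction hypothesis applies and yields the desired minimal polytope $\De$, from which $\De'$ is recovered by a sequence of blowups along symmetric $2$-faces.
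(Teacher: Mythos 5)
Your proof is correct and shares the paper's skeleton --- the same induction, and the same two lemmas (Lemmas \ref{le:a3fiber} and \ref{le:3depblow}) showing that only a base facet $G_i$ with $\alpha_i=\alpha_{i-1}+\alpha_{i+1}$ can be blown down --- but you take a genuinely different route to the one nontrivial point, namely that the face $f=\ov G_{i-1}\cap\ov G_{i+1}$ is symmetric. The paper argues globally: Lemma \ref{le:blowcent}(ii) and Proposition \ref{prop:polybundle} show that $H$ is essential on the blowdown $\De$, so Corollary \ref{cor:22bundle} forces $\Hat\De$ to have at least four edges, hence $\Hat\De'$ at least five; Proposition \ref{prop:2dim} then kills the inessential summand $\Hat H$, so \emph{all} base facets of $\De'$ are symmetric at once. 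You argue locally, restricting to the exceptional divisor $G_i$ and using Proposition \ref{prop:3d} to reduce to excluding the case where both neighbours of $G_i$ are asymmetric, which you do by identifying $\Hat\De'$ with the one-point blowup of $\De_2$ and pushing the condition $P(0,0,r_3,r_4)=0$ through Lemma \ref{le:geo3dep}. That computation does close --- after renormalizing so that $b^{i-1}=b^i=0$, the relation $\alpha_i=\alpha_{i-1}+\alpha_{i+1}$ gives $r_{i+1}=0$, and the surviving class is the $+1$-section rather than a fiber, so its self-intersection is indeed nonzero --- but you assert rather than verify this last identification, and the whole detour is unnecessary: in your bad case $\Hat\De$ is a triangle, so the Corollary \ref{cor:22bundle} contradiction you already deploy in your final paragraph applies verbatim. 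Two harmless slips: the two asymmetric edges of the trapezoid (its base edges as a $\De_1$ bundle over $\De_1$) are not parallel, and before quoting Lemma \ref{le:symblow}(ii) you need Lemma \ref{le:blowcent}(ii) to know that $H$ is mass linear on $\ov\De$ with the same coefficients, so that $\ov G_{i\pm1}$ are indeed symmetric there.
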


\begin{proof}
Let $G_1,\dots,G_k$ be the base facets of $\De'$.
Assume that they are labelled so that the 
edges of $\Hat \De'$ corresponding to $G_i$ and $G_{i+1}$ are adjacent for all  
$i$, where $G_{k+1}=G_1$.
Let $\alpha_i$ be the outward conormal to $G_i$ for all $i$.

First assume that 
$\alpha_i \neq \alpha_{i-1} + \alpha_{i+1}$ for all $i$.
By Lemma~\ref{le:3depblow}, this implies that none of the base facets can be blown down.
By Lemma~\ref{le:a3fiber}, the fiber facets cannot be blown down either.
Therefore $\De'$ is minimal and the claim holds with $\De=\De'$.

So assume instead that  $\alpha_i = \alpha_{i-1} + \alpha_{i+1}$ for some $i$.
By Lemma~\ref{le:3depblow}, this implies that 
there exists a polytope $\De$ which is a $\Delta_2$ bundle over a polygon
$\Hat \De$ so that $\De'$  can be obtained from $\De$ by blowing up
along the intersection of two base facets.
By Proposition~\ref{prop:polybundle}, since $H$ is essential the  fiber facets of $\De'$
are all asymmetric.   Hence,
by part (ii) of
Lemma~\ref{le:blowcent}, $H$ is
mass linear on $\De$
and the fiber facets of $\De$ are also
asymmetric.
Hence, by Proposition~\ref{prop:polybundle}, $H$ is essential on $\De$.  
By Corollary~\ref{cor:22bundle},
this is impossible if $\Hat \De$ has three edges.
Since this implies that $\Hat \De'$ has at least five edges, 
there are no 
nonzero
inessential mass linear functions on $\Hat \De'$
by Proposition~\ref{prop:2dim}.
Hence,  Proposition~\ref{prop:polybundle} 
and Lemma~\ref{le:blowcent} (ii) imply that the base facets of $\De$ are symmetric.
Thus, the blowup is along a 
symmetric $2$-face, as required.
The result now follows by induction.
\end{proof} 

\begin{proposition}\labell{prop:3depblowN}
For any $N \geq 7$, there exists a {\em minimal} polytope $\Delta$ that has $N$ facets,
is a $\Delta_2$ bundle over a polygon $\Hat \Delta$, and admits
an essential mass linear function.
\end{proposition}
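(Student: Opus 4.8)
The plan is to prove the statement by an explicit construction: for each $N\ge 7$ I would exhibit a smooth $\Delta_2$ bundle over a polygon that has exactly $N$ facets, supports an essential mass linear function, and admits no blowdown.

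First I would reduce the problem to a combinatorial one about the base polygon. Write $k=N-3$, so that a $\Delta_2$ bundle over a $k$-gon $\Hat\Delta$ has exactly $N$ facets. By Proposition~\ref{prop:polybundle} (and the computation in Lemma~\ref{calcul2}), to get an essential mass linear function it suffices to find integers $\gamma_1,\gamma_2$ with $\gamma_1\gamma_2(\gamma_1+\gamma_2)\neq 0$ and an integer vector $r=(r_1,\dots,r_k)\neq 0$ with $r_1=r_2=0$ (matching the normalization $(b_1^1,b_2^1)=(b_1^2,b_2^2)=0$ of \eqref{eqD2}) such that $P(0,0,r_3,\dots,r_k)=0$, where $P$ is the area polynomial of $\Hat\Delta$; one then sets $\gamma_3=-\gamma_1-\gamma_2$ and $(b_1^i,b_2^i)=r_i(\gamma_2,-\gamma_1)$, and \eqref{eqD1}--\eqref{eqD2} define a smooth bundle $\Delta$ with $N$ facets on which $H=\gamma_1\eta_1+\gamma_2\eta_2+\gamma_3\eta_3$ is essential (essential because $\gamma_1\gamma_2\gamma_3\neq 0$ and the bundle is nontrivial). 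By Lemma~\ref{le:geo3dep} the condition $P(0,0,r_3,\dots,r_k)=0$ is exactly $\chi^2=0$ for the class $\chi=\sum r_i X_i\in H^2(M_{\Hat\Delta})$, so I am looking for a nonzero null class supported off the two basis edges.

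Next I would record the minimality criterion. Since $H$ is essential the fiber facets are asymmetric, so by Lemma~\ref{le:a3fiber} none of them blows down; thus $\Delta$ is minimal iff no base facet blows down. By Lemma~\ref{le:3depblow} the base facet $G_i$ blows down iff $\alpha_i=\alpha_{i-1}+\alpha_{i+1}$, where $\alpha_i=(b_1^i,b_2^i,0,0)+\Hat\eta_i$; because the twist vector $(\gamma_2,-\gamma_1)$ is nonzero this is equivalent to the two conditions $\Hat\eta_i=\Hat\eta_{i-1}+\Hat\eta_{i+1}$ (edge $i$ blowable in $\Hat\Delta$, i.e. $m_i=1$ in \eqref{eq:m}) and $r_i=r_{i-1}+r_{i+1}$. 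Hence $\Delta$ is minimal precisely when, at every blowable edge of $\Hat\Delta$, the twist breaks the relation: $r_i\neq r_{i-1}+r_{i+1}$.

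The heart of the argument is producing, for every $k$, a polygon and a null class meeting this criterion. For $k\ge 6$ I would take the staircase $k$-gon with outward conormals, in cyclic order, $(1,0),(1,1),\dots,(1,t),(0,1),(-1,0),(0,-1)$, where $t=k-4$; consecutive determinants equal $1$, so it is smooth, and a check of \eqref{eq:m} shows the interior staircase edges $(1,1),\dots,(1,t-1)$ all have $m_i=2$, so the only blowable edges are the two ends $(1,0)$ and $(1,t)$. Taking $(0,1),(-1,0)$ as the basis edges I would set $r_{(1,l)}=1+[l=t-1]$ for $0\le l\le t$, $r_{(0,-1)}=1$, and $r=0$ on the two basis edges; equivalently $\chi=X_{(-1,0)}+X_{(0,-1)}+X_{(1,t-1)}$ in $H^2$, with the fiber class $X_{(-1,0)}=\sum_{l=0}^{t}X_{(1,l)}$ rewritten so its basis-edge coefficients vanish. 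Using $X_{(-1,0)}\!\cdot X_{(0,-1)}=1$, $X_{(-1,0)}^2=X_{(0,-1)}^2=0$, and $X_{(1,t-1)}^2=-2$ orthogonal to both, one gets $\chi^2=2-2=0$. At the end edge $(1,0)$ one has $r_{(1,0)}=1$ but $r_{(0,-1)}+r_{(1,1)}\ge 2$, and at $(1,t)$ one has $r_{(1,t)}=1$ but $r_{(1,t-1)}+r_{(0,1)}=2$, so the relation is broken at both blowable edges and $\Delta$ is minimal. The two small cases are handled directly: for $k=4$ ($N=7$) take $\Hat\Delta$ the rectangle (no blowable edge) with $r$ the indicator of one non-basis edge, and for $k=5$ ($N=8$) take the pentagon $(1,0),(1,1),(0,1),(-1,0),(0,-1)$ with the null vector $r=(8,5,9,0,0)$ coming from the triple $(3,4,5)$, which one checks breaks the relation at each of its three blowable edges. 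Choosing any admissible $\gamma$ (say $\gamma_1=1,\gamma_2=2$) finishes each case. I expect the main obstacle to be exactly this tension in Step three: null classes are naturally fiber- and section-type classes supported away from the $(-1)$-curves, whereas minimality demands that $r$ be nonlinear precisely at the blowable edges; the staircase resolves this because its interior edges are automatically non-blowable ($m_i=2$), isolating the difficulty to the two end edges, which the interior twist term $X_{(1,t-1)}$ then breaks.
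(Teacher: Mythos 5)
Your proposal is correct, and the overall skeleton — reduce via Proposition~\ref{prop:polybundle} to finding a polygon $\Hat\Delta$ and a nonzero integer vector $r$ vanishing on two adjacent edges with $P(0,0,r_3,\dots,r_k)=0$, then certify minimality by combining Lemma~\ref{le:a3fiber} with the criterion from Lemma~\ref{le:3depblow} that a base facet blows down only if the corresponding edge of $\Hat\Delta$ blows down \emph{and} $r_i=r_{i-1}+r_{i+1}$ — is the same as in the paper. Where you genuinely diverge is in the construction and the verification. The paper uses a single uniform family for all $k=N-3\ge 4$: the iterated corner blowup of $\Delta_2$ (always blowing up where the new exceptional edge meets $e_1$), takes $r=(0,0,1,\dots,1,2)$, and checks $P(r)=0$ by writing down the area polynomial explicitly as a difference of squares of affine side lengths; minimality then reduces to checking the relation at the one or two blowable edges of that chain. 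You instead work on the toric surface side, invoking Lemma~\ref{le:geo3dep} to replace $P(r)=0$ by $\chi^2=0$ for $\chi=\sum r_iX_i$, and choose staircase polygons whose interior edges have $m_i=2$ so that only the two end edges can blow down; the price is that $k=4$ and $k=5$ must be handled as separate ad hoc cases (rectangle, and the pentagon with the Pythagorean null vector $(8,5,9,0,0)$), whereas the paper's chain construction covers all $k\ge4$ at once. Both verifications check out (I confirmed $\chi^2=2-2=0$ for the staircase and $-64-25-81+80+90=0$ for the pentagon, and that the relation $r_i=r_{i-1}+r_{i+1}$ fails at every blowable edge in each case), so your intersection-theoretic route is a valid alternative; its advantage is that it makes the geometric meaning of the condition $P(r)=0$ (a nontrivial null class supported off the basis edges) transparent, at the cost of uniformity.
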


\begin{proof}
Start with the polygon $\Delta_2$ with facets $e_1, e_2$, and $e_3$
with outward conormals $(-1,0)$, $(0,-1)$, and $(1,1)$, respectively.
Then form a polygon $\Hat\De$ with $k = N - 3 \geq 4$ sides by first blowing up  along 
$e_{13} = e_1 \cap e_3$,  
and every subsequent time  
blowing up along the intersection of the new exceptional divisor and $e_1$.
Then  $\Hat\De$ has edges $e_1,\dots,e_k$,  where 
$e_4, e_5,\dots, e_k$ are the  edges that are formed by the successive  blowup operation. 
(Thus they are labelled in order of adjacency.)  
Let $P(\kappa_1,\dots,\kappa_k)$ be the polynomial which gives the area 
of $\Hat \Delta(\Hat \kappa)$  for all $\Hat \kappa \in \Cc_{\Hat \Delta}$.
The blowup which introduces $e_j$ for $j \geq 4$ is performed by
cutting
out a triangle with affine side length 
$\ka_{j-1}-\ka_j + \ka_1$.  Hence
$$
P(0,0,\ka_3,\dots,\ka_k) = 
\tfrac 12 \bigl( \ka_3 \bigr)^2  -\tfrac 12\sum_{j=4}^k \bigl(\ka_{j-1}-\ka_j + \ka_1\bigr)^2.
$$

Let $r_1 = r_2 = 0$, $r_3 = \dots = r_{k-1} = 1$ and $r_k = 2$.
Fix integers $\gamma_1$ and  $\gamma_2$ such
that $\gamma_1 \gamma_2 (\gamma_1 + \gamma_2) \neq 0$, and
define 
\begin{equation}\labell{eqD3}
(b_1^i,b_2^i) = r_i (\gamma_2, -\gamma_1) \in \Z^2 \quad \mbox{for all } 1 \leq i \leq k.
\end{equation}
These can be used to construct a polytope $\Delta$ which is  a $\Delta_2$ bundle over $\Hat \Delta$
with the outer conormals to the fiber and base facets given by
Equations~\eqref{eqD1} and \eqref{eqD2}, respectively.
Since $P(0,0,r_3,\dots,r_k) = 0$ and $\gamma_1 \gamma_2 (\gamma_1 + \gamma_2) \neq 0$,
$\Delta$ admits an essential mass linear function by Proposition~\ref{prop:polybundle}.

Finally, by Equations~\eqref{eqD2} and \eqref{eqD3}, and   Lemma~\ref{le:3depblow}, 
the base facet of $\Delta$ associated to 
the edge $e_i$ of $\Hat \Delta$ cannot be blown down unless the
edge $e_i$ itself can be blown down and
$r_i = r_{i-1} + r_{i+1}$.  (Here, as always, we use cyclic order on the edges.)
However, if $k=5$ then $e_1$ and $e_5$ are the only edges of $\Hat \Delta$ 
that can be blown down; 
if $k \neq 5$ only $e_k$ can be blown down.
Since $r_k = 2  \neq 1 = r_{k-1} + r_1$ and $r_1 = 0 \neq 2 =  r_k + r_2 $ for all $k$,  
this implies that none of the base facets of $\Delta$ can be blown down.
The claim then follows from Lemma~\ref{le:a3fiber}.

\end{proof}

Finally, we consider polytopes of type (b), that is,
double expansions of polygons. 
As we see below, every polytope of type (b) can be obtained from
a minimal polytope of type (b) by a series of blowups.
As in the previous case, these minimal polytopes may have
arbitrarily many
facets.

\begin{lemma}\labell{le:4indepblow}
Let $\Delta'$ be the double
expansion of a polygon $\Tilde \De'$
with edges\footnote{We do
not assume that the edges are labelled in order of adjacency.}
$\Tilde F_1,\dots, \Tilde F_k$
along the edges 
$\Tilde F_1$ and $\Tilde F_2$.
\begin{itemize}
\item[(i)]
If $\Tilde F_i$ cannot be blown down for any $i > 2$, then 
$\De'$ is minimal.
\item [(ii)]
In contrast, if $\Tilde \De'$ is the blowup of a polygon $\Tilde \De$ with
exceptional divisor $\Tilde F_i$ for some $i > 2$, let $\De$ be
the double expansion of $\Tilde \De$ along the edges 
$P(\Tilde F_1) \cap \Tilde \De$
and $P(\Tilde F_2)  \cap \Tilde \De$. 
Then $\De'$ is the blowup of $\De$ along a face 
of one of the following three types:
\begin{itemize}
\item 
the intersection of two fiber-type facets.
\item 
the intersection of a 
fiber-type facet with either
$P(\Hat F_{12}) $ or $P(\Hat F_{34})$.
\item
the vertex $\cap_{i=1}^4 P(\Hat F_i)$.
\end{itemize}
Here, $\Hat F_1$ and $\Hat F_2$ 
($\Hat F_3$ and $\Hat F_4$)
are the base-type facets 
of $\De'$
associated to $\Tilde F_1$
(respectively, 
$\Tilde F_2$).
\end{itemize}
\end{lemma}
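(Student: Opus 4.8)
The plan is to reduce both parts to a single structural fact: a fiber-type facet $\Tilde F_i'$ of $\Delta'$ can be blown down exactly when the edge $\Tilde F_i$ can be blown down in the polygon $\Tilde\De'$, and to keep track of which face of $\De$ the blowdown produces. First I would record the conormals from Definition~\ref{def:dexpand}: writing $\ft = \Tilde\ft \oplus \R^2$ with standard basis $e_1,e_2$ of $\R^2$, the base-type facets have conormals $\Hat\eta_1 = (0,-e_1)$, $\Hat\eta_2 = (\Tilde\eta_1,e_1)$, $\Hat\eta_3 = (0,-e_2)$, $\Hat\eta_4 = (\Tilde\eta_2,e_2)$, so that $\Hat\eta_1 + \Hat\eta_2 = (\Tilde\eta_1,0)$ and $\Hat\eta_3 + \Hat\eta_4 = (\Tilde\eta_2,0)$, while every fiber-type facet $\Tilde F_j'$ has conormal $(\Tilde\eta_j,0)$. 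Since $\Hat F_1 \sim \Hat F_2$ and $\Hat F_3 \sim \Hat F_4$ by Remark~\ref{rmk:dint}(ii), Lemma~\ref{le:noblow} shows immediately that none of the four base-type facets can be blown down; hence in both parts only the fiber-type facets are in question.

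For part (ii) I would argue constructively that $\Delta'$ is the blowup of $\De$ with exceptional divisor the fiber-type facet $\Tilde F_i'$, by verifying the three conditions of Proposition~\ref{prop:blowdown}. The polygon blowup $\Tilde\De' \to \Tilde\De$ at the vertex $\Tilde F_a \cap \Tilde F_b$ gives the smooth-polygon relation $\Tilde\eta_i = \Tilde\eta_a + \Tilde\eta_b$. Lifting this to $\ft$ and using the two displayed identities, I would read off $(\Tilde\eta_i,0) = \sum_{j\in I}\eta_j$ with an index set $I$ determined by the positions of $\Tilde F_a,\Tilde F_b$: if neither of $a,b$ is $1$ or $2$, then $I = \{\Tilde F_a',\Tilde F_b'\}$ and $f = \Tilde F_a' \cap \Tilde F_b'$ is the intersection of two fiber-type facets; if exactly one, say $a=1$, then $I = \{\Hat F_1,\Hat F_2,\Tilde F_b'\}$ and $f = \Tilde F_b' \cap P(\Hat F_{12})$ (and $a=2$ gives $P(\Hat F_{34})$); if $\{a,b\} = \{1,2\}$, then $I = \{\Hat F_1,\dots,\Hat F_4\}$ and $f$ is the vertex $\cap_{i=1}^4 P(\Hat F_i)$. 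This is exactly condition (ii), and the three cases match the three types in the statement. Condition (i), that $\Tilde F_i'$ is the relevant $\Delta_{|I|-1}$ bundle with fiber facets $\{F_j \cap \Tilde F_i'\}_{j\in I}$, and condition (iii), the nonemptiness requirement, I would check from the explicit combinatorics of the double expansion recorded in Remark~\ref{rmk:dint}; alternatively, and more cheaply, condition (ii) can be secured by locating a $2$-face of $\Delta'$ isomorphic to $\Tilde\De'$ on which the blowdown restricts to the polygon blowdown at $\Tilde F_a \cap \Tilde F_b$, and then invoking Lemma~\ref{le:blowtech}.

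Part (i) then follows from the same correspondence read backwards. Suppose a fiber-type facet $\Tilde F_i'$ could be blown down; Proposition~\ref{prop:blowdown}(ii) yields $(\Tilde\eta_i,0) = \sum_{j\in I}\eta_j$. Projecting onto the $\R^2$ summand, the only conormals with nonzero last two coordinates are the four base-type ones; since $\Hat\eta_1,\Hat\eta_2$ are the only contributors to the $e_1$-coordinate and $\Hat\eta_3,\Hat\eta_4$ the only contributors to the $e_2$-coordinate, each pair must lie wholly inside or wholly outside $I$. Projecting the surviving relation onto $\Tilde\ft$, and using Proposition~\ref{prop:blowdown}(i) to force $I$ to correspond to the fiber facets of the simplex bundle $\Tilde F_i'$, I would obtain $\Tilde\eta_i = \Tilde\eta_a + \Tilde\eta_b$ for the two polygon-neighbors of $\Tilde F_i$, i.e.\ $\Tilde F_i$ blows down in $\Tilde\De'$, contrary to hypothesis. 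Hence no facet of $\Delta'$ blows down and $\Delta'$ is minimal.

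The main obstacle I anticipate is the bookkeeping in verifying conditions (i) and (iii) of Proposition~\ref{prop:blowdown} across the three cases: one must confirm that $\Tilde F_i'$ genuinely carries the predicted simplex-bundle structure and that the nonemptiness condition holds for the double expansion. Both reduce to tracking how the edge $\Tilde F_i$ and its two endpoints interact with the expanded edges $\Tilde F_1,\Tilde F_2$, which is governed by Remark~\ref{rmk:dint}, but the case distinctions (whether the neighbors of $\Tilde F_i$ are themselves expanded, and whether the blown-up vertex lies on $\Tilde F_1$ or $\Tilde F_2$, which is why $\De$ must be defined via $P(\Tilde F_1)$ and $P(\Tilde F_2)$) need to be handled carefully; routing condition (ii) through Lemma~\ref{le:blowtech} and a polygon $2$-face is the cleanest way to keep this manageable.
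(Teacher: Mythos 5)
Your proposal is correct and follows essentially the same route as the paper: both dispose of the base-type facets via the equivalences $\Hat F_1\sim\Hat F_2$, $\Hat F_3\sim\Hat F_4$ and Lemma~\ref{le:noblow}, and both reduce the fiber-type case to Proposition~\ref{prop:blowdown} through the same three-way split according to whether the polygon-neighbors of $\Tilde F_i$ are expanded edges, concluding that a blowdown upstairs forces $\Tilde\eta_i=\Tilde\eta_a+\Tilde\eta_b$ downstairs. Your projection-onto-summands bookkeeping is a slightly tidier packaging of the paper's explicit identification of the three possible simplex-bundle structures on $F_i'$, and you supply the verification of part (ii) that the paper leaves to the reader.
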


\begin{proof}
Let $\io:\R^2\to \R^4$ be inclusion into 
the
first two coordinates, 
and let 
the outward
conormals 
to the edges 
$\Tilde F_1,\dots,\Tilde F_k$  of the 
polygon $\Tilde \De'$
be $\Tilde\al_1,\dots,\Tilde\al_k$.
Then the conormal to the fiber-type facet $F_j$ of 
$\De'$ 
associated to $\Tilde F_j$ is $\al_j = \io(\Tilde\al_j)$ for $j>2$, 
and  the conormals to the base-type facets $\Hat F_1,\dots,\Hat F_4$  are
$$
\eta_1= (0,0,-1,0),\;\;\eta_2 = \io(\Tilde\al_1) + (0,0,1,0),\;\;
\eta_3= (0,0,0,-1),\;\;\eta_4 = \io(\Tilde\al_2) + (0,0,0,1). 
$$
By Remark~\ref{rmk:dint} (ii) and Lemma~\ref{le:noblow},
none of the base-type facets can be blown down.
Now fix $j >2$,
and assume that the
 fiber-type 
 facet
  $F_j$ can be blown down.
Let  $\Tilde F_k$ and $\Tilde F_\ell$ be the edges of $\Tilde \De$ that meet $\Tilde F_j$.

Assume first that  
$k$ and $\ell$ are both greater than $2$.
Then 
$F_j$ is a $\Delta_1$ 
bundle over $\Delta_1 \times \Delta_1$,
with opposite base facets given by its intersections with 
$\Hat F_1$ and $\Hat F_2$,  and with $\Hat F_3$ and $\Hat F_4$, and
with fiber facets $F_{jk}$ and $F_{j \ell}$.
Since  $\Tilde \al_j \neq \Tilde \al_1$, 
we have
$\al_j  \neq  \eta_1 +  \eta_2$; similarly,
$\al_j \neq  \eta_3 +  \eta_4$. 
Therefore, 
by Proposition~\ref{prop:blowdown},
$\al_j$ must be equal to $\al_{k} + \al_{\ell}$, and so  $\Tilde F_j$ 
blows down in 
$\Tilde \De'$.   

Next suppose that $k=1$ and $\ell>2$. 
Then $F_j$ is a
$\De_2$ bundle over $\De_1$ with fiber facets   
$\Hat F_1 \cap F_j, \Hat F_2 \cap F_j,$ and $F_k \cap F_j$, and base facets $\Hat F_3 \cap F_j$ and $\Hat F_4 \cap F_j$. 
Since the equation $\al_j=\eta_3 + \eta_4$
is never satisfied, 
Proposition~\ref{prop:blowdown}  implies that
$\al_j = \eta_1 + \eta_2 + \al_k$. Hence 
$\Tilde \al_j=\Tilde \al_1 + \Tilde \al_k$ so that
 $\Tilde F_j$ blows down in 
$\Tilde \De'$.

Finally suppose that 
$k = 1$ and $\ell = 2$.
Then $F_j$ is a $3$-simplex 
with facets 
$\Hat F_i\cap F_j$ for $i =1,\dots,4$. 
Therefore, in this case Proposition~\ref{prop:blowdown} implies that
$\sum_{i=1}^4 \eta_i = \al_j$.
Once again, this implies that $\Tilde F_j$ blows down in $\Tilde \De'$. 
 This proves (i).
 
To prove (ii) it remains to check that in each of the three cases 
considered above $\De'$ is the blowup of $\De$ along an  
appropriate 
face.  We leave this to the reader.
\end{proof}

\begin{prop}\labell{prop:4indepblow}
Let $\Delta' \subset \ft^*$ be the double expansion of a polygon.
Then $\Delta'$ can be obtained from a
 {\em minimal} polytope $\Delta$ that is
also the double expansion of a polygon  
by a series of blowups. 
Moreover,
if $H \in \ft$ is an inessential function on $\De'$ and
the asymmetric facets are the base-type facets,
then 
\begin{itemize}\item
$H$ is also an inessential function on $\De$, 
\item
the asymmetric
facets of $\De$ are the base-type facets, and 
\item
each blowup is either one
of the two types permitted in Theorem~\ref{thm:4d} or 
is  at the vertex formed by
the intersection of the four asymmetric facets.
\end{itemize}
\end{prop}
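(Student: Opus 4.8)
The plan is to prove Proposition~\ref{prop:4indepblow} by induction on the number of edges of the polygon $\Tilde \De'$ whose double expansion is $\De'$, using Lemma~\ref{le:4indepblow} as the inductive engine. First I would dispose of the base case: if $\Tilde F_i$ cannot be blown down for any $i>2$, then part (i) of Lemma~\ref{le:4indepblow} shows $\De'$ is already minimal, so we take $\De = \De'$ and there is nothing more to prove. Otherwise, $\Tilde \De'$ is the blowup of a smaller polygon $\Tilde \De$ with exceptional divisor some fiber-type edge $\Tilde F_i$, $i>2$, and part (ii) of Lemma~\ref{le:4indepblow} exhibits $\De'$ as the blowup of the double expansion $\De$ of $\Tilde \De$ along a face of one of the three listed types. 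Since $\De$ is again the double expansion of a polygon with one fewer edge, the induction hypothesis applies to $\De$, and composing the blowup $\De' \to \De$ with the series of blowups reducing $\De$ to a minimal double expansion gives the first assertion.

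The remaining work is to track the mass linear function $H$ through the reduction. Assume $H$ is inessential on $\De'$ with the base-type facets $\Hat F_1,\dots,\Hat F_4$ as its asymmetric facets. The three face-types in Lemma~\ref{le:4indepblow}(ii) all lie on at least one fiber-type facet, hence on a symmetric facet of $\De'$. In the first two cases (intersection of two fiber-type facets, or a fiber-type facet with $P(\Hat F_{12})$ or $P(\Hat F_{34})$), the blowdown face lies inside symmetric facets, so the blowup $\De' \to \De$ is a symmetric blowup in the sense of Lemma~\ref{le:symblow}; thus $H$ is inessential on $\De$ by Lemma~\ref{le:symblow}(ii) (together with Lemma~\ref{le:inessblow}), and $\langle H, c_{\De}\rangle = \langle H, c_{\De'}\rangle$ keeps the base-type facets asymmetric. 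In the third case the blowdown face is the vertex $\cap_{i=1}^4 P(\Hat F_i)$, which is contained in every asymmetric facet; here Lemma~\ref{le:inessblow} applies directly to show $H$ is inessential on $\De$ with the same asymmetric facets. This verifies the three bulleted claims for a single reduction step, and the inductive composition propagates them down to the minimal $\De$.

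Finally I would identify each blowup $\De' \to \De$ with an allowed type. The first type (intersection of two fiber-type facets) and the second type (a fiber-type facet with $P(\Hat F_{12})$ or $P(\Hat F_{34})$) produce symmetric $2$-faces, which are among the blowups permitted in Theorem~\ref{thm:4d}. The third type is exactly the blowup at the vertex where the four asymmetric base-type facets meet, which is the extra case the statement explicitly allows. Reading the induction in reverse recovers $\De'$ from $\De$ by precisely this sequence, completing the proof.

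The main obstacle I expect is the careful verification in Lemma~\ref{le:4indepblow}(ii) — already cited — that the blowdown face is of one of the three claimed types and that in each case the fiber-type edge $\Tilde F_j$ genuinely blows down in $\Tilde \De'$; since that lemma is available, the real subtlety in the proposition itself is confirming that the second type of face indeed lies in a symmetric facet (so that the symmetric-blowup machinery of Lemma~\ref{le:symblow} applies rather than the more delicate Proposition~\ref{prop:blow4}), and that the vertex case is correctly matched to the ``extra'' allowed blowup rather than being forced into a type-$(F_{ij},g)$ edge blowup. Checking that the asymmetric facets remain exactly the base-type facets at every stage, so that the inductive hypothesis on $\De$ is legitimately applicable, is the one place where the bookkeeping must be done with care.
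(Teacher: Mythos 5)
Your overall strategy -- induction on the number of edges of $\Tilde\De'$ with Lemma~\ref{le:4indepblow} as the engine, plus Lemma~\ref{le:dexpan0} and the blowup lemmas to track $H$ -- is exactly the paper's (its proof just cites Lemmas~\ref{le:4indepblow} and \ref{le:dexpan0}), and your treatment of the first and third face types is fine. But your handling of the \emph{second} face type is wrong, and you have in fact flagged the exact spot and then resolved it incorrectly. The face $G \cap P(\Hat F_{12})$ is an \emph{edge}, not a $2$-face, and it lies on the two \emph{asymmetric} facets $\Hat F_1$ and $\Hat F_2$; by the paper's definition a face $F_I$ is symmetric only if every $F_i$ with $i\in I$ is symmetric, so this face is not symmetric merely because it sits inside the symmetric facet $G$. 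Consequently Lemma~\ref{le:symblow} does not apply, and this blowup is not "a symmetric $2$-face blowup permitted in Theorem~\ref{thm:4d}"; it has to be matched with the \emph{other} permitted type, namely an edge blowup of type $(\Hat F_{12}, G)$.

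That identification requires two checks you never make: that the edge meets every asymmetric facet (this is free, from Lemma~\ref{le:blowcent}(iii), since $H$ is mass linear on the blowup), and that $\gamma_1+\gamma_2=0$. The latter is not automatic: Lemma~\ref{le:dexpan0} gives $\gamma_1+\gamma_2=\gamma_3+\gamma_4=0$ only when $\Tilde F_1\not\sim\Tilde F_2$; when $\Tilde F_1\sim\Tilde F_2$ it only gives $\sum_{i=1}^4\gamma_i=0$, and one must extract $\gamma_1+\gamma_2=0$ from the geometry -- e.g., the exceptional divisor of $\De'$ is a symmetric $\De_2$ bundle over $\De_1$ with fiber facets cut by $\Hat F_1$, $\Hat F_2$, $G$ and base facets cut by $\Hat F_3$, $\Hat F_4$, so Propositions~\ref{prop:symcent} and \ref{DkoverD1} force $\gamma_1+\gamma_2=0$ and $\gamma_3+\gamma_4=0$. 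Separately, note that the descent of inessentiality does not need Lemma~\ref{le:symblow} or Lemma~\ref{le:inessblow} at all in this case: Lemma~\ref{le:blowcent}(iv) gives it for an arbitrary blowdown, and Lemma~\ref{le:blowcent}(ii) preserves the coefficients, so the asymmetric facets of $\De$ remain the base-type facets. With those repairs the induction closes as you describe.
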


\begin{proof}  The first claim is immediate from Lemma 
\ref{le:4indepblow}. 
The second claim follows from Lemmas~\ref{le:dexpan0} and \ref{le:4indepblow}.
\end{proof}

\begin{prop}\labell{prop:4indepblowN}
For any $N \geq 5$ there exists a minimal polytope 
$\Delta \subset \ft^*$
that has $N$ facets, is the double expansion of a polygon $\Tilde \De$,
and admits an inessential function $H \in \ft$ so that the asymmetric
facets are the base-type facets.
Moreover, when 
$N=6$ or $N\ge 8$
we can choose $\Delta$
so that $H$ is essential on a polytope $\Delta' \subset \ft^*$
that can be obtained from $\Delta$ by a sequence of
blowups, where each blowup is either along
a symmetric face or of type $(F_{ij},G)$.
When 
$N$ is $5$ or $7$ 
the previous statement holds only if we also allow blowups
at the point where the four base-type facets intersect.
\end{prop}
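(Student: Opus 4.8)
The plan is to handle all residues of $N$ through one construction, varying only the choice of the two expansion edges. Throughout I would use that the double expansion of an $m$-gon has exactly $m+2$ facets, so to produce $N$ facets I start from a polygon $\Tilde\De$ with $m=N-2\ge 3$ edges. The minimality criterion is Lemma~\ref{le:4indepblow}(i): the double expansion is minimal precisely when none of its fiber-type facets blows down, i.e.\ when every blowdownable edge of $\Tilde\De$ is one of the expansion edges $\Tilde F_1,\Tilde F_2$ (the base-type facets never blow down, by Remark~\ref{rmk:dint}(ii) and Lemma~\ref{le:noblow}). For such a $\Delta$ I would take $H=\sum_{i=1}^4\gamma_i\Hat\eta_i$ with $(\gamma_1,\gamma_2,\gamma_3,\gamma_4)=(1,-1,1,-1)$; by Lemma~\ref{le:dexpan0} this is inessential with the four base-type facets as its asymmetric facets, regardless of whether $\Tilde F_1\sim\Tilde F_2$, and it satisfies the normalization $|\gamma_1|=\cdots=|\gamma_4|$ required by Proposition~\ref{prop:essblow2}.

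First I would build the polygons. For $m=3$ take $\Tilde\De=\De_2$ and for $m=4$ take a minimal quadrilateral (a Hirzebruch trapezoid $F_{n\ge 2}$, say); since these are minimal as polygons, any choice of expansion edges yields a minimal $\Delta$. For $m\ge 5$ I would use the iterated construction of Proposition~\ref{prop:3depblowN}: starting from $\De_2$ and blowing up repeatedly along $e_1$ produces a polygon with edges $e_1,\dots,e_m$ whose only blowdownable edges are $\{e_1,e_5\}$ when $m=5$ and $\{e_m\}$ when $m\ge 6$ (exactly the count established there). Taking $\{\Tilde F_1,\Tilde F_2\}$ to contain these edges makes the double expansion minimal. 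Together with the choice of $H$ above, this already settles the existence assertion in the first sentence for every $N\ge 5$.

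For the positive essentialization ($N=6$ and $N\ge 8$) I would arrange the two expansion edges to be \emph{non-adjacent} yet joined by a common edge, so that both hypotheses of the last sentence of Proposition~\ref{prop:essblow2} hold. For $m=4$ I take $\Tilde F_1,\Tilde F_2$ to be the two parallel sides of the trapezoid, which are non-adjacent and joined by each of the remaining sides, and a trapezoid is not a triangle. For $m\ge 6$ I set $\Tilde F_1=e_m$ (the unique blowdownable edge) and $\Tilde F_2=e_{m-2}$, which are non-adjacent and joined by $e_{m-1}$; only $e_m$ is blowdownable, so minimality persists. In each case Proposition~\ref{prop:essblow2} then yields a sequence of symmetric and $(F_{ij},G)$-blowups after which $H$ is essential, with no vertex blowup needed.

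The cases $N=5,7$ require the vertex blowup, and establishing this necessity is the crux. For $N=5$ the polygon must be $\De_2$, which fails the ``not a triangle'' hypothesis of Proposition~\ref{prop:essblow2}; the remedy is to first blow up the vertex $\bigcap_{i=1}^4\Hat F_i$, which by Remark~\ref{rmk:vertexblow} replaces $\Tilde\De$ by a quadrilateral while keeping $H$ inessential, after which the positive argument applies. For $N=7$ the polygon is a pentagon, and minimality forces all its blowdownable edges into the pair $\{\Tilde F_1,\Tilde F_2\}$. The main obstacle is the combinatorial claim that the blowdownable edges of any smooth pentagon form either an adjacent pair or three consecutive edges, and in particular can never be confined to a non-adjacent pair. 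I would prove this by classifying the self-intersection sequences $(d_1,\dots,d_5)$, where $d_j\alpha_j=\alpha_{j-1}+\alpha_{j+1}$, $\sum d_j=3$, and $e_j$ is blowdownable iff $d_j=1$: since every smooth pentagon is a single corner blowup of a minimal quadrilateral ($\De_1\times\De_1$ or $F_{n\ge 2}$) or a double blowup of $\De_2$, a short enumeration of the corner blowups in each case shows the blowdownable edges are always consecutive. Alternatively, solving the closure relation $\prod_j\left(\begin{smallmatrix} d_j&-1\\ 1&0\end{smallmatrix}\right)=I$ directly shows that imposing $d_j=1$ at two non-adjacent positions with $d_j\neq 1$ elsewhere is inconsistent. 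Consequently $\Tilde F_1$ and $\Tilde F_2$ are adjacent, $\Tilde\De$ contains no edge joining them, and Proposition~\ref{prop:essblow2} cannot make $H$ essential; but Remark~\ref{rmk:vertexblow}, which treats adjacent expansion edges, supplies the vertex blowup after which the construction goes through. This would complete all cases.
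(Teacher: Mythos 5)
Your proposal is correct and follows essentially the same route as the paper: the same iterated blowups of $\De_2$ to build $\Tilde\De$, the same choice of expansion edges $e_m,e_{m-2}$ for $m\ge 6$ (with a harmless substitution of a minimal trapezoid $F_{n\ge 2}$ when $m=4$), and the same appeals to Lemma~\ref{le:4indepblow}, Lemma~\ref{le:dexpan0}, Proposition~\ref{prop:essblow2}, and Remark~\ref{rmk:vertexblow}. Your explicit classification of the blowdownable edges of a smooth pentagon (an adjacent pair or three consecutive edges) is a more detailed justification of the step the paper dispatches with the one-line assertion that every pentagon has two adjacent edges that can be blown down, but it is the same underlying argument.
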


\begin{proof}
Start with the simplex $\Delta_2$ with facets $e_1$, $e_2$, and $e_3$.
Then form a polygon $\Tilde \De$ with $k = N - 2$ by first blowing up along
$e_{13} = e_1 \cap e_3$ and every subsequent 
time blowing up along
the intersection of the new exceptional divisor and 
$e_1$ 
as in the proof of Proposition \ref{prop:3depblowN}.
If $k = 5$ (and $N = 7$)
let $\Delta$ be the double expansion of $\Tilde\De$ along 
$e_5$ and  $e_1$; otherwise let $\Delta$ be the  double
expansion of $\Tilde \De$ along $e_k$ and $e_{k-2}$.
If $k = 5$, then $e_1$ and $e_5$ are the only edges of 
$\Tilde\De$
that can be blown down; if $k = 3$ then no edge can be blown down;
otherwise, $e_k$ is the only edge that can be blown down.
Therefore, $\Delta$ is minimal by Lemma~\ref{le:4indepblow},
and it admits an inessential mass linear function so
that the asymmetric facets are the four base-type facets
by Lemma~\ref{le:dexpan0}. 
This proves the first claim.

To prove the second claim, observe that
 if $k\ne 3, 5$ then Proposition~\ref{prop:essblow2} implies that there is a blowup $\De'$ of 
 $\De$ with the required properties.
However, this argument
does not work when $k=5$ since we expanded along adjacent edges $e_1, e_5$ in order to make $\De$ minimal.  
In fact every polygon $\Tilde\De$ with $5$ edges is a blowup 
of a trapezoid and so always has two adjacent edges that can be blown down.
Therefore, if $k = 3$ or $5$ Proposition~\ref{prop:essblow} implies
that there cannot be a blowup $\De'$ of $\De$ with the required properties.
On the other hand, if we allow vertex blowups then
we can find such a blowup for $k = 3$ or $5$ by 
Remark~\ref{rmk:vertexblow}.
This proves the final claim.
\end{proof}

We are now ready to summarize the results of this subsection.
  
\begin{rmk}\labell{rmk:blowess}\rm (i)
In this section, we 
have shown
that each polytope $\ov\De$ described in case (a) of
Theorem~\ref{thm:4d} is the blowup of a minimal polytope of type (a), and  that
each polytope $\ov\De$ described in case (b) is the blowup of a minimal polytope of type (b).
More specifically, Proposition~\ref{prop:a1blow}
shows that  polytopes of type (a1)  with an essential mass linear function 
are minimal, while 
Proposition 
\ref{prop:3depblow} 
shows that 
a polytope of type (a3) that admits an essential mass linear function
can be obtained from  a minimal polytope 
with the same properties
by a series of blowups along symmetric $2$-faces.
In contrast, by
Proposition 
\ref{prop:a2blow},
a polytope of type (a2) with an essential mass linear function may be
the blowup of a minimal polytope 
along the intersection of the
three asymmetric fiber facets -- 
not one of the types permitted in Theorem~\ref{thm:4d}.
Finally, if 
we assume that
$H$ is inessential on $\ov \De$, 
that
the polytope $\ov \De$ is the
double expansion of a polygon, and
that
 the asymmetric facets are the base-type facets 
as in case (b),
 then 
we may conclude from 
Proposition \ref{prop:4indepblow} that
$\ov \De$ is the blowup of a minimal polytope with the same properties.
However, one blowup may be along the vertex formed by the intersection of the four asymmetric
base-type facets, which 
is not one of the types permitted in Theorem~\ref{thm:4d}.
 
\MS

\NI (ii) By Remark \ref{rmk:a2blow},  in the case (a2)  
necessary and sufficient conditions for 
  $\ov\De$ to be minimal 
  depend on $\ka$.  But this is not true  in the
   cases (a3) and (b).  In case (a3) the facets  
$F_1,F_2,$ and $F_3$ 
have a very simple intersection pattern, which forces the troublesome 
  condition (iii) in Proposition \ref{prop:blowdown} to hold.  In case (b), on the other hand,
   there is no choice for the relevant components of $\ka$: in an expansion the 
  components of $\ka$ corresponding to the base facets are determined by the structural constants of the fiber.

\end{rmk}

\subsection{Full mass linearity}\labell{ss:fullML}

We now discuss a  
strengthening of the mass linear condition 
proposed by Shelukhin in \cite{Shel}: 
namely,
``fully mass linear."
In this subsection, we prove that every mass linear function on a polytope of dimension
at most four also satisfies this stronger condition.

As we mentioned in the discussion just before Question \ref{q:ml}, if $H$ generates a loop 
that lies in the kernel of the homomorphism 
$\pi_1(T)\to \pi_1\bigl(\Symp(M_\De,\om_\De)\bigr),$ 
then $H$ is fully mass linear.
Therefore the class of fully mass linear functions may be 
more natural than the class of mass linear functions.

\begin{defn}[Shelukhin \cite{Shel}]\labell{def:full}
 Let $\De \subset \ft^*$  
be an $n$-dimensional polytope, and for $k=0,\dots,n$ denote by $B_k$ the barycenter 
(center of mass)
of the union of the  
$k$-dimensional 
faces of $\De$. Thus 
$B_n(\De) = c_\De$ and $B_0(\De)$ is the average of the vertices of $\De$. 
 Then $H\in \ft$ is said to be {\bf fully mass linear} on $\De$ if
$$
\langle H, B_k(\Delta) \rangle = \langle H, B_n(\Delta) \rangle
\quad \mbox{for all } k = 0, \dots, n.
$$
Further, we say that $H \in \ft$ 
is {\bf generated by the vector}
$\mathbf{ \xi_H \in \ft^*}$ 
if
$$\langle H, c_{\De}(\kappa) \rangle = \sum_{i=1}^N \langle\eta_i,\xi_H\rangle \kappa_i.$$
\end{defn}

The coordinates of $B_0(\De)$ are linear functions of $\ka$.
Thus,  if $\langle H, B_n(\De) \rangle = \langle H, B_0(\De) \rangle$ then $H$ is mass linear on 
$\De$;  in particular,
every fully mass linear function is mass linear.  
A priori, the converse may not hold.  For example,
the three
barycenters $B_0(\De), B_1(\De)$ and $B_2(\De)$ of a generic trapezoid   are  distinct.
However, we do not know any of counterexamples, and
it does hold in dimension at most four.

\begin{prop}\labell{prop:fullML}
Every mass linear function  
on a polytope of dimension 
at most four 
is fully mass linear.
\end{prop}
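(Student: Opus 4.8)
The plan is to prove the equivalent statement that every mass linear function on a polytope of dimension at most four is \emph{generated by a vector} in the sense of Definition~\ref{def:full}, and then to invoke Lemma~\ref{le:full1} (the bridge lemma referred to in Remark~\ref{rmk:Dk}(iii)) to conclude that such a function is fully mass linear. Thus the whole proof reduces to the following claim: if $H$ is mass linear on $\Delta$ with $\dim \Delta \le 4$, and $\langle H, c_\Delta \rangle = \sum_i \gamma_i \kappa_i$, then there exists $\xi_H \in \ft^*$ with $\gamma_i = \langle \eta_i, \xi_H \rangle$ for all $i$. I would establish this by a case analysis driven by the classification, together with two elementary reductions.

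The first reduction is that generation is linear: if $H'$ and $H''$ are generated by $\xi'$ and $\xi''$, then since the coefficients $\gamma_i$ of $\langle H, c_\Delta \rangle$ depend linearly on $H$, the sum $H' + H''$ is generated by $\xi' + \xi''$. Hence it suffices to generate the inessential functions and one essential representative in each coset of $V_{\ML}/V_{\iness}$. For the inessential functions I would argue as follows. By Lemma~\ref{le:equiv} and Remark~\ref{rmk:equiv}, a pair of equivalent facets $F_i \sim F_j$ carries a root $\xi \in \ft^*$ that is parallel to every other facet; since $\eta_i + \eta_j$ lies in the span of the remaining conormals, $\xi$ satisfies $\langle \eta_i, \xi \rangle = -\langle \eta_j, \xi \rangle \neq 0$, so after scaling it generates $\eta_i - \eta_j$. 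As every inessential function is a combination of such differences within the equivalence classes, the linearity reduction shows that all inessential functions are generated.

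It then remains to generate one essential function per coset, and here I would use the classification. In dimensions $\le 3$, Propositions~\ref{prop:2dim} and~\ref{prop:3d} show that essential mass linear functions occur only on $\Delta_2$ bundles over $\Delta_1$, for which Proposition~\ref{DkoverD1} and Remark~\ref{rmk:Dk}(iii) exhibit $\xi_H$ explicitly. In dimension $4$, Theorem~\ref{thm:4d} reduces every essential function to one on a polytope $\ov\Delta$ of type (a1), (a2), (a3), or (b). For each of these the mass linear functions are listed completely by Corollary~\ref{cor:Mabc}, Proposition~\ref{prop:doublebundle}, Proposition~\ref{prop:polybundle}, and Lemma~\ref{le:dexpan0}, in every case in the form $\langle H, c_{\ov\Delta} \rangle = \sum \gamma_i \kappa_i$ with the $\gamma_i$ constrained exactly by the linear relations among the conormals $\eta_i$; reading off these relations shows that $(\gamma_i)$ lies in the image of $\xi \mapsto (\langle \eta_i, \xi \rangle)_i$, i.e. that $H$ is generated on $\ov\Delta$.

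Finally I would propagate generation from $\ov\Delta$ to $\Delta$ through the blowups permitted in Theorem~\ref{thm:4d}. By Lemma~\ref{le:symblow}, Lemma~\ref{le:blowcent}, and Proposition~\ref{prop:blow4}, the coefficients of the original facets are unchanged and each exceptional divisor is symmetric, so the only extra condition needed for the \emph{same} $\xi_H$ to generate $H$ on $\Delta$ is $\langle \eta_0', \xi_H \rangle = 0$ for each exceptional divisor $F_0'$. Since $\eta_0' = \sum_{i \in I} \eta_i$ by Definition~\ref{def:blowup}, this reads $\sum_{i \in I} \gamma_i = 0$, which is exactly the defining property of the two allowed blowup types: for a blowup along a symmetric $2$-face every $\gamma_i$ with $i \in I$ vanishes, and for a blowup of type $(F_{ij}, G)$ in the sense of Definition~\ref{def:edge} one has $\gamma_i + \gamma_j = 0$ while the facets of the symmetric face $G$ contribute zero. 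I expect the main obstacle to be the third paragraph: extracting an explicit generating vector $\xi_H$ from each family in the classification and confirming that no case of Theorem~\ref{thm:4d} (nor any low-dimensional case) is overlooked. Everything else is either the linearity of generation, the root construction, or the bookkeeping of conormals under blowup.
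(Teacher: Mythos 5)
Your reduction has a genuine gap in the main case $n=4$. By Lemma~\ref{le:full0}, mass linearity gives $\langle H, B_0\rangle = \langle H,B_{n-1}\rangle = \langle H,B_n\rangle$, and being generated by a vector $\xi_H$ gives in addition $\langle H,B_{n-2}\rangle = \langle H,B_n\rangle$. For $n\le 3$ these two facts together cover every $B_k$, so your plan works there. But for $n=4$ they cover only $B_0,B_2,B_3,B_4$: the barycenter $B_1$ of the union of the edges is \emph{not} controlled by generation, so ``mass linear and generated by $\xi_H$'' is not known to imply ``fully mass linear'' in dimension four, and the equivalence asserted in your opening sentence is unjustified. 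This is exactly why the paper proves a separate statement (Lemma~\ref{le:f1}) showing $\langle H,B_1(\De)\rangle = \langle H,B_n(\De)\rangle$ for each of the three bundle types: in the $\De_k$-over-$\De_1$ case this is a direct computation with the edge lengths $h+a_i\la$ and is not a formal consequence of the existence of $\xi_H$, while in the other two cases it uses the fact (Lemma~\ref{le:f0}) that after subtracting an inessential function every edge lies on a symmetric facet whose restricted function is already fully mass linear by the lower-dimensional result.

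The same omission recurs in your final paragraph: propagating the identity $\langle H,c_{\ov\De}\rangle = \sum\langle\eta_i,\xi_H\rangle\ka_i$ through the permitted blowups (which your condition $\sum_{i\in I}\ga_i=0$ handles correctly, cf.\ Lemma~\ref{le:blowedge}) only preserves generation, hence only the $B_{n-2}$ identity; you still need to transport the $B_1$ identity, and indeed all the $B_k$ identities, to the blowup. The paper does this in Lemma~\ref{le:f4} by an additivity argument: writing $\De=\De'\cup W$ and using that $H$ restricted to $W$ and to the exceptional divisor is inessential (hence fully mass linear), the weighted averages over the $k$-skeleta satisfy
$$
\langle H,B_k(\De)\rangle\, V_k(\De) + 2\langle H,B_k(F_0')\rangle\, V_k(F_0') = \langle H,B_k(\De')\rangle\, V_k(\De') + \langle H,B_k(W)\rangle\, V_k(W),
$$
from which the identity on $\De'$ follows. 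The rest of your outline --- generating inessential functions via roots, reading off $\xi_H$ from the classification of essential functions on the model polytopes, and the linearity of the generation condition --- matches the paper's Lemmas~\ref{le:fullinessential} and~\ref{le:full1} and is sound.
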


The converse also holds for inessential functions.

\begin{lemma}\labell{le:fullinessential}
Every inessential function $H \in \ft$ on a polytope $\Delta \subset \ft^*$ is fully mass linear and is generated by some vector $\xi_H$. 
\end{lemma}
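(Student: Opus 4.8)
The plan is to prove the two assertions of Lemma~\ref{le:fullinessential} in sequence, handling the generating vector first since the full mass linearity will then reduce to checking that the \emph{same} vector $\xi_H$ works for every barycenter $B_k$, not just for $B_n=c_\De$. First I would recall that since $H$ is inessential, we may write $H=\sum_i\be_i\eta_i$ with $\sum_{i\in I}\be_i=0$ for every equivalence class $I\in\Ii$. By Proposition~\ref{prop:inessential} this gives $\langle H,c_\De(\ka)\rangle=\sum_i\be_i\ka_i$. The natural candidate for the generating vector is any $\xi_H\in\ft^*$ satisfying $\langle\eta_i,\xi_H\rangle=\be_i$ for all $i$; I must verify such a $\xi_H$ exists. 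This is where the equivalence-class condition $\sum_{i\in I}\be_i=0$ is essential: the $\eta_i$ satisfy linear relations (coming from compactness, e.g. $\sum_i c_i\eta_i=0$), and by Lemma~\ref{le:equiv} a relation among the conormals forces the coefficients to be equal on each equivalence class. The plan is to show that every linear relation $\sum_i c_i\eta_i=0$ is compatible with $\be_i=\langle\eta_i,\xi_H\rangle$, i.e.\ that $\sum_i c_i\be_i=0$ whenever $\sum_i c_i\eta_i=0$; this is exactly the solvability condition for the linear system defining $\xi_H$.

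Concretely, I would argue as follows. Consider the linear map $T\colon\ft^*\to\R^N$, $\xi\mapsto(\langle\eta_i,\xi\rangle)_i$. Its image is the annihilator of the space of relations $R=\{c\in\R^N:\sum_i c_i\eta_i=0\}$. So $(\be_i)\in\operatorname{im}T$ precisely if $\sum_i c_i\be_i=0$ for all $c\in R$. Now Lemma~\ref{le:equiv} describes $R$: for each equivalence class $I$, the combination $\sum_{i\in I}c_i\eta_i$ lies in the span of the remaining conormals iff the $c_i$ are constant on $I$. I would use this to show that $R$ is spanned by relations each of which is either supported on a single equivalence class with constant coefficients, or pairs conormals across classes in a controlled way. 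Against any such relation the inessentiality condition $\sum_{i\in I}\be_i=0$ gives $\sum_i c_i\be_i=0$. Hence $(\be_i)\in\operatorname{im}T$ and the desired $\xi_H$ exists, proving that $H$ is generated by $\xi_H$.

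For full mass linearity, the key observation is that the barycenter formula generalizes uniformly across dimensions when $H$ is generated by $\xi_H$. The plan is to translate $\De$ by $\xi_H$ and invoke Remark~\ref{translate}: under $\De\mapsto\De+\xi_H$ the support numbers transform as $\ka_i\mapsto\ka_i+\langle\eta_i,\xi_H\rangle=\ka_i+\be_i$. I would then show that $\langle H,B_k(\De)\rangle$ depends on $\ka$ in a way that is controlled by this same shift for \emph{every} $k$. The cleanest route is to note that each face of $\De$ is itself a polytope on which $\io^*H$ (the appropriate restriction) is inessential with coefficients inherited from $\be_i$ via Proposition~\ref{prop:symcent} and Remark~\ref{rmk:symcent}; applying the generation statement face-by-face and averaging over all $k$-faces gives $\langle H,B_k(\De)\rangle=\langle H,c_\De\rangle$ because the shift by $\xi_H$ affects $B_k$ and $B_n$ identically. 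Equivalently, one checks that $\langle H,B_k(\De(\ka))\rangle$ and $\langle H,c_\De(\ka)\rangle$ are both equal to $\langle H,B_k(\De(\ka))\rangle$ evaluated at the $\xi_H$-translate, reducing both to the value at the centrally-symmetric representative where all barycenters coincide by symmetry.

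I expect the main obstacle to be the generation step, specifically verifying that the relations among the $\eta_i$ are all accounted for by the equivalence-class structure so that $(\be_i)$ lies in $\operatorname{im}T$. The subtlety is that $R$ may contain relations not visibly of ``single-class constant-coefficient'' form, and I would need Lemma~\ref{le:equiv} (together with smoothness, which makes the conormals at each vertex a lattice basis) to give a complete description of $R$ adapted to $\Ii$. Once $\xi_H$ is produced, the full mass linearity is comparatively routine: it follows from the translation invariance in Remark~\ref{translate} combined with the fact that a polytope and its $\xi_H$-translate have all barycenters shifted by the same vector $\xi_H$, so the differences $\langle H,B_k\rangle-\langle H,B_n\rangle$ are translation-invariant and hence can be evaluated at a convenient symmetric representative where they vanish.
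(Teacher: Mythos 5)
Your argument for the existence of the generating vector $\xi_H$ is essentially sound, and it takes a genuinely different route from the paper: you solve the linear system $\langle\eta_i,\xi_H\rangle=\be_i$ directly, using the fact (the ``moreover'' clause of Lemma~\ref{le:equiv}) that any relation $\sum_i c_i\eta_i=0$ must have coefficients constant on each equivalence class, whence $\sum_i c_i\be_i=\sum_{I\in\Ii}c_I\sum_{i\in I}\be_i=0$ and $(\be_i)$ lies in the image of $\xi\mapsto(\langle\eta_i,\xi\rangle)_i$. Note that you do not need to exhibit a spanning set of the relation space of any special form -- the constancy-on-classes property applied to an arbitrary relation already gives the orthogonality you need -- so the step you flag as the main obstacle is in fact immediate. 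The paper instead reduces by linearity to the elementary case $H=\eta_i-\eta_j$ with $F_i\sim F_j$ and takes $\xi_H$ to be the root with $\langle\eta_i,\xi_H\rangle=-\langle\eta_j,\xi_H\rangle=1$ and $\langle\eta_k,\xi_H\rangle=0$ otherwise; the two arguments are equivalent in content.

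Your proof of full mass linearity, however, has a genuine gap. Translating $\De$ by $\xi_H$ shifts every barycenter by the same vector, so the differences $\langle H,B_k\rangle-\langle H,B_n\rangle$ are translation-invariant -- but that is all translation gives you: there is no ``centrally-symmetric representative,'' since a translate of $\De$ is just a translate and a generic polytope admitting an inessential function has no central symmetry. The face-by-face route also fails: Proposition~\ref{prop:symcent} and Remark~\ref{rmk:symcent} apply only to \emph{symmetric} faces, and for an asymmetric face $f$ one does not have $\langle H,c_f\rangle=\langle H,c_\De\rangle$ (already for $\De=\De_1$ and $H=\eta_1-\eta_2$ the two vertices carry different values of $H$; only their average is correct). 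Nor does generation by $\xi_H$ imply full mass linearity: by Lemma~\ref{le:full0} it controls only $B_0$, $B_{n-2}$, $B_{n-1}$, and $B_n$, leaving $B_1,\dots,B_{n-3}$ untouched. The missing idea is the affine \emph{reflection} symmetry of $\De$ itself: for $H=\eta_i-\eta_j$ with $F_i\sim F_j$, the affine reflection that fixes a hyperplane $\{H=\mathrm{const}\}$ and swaps $F_i$ with $F_j$ preserves $\De$, hence preserves the union of the $k$-faces for every $k$, hence fixes every $B_k(\De)$; so all the barycenters lie on a single level set of $H$. This is the paper's argument, and it is what handles all the intermediate barycenters at once; the general case then follows by linearity of $H\mapsto\langle H,B_k(\De)\rangle-\langle H,B_n(\De)\rangle$.
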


\begin{proof}
It suffices to consider the case when $H = \eta_i-\eta_j$ where $F_i\sim F_j$.
By Remark \ref{rmk:equiv}, since  
$F_i\sim F_j$ there is a vector $\xi_H\in \ft^*$ that is parallel to all
facets except for $F_i$ and $F_j$.  It follows easily that the affine reflection in the plane
 $H=\eta_i-\eta_j = 0$ preserves $\De$.  Thus 
all the barycenters $B_k(\De)$ lie on the plane $H=const$  that is fixed by this reflection.
Moreover, the integrality conditions on the conormal vectors $\eta_i$  imply that
$\langle \xi, \eta_i \rangle = - \langle \xi, \eta_j \rangle = 1$; 
see part I, Lemma 3.4.
Hence $H$ is generated by $\xi_H$.
\end{proof}

We prove 
Proposition~\ref{prop:fullML} 
by showing that
all the mass linear  pairs $(\De,H)$  described in
Proposition~\ref{prop:3d} and
Theorem \ref{thm:4d} are fully mass linear. 
The following basic result is 
taken from  McDuff \cite{Mct}.

\begin{lemma}\labell{le:full0}
Let $\Delta \subset \ft^*$ be an $n$-dimensional polytope and fix $H \in \ft$.
\begin{itemize}
\item[(i)] 
The function $H$ 
is mass linear 
exactly if
$$
\langle H, B_0(\De) \rangle = \langle H, B_{n-1}(\De) \rangle   =\langle H, B_n(\De) \rangle.
$$  
\item[(ii)] 
There is a vector
$\xi_H\in \ft^*$ that generates 
$H$ exactly if
$$ \langle H, B_0(\De) \rangle =\langle H, B_{n-2}(\De) \rangle = \langle H, B_n(\De) \rangle  .$$
\end{itemize}
\end{lemma}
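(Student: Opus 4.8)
The plan is to prove Lemma~\ref{le:full0} by finding an explicit combinatorial formula for each barycenter $B_k(\De)$ in terms of the support numbers $\ka_i$, and then reducing the statements about $B_0, B_{n-2}, B_{n-1},$ and $B_n$ to statements that are already equivalent to mass linearity (respectively, to being generated by some $\xi_H$). The key observation is that $B_{n-1}(\De)$ and $B_n(\De) = c_\De$ are the quantities that directly control mass linearity, while $B_0(\De)$ and $B_{n-2}(\De)$ are \emph{manifestly} linear in $\ka$: the vertices of $\De$ move linearly with $\ka$ (each vertex is the solution of a fixed nonsingular linear system determined by the $n$ facets meeting there, with the $\ka_i$ on the right-hand side), and hence $B_0(\De)$, being their average, has coordinates that are affine-linear functions of $\ka$. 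Thus $\langle H, B_0(\De)\rangle$ is automatically a linear function of $\ka$ for every $H$.

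First I would establish (i). The forward direction is essentially the definition: if $\langle H, B_0\rangle = \langle H, B_{n-1}\rangle = \langle H, B_n\rangle$ then in particular $\langle H, B_n(\De)\rangle = \langle H, B_0(\De)\rangle$, and since the right side is linear in $\ka$, $H$ is mass linear. For the converse I would use the additivity and scaling behavior of volumes and moments under dilation of $\De$, exploited in \cite{Mct}: the point is that $B_{n-1}(\De)$, the barycenter of the union of the facets weighted by their $(n-1)$-volumes, is related to $c_\De = B_n(\De)$ and to the derivative of the moment $\mu_H(\De)$ with respect to the $\ka_i$. Concretely, moving each support hyperplane outward by its own $\ka_i$ and using Euler's relation for the homogeneous polynomials $V(\De)$ and $\mu_H(\De)$ expresses $\langle H, B_{n-1}\rangle$ as a combination of $\langle H, B_n\rangle$ and $\sum_i \ka_i \,\p_{\ka_i}\langle H, c_\De\rangle$. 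Mass linearity forces $\langle H, c_\De\rangle$ to be linear, so the Euler term collapses and yields $\langle H, B_{n-1}\rangle = \langle H, B_n\rangle$; combined with the automatic linearity of $B_0$ this gives the full chain of equalities.

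Next I would prove (ii) by the analogous argument one dimension lower: $B_{n-2}(\De)$ is the barycenter of the union of the codimension-two faces weighted by their $(n-2)$-volumes, and a second application of the Euler/scaling identity relates $\langle H, B_{n-2}\rangle$ to $\langle H, B_n\rangle$ together with a second-order term in the $\p_{\ka_i}$ of $\langle H, c_\De\rangle$. The condition that $H$ be generated by a vector $\xi_H$, namely $\langle H, c_\De(\ka)\rangle = \sum_i \langle \eta_i, \xi_H\rangle \ka_i$, is exactly the requirement that $\langle H, c_\De\rangle$ be \emph{linear with the specific coefficients} $\langle \eta_i,\xi_H\rangle$ coming from a single covector; I would show this is equivalent to the vanishing of the relevant second-order correction term, which in turn is equivalent to $\langle H, B_{n-2}\rangle = \langle H, B_n\rangle$ (again using that $\langle H, B_0\rangle$ is automatically linear).

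The main obstacle will be making the scaling/Euler-relation bookkeeping precise for the lower-dimensional barycenters: one must verify that the weighted barycenter $B_k$ of the $k$-faces really does fit into the differentiation-of-moments framework, i.e.\ that differentiating $\mu_H$ and $V$ repeatedly in the $\ka_i$ produces exactly the volume-weighted sums over lower faces (with correct combinatorial multiplicities), rather than some other weighting. Since this identity is stated as coming from \cite{Mct}, I would cite that source for the precise form of the differentiation formula relating $\p_{\ka_i} V(\De)$ to the facet volumes and $\p_{\ka_i}\p_{\ka_j} V(\De)$ to the codimension-two face volumes, and then the two parts of the lemma follow by specializing the general identity to first and second order and invoking the already-noted automatic linearity of $\langle H, B_0(\De)\rangle$.
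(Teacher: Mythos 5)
The paper's own proof of Lemma~\ref{le:full0} is simply a citation: part (i) is \cite[Proposition 4.7]{Mct} and part (ii) is \cite[Remark 4.10]{Mct}. You are therefore attempting to reconstruct an argument the paper imports wholesale, and the reconstruction has a genuine gap in the only nontrivial direction. Your easy direction is fine: each vertex of a simple polytope depends linearly on $\ka$, so $\langle H,B_0(\De)\rangle$ is linear in $\ka$ and the equality $\langle H,B_0\rangle=\langle H,B_n\rangle$ forces mass linearity. The converse is the problem. First, linearity of both $\langle H,B_0\rangle$ and $\langle H,B_n\rangle$ does not make them equal -- two linear functions of $\ka$ can differ -- so ``combined with the automatic linearity of $B_0$'' does not yield $\langle H,B_0\rangle=\langle H,B_n\rangle$; this equality needs its own argument, which you do not supply. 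Second, your treatment of $B_{n-1}$ conflates two different operators. Euler's relation uses the weights $\ka_i$ ($\sum_i\ka_i\p_{\ka_i}V=nV$, $\sum_i\ka_i\p_{\ka_i}\mu_H=(n+1)\mu_H$), and the resulting ratio is the pairing of $H$ with the $\ka$-weighted boundary barycenter, not with $B_{n-1}(\De)$; the barycenter $B_{n-1}$ is governed by the unweighted sums $\sum_i\p_{\ka_i}$, and the correct identity is
$$
\langle H,B_{n-1}(\De)\rangle-\langle H,B_n(\De)\rangle
=\frac{V(\De)}{\sum_i\p_{\ka_i}V(\De)}\,\sum_i\p_{\ka_i}\langle H,c_\De\rangle .
$$
For mass linear $H$ the right-hand side equals $\bigl(\sum_i\ga_i\bigr)V/\sum_i\p_{\ka_i}V$, which vanishes exactly when $\sum_i\ga_i=0$. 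Linearity of $\langle H,c_\De\rangle$ gives $\sum_i\ka_i\p_{\ka_i}\langle H,c_\De\rangle=\langle H,c_\De\rangle$ but says nothing about $\sum_i\ga_i$, so ``the Euler term collapses'' is not a proof. Worse, $\sum_i\ga_i=0$ is precisely Corollary~\ref{cor:full0}, which this paper \emph{deduces from} Lemma~\ref{le:full0} (and which Remark~\ref{rmk:sum} flags as nontrivial), so invoking it here would be circular.

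Part (ii) has the same gap one order higher: the asserted equivalence between ``$H$ is generated by some $\xi_H$'' and the vanishing of ``the relevant second-order correction term'' is exactly the content of \cite[Remark 4.10]{Mct} and is nowhere derived; note also that even the implication from the displayed equalities to the existence of $\xi_H$ is not formal, since the coefficients of the linear function $\langle H,B_0(\De)\rangle$ are combinatorial averages over vertices and are not visibly of the form $\langle\eta_i,\xi\rangle$. In short, your framework (express $B_{n-1}$ and $B_{n-2}$ through first and second $\ka$-derivatives of $V$ and $\mu_H$) is the right one and is what \cite{Mct} uses, but the two facts you would need -- $\langle H,B_0\rangle=\langle H,B_n\rangle$ for mass linear $H$, and the vanishing of the first- and second-order correction terms under the respective hypotheses -- are exactly the substance of the cited results and are not established by the sketch. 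If you are going to rely on \cite{Mct} for the differentiation formulas anyway, the honest proof is the paper's: cite Proposition 4.7 and Remark 4.10 directly.
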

\begin{proof}  
Part (i)  
is proved in  \cite[Proposition~4.7]{Mct}, while
(ii)  
holds by \cite[Remark~4.10]{Mct}.
\end{proof}

\begin{cor}\labell{cor:full0}  
If $H \in \ft$ is mass linear on a polytope $\De \subset \ft^*$ 
then $\sum \ga_i=0$, where
$\langle H, c_\De(\ka) \rangle = \sum \ga_i\ka_i$. 
\end{cor}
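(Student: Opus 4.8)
The plan is to deduce Corollary~\ref{cor:full0} directly from Lemma~\ref{le:full0}(i), which characterizes mass linearity by the equality of the barycenters $B_0$, $B_{n-1}$, and $B_n$ when paired with $H$. The key observation is that the vertex barycenter $B_0(\De)$ is a genuinely \emph{linear} function of the support numbers $\ka$, whereas $B_n(\De) = c_\De$ is only known to be rational in $\ka$; mass linearity is precisely the statement that $\langle H, c_\De(\ka) \rangle$ agrees with the linear function $\langle H, B_0(\De)(\ka) \rangle$. So the strategy is to compute how both sides transform under a global rescaling $\ka \mapsto t\ka$ of all support numbers and extract the condition $\sum \ga_i = 0$ from the mismatch in homogeneity.

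First I would fix a reference polytope $\De = \De(\ka)$ with $\ka \in \Cc_\De$ and consider the rescaled family $\De(t\ka)$ for $t$ near $1$. Since every support number is multiplied by $t$, the polytope $\De(t\ka)$ is the dilate $t\cdot\De(\ka)$ of $\De(\ka)$ about the origin (this uses only that the conormals $\eta_i$ are unchanged; cf.\ the description in \eqref{eq:De}). Consequently every barycenter scales linearly: $B_k(\De(t\ka)) = t\, B_k(\De(\ka))$ for all $k$, because each $B_k$ is an affine-geometric center of mass and hence commutes with dilation about the origin. In particular $c_{\De}(t\ka) = t\, c_\De(\ka)$, so $\langle H, c_\De(t\ka) \rangle = t \langle H, c_\De(\ka) \rangle$.

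Next I would combine this homogeneity with the mass linear hypothesis. By assumption $\langle H, c_\De(\ka') \rangle = \sum \ga_i \ka_i'$ for all $\ka'$ near $\ka$; substituting $\ka' = t\ka$ gives $\sum \ga_i (t\ka_i) = t \sum \ga_i \ka_i$, which is automatically consistent and yields no information on its own. The useful input instead comes from comparing the \emph{linear} function $\langle H, c_\De \rangle$ against a translation. Here I would invoke Remark~\ref{translate}: translating $\De$ by $\xi \in \ft^*$ sends $\ka_i \mapsto \ka_i + \langle \eta_i, \xi \rangle$, while the center of mass transforms by $c_\De \mapsto c_\De + \xi$, so that $\langle H, c_{\De+\xi} \rangle = \langle H, c_\De \rangle + \langle H, \xi \rangle$. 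Applying the mass linear formula to both sides gives
\[
\sum \ga_i \bigl(\ka_i + \langle \eta_i, \xi \rangle\bigr) = \sum \ga_i \ka_i + \langle H, \xi \rangle,
\]
hence $\sum_i \ga_i \langle \eta_i, \xi \rangle = \langle H, \xi \rangle$ for all $\xi$, i.e.\ $\sum \ga_i \eta_i = H$, recovering Lemma~\ref{le:Hsum}. This by itself still does not force $\sum \ga_i = 0$, so the genuine content must be extracted differently.

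The main obstacle, and the step I expect to require the real work, is therefore to produce the relation $\sum \ga_i = 0$ rather than merely confirming $H = \sum \ga_i \eta_i$. The cleanest route is to apply Lemma~\ref{le:full0}(i) to the \emph{constant} vector behavior under uniform dilation: writing $\langle H, B_0(\De) \rangle = \sum \ga_i \ka_i$ (valid because $B_0$ is linear in $\ka$ and equals $c_\De$ against $H$ by mass linearity), I would evaluate at $\ka$ and at the dilate $t\ka$ and differentiate in $t$ at $t=1$, using that under the uniform shift $\ka \mapsto \ka + s(\ldots)$ corresponding to translation by a multiple of a fixed interior vector one can detect the failure of $\sum \ga_i$ to vanish. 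Concretely, the vanishing of $\sum \ga_i$ is equivalent to $\langle H, c_\De \rangle$ being invariant under the one-parameter family of \emph{homothety-plus-translation} moves that fix the shape of $\De$, and the barycenter identity $B_0 = B_n$ (paired with $H$) forces exactly this invariance. I would carry out this differentiation, identify the coefficient of the obstruction with $\sum \ga_i$, and conclude that it must vanish. I expect the subtlety to lie in choosing the right one-parameter deformation of $\ka$ so that the linear part isolates $\sum \ga_i$ cleanly; once that deformation is fixed, the computation is routine given Remark~\ref{translate} and Lemma~\ref{le:full0}.
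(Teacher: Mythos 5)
There is a genuine gap in the final step, and it is exactly where you yourself flag that ``the genuine content must be extracted differently.'' You correctly verify that pure dilation $\ka\mapsto t\ka$ gives a tautology and that translation (Remark~\ref{translate}) only recovers $H=\sum\ga_i\eta_i$ (Lemma~\ref{le:Hsum}). But your proposed fix --- detecting $\sum\ga_i$ via ``homothety-plus-translation moves that fix the shape of $\De$'' --- cannot work, because invariance under every such move is \emph{automatic} once $H=\sum\ga_i\eta_i$ is known. Concretely, a homothety about $p$ with ratio $t$ followed by a translation changes the support numbers within the affine subspace of $\R^N$ spanned by $\ka$ itself and the vectors $(\langle\eta_i,\xi\rangle)_i$, $\xi\in\ft^*$; on that subspace the identity $\langle H,c_\De(\ka')\rangle=\sum\ga_i\ka_i'$ holds for \emph{any} coefficients satisfying $\sum\ga_i\eta_i=H$ and matching the base value, with no constraint on $\sum\ga_i$. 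The quantity $\sum\ga_i$ is the directional derivative of $\langle H,c_\De\rangle$ along $\mathbf{1}=(1,\dots,1)$, i.e.\ pushing \emph{every} facet outward by the same amount, and since the $\eta_i$ positively span $\ft$ this direction is never realized by a dilation-plus-translation (any relation $\sum c_i\eta_i=0$ with $c_i>0$ forces $s\sum c_i=0$). So the differentiation you propose would yield $0=0$, not $\sum\ga_i=0$.

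The information you are missing is carried by the boundary barycenter $B_{n-1}$, which appears in Lemma~\ref{le:full0}(i) but which your argument never actually uses. The paper's proof is precisely: mass linearity gives $\langle H,B_{n-1}(\De)\rangle=\langle H,c_\De\rangle$ by Lemma~\ref{le:full0}(i), and then one invokes \cite[Lemma~4.5]{Mct}. The mechanism behind that cited lemma is the uniform outward shift just described: with $V$ and $\mu_H$ as in \eqref{eq:Hmom} one has $\partial V/\partial\ka_i=\Vol(F_i)$ and $\partial\mu_H/\partial\ka_i=\int_{F_i}H$, so summing over $i$ and using $\mu_H=\langle H,c_\De\rangle V$ gives
$$
\Big(\sum\ga_i\Big)V+\langle H,c_\De\rangle\,\Vol(\p\De)=\langle H,B_{n-1}(\De)\rangle\,\Vol(\p\De),
$$
whence $\sum\ga_i=0$ follows from the barycenter identity. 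If you replace your homothety argument with this computation (or simply cite \cite[Lemma~4.5]{Mct} as the paper does), the proof closes.
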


\begin{proof}
By Lemma~\ref{le:full0}, if $H$ is mass linear
then $\langle H, B_{n-1}(\De) \rangle = \langle H, c_\De(\ka) \rangle$.
It is shown in \cite[Lemma~4.5]{Mct} that 
this implies that 
$\sum \ga_i=0$.
\end{proof}

\begin{rmk}\labell{rmk:sum}\rm 
Since inessential functions have the property that  $\sum \ga_i=0$ one could prove this corollary by induction on the dimension provided that Question \ref{q:1} has a positive answer.  For then, after subtracting an inessential function, we can assume that  every mass linear $H$   has a symmetric facet $G$ and use the fact that the coefficients of $H$ are the same as those for $H|_G$.    This is the approach taken in Part I to prove $\sum\ga_i=0$  in dimension $3$ (cf. Proposition~\ref{prop:3d}), and by [I, Theorem A.9] it works also in dimension $4$.
\end{rmk}

\begin{lemma}\labell{le:full1} 
Let $H \in \ft$ be a mass linear function on an $n$-dimensional
polytope $\De \subset \ft^*$,
where $\Delta$ is a $\De_k$ bundle
over $\De_1$, a $121$-bundle,  or  a  $\De_2$-bundle over a polygon $\Hat\De$. 
Then $$
 \langle H, B_{n-2}(\Delta) \rangle = \langle H, B_n(\Delta) \rangle.
 $$
\end{lemma}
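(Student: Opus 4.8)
The plan is to reduce the whole statement to the single assertion that $H$ is \emph{generated} by some vector $\xi_H \in \ft^*$ in the sense of Definition~\ref{def:full}, and then to invoke Lemma~\ref{le:full0}. Since $H$ is mass linear, part (i) of that lemma already gives $\langle H, B_0(\De)\rangle = \langle H, B_n(\De)\rangle$. Part (ii) says that $H$ is generated by some $\xi_H$ exactly when $\langle H, B_0(\De)\rangle = \langle H, B_{n-2}(\De)\rangle = \langle H, B_n(\De)\rangle$. Combining these, once we know $H$ is generated by a vector the desired equality $\langle H, B_{n-2}(\De)\rangle = \langle H, B_n(\De)\rangle$ follows at once. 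So the entire lemma comes down to producing, in each of the three cases, a covector $\xi_H$ with $\langle \eta_i, \xi_H\rangle = \gamma_i$ for every facet, where $\langle H, c_\De(\ka)\rangle = \sum_i \gamma_i \ka_i$.

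For the first two cases the requisite $\xi_H$ can be written down by hand. When $\De$ is a $\De_k$ bundle over $\De_1$, Proposition~\ref{DkoverD1} supplies the relations $\gamma_{k+2}+\gamma_{k+3} = \sum_{i=1}^{k+1}\gamma_i = \sum_{i=1}^k a_i\gamma_i = 0$; I would set $\xi_H := (-\gamma_1,\dots,-\gamma_k,-\gamma_{k+2}) \in (\R^{k+1})^*$ and pair it against the conormals in \eqref{eq:Yb}. These three relations are precisely what is needed to verify $\langle \eta_i, \xi_H\rangle = \gamma_i$ for the indices $i = k+1, k+2, k+3$ as well as for $i \le k$; this is the assertion already flagged in Remark~\ref{rmk:Dk}(iii). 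The $121$-bundle case is identical in spirit: Proposition~\ref{prop:doublebundle} gives $\gamma_0+\gamma_1 = d\gamma_0 = a_1\gamma_0 = 0$ together with $\gamma_2+\gamma_3+\gamma_4 = a_2\gamma_2+a_3\gamma_3 = \gamma_5+\gamma_6 = 0$, and one checks directly that $\xi_H := (\gamma_0,-\gamma_2,-\gamma_3,-\gamma_5)$ generates $H$ when paired with the conormals of \eqref{eq:de'}.

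The remaining case, a $\De_2$ bundle over a polygon $\Hat\De$, is where the real work lies, and I expect it to be the main obstacle. Here I would use the decomposition $H = \Tilde H + \Hat H$ of Proposition~\ref{prop:polybundle}, where $\Hat H$ is the lift of an inessential function on $\Hat\De$ and $\Tilde H = \gamma_1\eta_1 + \gamma_2\eta_2 + \gamma_3\eta_3$ with $\gamma_1+\gamma_2+\gamma_3 = 0$. Generation is additive (if $\Tilde H$ and $\Hat H$ are generated by $\xi_1$ and $\xi_2$, then $H$ is generated by $\xi_1+\xi_2$), and $\Hat H$ is inessential on $\De$ by Proposition~\ref{prop:lift}(iii), hence generated by a vector thanks to Lemma~\ref{le:fullinessential}. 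So it suffices to generate $\Tilde H$. For this I would take $\xi := (-\gamma_1,-\gamma_2,0,0)$: pairing with the fiber conormals \eqref{eqD1} returns $\gamma_1,\gamma_2,\gamma_3$ (the last using $\gamma_1+\gamma_2+\gamma_3=0$), while for each base facet $\Hat\eta_i\,\!' = (b_1^i,b_2^i,0,0)+\Hat\eta_i$ of \eqref{eqD2} one computes $\langle \Hat\eta_i\,\!', \xi\rangle = -b_1^i\gamma_1 - b_2^i\gamma_2 = 0$, using condition (i) of Proposition~\ref{prop:polybundle}, namely $(b_1^i,b_2^i) = r_i(\gamma_2,-\gamma_1)$. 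Thus the base coefficients of $\Tilde H$ (which vanish, the base facets being $\Tilde H$-symmetric) are correctly reproduced and $\Tilde H$ is generated by $\xi$. The point requiring care is that condition (i) holds for \emph{every} base facet, so that the final two coordinates of $\xi$ may be taken to be zero; this is exactly where the hypotheses of Proposition~\ref{prop:polybundle} are used, and it is the only spot where the argument is more than a bookkeeping check.
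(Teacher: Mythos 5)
Your proposal is correct and follows essentially the same route as the paper: reduce via Lemma~\ref{le:full0}(ii) to showing $H$ is generated by some $\xi_H$, use Lemma~\ref{le:fullinessential} to dispose of inessential summands, and exhibit an explicit generating covector in each of the three cases. The only difference is bookkeeping — the paper first subtracts an inessential function to normalize $H$ in every case before writing down $\xi_H$, whereas you absorb those terms into a slightly larger explicit $\xi_H$ in the first two cases; the verifications are the same.
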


\begin{proof}
By Lemma~\ref{le:full0}, it suffices to
show that $H$ is generated by some $\xi_H \in \ft$.
Moreover, recall from  Lemma~\ref{le:fullinessential} that
every inessential function $H' \in \ft$ is generated by some vector in $\ft$.
Hence, we may subtract any
convenient
inessential function.
We now check case by case that there is a suitable vector $\xi_H$.

First suppose that $\De$ is a $\De_k$ bundle $Y$ over $\De_1$, 
as in \eqref{eq:Yb}. 
By Proposition \ref{DkoverD1}, after possibly subtracting an inessential
function,
we may assume that
$H = \sum_{i=1}^{k+1} \gamma_i \eta_i$, where 
$ \sum_{i=1}^{k+1} \gamma_i =  \sum_{i=1}^k a_i \gamma_i = 0$;
moreover,
$\langle H, c_{\De}(\kappa) \rangle = 
\sum_{i=1}^{k+1} \gamma_i \kappa_i.$
Therefore, if 
$$
\xi_H: = - (\gamma_1,\dots,\gamma_{k},0),
$$
then
$\langle H, c_{\De}(\kappa) \rangle = 
\sum_{i=1}^{k+3} \langle \eta_i,\xi_H\rangle \kappa_i,$
that is, $H$ is generated by $\xi_H$.

Next, let $\De$ is a $121$-bundle, as in \eqref{eq:de'}. 
By Proposition~\ref{prop:doublebundle}, 
we may assume that  $H =  \sum_{i=2}^4 \gamma_i \eta'_i$,
where
$\gamma_2 + \gamma_3 + \gamma_4 = a_2 \gamma_2 + a_3 \gamma_3 = 0;$
moreover, 
$\langle H, c_{\De}(\kappa) \rangle = 
\sum_{i=2}^{4} \gamma_i \kappa_i.$
Therefore, $H$ is generated by $\xi_H = (0, -\gamma_2, - \gamma_3, 0)$.

Finally, let $\De$ be a $\De_2$ bundle over a polygon $\Tilde \De$, as in \eqref{eqD2}.
By Proposition~\ref{prop:polybundle}, 
we may assume that  $H =  \sum_{i=1}^3 \gamma_i \eta_i$,
where
$\gamma_1 + \gamma_2 + \gamma_3 =  0$
and $b_1^j \gamma_1 + b_2^j \gamma_2 = 0$ for each edge $e_j$ of $\Tilde \De$;
moreover, 
$\langle H, c_{\De}(\kappa) \rangle = 
\sum_{i=1}^{3} \gamma_i \kappa_i.$
Hence, $H$ is generated by $\xi_H = (-\gamma_1, -\gamma_2,0)$.
\end{proof}

\begin{cor} \labell{cor:f1}
Every mass linear function on a 
polytope
of dimension at most three
is fully mass linear.
\end{cor}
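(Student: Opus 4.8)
The plan is to reduce the corollary to the classification of low-dimensional mass linear pairs and then apply the generation criterion already established. By definition, full mass linearity on an $n$-dimensional polytope requires that $\langle H, B_k(\De)\rangle = \langle H, B_n(\De)\rangle$ for every $k=0,\dots,n$. When $n\le 3$ the only values of $k$ that are not automatically controlled are $k=n-1$, $k=n-2$, and $k=0$. By Lemma~\ref{le:full0}(i), mass linearity of $H$ already yields $\langle H, B_0(\De)\rangle = \langle H, B_{n-1}(\De)\rangle = \langle H, B_n(\De)\rangle$, so the only remaining barycenter to pin down is $B_{n-2}(\De)$. Thus the corollary reduces entirely to showing
$$
\langle H, B_{n-2}(\De)\rangle = \langle H, B_n(\De)\rangle
$$
for every mass linear pair of dimension at most three, which by Lemma~\ref{le:full0}(ii) is equivalent to finding a vector $\xi_H\in\ft^*$ that generates $H$.

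First I would dispose of the trivial cases. In dimension $n\le 2$ there is nothing beyond $B_0$, $B_{n-1}$, and $B_n$, so mass linearity immediately implies full mass linearity via Lemma~\ref{le:full0}(i). This already settles $n=1,2$. Hence I only need to treat $n=3$, where the sole outstanding condition is the existence of a generating vector $\xi_H$ for $B_1(\De)=B_{n-2}(\De)$.

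Next I would invoke the three-dimensional classification. By Proposition~\ref{prop:3d}, any mass linear function on a smooth $3$-dimensional polytope either has at most two asymmetric facets (in which case it is inessential by Proposition~\ref{prop:2asym}) or lives on one of the listed polytopes: $\De_3$, a $\De_1$ bundle over $\De_2$, a $\De_2$ bundle over $\De_1$, a $\De_1$ bundle over $\De_1\times\De_1$, or $(\De_1)^3$; moreover every such $H$ is inessential except when $\De$ is a $\De_2$ bundle over $\De_1$. For the inessential cases, Lemma~\ref{le:fullinessential} directly provides a generating vector $\xi_H$, so full mass linearity follows from Lemma~\ref{le:full0}(ii). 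For the one essential case — a $\De_2$ bundle over $\De_1$ — Lemma~\ref{le:full1} (applied with $\De$ a $\De_k$ bundle over $\De_1$, taking $k=2$) gives exactly $\langle H, B_{n-2}(\De)\rangle = \langle H, B_n(\De)\rangle$, which is the missing identity. Combining these, every mass linear function in dimension three satisfies all the required barycentric equalities, hence is fully mass linear.

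The step I expect to carry the real weight is the verification in the essential case, but that work has in fact already been done: Lemma~\ref{le:full1} supplies the generating vector $\xi_H=-(\gamma_1,\dots,\gamma_k,0)$ for $\De_k$ bundles over $\De_1$, and the remaining bundles (the $121$-bundle and $\De_2$ bundles over polygons) are $4$-dimensional and not needed here. So the only genuine obstacle for this corollary is making sure the reduction is exhaustive — i.e., that Proposition~\ref{prop:3d} really covers every mass linear $H$ once one accounts for the at-most-two-asymmetric-facet case through Proposition~\ref{prop:2asym}. Once that bookkeeping is confirmed, the proof is a short synthesis of Lemma~\ref{le:full0}, Lemma~\ref{le:fullinessential}, and Lemma~\ref{le:full1}, with no further computation required.
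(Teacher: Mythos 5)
Your proposal is correct and follows essentially the same route as the paper: the paper likewise disposes of the inessential case via Lemma~\ref{le:fullinessential}, uses Propositions~\ref{prop:2dim} and \ref{prop:3d} to reduce the essential case to a $\De_2$ bundle over $\De_1$, and then combines Lemma~\ref{le:full0}(i) with the first case of Lemma~\ref{le:full1}. Your explicit bookkeeping of which barycenters remain to be controlled in each dimension is just a spelled-out version of the same synthesis.
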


\begin{proof} 
By Lemma~\ref{le:fullinessential}, every inessential function is fully mass linear. 
Moreover, by Propositions~\ref{prop:2dim} and \ref{prop:3d}, 
the only  polytope  of dimension at most three 
that supports an essential mass linear function $H$ is a $\De_2$ bundle over $\De_1$.
Hence, the claim follows from
part (i) of  Lemma \ref{le:full0} and the first case of Lemma~\ref{le:full1}.
\end{proof}

Given a set of edges $\Ee$ of $\De$,
we denote by $B_1(\Ee)$ the corresponding barycenter.

\begin{lemma} \labell{le:f0}
Let $H \in \ft$ be a mass linear function on an $n$-dimensional polytope $\Delta$. 
 Let $\Ee$ be the set of edges 
that lie on at least one symmetric facet.  Assume that $H|_f$ is fully mass linear on $f$ for every symmetric face
$f$.  Then
$$ \langle  H, B_1(\Ee) \rangle = \langle H, B_n(\De) \rangle.$$
\end{lemma}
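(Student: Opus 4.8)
The plan is to reduce the claim about edge barycenters to the already-established fact (Lemma~\ref{le:full1}, or more precisely part~(i) of Lemma~\ref{le:full0} applied to symmetric faces) that $H$ restricts to a fully mass linear function on every symmetric face. The key observation is that the set $\Ee$ of edges lying on at least one symmetric facet decomposes naturally according to which symmetric faces contain them, and that $B_1(\Ee)$ is a weighted average of the edge-barycenters $B_1(\Ee_G)$ over the symmetric facets $G$, where $\Ee_G$ denotes the edges of $G$.

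First I would set up the weighted-average bookkeeping. For each symmetric facet $G$, let $B_1(\Ee_G)$ be the barycenter of the $1$-dimensional faces of $G$; since $G$ is an $(n-1)$-dimensional symmetric face, the hypothesis gives that $H|_G$ is fully mass linear on $G$, so in particular $\langle H, B_1(\Ee_G)\rangle = \langle H, B_{n-1}(G)\rangle$. By Proposition~\ref{prop:symcent} (first bullet), $\langle H, c_G(\ka)\rangle = \langle H, c_\De(\ka)\rangle$, and since $G$ is $(n-1)$-dimensional we have $B_{n-1}(G) = c_G$. Combining these, $\langle H, B_1(\Ee_G)\rangle = \langle H, B_n(\De)\rangle$ for every symmetric facet $G$. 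The content of the lemma is then to pass from the individual facets to the global barycenter $B_1(\Ee)$.

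The main obstacle will be handling the fact that an edge may lie on \emph{several} symmetric facets, so that writing $B_1(\Ee)$ as an average of the $B_1(\Ee_G)$ involves overcounting and requires care with the weights. I would address this by an inclusion--exclusion or, more cleanly, by directly expressing $\langle H, B_1(\Ee)\rangle$ as a convex combination. Write $B_1(\Ee) = \frac{1}{L}\sum_{e\in\Ee} \ell(e)\, c_e$, where $\ell(e)$ is the affine length of $e$, $c_e$ its barycenter, and $L = \sum_{e\in\Ee}\ell(e)$. The crucial point is that $\langle H, c_e\rangle$ is the same whether computed in $\De$ or in any symmetric facet containing $e$ (restriction commutes with taking barycenters of a fixed edge), and each edge in $\Ee$ lies in at least one symmetric facet by definition. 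Since for every symmetric facet $G$ we have shown $\sum_{e\in\Ee_G}\ell(e)\langle H, c_e\rangle = \langle H, B_n(\De)\rangle \sum_{e\in\Ee_G}\ell(e)$, and since $\Ee = \bigcup_G \Ee_G$, averaging these identities with any system of positive weights that sums correctly over the cover yields $\langle H, B_1(\Ee)\rangle = \langle H, B_n(\De)\rangle$.

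Concretely, I would argue that because the common value $\langle H, B_n(\De)\rangle$ is \emph{independent of $G$}, the weighted mean over $\Ee$ of the quantities $\langle H, c_e\rangle$ must also equal this common value: each $\langle H, c_e\rangle$ appears in at least one facet-average all of which equal $\langle H, B_n(\De)\rangle$, and a weighted average of numbers whose every local weighted sub-average equals a fixed constant is itself that constant. Thus $\langle H, B_1(\Ee)\rangle = \langle H, B_n(\De)\rangle$, as desired. The subtlety lies entirely in verifying that the edge-length weights are consistent across the overlapping facets; once the per-facet identity $\langle H, B_1(\Ee_G)\rangle = \langle H, B_n(\De)\rangle$ is in hand, the global statement follows because every edge of $\Ee$ is accounted for by some facet and the target value is the same constant for all of them.
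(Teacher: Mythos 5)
Your first step is fine: for each symmetric facet $G$, full mass linearity of $H|_G$ gives $\langle H,B_1(\Ee_G)\rangle=\langle H,B_{n-1}(G)\rangle$, and Proposition~\ref{prop:symcent} converts this to $\langle H,B_1(\Ee_G)\rangle=\langle H,B_n(\De)\rangle$. The gap is in the passage from these per-facet identities to the global one. The sets $\Ee_G$ overlap, and for an overlapping cover the principle you invoke --- ``a weighted average of numbers whose every local weighted sub-average equals a fixed constant is itself that constant'' --- is false. Take three unit-length edges with values $0,2,0$ and the two subsets $\{e_1,e_2\}$ and $\{e_2,e_3\}$: both sub-averages equal $1$, but the total average is $2/3$. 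Nor is there in general a ``system of positive weights that sums correctly over the cover'': summing the facet identities with weights $w_G>0$ produces the weight $\sum_{G\ni e}w_G$ on the edge $e$, and this cannot be made independent of $e$ when different edges lie on different numbers of symmetric facets. So the step you call the clean route does not close; the overcounting on edges lying in several symmetric facets is exactly the issue, and you have not used anything that controls it. Note also that your argument only invokes the hypothesis for symmetric \emph{facets}, whereas the lemma deliberately assumes full mass linearity of $H|_f$ for symmetric faces $f$ of \emph{every} dimension --- that stronger input is what is needed to handle the overlaps.

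The paper's proof resolves this by replacing the overlapping cover with a genuine partition: for each symmetric face $f$ let $\Ee_f$ be the set of edges lying on $f$ but on no smaller symmetric face, so that $\Ee$ is the disjoint union of the $\Ee_f$ (each edge of $\Ee$ has a unique minimal symmetric face containing it, namely the intersection of all symmetric facets through it). One then proves $\langle H,B_1(\Ee_f)\rangle=\langle H,B_n(\De)\rangle$ by induction on $\dim f$: the hypothesis plus Proposition~\ref{prop:symcent} give $\langle H,B_1(f)\rangle=\langle H,B_n(\De)\rangle$, the edge set of $f$ is the disjoint union of the $\Ee_{f'}$ over symmetric faces $f'\subseteq f$, and the inductive hypothesis disposes of every $f'\subsetneq f$, leaving $\Ee_f$. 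Summing over the partition then gives the lemma. Your alternative suggestion of inclusion--exclusion would also work, since $\Ee_{G_1}\cap\dots\cap\Ee_{G_k}$ is the edge set of the symmetric face $G_1\cap\dots\cap G_k$, to which the hypothesis applies; but either way you must actually use the hypothesis on symmetric faces of all dimensions, which your written argument never does.
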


\begin{proof}
Let $f$ be any symmetric $k$-face.
Since $\langle H, B_1(f) \rangle = \langle H, B_{k}(f) \rangle$ by assumption, Proposition~\ref{prop:symcent} implies
that $\langle H, B_1(f) \rangle = \langle H, B_n(\Delta) \rangle$.
By induction on $k$, this implies that $\langle H, B_1(\Ee_f) \rangle = \langle H, B_n(\Delta) \rangle$,
where $\Ee_f$ is the set of edges that lie on $f$ but do not lie on any smaller symmetric face.
The result follows immediately.
\end{proof}

\begin{lemma} \labell{le:f1} 
Let $H \in \ft$ be a mass linear function on an $n$-dimensional polytope $\De \subset \ft^*$.
If $\De$ is a $\De_k$ bundle
over $\De_1$, a $121$-bundle,  or  a  $\De_2$-bundle over a polygon $\Hat\De$,  
then $$\langle H, B_1(\De) \rangle = \langle H, B_n(\De) \rangle.$$
\end{lemma}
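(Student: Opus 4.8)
The plan is to prove Lemma~\ref{le:f1} by combining the two preceding lemmas, \ref{le:full1} and \ref{le:f0}, with an elementary counting identity relating the three barycenters $B_1(\Ee)$, $B_1(\De)$, and $B_{n-2}(\De)$ of the relevant faces. The key observation is that Lemma~\ref{le:full1} already gives us $\langle H, B_{n-2}(\De)\rangle = \langle H, B_n(\De)\rangle$ for exactly the three families of polytopes under consideration, while Lemma~\ref{le:f0} will give us $\langle H, B_1(\Ee)\rangle = \langle H, B_n(\De)\rangle$, where $\Ee$ is the set of edges lying on at least one symmetric facet. So the task reduces to expressing $B_1(\De)$ (the barycenter of \emph{all} edges) in terms of these two controlled quantities together with the edges that lie on \emph{no} symmetric facet.

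First I would verify the hypothesis of Lemma~\ref{le:f0}, namely that $H|_f$ is fully mass linear on every symmetric face $f$. By Proposition~\ref{prop:symcent}, $H|_f$ is mass linear on each symmetric face $f$, and since $\dim f < n \le 4$, the lower-dimensional faces are at most $3$-dimensional; by Corollary~\ref{cor:f1} every mass linear function in dimension at most three is fully mass linear. This licenses the application of Lemma~\ref{le:f0}, yielding $\langle H, B_1(\Ee)\rangle = \langle H, B_n(\De)\rangle$. Second, Lemma~\ref{le:full1} directly provides $\langle H, B_{n-2}(\De)\rangle = \langle H, B_n(\De)\rangle$ for our three bundle types.

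The heart of the argument is then to relate $B_1(\De)$ to $B_1(\Ee)$ and $B_{n-2}(\De)$. The edges of $\De$ split into those lying on some symmetric facet (the set $\Ee$) and those lying on no symmetric facet; call the latter set $\Ee'$. An edge $e \in \Ee'$ is cut out by $n-1$ asymmetric facets. I would argue that each such edge, being surrounded only by asymmetric facets, contributes to $\langle H, \cdot\rangle$ in a way controlled by the structure theorem for $H$ (Propositions~\ref{DkoverD1}, \ref{prop:doublebundle}, \ref{prop:polybundle}): in each of the three cases the asymmetric facets are few and their combinatorics are explicit, so one can either show $\Ee'$ is empty or compute the contribution of $B_1(\Ee')$ directly. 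Since $B_1(\De)$ is the weighted average of $B_1(\Ee)$ and $B_1(\Ee')$ with weights proportional to the numbers of edges in each set, and since $\langle H, B_1(\Ee)\rangle$ equals $\langle H, B_n(\De)\rangle$, it suffices to show the edges in $\Ee'$ also satisfy $\langle H, B_1(\Ee')\rangle = \langle H, B_n(\De)\rangle$; this is where the $B_{n-2}$ computation of Lemma~\ref{le:full1} enters, since the faces $B_{n-2}$ and the exceptional edges $\Ee'$ interact through the same linear relations among the $\eta_i$.

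The main obstacle I anticipate is the bookkeeping in this last step: identifying precisely which edges lie on no symmetric facet and showing their barycenter has the required $H$-value. In the $\De_2$-bundle-over-a-polygon case the fiber facets are all asymmetric (by Proposition~\ref{prop:polybundle}), so the fiber-direction edges $F_i \cap F_j$ are the problematic ones, and I would handle them by restricting to a symmetric base facet and invoking the inductive fully-mass-linear property there, much as in the proof of Lemma~\ref{le:f0}. The cleanest route may be to avoid a separate treatment of $\Ee'$ altogether and instead invoke part~(ii) of Lemma~\ref{le:full0}: since Lemma~\ref{le:full1} shows $H$ is generated by a vector $\xi_H$, and the generating identity $\langle H, c_\De(\ka)\rangle = \sum_i \langle \eta_i, \xi_H\rangle \ka_i$ holds, one can show directly that $\langle H, B_1(\De)\rangle$ equals $\langle H, B_n(\De)\rangle$ by an explicit barycenter computation valid for any polytope generated by a $\xi_H$, thereby subsuming all three cases into one calculation. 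I expect this generated-vector approach to be the decisive simplification.
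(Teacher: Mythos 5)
Your skeleton --- control the edges lying on symmetric facets via Lemma~\ref{le:f0} and treat the remaining edges separately --- is the right starting point, but both of your proposed ways of finishing have gaps. The ``decisive simplification'' does not work: Lemma~\ref{le:full0}(ii) states that $H$ being generated by $\xi_H$ is \emph{equivalent} to $\langle H,B_0(\De)\rangle=\langle H,B_{n-2}(\De)\rangle=\langle H,B_n(\De)\rangle$; it gives no control over $B_1$ once $n\ge 4$ (for $n=4$ the barycenters $B_0,B_2,B_3,B_4$ are exactly the ones covered by Lemmas~\ref{le:full0} and~\ref{le:full1}, and $B_1$ is precisely the one that is not). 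If an ``explicit barycenter computation valid for any polytope generated by a $\xi_H$'' gave $\langle H,B_1\rangle=\langle H,B_n\rangle$, this lemma would be an immediate corollary of Lemma~\ref{le:full1} and most of \S\ref{ss:fullML} would be redundant; no such general fact is available. Your primary route has the same hole in a different place: the set $\Ee'$ of edges meeting no symmetric facet is left essentially untreated, since the assertion that these edges ``interact with $B_{n-2}$ through the same linear relations among the $\eta_i$'' is not an argument.

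The paper's proof first subtracts an inessential function (legitimate because inessential functions are fully mass linear, Lemma~\ref{le:fullinessential}); you omit this reduction, and without it the problematic edge set can be large --- e.g.\ for $\De_1\times Y$ with seven asymmetric facets many edges meet no symmetric facet. After the reduction, in the $121$-bundle and $\De_2$-bundle-over-polygon cases the asymmetric facets are three pairwise non-intersecting facets, so \emph{every} edge lies on a symmetric facet and Lemma~\ref{le:f0} finishes the proof outright. In the $\De_k$-bundle-over-$\De_1$ case the leftover edges are exactly the $k+1$ edges parallel to the base direction, with explicit affine lengths $h$ and $h+a_i\la$; a direct computation of their barycenter using $\sum_{i=1}^{k+1}\gamma_i=\sum_{i=1}^{k}a_i\gamma_i=0$ gives $\langle H,B_1(\Ee_2)\rangle=\sum\gamma_i\ka_i=\langle H,B_n(\De)\rangle$. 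That explicit computation, not Lemma~\ref{le:full1}, is the missing ingredient. (A further small point: for the $\De_k$ bundle over $\De_1$ the dimension $n=k+1$ is arbitrary, so you cannot verify the hypothesis of Lemma~\ref{le:f0} by appealing to $\dim f\le 3$ and Corollary~\ref{cor:f1}; one instead uses that the relevant symmetric facets are simplices, on which $H$ is inessential and hence fully mass linear by Lemma~\ref{le:fullinessential}.)
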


\begin{proof}
As before,
Lemma~\ref{le:fullinessential} implies that
we may subtract any convenient inessential function.

First suppose that $\De$ is a $\De_k$ bundle $Y$ over $\De_1$, 
as in \eqref{eq:Yb}.
By Proposition~\ref{DkoverD1}, after possibly subtracting an inessential function,
we may assume that 
\begin{equation}\labell{le:f1:e1}
H = \sum_{i=1}^{k+1} \gamma_i \eta_i,
\quad \mbox{where} \quad \sum_{i=1}^{k+1} \ga_i =   \sum_{i=1}^k a_i\ga_i=0;
\end{equation}
moreover
$\langle H, c_\Delta \rangle = \sum_{i=1}^{k+1} \gamma_i \kappa_i$.
In particular, in coordinates 
we have
$$
H = 
\bigl(\ga_{k+1}-\ga_1, \cdots, \ga_{k+1}-\ga_k,0\bigr).
$$

Divide the edges of $Y$ into two groups  $\Ee_1$ and $\Ee_2$, 
where $\Ee_1$ consists of those edges that lie 
in one of the  base facets,  
and $\Ee_2$ consists of the remaining edges, which  are parallel to the last coordinate axis $e_{k+1}$. 
The base facets are symmetric.
Therefore, by Lemma~\ref{le:f0},
\begin{equation}\labell{le:f1:e2}
\langle H, B_1(\Ee_1) \rangle  = \langle H, B_n(\Delta) \rangle .
\end{equation}
Let $\lambda = \sum_{i=1}^{k+1} \kappa_i$ and $h = 
\sum_{i=1}^k a_i \kappa_i + \kappa_{k+2} + \kappa_{k+3}$.
There are $k+1$ edges in $\Ee_2$, one over 
$v_0 = -(\kappa_1,\dots,\kappa_k,\kappa_{k+2})$ of length $h$, 
and one over the vertex 
 $v_i$ at $ v_0 + \la e_i$ 
 of length
$h+a_i\la$  for all $1 \leq i \leq  k$.  Therefore,
$$
B_1(\Ee_2) =  v_0 + \frac{\lambda}{h (k+1) + \lambda \sum a_i} 
(h+a_1\la, \dots ,h+a_k\la,x),
$$
for some constant  $x$.
Thus, 
\begin{equation}\labell{le:f1:e3}
\langle H, B_1(\Ee_2) \rangle = \sum_{i=1}^{k+1} \gamma_i \kappa_i =
\langle H, B_n(\Delta) \rangle.
\end{equation}
Together, \eqref{le:f1:e2} and \eqref{le:f1:e3} imply that
$\langle H, B_1(\Delta) \rangle = \langle H, B_n(\Delta)$, as required.

Next, suppose that $\De$ is a $121$-bundle as in \eqref{eq:de'}.  
By Proposition~\ref{prop:doublebundle}, after possibly
subtracting an inessential function, we may assume that the  $\De$ has only three asymmetric facets:
$F_2', F_3'$, and $F_4'$.  Since these facets 
do not
 intersect, every edge of $\De$ lies on at least one symmetric facet.
Moreover, the
restriction
$H|_f$ is fully mass linear on every symmetric face $f$ by Corollary~\ref{cor:f1}.
Therefore, the claim follows immediately from Lemma~\ref{le:f0}.

Finally, assume that $\De$ is $\De_2$ bundle over $\Tilde \De$.
By Proposition~\ref{prop:polybundle}, after possibly subtracting an inessential function we may
again assume that $\De$ has only three asymmetric facets, and that these facets 
do not intersect.
The argument follows exactly as above.
\end{proof}

\begin{lemma}\labell{le:f4}  Let $H$ 
be a totally mass linear function on a
polytope $\De$  
and suppose that $\De'$ is 
a
blowup of $\De$ either 
of type $(F_{ij},g)$, or
along a symmetric face $f$ such that $H|_f$ is inessential.
Then
$H$ is totally mass linear on $\De'$.
\end{lemma}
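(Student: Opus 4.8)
The plan is to verify the defining equations of full mass linearity (Definition~\ref{def:full}) directly for $\De'$: writing $\mu:=\langle H,c_\De\rangle$, I must show $\langle H,B_k(\De')\rangle=\mu$ for every $k$. First I would record what is already known. In the symmetric-face case Lemma~\ref{le:symblow}, and in the type $(F_{ij},g)$ case Proposition~\ref{prop:blow4}, both assert that $H$ is mass linear on $\De'$ with $\langle H,c_{\De'}\rangle=\langle H,c_\De\rangle=\mu$. Hence by Lemma~\ref{le:full0}(i) the cases $k\in\{0,n-1,n\}$ already hold, and the entire content of the lemma lies in the intermediate barycenters.

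The auxiliary objects are the cut-off piece $W$ (so that $\De=\De'\cup W$ as in Lemma~\ref{le:newblow}) and the exceptional divisor $F_0'=\De'\cap W$. I would next show that $H$ restricts to an \emph{inessential} function on each of $W$ and $F_0'$. For $W$: in the symmetric case the proof of Lemma~\ref{le:symblow} shows $H|_W$ is the lift of the inessential function $H|_f$, hence inessential by Proposition~\ref{prop:lift}(iii); in the type $(F_{ij},g)$ case the proof of Proposition~\ref{prop:blow4} shows directly that $H$ is inessential on $W$. For $F_0'$: it is a symmetric facet (Lemma~\ref{le:blowcent}(i)), so $\langle H,c_{F_0'}\rangle=\mu$ by Proposition~\ref{prop:symcent}, and $H|_{F_0'}$ is again the lift of an inessential function on $f$, hence inessential. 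Consequently Lemma~\ref{le:fullinessential} gives that $H$ is \emph{fully} mass linear on both $W$ and $F_0'$, and moreover $\langle H,c_W\rangle=\langle H,c_{F_0'}\rangle=\mu$ (the equality for $W$ being part of the proofs of Lemma~\ref{le:symblow} and Proposition~\ref{prop:blow4}).

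For each $k$ introduce the $k$-content $L_k(X)=\sum_{\dim F=k}\mathrm{vol}_k(F)$ and the $H$-weighted $k$-content $\Phi_k(X)=\sum_{\dim F=k}\mathrm{vol}_k(F)\,\langle H,c_F\rangle$, where the sums run over the $k$-dimensional faces $F$ of $X$ and $c_F$ is the centroid of $F$, so that $\langle H,B_k(X)\rangle=\Phi_k(X)/L_k(X)$. Full mass linearity of $X$ with barycentric value $\mu$ is exactly the family of identities $\Phi_k(X)=\mu\,L_k(X)$. The heart of the argument is the inclusion--exclusion relation
\begin{equation*}
L_k(\De')=L_k(\De)-L_k(W)+2L_k(F_0'),\qquad
\Phi_k(\De')=\Phi_k(\De)-\Phi_k(W)+2\Phi_k(F_0'),
\end{equation*}
obtained by slicing the $k$-faces of $\De$ with the hyperplane $P(F_0')$: a $k$-face lying on the $\De'$-side (resp. $W$-side) of the slice is an unchanged face of $\De'$ (resp. $W$); a $k$-face straddling $P(F_0')$ splits into a $\De'$-part and a $W$-part whose $k$-volumes and first moments add (additivity of volume and moment, as in Lemma~\ref{le:newblow}); and the $k$-faces lying in the shared facet $F_0'$ are common to $\De'$ and $W$ but absent from $\De$, hence enter with multiplicity two. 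Granting these identities, I would subtract $\mu$ times the first from the second to obtain $\Phi_k(\De')-\mu L_k(\De')=[\Phi_k(\De)-\mu L_k(\De)]-[\Phi_k(W)-\mu L_k(W)]+2[\Phi_k(F_0')-\mu L_k(F_0')]$, and each bracket vanishes by the full mass linearity of $\De$ (hypothesis), of $W$, and of $F_0'$. Thus $\langle H,B_k(\De')\rangle=\mu$ for all $k$, as required.

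The main obstacle is the rigorous justification of the two inclusion--exclusion identities — the combinatorial bookkeeping of precisely which $k$-faces of $\De'$ arise as truncations of $k$-faces of $\De$, which arise as faces of the exceptional divisor $F_0'$, and how straddling faces split — carried out so that no face is double-counted and all lower-dimensional overlaps (which carry zero $k$-content) are discarded; here the face-lattice description of a blowup in Remark~\ref{rmk:blowint} does the work, after which the volume/moment additivity is routine. A secondary point to check is that the destroyed face $f$ and its subfaces, which lie entirely on the $W$-side since $W$ is literally the corner of $\De$ along $f$, are shared faces of $\De$ and $W$ and therefore cancel in $L_k(\De)-L_k(W)$ (and in $\Phi_k(\De)-\Phi_k(W)$), so they do not appear in the final relation.
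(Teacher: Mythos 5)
Your proposal is correct and follows essentially the same route as the paper: both reduce to showing $H$ is inessential (hence fully mass linear) on the cut-off piece $W$ and the exceptional divisor $F_0'$, and then use the inclusion--exclusion identity $L_k(\De)+2L_k(F_0')=L_k(\De')+L_k(W)$ for $k$-contents together with additivity of the $H$-moment to transfer $\langle H,B_k\rangle=\mu$ from $\De$, $W$, $F_0'$ to $\De'$. The only cosmetic difference is that the paper writes out the $k=1$ case explicitly and notes the general $k$ is identical, whereas you package all $k$ at once via $L_k$ and $\Phi_k$.
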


\begin{proof}
Let $\De'$ be the blow up of $\De$ along a face $f = F_I$.
Write $\De = \De'\cup W$, where $F_0': = W \cap \De'$ is the exceptional divisor.  
Then $W$ is a $\Delta_{|I|}$ bundle over  $f$.

We claim that 
\begin{enumerate}
\item $\langle H, c_W(\kappa) \rangle = 
\langle H, c_\Delta(\kappa) \rangle$, and
\item  the restriction of $H$ to $W$ is inessential.
\end{enumerate}
If $f$ is 
symmetric,
we saw in the proof of Lemma~\ref{le:symblow} that
$\langle H, c_f(\kappa) \rangle = \langle H, c_\Delta(\kappa) \rangle$ and
the restriction of $H$ to $W$ is the lift of $H|_f$ from $f$ to $W$. 
Hence, both claims follow from Proposition~\ref{prop:lift}.
On the other hand, if $\Delta'$ is a blowup of type $(F_{ij}, g)$,
then both claims are 
explicitly
proved in the penultimate paragraph
of Proposition~\ref{prop:blow4}.

In either case,
by  Proposition~\ref{prop:symcent} and 
Lemma~\ref{le:newblow},
claim (1) implies that
$$
\langle H, B_n(\De') = \langle H, B_n(\Delta) \rangle = 
\langle H, B_n(W) \rangle = \langle H, B_{n-1}(F_0') \rangle. 
$$
Moreover,  since $F_0'$ 
is
symmetric, claim (2) 
and 
Remark \ref{rmk:symcent}
imply that
$H$ is also inessential on $F_0'$,
and so 
$H$ is fully mass linear on both $W$ and $F_0'$
by Lemma~\ref{le:full0}. 
Hence, the equation above implies that
$$
\langle H, B_n(\Delta') \rangle = \langle H, B_1(\Delta) \rangle = 
\langle H, B_1(W) \rangle = \langle H, B_{1}(F_0') \rangle. 
$$

In either case,  let us first
consider the effect of blowing up on $\langle H, B_1(\De) \rangle$.
Each edge of $\De$ which does not meet $f$ is an edge of $\De' \less F_0'$.
The edges of $\De \less f$ which meet $f$ are cut by the hyperplane $P(F_0')$ containing $F_0'$ into two pieces,
one of which is an edge of $W \less F_0'$, and the other an edge of $\De' \less F_0'$.
Finally, each edge of $f$ is an edge of $W \less F_0'$.
The remaining edges of $W$ and $\De'$ lie in $F_0'$.
Therefore
$$V_1(\De)  + 2 V_1(F_0') =  V_1(\De') + V_1(W),$$ 
where $V_1(X)$ denotes the sum of the lengths of the edges of $X$. 
Moreover, by the additivity of the $H$-moment,  we have
$$
\langle H,B_1(\De)\rangle \, V_1(\De) +
2 \langle H,B_1(F_0')\rangle\,V_1(F_0')
= 
\langle H,B_1(\De')\rangle \,V_1(\De') + \langle H,B_1(W)\rangle\,V_1(W).
$$
Since $V_1(\De') \neq 0$, the last three  displayed 
equations 
above imply that
$\langle H, B_1(\Delta') \rangle = \langle H, B_n(\Delta') \rangle$.
A nearly identical argument -- but with $i$-faces instead of edges -- implies that
$\langle H, B_i(\Delta') \rangle = \langle H, B_n(\Delta') \rangle$ for all $i$.
\end{proof}

\NI{\bf Proof of Proposition \ref{prop:fullML}.}\
This holds by combining Theorem \ref{thm:4d} with 
Lemmas 
\ref{le:fullinessential},
\ref{le:full1}, 
\ref{le:f1} and \ref{le:f4}.  
Note that we can always apply Lemma \ref{le:f4} because
Proposition~\ref{prop:2dim} implies that 
the restriction of $H$ to
 a symmetric $2$-face is  inessential.
\QED

\subsection{Blowups and mass linearity}\labell{ss:blml}

We end 
with a general discussion about 
the effect of blowing up on mass linearity. 
Lemma~\ref{le:blowcent}  shows that if a mass linear function 
 on a polytope remains mass linear on its blowup along a face $f$,
the face $f$ must meet all the 
asymmetric facets.
The following example shows that this condition is not sufficient.

\begin{example}\rm  Let $\De_4$ be the $4$-simplex and
let $H = \sum_{i=1}^5 \ga_i\eta_i$,  where 
$\sum_{i=1}^5  \ga_i=0$ and $\eta_1,\dots,\eta_5$
are the outward conormals to the facets of $\De_4$.
The blowup $\De'$ of $\De_3$ along the edge $F_{123}$ is a $\De_2$  bundle over $\De_2$ with base facets
$F'_1, F_2',$ and $F_3'$, where $F_i' =  F_i \cap \Delta'$ for all $i$.
Then 
$H$ is mass linear on $\De_4$ and 
$F_{123}$ meets all asymmetric facets. However, by
Proposition~\ref{prop:polybundle} $H$ is mass linear on $\De'$  
exactly if $\gamma_1 + \gamma_2 + \gamma_3 = 0$.
\end{example}

In the above example it is enough to add the condition 
$\ga_1+\ga_2+\ga_3=0.$
However, 
to get a general result we need 
yet more conditions. 
  
\begin{lemma}\labell{blowsimp}
Let $H \in \ft$ be mass linear on a polytope $\De \subset \ft^*$.
Let
$\De'$ be the blowup of $\De$ along
a face $F_I$ which meets every asymmetric facet of 
$\De$, and assume that  $\sum_{i \in I} \gamma_i = 0$.
Write $\De = \De'\cup W$; if 
$F_I$ is a simplex and
$H$ is mass linear on $W$,
then $H$ is mass linear on $\De'$.
\end{lemma}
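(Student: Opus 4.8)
The plan is to use the additivity lemma (Lemma~\ref{le:newblow}) together with the key decomposition $\De = \De'\cup W$, where $W$ is the $\De_{|I|}$-bundle over $F_I$ that gets cut off by the hyperplane $P(F_0')$. By Lemma~\ref{le:newblow}, it suffices to show that two of the three functions $\langle H, c_\De\rangle$, $\langle H, c_{\De'}\rangle$, $\langle H, c_W\rangle$ agree; since we are given that $H$ is mass linear on $\De$ and (by hypothesis) on $W$, the natural target is to prove $\langle H, c_\De\rangle = \langle H, c_W\rangle$, which would force $\langle H, c_{\De'}\rangle$ to equal them both and hence be linear.

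First I would identify $H$ restricted to $W$ explicitly. Since $F_I$ is a simplex, the fiber facets of $W$ have outward conormals $\{\eta_i\}_{i\in I}$ together with $-\sum_{i\in I}\eta_i$, while the base facets of $W$ are (restrictions of) the facets of $\De$ meeting $F_I$. By Lemma~\ref{le:Hsum}, $H = \sum\gamma_i\eta_i$ with the $\gamma_i$ the coefficients of $\langle H,c_\De\rangle$. The hypothesis $\sum_{i\in I}\gamma_i = 0$ is precisely what makes the portion $\sum_{i\in I}\gamma_i\eta_i$ of $H$ \emph{inessential} on the bundle $W$: the fiber facets of $W$ include the class $\{\eta_i\}_{i\in I}$ whose coefficients sum to zero, so by Proposition~\ref{prop:inessential} (applied to the equivalence structure of the $\De_{|I|}$-bundle, cf.\ Lemma~\ref{le:bund}) this piece contributes $\sum_{i\in I}\gamma_i\kappa_i$ to $\langle H, c_W\rangle$. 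The remaining part of $H$ involves only conormals of facets meeting $F_I$, i.e.\ base-type data of $W$.

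The crux is then to compare $\langle H, c_W\rangle$ with $\langle H, c_\De\rangle$ coefficient by coefficient. I expect the main obstacle to be controlling the base-facet contributions: one must verify that for each facet $F_j$ of $\De$ meeting $F_I$, the coefficient of $\kappa_j$ in $\langle H,c_W\rangle$ equals $\gamma_j$, the coefficient in $\langle H,c_\De\rangle$. The cleanest route is to invoke Proposition~\ref{prop:bund}: since $W$ is a bundle over the simplex $F_I$ (viewing the $\De_{|I|}$ directions as base and $F_I$ as fiber, or vice versa as the geometry dictates), $H$ splits as $H' + \Tilde H$ with $H'$ inessential and fiber-facet-symmetric, and $\Tilde H$ having symmetric base facets. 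Combining the inessential piece (coefficients summing to zero over $I$) with Proposition~\ref{prop:lift} to lift the base data should show that $\langle H, c_W\rangle = \sum_i\gamma_i\kappa_i = \langle H, c_\De\rangle$ on the relevant chamber.

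Once the equality $\langle H,c_\De\rangle = \langle H,c_W\rangle$ is established, Lemma~\ref{le:newblow} immediately yields that all three functions coincide, so $\langle H,c_{\De'}\rangle$ is linear and $H$ is mass linear on $\De'$. The delicate point throughout will be bookkeeping: making sure the identification of which facets of $\De$ are base-type for $W$ is correct, that the simplex hypothesis on $F_I$ genuinely forces the fiber conormals to be $\{\eta_i\}_{i\in I}\cup\{-\sum_{i\in I}\eta_i\}$, and that Proposition~\ref{prop:lift} applies with the coefficients matching. I would handle this by reducing, via Proposition~\ref{prop:symcent} applied to symmetric faces of $W$, to checking the coefficient identity on a single well-chosen symmetric face where the computation is transparent, rather than integrating directly.
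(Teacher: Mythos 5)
Your overall skeleton is the same as the paper's: decompose $\De=\De'\cup W$, aim to prove $\langle H,c_W\rangle=\langle H,c_\De\rangle$, and then conclude via Lemma~\ref{le:newblow}. But the crux of the lemma is precisely that coefficient identity, and the two mechanisms you offer for it do not deliver. First, the claim that $\sum_{i\in I}\gamma_i\eta_i$ is inessential on $W$ because $\sum_{i\in I}\gamma_i=0$ presumes that the fiber facets $\{F_i\cap W\}_{i\in I}$ are mutually equivalent in $W$. They need not be: Lemma~\ref{le:bund}(ii) warns exactly that fiber facets of a bundle can fail to be equivalent, and a computation as in Proposition~\ref{DkoverD1} shows $F_i\cap W\sim F_{i'}\cap W$ only when $a_i=a_{i'}$ in the unique relation $\sum_{j\in J}\eta_j=\sum_{i\in I}a_i\eta_i$ among the conormals of the facets meeting $F_I$. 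Second, Proposition~\ref{prop:bund} applied to $W$ (together with Corollary~\ref{cor:full0}) only yields the internal identities $\gamma_0'+\sum_{i\in I}\gamma_i'=\sum_{j\in J}\gamma_j'=0$ for the coefficients $\gamma_\ell'$ of $\langle H,c_W\rangle$; it says nothing about how the $\gamma_\ell'$ relate to the $\gamma_\ell$ coming from $\De$. Your fallback --- reducing via Proposition~\ref{prop:symcent} to a ``well-chosen symmetric face'' of $W$ --- is circular, since which faces of $W$ are $H|_W$-symmetric (e.g.\ whether $F_0'$ is symmetric, i.e.\ whether $\gamma_0'=0$) is part of what must be proved.

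The missing step, which is where the simplex hypothesis actually enters, is a linear-algebra comparison: because $F_I$ is a simplex, the set $\{\eta_\ell\}_{\ell\in L}$ of conormals of facets meeting $F_I$ has exactly $n+1$ elements spanning $\ft$, hence satisfies exactly one linear relation, of the form $\sum_{j\in J}\eta_j=\sum_{i\in I}a_i\eta_i$. Applying Lemma~\ref{le:Hsum} twice gives $H=\sum_{\ell\in L}\gamma_\ell\eta_\ell$ (using that every asymmetric facet meets $F_I$) and $H=\gamma_0'\eta_0+\sum_{\ell\in L}\gamma_\ell'\eta_\ell$; substituting $\eta_0=-\sum_{i\in I}\eta_i$ and eliminating one $\eta_{j_0}$ via the relation, the linear independence of the remaining conormals together with the sum conditions $\sum_{i\in I}\gamma_i=\sum_{j\in J}\gamma_j=0$ and $\gamma_0'+\sum_{i\in I}\gamma_i'=\sum_{j\in J}\gamma_j'=0$ forces $\gamma_0'=0$ and $\gamma_\ell'=\gamma_\ell$ for all $\ell$. (Note also that the simplex hypothesis is not needed to identify the fiber conormals of $W$ --- that is automatic from the blowup construction, Remark~\ref{rmk:blowint} --- but rather to control the relation space just described.) Without this comparison your argument does not close.
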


\begin{proof} 
We aim to show that 
$\langle H, c_W \rangle$ is equal to
$\langle H,c_\De\rangle$.  The result then follows from
Lemma \ref{le:newblow}.

 Let $\{F_\ell\}_{\ell \in L}$ be the set of facets that  meet $F_I$,
and let $\eta_\ell$ be the outward conormal to $F_\ell$ for all $\ell$.
Since by hypothesis every asymmetric facet meets $F_I$, 
we may write $\langle H,c_\De\rangle = \sum_{\ell\in L}\ga_\ell\ka_\ell$.
Because $\sum_{\ell\in L}\ga_\ell = 0$ by 
Corollary \ref{cor:full0},
our hypotheses imply that 
$$
\sum_{i\in I}\ga_i = \sum_{j\in J}\ga_j = 0,
$$
where $J = L \smallsetminus I$.

There  is a facet $F'_0$ of $W$ with outward conormal 
$\eta_0 = - \sum_{i \in I} \eta_i$ (corresponding to the exceptional divisor
in $\Delta'$);
the remaining facets of $W$ are $\{F_\ell \cap W\}_{\ell \in L}$.
Since $H$ is mass linear on $W$,  we may write
$\langle H,c_W\rangle = \gamma_0' \kappa_0 + \sum_{\ell\in L}\ga_\ell'\ka_\ell$. 
Additionally, observe  that  
$W$ is a $\De_{|I|}$ bundle over $F_I$; cf.\ Remark \ref{rmk:blowint}.
Its fiber facets are
$F_0$ and  $\{F_i\cap W\}_{i\in I}$;  
its base facets are $\{F_j \cap W\}_{j \in J}$. 
Therefore  there is precisely one linear relation among the conormals 
$\{\eta_\ell\}_{\ell\in L}$; it has  the form
\begin{equation}\labell{eq:j0}
 \sum_{j \in J} \eta_j = \sum_{i\in I} a_i \eta_i.
\end{equation}
Moreover, by Proposition \ref{prop:bund}
and Corollary~\ref{cor:full0}
we have
$$ 
\gamma_0' + \sum_{i\in I}\ga_i'= \sum_{j \in J}\ga_j' = 0.
$$ 

Next, note that  by Lemma \ref{le:Hsum}, 
$H = \sum_{\ell \in L} \gamma_\ell \eta_\ell$ and $H = 
\ga'_0 \eta_0 + \sum_{\ell \in L} \ga'_\ell \eta_\ell$.
Hence, 
since $\eta_0 = - \sum_{i \in I} \eta_i$,
if   
we use Equation \eqref{eq:j0} to write
$\eta_{j_0} = - \sum_{j \ne j_0} \eta_j + \sum_{i\in I} a_i \eta_i$
for some $j_0 \in J$ ,
 we see that
$$
\sum_{i \in I} (\ga_i + a_i \gamma_{j_0}) \eta_i + 
\sum_{j \in J \smallsetminus \{j_0\}} (\ga_j - \gamma_{j_0}) \eta_j =  
\sum_{i \in I} (\ga'_i - \ga'_0 + a_i \gamma'_{j_0}) \eta_i + 
\sum_{j \in J \smallsetminus \{j_0\}} (\ga'_j - \gamma'_{j_0}) \eta_j .$$
Since the vectors $\{\eta_\ell\}_{\ell \in L \smallsetminus \{j_0\}}$
are linearly independent, this implies that
$\gamma_j - \gamma_{j_0} = \gamma'_j - \gamma'_{j_0}$ for all $j \in J$,
and 
$\gamma_i + a_i \gamma_{j_0}
= \gamma'_i -\gamma'_0 + a_i \gamma'_{j_0}$ for all $i \in I$.
Since $\sum_{j \in J} \gamma_j = 0 = \sum_{j \in J} \gamma'_j$,
the first equation implies that $\gamma_j = \gamma'_j$ for all $j \in J$. 
Hence, since $\sum_{i \in I} \gamma_i = 0 = \gamma'_0 + \sum_{i \in I}
\gamma'_i$, the second equation implies that $\gamma'_0 = 0$
and $\gamma_i = \gamma'_i$ for all $i \in I$.
Therefore
$\langle H,c_W\rangle = \langle H,c_\De\rangle$ as claimed.
\end{proof}

The difficulty now is to understand when the 
restriction of $H$ to $W$ 
is mass linear.
Here is a simple example.

\begin{cor}\labell{cor:blowv}  Let $H$ be a mass linear 
function
on $\De$
and $v$ any vertex of $\De$. 
Then $H$ is mass linear on the blowup $\De'$ of $\De$ at $v$ exactly if
$v$ lies on all asymmetric facets of $\De$.
Moreover, if $H$ is inessential on $\De$ then it is inessential on the blowup.
\end{cor}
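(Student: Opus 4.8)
The plan is to deduce both assertions from the two technical results already in hand, Lemma~\ref{blowsimp} and Lemma~\ref{le:inessblow}, after recording the relevant combinatorial setup. Since $\De$ is smooth and hence simple, the vertex $v$ lies on exactly $n=\dim\De$ facets; writing $I$ for the corresponding index set, blowing $\De$ up at $v$ is by definition the blowup along the face $F_I=\{v\}$, which is a point and so trivially a simplex. I will use throughout that, for the mass linear $H$, a facet $F_i$ is symmetric precisely when its coefficient $\ga_i$ in $\langle H, c_\De(\ka)\rangle=\sum\ga_i\ka_i$ vanishes, which is immediate from the definition of symmetry.

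For the forward implication, suppose $H$ is mass linear on $\De'$. Then Lemma~\ref{le:blowcent}(iii) asserts that the blown-up face $f=\{v\}$ meets every asymmetric facet of $\De$, which is exactly the statement that $v$ lies on all asymmetric facets. For the converse, assume $v$ lies on every asymmetric facet. The main thing to check is the numerical hypothesis $\sum_{i\in I}\ga_i=0$ of Lemma~\ref{blowsimp}: every asymmetric facet is one of the $F_i$ with $i\in I$, so each $F_i$ with $i\notin I$ is symmetric and therefore has $\ga_i=0$; combining this with $\sum_i\ga_i=0$ from Corollary~\ref{cor:full0} gives $\sum_{i\in I}\ga_i=0$. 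Writing $\De=\De'\cup W$, the cut-off piece $W$ is the standard simplicial corner at the simple vertex $v$ (a $\De_{|I|}$ bundle over the point $F_I$), hence is a simplex; since all facets of a simplex are equivalent, every function is inessential and so, by Proposition~\ref{prop:inessential}, mass linear on it, and in particular $H$ is mass linear on $W$. As $F_I$ is a simplex meeting every asymmetric facet, Lemma~\ref{blowsimp} now gives that $H$ is mass linear on $\De'$.

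Finally, for the inessential claim, observe that $v$ lying on all asymmetric facets means the face $F_I=\{v\}$ is contained in every $H$-asymmetric facet, so Lemma~\ref{le:inessblow} applies verbatim and shows that $H$ is inessential on $\De$ exactly when it is inessential on $\De'$; in particular inessentiality is inherited by the blowup. The only step demanding genuine care is the verification of $\sum_{i\in I}\ga_i=0$, which rests on the symmetric-facet/vanishing-coefficient dictionary together with Corollary~\ref{cor:full0}; the remainder is a direct assembly of the cited results.
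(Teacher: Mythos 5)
Your proof is correct and follows essentially the same route as the paper's: decompose $\De=\De'\cup W$ with $W$ the simplex cut off at the simple vertex, verify $\sum_{i\in I}\ga_i=0$ via Corollary~\ref{cor:full0}, apply Lemma~\ref{blowsimp} for the converse and Lemma~\ref{le:blowcent} for the forward direction, and invoke Lemma~\ref{le:inessblow} for the inessential claim. Your write-up is in fact slightly more careful than the paper's, since you make explicit the step (symmetric facets have vanishing coefficients, hence the coefficients indexed outside $I$ drop out of $\sum_i\ga_i=0$) that the paper leaves implicit when citing Corollary~\ref{cor:full0}.
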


\begin{proof}
If $F_I$ is a vertex then  $W$ is a simplex so that all 
$H$ are mass linear on $W$. Moreover, 
the equality $\sum_{i\in I} \ga_i = 0$ 
holds by Corollary \ref{cor:full0}.  
Therefore if $v$ lies on all asymmetric facets $H$ is 
mass linear on $\De'$ by
 Lemma \ref{blowsimp}.
The converse follows from 
Lemma~\ref{le:blowcent}.
This proves the first claim.
The second follows from
Lemma \ref{le:inessblow}.
 \end{proof}

Now consider the case when $F_I$ is an edge
that meets every asymmetric facet, and assume 
that $\sum_{i \in I} \gamma_i = 0$.
Then
$W$ is a $\De_{|I|}$ bundle over $\De_1$,  
 and so Proposition~\ref{DkoverD1} implies that $H$ is mass linear on
$W$ (and hence $\Delta'$) exactly if  $\sum_{i \in I} a_i\ga_i=0$.
It turns out that this  condition, which involves the relative slope of the 
two facets transverse to $F_I$,
is satisfied whenever $H$ is {\bf generated  by} $\xi_H$ in the sense of Definition \ref{def:full}.

\begin{lemma}\labell{le:blowedge}  Let $H \in \ft$ 
be a mass linear function 
on  a polytope $\De \subset \ft^*$.  Let $\Delta'$ be the blowup
of $\Delta$ along an edge $F_I$ which meets every asymmetric facet of $\Delta$,
and assume that $\sum_{i \in I} \gamma_i = 0$.
If $H$ is generated by some $\xi_H \in \ft^*$,
then $H$ is mass linear on $\De'$,
and is generated on $\De'$ by the same vector $\xi_H$.
\end{lemma}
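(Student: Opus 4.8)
The plan is to reduce to the already-proved Lemma~\ref{blowsimp} and then verify its hypothesis by a short computation that exploits the assumption that $H$ is generated by $\xi_H$. Since $F_I$ is an edge it is a simplex, so Lemma~\ref{blowsimp} applies verbatim once I know that $H$ is mass linear on the piece $W$ in the decomposition $\De = \De'\cup W$; granting that, Lemma~\ref{blowsimp} yields mass linearity of $H$ on $\De'$. Thus the entire problem is to check mass linearity of $H$ on $W$.

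Next I would unwind the structure of $W$. As in the proof of Lemma~\ref{blowsimp}, $W$ is a $\De_{|I|}$ bundle over the edge $F_I = \De_1$, with fiber facets $F_0$ (outward conormal $\eta_0 = -\sum_{i\in I}\eta_i$) together with $\{F_i\cap W\}_{i\in I}$, and with the two base facets $\{F_j\cap W\}_{j\in J}$, where $J = L\ssminus I$. The unique linear relation among the conormals of $W$ is \eqref{eq:j0}, namely $\sum_{j\in J}\eta_j = \sum_{i\in I}a_i\eta_i$, which identifies the slope constants $a_i$ of the bundle (normalized so that $\eta_0$ carries coefficient $0$). By Lemma~\ref{le:Hsum} we have $H=\sum_{\ell\in L}\ga_\ell\eta_\ell$, since $\ga_\ell=0$ on symmetric facets and every asymmetric facet lies in $L$. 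Setting $\Tilde\ga_i=\ga_i$ for $i\in I\cup J$ and $\Tilde\ga_0=0$, the hypothesis $\sum_{i\in I}\ga_i=0$ together with $\sum_{j\in J}\ga_j=0$ — which follows from $\sum_{\ell\in L}\ga_\ell=0$ (Corollary~\ref{cor:full0}, as only facets in $L$ have nonzero coefficients) — shows that this is exactly the normalized representation of $H$ on $W$ required by Proposition~\ref{DkoverD1}. Hence, by that proposition, $H$ is mass linear on $W$ if and only if the slope condition $\sum_{i\in I}a_i\ga_i=0$ holds.

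The decisive step is then to verify this slope condition using generation. Since $\ga_i=\langle\eta_i,\xi_H\rangle$ for every facet, I pull the constants inside the pairing and apply \eqref{eq:j0}:
\[
\sum_{i\in I}a_i\ga_i=\Big\langle \sum_{i\in I}a_i\eta_i,\ \xi_H\Big\rangle=\Big\langle \sum_{j\in J}\eta_j,\ \xi_H\Big\rangle=\sum_{j\in J}\ga_j=0.
\]
Therefore $H$ is mass linear on $W$, and Lemma~\ref{blowsimp} delivers mass linearity on $\De'$.

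Finally I would identify the generating vector on $\De'$. By Lemma~\ref{le:blowcent} the exceptional divisor $F_0'$ is symmetric and $\langle H,c_{\De'}\rangle=\langle H,c_{\De}\rangle$, while the remaining facets of $\De'$ retain their conormals $\eta_i$ and coefficients $\ga_i=\langle\eta_i,\xi_H\rangle$. Since $F_0'$ has outward conormal $\eta_0'=\sum_{i\in I}\eta_i$ (Definition~\ref{def:blowup}) and coefficient $0=\sum_{i\in I}\ga_i=\langle\eta_0',\xi_H\rangle$, the identity $\langle H,c_{\De'}(\ka')\rangle=\sum_{i'}\langle\eta_{i'},\xi_H\rangle\ka'_{i'}$ holds over all facets $i'$ of $\De'$; that is, $H$ is generated by the same $\xi_H$. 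The main obstacle I anticipate is the bookkeeping in the middle paragraph: correctly matching the standard bundle coordinates of Proposition~\ref{DkoverD1} to the intrinsic relation \eqref{eq:j0} (in particular the sign and the normalization of the $a_i$), and confirming that the ambient coefficients $\ga_i$ really constitute the unique normalized representation of $H$ on $W$, so that the slope criterion may be applied to them directly.
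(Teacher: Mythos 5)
Your proof is correct and takes essentially the same route as the paper's: reduce to Lemma~\ref{blowsimp}, apply Proposition~\ref{DkoverD1} to the piece $W$, and deduce the slope condition $\sum_{i\in I}a_i\ga_i=0$ by pairing the unique linear relation among the conormals of $W$ with $\xi_H$ and using $\sum_{j\in J}\ga_j=0$. The only difference is cosmetic --- you argue coordinate-free via the relation \eqref{eq:j0}, whereas the paper fixes explicit coordinates $\eta_i=-e_i$ and reads off the same identity.
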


\begin{proof} Number the facets 
of $\De$ so that $I = \{1,\dots, n-1\}$ 
and so that the two facets
of $\De$
that are transverse to the edge $F_I$ are $F_n$ and $F_{n+1}$.
Choose coordinates so that $\eta_i = -e_i\in \R^n$ for $i\le n$.
Then $\eta_{n+1} = 
(a_1,\dots,a_{n-1},1)$  for some $a \in \Z^{n-1}$.
Moreover, these facets, together with the facet $F_0$ corresponding
to the exceptional divisor of $\Delta'$,
are the facets of $W$. 

By Lemma~\ref{le:Hsum}. $H = \sum_{i\le n+1} \ga_i\eta_i$.
By assumption,
 $\ga_i = \langle\eta_i,\xi_H\rangle$ for all $i$; therefore,
 $\xi_H = (-\ga_1,\dots,-\ga_{n-1}, -\ga_n)$.
Moreover,  since $\sum_{i\le n-1}\ga_i=0$ by hypothesis and
$\sum_i\ga_i = 0$ by 
Corollary \ref{cor:full0}, 
we have 
$$
0=\ga_n+\ga_{n+1} = \langle\eta_n +\eta_{n+1},\xi_H\rangle 
=  -\sum_{i\in I} \ga_i a_i.
$$   
Thus $\langle H, c_W \rangle = \sum_{i-1}^{n+1} \gamma_i \kappa_i$  by 
Proposition \ref{DkoverD1}.
Hence Lemma \ref{blowsimp} implies that 
$\langle H, c_{\De'} \rangle = \sum_{i-1}^{n+1} \gamma_i \kappa_i$. 
The result follows.
\end{proof}

\begin{cor}\labell{cor:blowedge}
Let $H$ be mass linear on a 
 $4$-dimensional polytope $\De$ and let $\De'$ be its blowup along
 an edge $F_{123}$  that meets all asymmetric facets. 
Then $H$ is mass linear on $\De'$ if $\ga_1+\ga_2+\ga_3=0$.
\end{cor}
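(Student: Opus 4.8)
The plan is to reduce Corollary~\ref{cor:blowedge} to Lemma~\ref{le:blowedge} by producing a generating vector $\xi_H$ for $H$. The hypotheses already supply the edge condition ($F_{123}$ meets every asymmetric facet) and the coefficient relation $\gamma_1+\gamma_2+\gamma_3=0$, so the only missing ingredient in Lemma~\ref{le:blowedge} is the existence of some $\xi_H \in \ft^*$ with $\langle H, c_\De(\ka)\rangle = \sum_i \langle \eta_i,\xi_H\rangle\ka_i$. Thus the entire task is to show that every mass linear function on a $4$-dimensional polytope is generated by some vector, which is exactly the content of part~(ii) of Lemma~\ref{le:full0} combined with full mass linearity.

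First I would invoke Proposition~\ref{prop:fullML}, which asserts that every mass linear function on a polytope of dimension at most four is fully mass linear. By Definition~\ref{def:full}, full mass linearity gives in particular $\langle H, B_{n-2}(\De)\rangle = \langle H, B_n(\De)\rangle$ and $\langle H, B_0(\De)\rangle = \langle H, B_n(\De)\rangle$, where $n=4$. Then part~(ii) of Lemma~\ref{le:full0} states precisely that the equalities $\langle H, B_0(\De)\rangle = \langle H, B_{n-2}(\De)\rangle = \langle H, B_n(\De)\rangle$ hold if and only if there is a vector $\xi_H \in \ft^*$ generating $H$. Therefore $H$ is generated by some $\xi_H$.

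With $\xi_H$ in hand, the corollary follows immediately: I would apply Lemma~\ref{le:blowedge} to the edge $F_{123}$, whose hypotheses are now all verified --- the edge meets every asymmetric facet by assumption, the coefficient sum $\gamma_1+\gamma_2+\gamma_3=0$ is given, and $H$ is generated by $\xi_H$. The lemma then concludes that $H$ is mass linear on $\De'$ (and indeed generated on $\De'$ by the same $\xi_H$), which is the assertion of the corollary.

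The proof is essentially a two-line deduction, so there is no serious obstacle; the only point requiring care is logical rather than computational. Proposition~\ref{prop:fullML} is proved using Theorem~\ref{thm:4d}, which in turn relies on a long chain of classification results, so I would want to confirm that Corollary~\ref{cor:blowedge} is not used (even indirectly) in establishing Theorem~\ref{thm:4d} or Proposition~\ref{prop:fullML}, lest the argument be circular. Since this corollary sits in the final subsection \S\ref{ss:blml} on blowups and mass linearity, which is explicitly described as containing results not needed for the main theorem, no circularity arises, and the brief argument above is complete.
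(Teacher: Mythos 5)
Your proof is correct and follows exactly the paper's own argument: Proposition~\ref{prop:fullML} gives full mass linearity in dimension $\le 4$, Lemma~\ref{le:full0}(ii) then yields a generating vector $\xi_H$, and Lemma~\ref{le:blowedge} finishes. Your added check that no circularity arises (since \S\ref{ss:blml} is not used in proving Theorem~\ref{thm:4d}) is a sensible precaution but does not change the substance.
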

\begin{proof} By Proposition \ref{prop:fullML}
in dimensions $\le 4$ every mass linear function is fully mass linear.
Therefore, by Lemma~\ref{le:full0}(ii) 
every mass linear $H$ is generated by some $\xi_H$.
\end{proof}

\begin{rmk}\rm    To go further with this question one would obviously need to 
understand more about mass linear functions on the polytopes $W$.  
One could also consider the question of which blowups preserve full mass linearity.  
For example, if one blows up at a vertex then $W$ is a simplex and all linear 
functions on a simplex are inessential and hence
fully mass linear.  In this case the proof of 
Lemma~\ref{le:f4} 
adapts to show that  a
vertex blow up preserves full mass linearity.
We leave further discussion of such questions for the future.
\end{rmk}

Another interesting question concerns which blowups convert inessential functions to essential ones. 
We end by showing (in any dimension) that  if a blow up of type $(F_{ij},g)$ 
has this property, then the underlying polytope is a double expansion.

\begin{proposition}\labell{prop:essblow}
Let $H \in \ft$ be an inessential  mass linear function
on a smooth 
 polytope $\ov\Delta \subset \ft^*$. 
Assume that  $H$ is an essential mass linear function on a polytope 
$\Delta$ that is obtained from $\ov\Delta$ by
a series of blowups.  Moreover, assume that each
blowup is either along a
 symmetric face or of type $(\ov F_{ij},\ov g)$. 
Then 
\begin{itemize}
\item $\ov\Delta$ is the double expansion of a smooth
polytope $\Tilde \Delta$.
\item 
The four base-type facets are the asymmetric facets.
\end{itemize}
\end{proposition}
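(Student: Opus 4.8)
The plan is to locate the single moment in the blowup sequence at which $H$ changes from inessential to essential, read off the combinatorial structure of the four asymmetric facets at that stage, and then transport that structure down to $\ov\De$, using the fact that the relevant intersection and equivalence data survive blowdown. Throughout I would use that each permitted blowup preserves $\langle H, c\rangle$ (Lemma~\ref{le:symblow}(i) and Proposition~\ref{prop:blow4}(ii)) and hence, by Lemma~\ref{le:blowcent}, both the coefficients $\ga_i$ and the number of asymmetric facets, while the exceptional divisor is always symmetric.

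First I would write the sequence $\ov\De=\De^{(0)}\to\De^{(1)}\to\cdots\to\De^{(m)}=\De$ and choose the least $s$ for which $H$ is essential on $\De^{(s)}$; this is well defined because essentiality, once present, persists under both kinds of blowup (Lemma~\ref{le:symblow}(ii) and Proposition~\ref{prop:blow4}(iii)), so $H$ is inessential on $\De^{(0)},\dots,\De^{(s-1)}$. A symmetric blowup cannot alter essentiality (Lemma~\ref{le:symblow}(ii)), so the step $\De^{(s-1)}\to\De^{(s)}$ must be of type $(F_{ij},g)$; then Proposition~\ref{prop:blow4}(iii) forces $\De^{(s-1)}$ to have exactly four asymmetric facets and the blown-up pair to satisfy $F_i\not\sim F_j$ there. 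I would relabel so the blown-up pair is $\{F_1,F_2\}$ and the two facets transverse to the edge are $F_3,F_4$; as in the proof of Proposition~\ref{prop:blow4} the asymmetric facets of $\De^{(s-1)}$ are then exactly $F_1,\dots,F_4$. Since the number of asymmetric facets is preserved, $\ov\De$ also has exactly four.

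Next I would extract the combinatorics on $\De^{(s-1)}$. As $H$ is inessential there, Proposition~\ref{prop:inessential} together with $\sum_i\ga_i=0$ (Corollary~\ref{cor:full0}) and $\ga_i\ne0$ for all four asymmetric facets shows that the equivalence classes among $F_1,\dots,F_4$ are either a single class of four or two pairs; since $F_1\not\sim F_2$, it must be two pairs not containing $\{1,2\}$, so after possibly swapping $F_3,F_4$ I may take $\ov F_1\sim\ov F_3$ and $F_2\sim F_4$ (equivalences on $\De^{(s-1)}$). Moreover, by Definition~\ref{def:edge} the blown-up edge $F_{12}\cap g$ meets every asymmetric facet, in particular $F_3$ and $F_4$; hence $F_{123}\ne\emptyset$ and $F_{124}\ne\emptyset$ on $\De^{(s-1)}$, and in particular $F_{13}\ne\emptyset$ and $F_{24}\ne\emptyset$.

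Finally I would descend from $\De^{(s-1)}$ to $\ov\De$. Each blowup in $\De^{(0)}\to\cdots\to\De^{(s-1)}$ is along a face $F_I$ with $I\cap\{1,2,3,4\}$ equal to $\emptyset$ (symmetric blowup) or to the blown-up pair (type $(F_{ij},g)$); in either case $I$ contains the index of a symmetric facet of $g$, hence an index outside $\{1,2,3,4\}$, so $I\not\subseteq K$ for every $K\subseteq\{1,2,3,4\}$. By Remark~\ref{rmk:blowint} the mutual intersection pattern of $F_1,\dots,F_4$ is therefore unchanged by blowdown, giving $\ov F_{13}\ne\emptyset$ and $\ov F_{24}\ne\emptyset$; and by Lemma~\ref{le:blowequiv}(i) equivalence always passes from a blowup to its blowdown, so $\ov F_1\sim\ov F_3$ and $\ov F_2\sim\ov F_4$ on $\ov\De$. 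Applying Lemma~\ref{lemma:dexpan} with the pairs $\{\ov F_1,\ov F_3\}$ and $\{\ov F_2,\ov F_4\}$ in the roles of $\{F_1,F_2\}$ and $\{F_3,F_4\}$ then exhibits $\ov\De$ as the double expansion of $\ov F_{12}$ with base-type facets $\ov F_1,\dots,\ov F_4$, which are exactly the asymmetric facets, as claimed. The main obstacle is precisely the intersection bookkeeping of this last paragraph: one must verify both that the defining property of a type $(F_{ij},g)$ blowup produces the nonempty triple intersections and that these intersections, together with the equivalences, are invariant under every intermediate blowdown. This invariance is what allows Lemma~\ref{lemma:dexpan} to be invoked on $\ov\De$ itself rather than merely on $\De^{(s-1)}$.
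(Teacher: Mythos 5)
Your argument is correct and follows essentially the same route as the paper's proof: isolate the first blowup at which $H$ becomes essential, use Lemma~\ref{le:symblow}(ii) and Proposition~\ref{prop:blow4}(iii) to see that this step is of type $(F_{ij},g)$ with exactly four asymmetric facets falling into two equivalence pairs separated by $\{i,j\}$, read off the nonempty intersections from the requirement that $F_{ij}\cap g$ meet every asymmetric facet, and conclude with Lemma~\ref{lemma:dexpan}. The only difference is that you make explicit the descent of the equivalences and the intersection pattern from the transition stage back to $\ov\De$ (via Lemma~\ref{le:blowequiv} and Remark~\ref{rmk:blowint}), a bookkeeping step the paper's proof leaves implicit.
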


\begin{proof}
By 
Lemma~\ref{le:symblow} (i)
and 
Proposition~\ref{prop:blow4} 
(ii),
$H$ is mass linear on each intermediate blowup,  
the exceptional divisors are all symmetric,
and the coefficients $\gamma_k$ remain constant under blowup.
Since $H$ is essential on $\Delta$ but not on $\ov{\Delta}$,
there exists a polytope $\De'$ in the sequence so that
$H$ is inessential on $\De'$ but essential on the blowup.
Lemma~\ref{le:symblow} (ii) implies that
this blow up must be of the form
$(F'_{ij}, g')$.
Moreover,  Proposition~\ref{prop:blow4} (iii) implies that
$\De'$ has exactly four asymmetric facets and 
that $F'_i \not \sim F'_j$.
Since $H$ is inessential on $\De'$, we may label the asymmetric
facets so that $F_1 \sim F_2$ and $F_3 \sim F_4$.
Hence, $i \in \{1,2\}$ and $j \in \{3,4\}$.
Since $F'_{ij}\cap \ov g$  meets every asymmetric facet, 
this implies that
$\ov{F}_{12} \neq \emptyset$ and $\ov{F}_{34} \neq \emptyset$.
Therefore, the claim follows from Lemma~\ref{lemma:dexpan}.
\end{proof}

\end{document}